\documentclass[10pt]{book}

\usepackage[utf8]{inputenc}

\usepackage{geometry} \geometry{a4paper} 

\usepackage{lmodern}
\usepackage{titling}
\usepackage[bottom]{footmisc}
\usepackage[shortlabels]{enumitem}

\usepackage{mathtools}
\usepackage{amsthm}
\usepackage{amsfonts}
\usepackage{textcomp}
\usepackage{gensymb}
\usepackage{amssymb}
\usepackage{mathrsfs}
\usepackage{shuffle}
\usepackage{empheq}

\usepackage[10pt]{mytrees}
\usepackage{color}

\usepackage{multirow}
\usepackage{boldline}
\usepackage{flafter}
\usepackage{placeins}

\usepackage{csquotes}
\usepackage[T1]{fontenc}
\usepackage[nottoc,notlof,notlot]{tocbibind}\usepackage[intoc,refpage]{nomencl}

\usepackage{makeidx}
\usepackage[pdfborder={0 0 1 [1 2]}]{hyperref}
\hypersetup{
pdfauthor={Rosa Preiß},
pdftitle={From Hopf algebras to rough paths and regularity structures},
pdfsubject={Master's Thesis}
}

\DeclareFontEncoding{LS1}{}{}
\DeclareFontSubstitution{LS1}{stixscr}{m}{n}
\DeclareFontSubstitution{LS1}{stixbb}{m}{it}
\DeclareFontFamily{LS1}{stixbb}{\skewchar\font127 }
\DeclareFontFamily{LS1}{stixscr}{\skewchar\font127 }
\DeclareFontShape{LS1}{stixbb}{m}{it}{<-> stix-mathbbit}{}
\DeclareFontShape{LS1}{stixscr}{m}{n}{<-> stix-mathscr}{}
\DeclareSymbolFont{stixsymbols}{LS1}{stixscr}{m}{n}
\DeclareSymbolFont{stixsymbols4}{LS1}{stixbb}{m}{it}
\DeclareMathSymbol{\smblkdiamond}{\mathbin}{stixsymbols4}{"E4}
\DeclareMathSymbol{\smwhtdiamond}{\mathbin}{stixsymbols}{"0B}

\newcommand*{\ie}{i.e.\@ }
\newcommand*{\resp}{resp.\@ }
\newcommand*{\cf}{cf.\@ }
\newcommand*{\eg}{e.g.\@ }

\newcommand*{\emphind}[1]{\emph{#1}\index{#1}}
\newcommand*{\eqrefequal}[1]{\stackrel{\mathclap{\text{\tiny\eqref{#1}}}}{=}}
\newcommand*{\diamondequal}{\stackrel{\mathclap{(\diamondsuit)}}{=}}

\newcommand*{\AND}{\;\wedge\;}

\newcommand*{\ofbracket}[1]{{(#1)}}
\newcommand*{\ofofbracket}[1]{{(\!\ofbracket{#1}\!)}}
\newcommand*{\tensoral}[1]{{\mathrm{T}\ofbracket{#1}}}
\newcommand*{\tensoralfs}[1]{{\mathrm{T}\ofofbracket{#1}}}
\newcommand*{\tensoraltc}[2]{{\mathrm{T}^{#1}\ofbracket{#2}}}

\newcommand*{\dualbracket}{{\langle{\,\cdotp,\cdotp\,}\rangle}}
\newcommand*{\tdual}[1]{#1'}
\newcommand*{\adual}[1]{{#1}^*}

\newcommand*{\pdual}[1]{\tilde{#1}}
\newcommand*{\Vpdual}{\pdual{V}}
\newcommand*{\vpdual}{\pdual{v}}
\newcommand*{\isdualvia}[2]{\mathrel{\mathop{{\uparrow}\!{\downarrow}}\limits_{#1}^{#2}}}
\newcommand*{\isdual}{\mathrel{{\uparrow}\!{\downarrow}}}

\newcommand*{\linear}{\mathrm{L}}
\newcommand*{\linearcont}{\mathcal{L}}

\newcommand*{\product}{\smwhtdiamond}
\newcommand*{\linofbilin}[1]{\mathord{\mathrm{m}_{\scriptscriptstyle{#1}}}}
\newcommand*{\mproduct}{\linofbilin{\product}}
\newcommand*{\producttwo}{\mathbin{\underline{\product}}}
\newcommand*{\mproducttwo}{\linofbilin{\producttwo}}
\newcommand*{\coproduct}{\mathord{\Delta}}
\newcommand*{\coproducttwo}{\mathord{\underline{\coproduct}}}
\newcommand*{\unitmap}{\mathrm{u}}
\newcommand*{\counit}{\varepsilon}
\newcommand*{\primitive}{\operatorname{Prim}}
\newcommand*{\grouplike}{\operatorname{GrLk}}
\newcommand*{\antipode}{\mathord{\mathcal{S}}}
\newcommand*{\rcoproduct}{\mathord{\tilde{\Delta}}}
\newcommand*{\convoproduct}{\ast}
\newcommand*{\rconvoproduct}{\mathbin{\tilde{\ast}}}
\newcommand*{\flip}{\hat{\tau}}

\newcommand*{\proj}{\mathord{\pi}}
\newcommand*{\projtwo}{\mathord{\underline{\proj}}}
\newcommand*{\projmult}[1]{\proj^{[#1]}}
\newcommand*{\copropro}{\mathord{\Delta_{\scriptscriptstyle\product}}}

\newcommand*{\productnum}[1]{\mathbin{\if 1#1 \smwhtdiamond\else\smblkdiamond \fi}}
\newcommand*{\mproductnum}[1]{\linofbilin{\productnum{#1}}}
\newcommand*{\copropronum}[1]{\mathord{\Delta_{\scriptscriptstyle\productnum{#1}}}}
\newcommand*{\antipodenum}[2]{\mathord{{\mathcal{S}_{\scriptscriptstyle\productnum{#1}}\hspace{-0.22em}}^{\scriptscriptstyle\productnum{#2}}}}
\newcommand*{\counitnum}[1]{\counit_{\scriptscriptstyle\productnum{#1}}}
\newcommand*{\unitmapnum}[1]{\unitmap_{\scriptscriptstyle\productnum{#1}}}

\newcommand*{\diffd}{\mathrm{d}}
\newcommand*{\intd}[0]{\,\diffd}
\newcommand*{\forallc}[0]{\forall\,}
\newcommand*{\existsc}[0]{\exists\,}

\newcommand*{\deltasymb}[1]{\delta_{#1}}
\newcommand*{\polynoms}{\mathcal{P}}
\newcommand*{\diffs}{\mathbb{D}}
\newcommand*{\propol}{\bullet}
\newcommand*{\propoltwo}{\mathbin{\underline{\bullet}}}
\newcommand*{\mpropol}{\linofbilin{\propol}}
\newcommand*{\prodiff}{\circ}
\newcommand*{\copropol}{\mathord{\Delta_{\scriptscriptstyle\prodiff}}}
\newcommand*{\coprodiff}{\mathord{\Delta_{\scriptscriptstyle\propol}}}
\newcommand*{\rcopropol}{\mathord{\rcoproduct_{\scriptscriptstyle\prodiff}}}

\newcommand*{\etensor}{\otimes}
\setlength{\fboxsep}{0pt}
\newcommand*{\itensor}{\mathbin{\dot\otimes}}
\newcommand*{\itensortwo}{\mathbin{\underline{\itensor}}}
\newcommand*{\X}{\mathrm{X}}
\newcommand*{\vspan}{\operatorname{span}}
\newcommand*{\naturals}{\mathbb{N}}
\newcommand*{\D}{\mathcal{D}}
\newcommand*{\antipol}{\mathord{\mathcal{S}_{\scriptscriptstyle\propol}^{\scriptscriptstyle\prodiff}}}
\newcommand*{\antidiff}{\mathord{\mathcal{S}_{\scriptscriptstyle\prodiff}^{\scriptscriptstyle\propol}}}
\newcommand*{\bigpropol}{\prod}
\newcommand*{\reals}{\mathbb{R}}

\newcommand*{\imorph}[1]{\iota_{#1}}
\newcommand*{\imorphpd}{\imorph{\polynoms_d\diffs_d}}

\newcommand*{\unit}{\mathbf{1}}
\newcommand*{\unitmappol}{\unitmap}
\newcommand*{\unitmapdiff}{\unitmap}
\newcommand*{\counitpol}{\counit}
\newcommand*{\counitdiff}{\counit}

\newcommand*{\counittens}{\varepsilon_{\scriptscriptstyle\shuffle}}
\newcommand*{\copropodu}{\mathord{\Delta_{\adual{\polynoms}}}}
\newcommand*{\antipodu}{\mathord{\mathcal{S}_{\polynoms'}}}

\newcommand*{\deconc}{\mathord{\Delta_{\scriptscriptstyle\itensor}}}
\newcommand*{\shuffleperm}{\operatorname{Sh}}
\newcommand*{\shuffletwo}{\mathbin{\underline{\shuffle}}}
\newcommand*{\symgroup}{\mathrm{S}}
\newcommand*{\deshuffle}{\mathord{\Delta_{\scriptscriptstyle\shuffle}}}
\newcommand*{\rdeshuffle}{\mathord{\rcoproduct_{\scriptscriptstyle\shuffle}}}
\newcommand*{\antitensor}{\mathord{\mathcal{S}_{\scriptscriptstyle\itensor}^{\scriptscriptstyle\shuffle}}}
\newcommand*{\mitensor}{\linofbilin{\itensor}}
\newcommand*{\mitensortwo}{\linofbilin{\itensortwo}}
\newcommand*{\id}{\mathord{\operatorname{id}}}
\newcommand*{\complexs}{\mathbb{C}}

\newcommand*{\antishuffle}{\mathord{\mathcal{S}_{\scriptscriptstyle\shuffle}^{\scriptscriptstyle\itensor}}}

\newcommand*{\itensortc}[1]{\mathbin{\itensor_{\scriptscriptstyle n}}}
\newcommand*{\shuffletc}[1]{\mathbin{\shuffle_{\scriptscriptstyle n}}}

\newcommand*{\liealg}{\mathfrak{l}}
\newcommand*{\liebracket}{\mathfrak{b}}
\newcommand*{\productindex}{_{\scriptscriptstyle\product}}
\newcommand*{\rnbracketing}{\mathfrak{R}}
\newcommand*{\derivgrade}[1]{\mathscr{D}}
\newcommand*{\lingen}[1]{\langle#1\rangle}
\newcommand*{\generate}[2]{\lingen{#1}_{#2}}

\newcommand*{\extproduct}{\mathbin{{\product}_{\scriptscriptstyle\mathrm{x}}}}
\newcommand*{\mextproduct}{\linofbilin{\extproduct}}
\newcommand*{\extcoproduct}{\mathord{{\coproduct}_{\scriptscriptstyle\mathrm{x}}}}
\newcommand*{\extproducttwo}{\mathord{{\producttwo}_{\scriptscriptstyle\mathrm{x}}}}
\newcommand*{\extcounit}{\counit_{\scriptscriptstyle\mathrm{x}}}
\newcommand*{\extantipode}{\mathord{{\antipode}_{\scriptscriptstyle\mathrm{x}}}}

\newcommand*{\directproduct}{\prod}

\newcommand*{\spacefs}[1]{#1_{\Pi}}
\newcommand*{\liefs}[1]{\mathfrak{g}_{#1}}
\newcommand*{\groupfs}[1]{\mathscr{G}_{#1}}
\newcommand*{\exppro}{\exp_{\product}}
\newcommand*{\logpro}{\log_{\product}}

\newcommand*{\quotient}{/}
\newcommand*{\quotientmap}{\mathord{\pi}}
\newcommand*{\quotientmaptwo}{\mathord{\underline{\quotientmap}}}
\newcommand*{\truncationop}[1]{\mathord{\quotientmap^{#1}}}
\newcommand*{\truncationoptwo}[1]{\mathord{\underline{\quotientmap}^{#1}}}

\newcommand*{\trees}{\mathscr{T}}
\newcommand*{\forests}{\mathscr{F}}
\newcommand*{\tonewroot}[1]{\left\lfloor #1 \right\rfloor}
\newcommand*{\lroot}{\lfloor}
\newcommand*{\rroot}{\rfloor}

\newcommand*{\forestspace}[1]{\lingen{\forests_{#1}}}

\newcommand*{\treepro}{\odot}
\newcommand*{\treeprotwo}{\mathbin{\underline{\treepro}}}
\newcommand*{\destar}{\mathord{\Delta_{\scriptscriptstyle\star}}}
\newcommand*{\rdestar}{\mathord{\tilde{\Delta}_{\scriptscriptstyle\star}}}
\newcommand*{\mtreepro}{\linofbilin{\treepro}}
\newcommand*{\antitree}{\mathord{\mathcal{S}_{\scriptscriptstyle\!\treepro}^{\star}}}
\newcommand*{\mstar}{\linofbilin{\star}}
\newcommand*{\antistar}{\mathord{\mathcal{S}_{\star}^{\scriptscriptstyle\treepro}}}
\newcommand*{\cuts}[1]{\operatorname{Cuts}\left(#1\right)}
\newcommand*{\cutfactor}[3]{\mathrm{c}_{#1}\!\left(#2,#3\right)}
\newcommand*{\detreepro}{\mathord{\coproduct}_{\scriptscriptstyle\treepro}}
\newcommand*{\splits}[1]{\operatorname{Splits}\left(#1\right)}
\newcommand*{\splitfactor}[2]{\mathrm{s}_{#1}\!\left(#2\right)}
\newcommand*{\butchergroup}{\mathscr{B}}

\newcommand*{\rX}{\mathbf{X}}
\newcommand*{\fullrX}{\bar{\rX}}
\newcommand*{\dualrX}{\check{\rX}}
\newcommand*{\dualfullrX}{\hat{\rX}}
\newcommand*{\timeT}{[0,T]}
\newcommand*{\level}{\mathbf{n}}
\newcommand*{\geogroup}[2]{{\mathrm{G}_{#2}^{#1}}}
\newcommand*{\geogroupfs}[1]{{\mathrm{G}_{#1}}}
\newcommand*{\geolie}[2]{{\mathrm{L}_{#2}^{#1}}}
\newcommand*{\geoliefs}[1]{{\mathrm{L}_{#1}}}
\newcommand*{\words}{\mathcal{W}}
\newcommand*{\geopaths}{\mathrm{R}}
\newcommand*{\geopathsfull}{\bar{\geopaths}}
\newcommand*{\geopathsdual}{\check{\geopaths}}
\newcommand*{\geopathsdualfull}{\hat{\geopaths}}

\newcommand*{\forestgroup}[2]{{\mathscr{G}_{#2}^{#1}}}
\newcommand*{\forestgroupfs}[1]{{\mathscr{G}_{#1}}}
\newcommand*{\branchedpaths}{\mathscr{R}}
\newcommand*{\branchedpathsfull}{\bar{\branchedpaths}}
\newcommand*{\branchedpathsdual}{\check{\branchedpaths}}
\newcommand*{\branchedpathsdualfull}{\hat{\branchedpaths}}

\newcommand*{\bra}[1]{\mathord{\langle #1|}}
\newcommand*{\structaction}[1]{\bar{\Gamma}_{#1}}
\newcommand*{\abstractint}{\mathcal{I}}
\newcommand*{\geomodelspace}{T^\mathrm{g}}
\newcommand*{\branchedmodelspace}{T^\mathrm{b}}
\newcommand*{\regstruc}{\mathcal{T}}
\newcommand*{\compact}{\mathfrak{K}}
\newcommand*{\schwartz}{\mathscr{S}}
\newcommand*{\schwartzd}{\tdual{\schwartz}}
\newcommand*{\spaceofmodels}{\mathscr{M}}
\newcommand*{\modeld}{\mathcal{D}}
\newcommand*{\contdiff}{\mathrm{C}}
\newcommand*{\abstractkernel}{\mathcal{K}}
\newcommand*{\lebesgue}{\mathscr{L}}
\newcommand*{\reconstr}{\mathcal{R}}
\newcommand*{\sectorY}{\lingen{\mathcal{Y}}}
\newcommand*{\sectorYdot}{\lingen{\dot{\mathcal{Y}}}}
\newcommand*{\hoelder}{\mathcal{C}}
\newcommand*{\lVERT}{\mathopen{\lvert\!\lvert\!\lvert}}
\newcommand*{\rVERT}{\mathclose{\rvert\!\rvert\!\rvert}}
\newcommand*{\testfcts}[2]{\mathcal{B}^#1(\reals^#2)}
\multlinegap=0pt

\title{From Hopf algebras to rough paths and regularity structures}
\author{Rosa Preiß}
\date{\today}
         
\makenomenclature
\makeindex
\begin{document}

\newtheorem{lemma}{Lemma}[chapter]
\newtheorem{thm}[lemma]{Theorem}
\newtheorem{cor}[lemma]{Corollary}
\newtheorem{conj}[lemma]{Conjecture}
\theoremstyle{definition}
\newtheorem{defn}[lemma]{Definition}
\newtheorem{subdefn}{Definition}[lemma]
\theoremstyle{remark}
\newtheorem{rmk}[lemma]{Remark}
\newtheorem{ansatz}[lemma]{Ansatz}
\newtheorem{example}[lemma]{Example}
\numberwithin{equation}{chapter}
\numberwithin{table}{chapter}

\frontmatter
\begin{titlingpage}
 \begin{center}
  \textsc{\Large Master's Thesis}\\[0.5\baselineskip]
  \huge\textbf{\thetitle}\\[2\baselineskip]
  \LARGE\theauthor\\[0.5\baselineskip]
  \Large \textsc{Supervisors}: Peter K. Friz \textsc{and} Sylvie Paycha\\[2\baselineskip]
  \textsc{\Large Technische Universität Berlin}
  \vfill
  \Large July 29, 2016\\
 \end{center}
\end{titlingpage}
\chapter*{Eidesstattliche Erklärung}
Hiermit erkläre ich, dass ich die vorliegende Arbeit selbstständig und eigenhändig sowie ohne unerlaubte fremde Hilfe und ausschließlich unter Verwendung der aufgeführten Quellen und \mbox{Hilfsmittel} angefertigt habe.\\[1\baselineskip]
\noindent Die selbständige und eigenständige Anfertigung versichert an Eides statt:\\[1\baselineskip]
Berlin, den\\[2\baselineskip]
{\tiny Unterschrift}
\chapter*{Zusammenfassung in deutscher Sprache}
 Im Mittelpunkt dieser Arbeit stehen die von Lyons entwickelten Rough Paths (wörtlich 'raue Pfade'), die einen algebraischen und analytischen Rahmen für Stieltjes Integrale, also Integrale eines Pfades gegen einen anderen, bieten, in Fällen, in denen die betrachteten Pfade so rau sind, dass das klassische Riemann-Stieltjes Integral nicht mehr konvergiert. Bevor wir Rough Paths formal definieren können, beginnen wir mit einer Einführung in einige algebraische Grundbegriffe. Dazu gehören zunächst Algebren und Koalgebren, zwei Begriffe, die in einer gewissen Dualitätsbeziehung zueinander stehen. Als Kombination der beiden Begriffe erhalten wir Bialgebren, und als Spezialfall dieser wiederum Hopf Algebren, die für diese Arbeit von zentraler Bedeutung sind. Nachdem wir uns mit wichtigen Eigenschaften dieser Objekte vertraut gemacht haben, widmen wir uns einer ausführlichen Diskussion verschiedener Beispiele von Hopf Algebren. Dazu gehört unter anderem die Hopf Algebra der Polynome, bei der das Produkt durch die übliche Multiplikation von Polynomen gegeben ist und sich das Koprodukt durch eine vertraute Binomialformel ausdrücken lässt. Anschließend benutzen wir das Beispiel der Tensor Hopf Algebren zur Definition von Weakly Geometric Rough Paths (wörtlich 'schwach geometrische raue Pfade'), die solchen Formen von Stieltjes Integralen entsprechen, welche die übliche Regel der partiellen Integration erfüllen. Für Fälle wie die Itô-Integration, bei denen diese Regel nicht mehr anwendbar ist, betrachten wir die von Gubinelli eingeführten Branched Rough Paths (wörtlich 'verzweigte raue Pfade') und verwenden dazu das Beispiel der Hopf Algebren auf Räumen von als Bäume und Wälder bezeichneten Strukturen. Schließlich gehen wir noch auf einige Grundkonzepte der von Hairer entwickelten Theorie der Regularitätsstrukturen ein, die uns einen weiteren Zugang zu Weakly Geometric und Branched Rough Paths bietet. Dabei betrachten wir zuerst eine von Hairer beschriebene allgemeine Methode, wie sich aus bestimmten Hopf Algebren eine Regularitätsstruktur ableiten lässt, um dann auf Basis einer formalen Picard Iteration eine Regularitätstruktur zu entwickeln, welche besser zur Betrachtung sogenannter Rough Differential Equations (wörtlich 'raue Differentialgleichungen') geeignet ist.
\chapter*{Abstract}
Lyon's rough paths give an algebraic and analytic framework for Stieltjes integrals in a regime of low regularity where the usual Riemann-Stieltjes integral does not converge. Before we may rigorously define rough paths, we start with the introduction of some basic algebraic terminology. Among them are algebras and coalgebras, two notions which are in some sense dual to each other. As a combination of these notions we obtain bialgebras, and as a special case of them then Hopf algebras, which play a central role in this thesis. After further algebraic preliminaries, we give the examples of Hopf algebras we are interested in. Among them is the example of the polynomial Hopf algebra, whose product is nothing but the usual multiplication of polynomials and whose coproduct can be expressed very simply with the help of a binomial coefficient. We then use the dual pair of tensor Hopf algebras to introduce weakly geometric rough paths, which correspond to notions of Stieltjes integrals satisfying the usual integration by parts rule. For cases like Itô-integration where we need to give up integration by parts, we look at Gubinelli's branched rough paths based on the dual pair of Hopf algebras on trees and forests. Finally, we give some basic concepts of Hairer's theory of regularity structures and use them for a different approach to branched and weakly geometric rough paths. While we first look at a general method described by Hairer to derive a regularity structure from certain Hopf algebras, we then develop a regularity structure based on a formal Picard iteration which is more suitable for dealing with rough differential equations.

\tableofcontents

\chapter*{Acknowledgements\markboth{}{}}
\addcontentsline{toc}{chapter}{Acknowledgements}
 My first contact with rough paths and regularity structures was through the lecture of my main supervisor Peter K. Friz from \emph{Technische Universität Berlin} which was based on the book \cite{frizhairer14}. After I got interested in writing my master's thesis in the subject, Peter pointed me to the paper \cite{hairerkelly14} and introduced me to my second supervisor Sylvie Paycha from the \emph{University of Potsdam}. Peter gave the most important impulses for the scope of the thesis, and both Peter and Sylvie accompanied the writing process with regular meetings and valuable feedback. Sylvie also gave me the opportunity to speak about some of the topics at the mathematics institute in Potsdam.
 
 I furthermore want to thank Pierre Clavier from the working group of Sylvie for our discussions about the connections of our fields of research and for reading through an early version of the thesis, giving valueable remarks and feedback. Also, Ilya Chevyrev from the working group of Peter, for looking at one of the later drafts, for discussions and suggestions, and Yvain Bruned from the \emph{University of Warwick} for providing information and examples regarding the realization of tree symbols in LaTeX which helped creating the ones I use in this thesis, and Youness Boutaib from the working group of Sylvie for looking at a near-finished version of the thesis.
\mainmatter
\chapter{Introduction}
Consider a family $(X^i)_{i=1}^d$ of paths $X_i\in\hoelder^\gamma([0,T],\reals)$, the space of Hoelder continuous functions of exponent $\gamma>1/2$. By the integration theory of Young, introduced in the paper \cite{young36}, the Riemann-Stieltjes integrals (Equation (4.1) \cite{frizhairer14})
\begin{equation*}
 \int_s^t X_r^i\intd X_r^j:=\lim_{\lvert\mathcal{P}\rvert\rightarrow 0}\sum_{[u,v]\in\mathcal{P}}X^i_u(X^j_v-X^j_u),
\end{equation*}
where the limit is taken over partitions $\mathcal{P}$ of $[s,t]$, are well defined (\cf Section 3.3.2 \cite{lyonsqian02}). We may even compute \emph{iterated integrals} of the type (Equation (1.3) \cite{hairerkelly14})
\begin{equation*}
 \dualfullrX_{st}(e_{i_1\ldots i_n}):=\int_s^t\cdots\int_s^{r_1}\intd X^{i_1}_{r_1}\cdots\intd X^{i_n}_{r_n},
\end{equation*}
where $n\in\naturals,\,i_1,\ldots i_n\in\{1,\ldots,d\}$. The set of all these \emph{words} is denoted by $\words_d$. Together with later research, most notably by Chen (see \eg \cite{chen71}), it turns out that these iterated integrals fulfill the following algebraic and analytic conditions (Based on Definition 1.2.\@ \cite{hairerkelly14}):
\begin{enumerate}
 \item $\dualfullrX_{st}(\unit)=1$ and $\dualfullrX_{st}(w_1\shuffle w_2)=\dualfullrX_{st}(w_1)\dualfullrX_{st}(w_2)$ for all words $w_1,w_2\in\words_d$,
 \item $\dualfullrX_{tt}=\counit$ and $\dualfullrX_{st}(w)=(\dualfullrX_{su}\etensor\dualfullrX_{ut})\deconc w=\sum_{(w)}^{\itensor}\dualfullrX_{su}(w^1)\dualfullrX_{ut}(w^2)$ for all words $w\in\words_d$,
 \item $\sup_{s\neq t}\frac{\lvert\dualfullrX_{st}(w)\rvert}{\lvert t-s\rvert^{\gamma\lvert w\rvert}}<\infty$ for all words $w\in\words_d$,
\end{enumerate}
where $\shuffle$ is the \emph{shuffle product} and $\deconc$ is the \emph{deconcatenation coproduct} on the real vector space $\lingen{\words_d}$. The object $(\lingen{\words_d},\shuffle,\deconc)$ forms a \emph{bialgebra}. More precisely, it is a special kind of a bialgebra called a \emph{Hopf algebra}.

The crucial point, though, is that we assumed a Hölder exponent $\gamma>1/2$. Once we have paths of lower Hölder regularity, all of the above breaks down. Our iterated integrals aren't defined any more. This is where \emph{rough paths} come into play. Now that we have $\gamma\in(0,1/2]$, the idea is to take the above three conditions as a definition, and to say that if a map $\dualrX:\,[0,T]^2\rightarrow\adual{\lingen{\words_d}}$ obeys all of them, then we call it a \emph{weakly geometric rough path}. 

This is however not the last generalization we want to make. Condition 1.\@ from above encodes nothing but the integration by parts rule applied to iterated integrals. Since we also want to cover theories of integration like Itô-integration which do not satisfy integration by parts, we later give another collection of conditions based on a different Hopf algebra. This leads to the concept of \emph{branched rough paths}, which was introduced by Gubinelli in the paper \cite{gubinelli10}. Rough paths as such however were originally developed by Lyons in the papers \cite{lyons94}, \cite{lyons95} and \cite{lyons98}.

Before we can rigorously introduce rough paths, there are a lot of algebraic concepts to understand. In Chapter \ref{chapter:basics}, we do so in mainly following the notes \cite{manchon06}, but also repeatedly refer to the book \cite{abe80}. The standard reference for the following topics is the book \cite{sweedler69}. We start by defining \emph{algebras} and \emph{coalgebras}, two notions which are in some sense dual to each other. With certain \emph{compatibility requirements}, they can be combined to the notion of a bialgebra. For some bialgebras, there is a certain algebra and coalgebra antimorphism called the \emph{antipode}, which is characterized by a property involving the so-called \emph{convolution product} algebra given by the product and coproduct. Bialgebras with an antipode are called Hopf algebras. We introduce gradings which make it possible for us to even better understand the structure of the examples of Hopf algebras we are interested in even better. Finally, we formally describe the duality relation already mentioned, but also learn about duality between Hopf algebras as a whole, which was presented in almost the same way in \cite{chen71}.

Chapter \ref{chapter:connected_graded_Hopf} then presents these examples of Hopf algebras. First, we look at the polynomial Hopf algebra with the usual product of polynomials and a coproduct which is easy to express through a multi dimensional binomial coefficient. A dual Hopf algebra can be described as the algebra of differential operators with composition product, but it turns out to be isomorphic to the polynomial one. Referring to the presentations of the book \cite{reutenauer93}, the paper \cite{hairerkelly14} and again \cite{manchon06}, we then look at the \emph{tensor algebra} whose coproduct is nothing but the dual of the shuffle product mentioned above.

Afterwards, we take a short look at the technicalities of formal series and truncations, and furthermore state some well-known basics about Lie algebras. Also, we introduce the exponential map, which maps a Lie algebra to its Lie group, and its inverse, the logarithm. The exponential map then plays an important role for us by relating \emph{primitive} and \emph{group-like} elements in the case of a connected graded Hopf algebras, as explained in \cite{hairerkelly14} and for the special case of the tensor algebra in \cite{reutenauer93}.

As our last example, we look at the Connes-Kreimer Hopf algebra of trees and forests, mainly based on \cite{hairerkelly14} and the original paper \cite{conneskreimer98}. After introducing all the objects fully recursively, we give representation formulas which formalize the notion of admissible cuts already introduced in \cite{conneskreimer98} without relying on interpreting trees as graphs. These formulas make it easy to introduce the dual Hopf algebra whose product is the \emph{Grossman-Larson product}, the dual of the Connes-Kreimer coproduct.

In Chapter \ref{chapter:rough_paths}, we finally get to learn more about weakly geometric and branched rough paths. In both cases, we look at the equivalence of four different kinds of definitions. For that equivalence, the extension theorems of Lyons, which we took from the book \cite{lyonscaruanalevy07}, and of Gubinelli, found in \cite{gubinelli10}, play a crucial role. Citing the result from \cite{hairerkelly14}, we will see that weakly geometric rough paths are indeed included in the concept of branched rough paths.

The final Chapter \ref{chapter:regularity_structures} then gives an introduction to some concepts from Hairer's theory of regularity structures which was originally presented in its full scope in the paper \cite{hairer14}. Exactly as described in \cite{hairer14}, we look at how a regularity structure can be derived from a connected graded Hopf algebra, and we relate rough paths to \emph{models} for the regularity structure obtained from the respective Hopf algebra. The last section then generalizes a different approach to regularity structures for rough paths which \cite{frizhairer14} shortly looked at for the simplest non-trivial case. It is motivated by a formal Picard iteration, which is a standard method of solving partial differential equations, which we here use for the problem of a rough differential equation.
\chapter{Basic definitions and results}\label{chapter:basics}
For some vector space $V$ over some field $K$, we denote by $\adual{V}$\nomenclature[V*]{$\adual{V}$}{Algebraic dual space of the vector space $V$} its \emphind{algebraic dual space}\index{dual space!algebraic@\textit{algebraic}}, \ie the vector space of all linear maps ${V\rightarrow K}$. If $V$ is equipped with a topology, we write $\tdual{V}$ for its \emphind{topological dual space}\index{dual space!topological@\textit{topological}}, \ie the vector space of all continuous linear maps ${V\rightarrow K}$.
\begin{defn}(Section I.1.\@ \cite{manchon06})
 Let $V_1, V_2$ be two vector spaces over the same field $K$. A \emphind{tensor product} $V_1\etensor V_2$ is a $K$ vector space for which there is a bilinear map $\iota:\,V_1\times V_2\rightarrow V_1\etensor V_2$ with the property that for every $K$ vector space $W$ and every bilinear map $\rho:\,V_1\times V_2\rightarrow W$, there is a unique linear map $\linofbilin{\rho}:\,V_1\etensor V_2\rightarrow W$ such that
 \begin{equation}\label{eq:tensor_property}
  \rho=\linofbilin{\rho}\circ\iota.
 \end{equation}
\end{defn}
We write $v_1\etensor v_2:=\iota(v_1,v_2)$ for all $v_1\in V_1,\,v_2\in V_2$. With that notation, Equation \eqref{eq:tensor_property} can be formulated as
\begin{equation*}
 \rho(v_1,v_2)=\linofbilin{\rho}(v_1\etensor v_2)\quad\forallc v_1\in V_1,\, v_2\in V_2.
\end{equation*}

It turns out that for every pair of vector spaces $V_1, V_2$ over the same field, there exists a tensor product $V_1\etensor V_2$, and that the tensor product is unique up to linear isomorphisms (Proposition I.1.1.\@ \cite{manchon06}). For the proof of existence, one needs the axiom of choice. See the proof of Proposition I.1.1.\@ \cite{manchon06} for both existence and uniqueness.

What is very important for us is that for every element $x\in V_1\etensor V_2$, there is $n\in\naturals$ and $v_1^1,\ldots v_1^n\in V_1$, $v_2^1,\ldots,v_2^n\in V_2$ such that
\begin{equation*}
 x=\sum_{i=1}^n \iota(v_1^i,v_2^i)=\sum_{i=1}^n v_1^i\etensor v_2^i.
\end{equation*}
For this fact, also see the proof of Proposition I.1.1.\@ \cite{manchon06}. Note that such a representation is of course not unique. Also note that in the literature, one may sometimes find notions of completed tensor products, especially when the vector spaces are endowed with topologies, and for these objects, this fact does not hold true.

Due to the uniqueness of the tensor product up to isomorphisms, we may identify $(V_1\etensor V_2)\etensor V_3$ with $V_1\etensor(V_2\etensor V_3)$, and therefore simply write $V_1\etensor V_2\etensor V_3$ (Section I.1.\@ \cite{manchon06}). 

\section{Algebras and coalgebras}

\begin{defn}(Section I.2.1.\@ \cite{manchon06})
 A vector space $A$ together with an associative bilinear map $\product:\,A\times A\rightarrow A$ is called an \emphind{algebra}. A subspace $S\subseteq A$ is called \emphind{subalgebra} if $S\product S\subseteq S$ and \emphind{two-sided ideal}\index{ideal} if $S\product A+A\product S\subseteq S$ holds. $(A,\product)$ is \emph{unital}\index{unital algebra} if there is a \emphind{unit element} $\unit\in A$\nomenclature[001]{$\unit$}{Unit element of an algebra} such that ${\unit\product a=a\product\unit=a}$ for all $a\in A$. The function ${\unitmap:\,K\rightarrow A,\,t\mapsto t\unit}$\nomenclature[u]{$\unitmap$}{Unit map of an algebra}, where $K$ is the field under $A$, is called \emphind{unit map}.
\end{defn}
If an algebra $(A,\product)$ is given, we denote by ${\mproduct:\,A\etensor A\rightarrow A}$ the linear map generated by ${\mproduct(a_1\etensor a_2):=a_1\product a_2}$\nomenclature[m product]{$\mproduct$}{Linear map corresponding to the bilinear product $\product$}. Using this notation, associativity reads
\begin{equation}\label{eq:associativity}
 \mproduct(\id\etensor\mproduct)=\mproduct(\mproduct\etensor\id)
\end{equation}
and the unit map is characterized by
\begin{equation}\label{eq:unit_property}
 \mproduct(\id\etensor\unitmap)=\mproduct(\unitmap\etensor\id)=\id,
\end{equation}
where we use the identification $A=A\etensor K=K\etensor A$ implying $\id\etensor\unitmap,\unitmap\etensor\id\in\linear(A,A\etensor A)$.
\begin{defn}(Chapter 1 Section 2.1 \cite{abe80}\footnote{In \cite{abe80}, the unit property is included in the definition of an algebra, and thus also the definition of an algebra homomorphism, which is simply called an algebra morphism there, contains the condition that the unit of one algebra is mapped to the unit of the other algebra.})
 An \emph{algebra homomorphism}\index{algebra!homomorphism}\index{homomorphism!algebra@\textit{algebra}} between two algebras $(A_1,\productnum{1})$, $(A_2,\productnum{2})$ is a linear map $\varLambda:\,A_1\rightarrow A_2$ such that $\varLambda\mproductnum{1}=\mproductnum{2}(\varLambda\etensor\varLambda)$.
\end{defn}

\begin{defn}\label{defn:coalgebra}(Section I.3.1.\@ \cite{manchon06})
 Let $C$ be a vector space together with a linear map $\coproduct:\,C\rightarrow C\etensor C$. If $\coproduct$ is \emph{coassociative}\index{coassociativity}, \ie
 \begin{equation}\label{eq:coassociativity}
  (\coproduct\etensor\id)\coproduct=(\id\etensor\coproduct)\coproduct,
 \end{equation}
 we call $(C,\coproduct)$ a \emphind{coalgebra} and $\coproduct$ a \emphind{coproduct}. A subspace $S\subseteq C$ is called \emphind{two-sided coideal}\index{coideal} if $\coproduct S\subseteq S\etensor C+C\etensor S$ and \emphind{subcoalgebra} if $\coproduct S\subseteq S\etensor S$. The coalgebra $(C,\coproduct)$ is \emph{counital}\index{counital coalgebra} if there is $\counit\in\adual{C}$\nomenclature[00g e2]{$\counit$}{Counit of a coalgebra} such that
 \begin{equation}\label{eq:counit_property}
  (\counit\etensor\id)\coproduct=(\id\etensor\counit)\coproduct=\id,
 \end{equation}
 a so-called \emphind{counit}. $(C,\coproduct)$ is \emph{cocommutative}\index{cocommutativity} if $\flip\coproduct=\coproduct$, where $\flip:\,C\etensor C\rightarrow C\etensor C$\nomenclature[00g]{$\flip$}{Flip} is the \emphind{flip} linearly generated by $\flip(a\etensor b):=b\etensor a$.
\end{defn}
\begin{rmk}
 While every ideal of an algebra is a subalgebra but generally not every subalgebra is an ideal, every subcoalgebra is a coideal but generally not every coideal is a subcoalgebra.
\end{rmk}

We sometimes write 
\begin{equation*}
 \sum_{(c)}c_1\etensor c_2:=\coproduct c
\end{equation*}
\nomenclature[00g S (c) c_1 c_2]{$\sum_{(c)}c_1\etensor c_2$}{Sweedler's notation of some coproduct value $\coproduct c$} and for arbitrary linear $L:\,C\etensor C\rightarrow V$ then $\sum_{(c)}L(c_1\etensor c_2):=L\coproduct c$. This concept is called \emphind{Sweedler's notation} (Section I.3.1.\@ \cite{manchon06}). In Sweedler's notation, coassociativity reads (section I.3.1.\@ \cite{manchon06})
\begin{equation}\label{eq:coassociativity_sweedler}
 \sum_{(c)}\sum_{(c_1)}c_{1:1}\etensor c_{1:2}\etensor c_2=\sum_{(c)}\sum_{(c_2)}c_1\etensor c_{2:1}\etensor c_{2:2}.
\end{equation}
\begin{defn}(Chapter 2 Section 1.1 \cite{abe80}\footnote{As with the definition of an algebra, the definition of a coalgebra in \cite{abe80} already includes the counit property and thus also the definiton of a coalgebra morphism there includes an additional condition concerning the counits.})
A \emph{coalgebra homomorphism}\index{coalgebra!homomorphism}\index{homomorphism!coalgebra@\textit{coalgebra}} between two coalgebras $(C_1,\coproduct_1)$, $(C_2,\coproduct_2)$ is a linear map $\varLambda:\,C_1\rightarrow C_2$ such that $(\varLambda\etensor\varLambda)\coproduct_1=\coproduct_2\varLambda$.
\end{defn}

If we have an algebra $(A,\product)$ and a coalgebra $(C,\coproduct)$, the \emphind{convolution product} $\convoproduct$ (Section I.4.\@ \cite{manchon06}) on the space of linear maps $\linear(C,A)$ is defined by
\begin{equation}\label{eq:defn_convoproduct}
 S\convoproduct T:=\mproduct(S\etensor T)\coproduct.
\end{equation}
$(\linear(C,A),\convoproduct)$ forms an algebra, as
\begin{align*}
 S\convoproduct(T\convoproduct V)&=\mproduct\big(S\etensor(\mproduct(T\etensor V)\coproduct)\big)\coproduct\\
 &=\mproduct(\id\etensor\mproduct)(S\etensor T\etensor V)(\id\etensor\coproduct)\coproduct=\mproduct(\mproduct\etensor\id)(S\etensor T\etensor V)(\coproduct\etensor\id)\coproduct\\
 &=\mproduct\big((\mproduct(S\etensor T)\coproduct)\etensor V\big)\coproduct=(S\convoproduct T)\convoproduct V,
\end{align*}
where we used associativity of $\product$ and coassociativity of $\coproduct$ (based on Section I.4.\@ \cite{manchon06}). If $(A,\product)$ is unitary with unit map $\unitmap$ and $(C,\coproduct)$ is counitary with counit $\counit$, then $(\linear(C,A),\convoproduct)$ is unitary with unit element $\unitmap\counit$ (based on Section I.5.\@ \cite{manchon06}) as
\begin{equation}\label{eq:unit_convoproduct}
 T\convoproduct\unitmap\counit=\mproduct(T\etensor\unitmap\counit)\coproduct=\mproduct(\id\etensor\unitmap)T(\id\etensor\counit) \coproduct=T
\end{equation}
and analogously $\unitmap\counit\convoproduct T=T$.
\begin{defn}(Section I.3.2.\@ \cite{manchon06})
 Let $(C,\coproduct)$ be a coalgebra over the field $K$. Then, a \emph{right comodule}\index{comodule} is a $K$ vector space $\hat{C}$ together with a map $\hat{\coproduct}:\,\hat{C}\rightarrow C\etensor\hat{C}$ such that
 \begin{equation*}
  (\coproduct\etensor\id)\hat{\coproduct}=(\id\etensor\hat{\coproduct})\hat{\coproduct}.
 \end{equation*}

\end{defn}

\section{Bialgebras and Hopf algebras}
For an algebra $(A,\product)$ we introduce another algebra $(A\etensor A,\producttwo)$\nomenclature[ZZs product two]{$\producttwo$}{Canonical algebra product on $A\etensor A$ for some algebra $(A,\product)$} where the product is bilinearly generated by (Section I.2.2.\@ \cite{manchon06})
\begin{equation}\label{eq:producttwo}
{(a_1\etensor a_2)\producttwo(a_3\etensor a_4)}:={(a_1\product a_3)\etensor(a_2\product a_4)}.
\end{equation}
The corresponding linear map is given by (Section I.2.2.\@ \cite{manchon06})
\begin{equation*}
 \mproducttwo=(\mproduct\etensor\mproduct)\underbrace{(\id\etensor\flip\etensor\id)}_{=:\flip_{1324}}.
\end{equation*}

\begin{defn}\label{defn:bialgebra}(Section I.5.\@ \cite{manchon06}, Chapter 2 Section 1.1 \cite{abe80})
 Let $(B,\product)$ be an algebra over the field $K$ with unit element $\unit=\unitmap(1)$ and $(B,\coproduct)$ a coalgebra with counit $\counit$. If furthermore the \emphind{compatibility requirements}
 \begin{enumerate}
  \item $\coproduct\mproduct=(\mproduct\etensor\mproduct)(\id\etensor\flip\etensor\id)(\coproduct\etensor\coproduct)\quad$ ($\coproduct$ is an algebra homomorphism from $(B,\product)$ to $(B\etensor B,\producttwo)$ and $\product$ is a coalgebra homomorphism from $(B\etensor B,\flip_{1324}(\coproduct\etensor\coproduct))$ to $(B,\coproduct)$),
  \item $\coproduct\unitmap=\unitmap\etensor\unitmap\quad$ ($\unitmap$ is a coalgebra homomorphism from $(K,\id_K)$ to $(B,\coproduct)$),
  \item $\counit\mproduct=\counit\etensor\counit\quad$ ($\counit$ is an algebra homomorphism from $(B,\product)$ to $(K,\cdot)$),
 \end{enumerate}
 hold, we call $(B,\product,\coproduct)$ a \emphind{bialgebra}. For bialgebras $(B_1,\productnum{2}_1,\coproduct_1)$ with unit map $\unitmap_1$ and counit $\counit$, $(B_2,\productnum{2}_2,\coproduct_2)$ with unit map $\unitmap_1$ and counit $\counit$,  we call $\varLambda:\,B_1\rightarrow B_2$ a \emph{bialgebra homomorphism}\index{bialgebra!homomorphism}\index{homomorphism!bialgebra@\textit{bialgebra}} if $\varLambda$ is both an algebra homomorphism and a coalgebra homomorphism and fulfills the additional properties $\varLambda\unitmap_1=\unitmap_2$ and $\counit_2\varLambda=\counit_1$.
\end{defn}
\begin{rmk}\label{rmk:comp_req}
 Applied to elements of $B$, the compatibility requirements read
 \begin{enumerate}
  \item ${\coproduct (b_1\product b_2)={\coproduct b_1}\producttwo{\coproduct b_2}\quad\forallc b_1,b_2\in B}$,
  \item $\coproduct\unit=\unit\etensor\unit$,
  \item $\counit(b_1\product b_2)=\counit(b_1)\counit(b_2)\quad\forallc b_1,b_2\in B$.
 \end{enumerate}
\end{rmk}
\begin{rmk}\label{rmk:counit_unit}
 In a bialgebra, we always have $\counit(\unit)=1$, as the counit property \eqref{eq:counit_property} implies
 \begin{equation*}
  \unit=(\id\etensor\counit)\coproduct\unit=(\id\etensor\counit)(\unit\etensor\unit)=\counit(\unit)\unit.
 \end{equation*}
 Thus, $\counit\unitmap=\id_K$ and therefore $\counit=\counit\unitmap\counit$.
\end{rmk}
We introduce the following simple lemma to avoid having to show this separately in each of the examples of Hopf algebras we later want to look at.
\begin{lemma}\label{lemma:coassociative_subset} 
 Let $(A,\product)$ be an algebra and $\coproduct:\,A\rightarrow A\etensor A$ an algebra homomorphism. Assume that      
 \begin{equation}\label{eq:coassociative_subset}
  (\coproduct\etensor\id)\coproduct v=(\id\etensor\coproduct)\coproduct v
 \end{equation}
 holds for all $v\in M\subseteq A$. Then, we have \eqref{eq:coassociative_subset} for all $v$ in the subalgebra generated by $M$. If furthermore $\counit\in\adual{A}$ is an algebra homomorphism and
 \begin{equation}\label{eq:counit_subset}
  (\counit\etensor\id)\coproduct v=(\id\etensor\counit)\coproduct v=v
 \end{equation}
 holds for all $v\in M$, we even have \eqref{eq:counit_subset} for all $v$ in the subalgebra generated by $M$.
\end{lemma}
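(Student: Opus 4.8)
The plan is to show that, for each of the two identities in question, the set of elements of $A$ satisfying it is itself a subalgebra; since this set contains $M$ by hypothesis, it then contains the whole subalgebra generated by $M$, which is precisely the claim. The engine behind this is a single elementary observation: if $f,g\colon A\to B$ are algebra homomorphisms that agree on a subset $M$, then they agree on the subalgebra generated by $M$. Indeed, the agreement set $\{v\in A:f(v)=g(v)\}$ is the kernel of the linear map $f-g$, hence a linear subspace, and it is closed under $\product$ because $f$ and $g$ are multiplicative: from $f(u)=g(u)$ and $f(v)=g(v)$ one gets $f(u\product v)=f(u)f(v)=g(u)g(v)=g(u\product v)$, the products being those of $B$. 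Thus the agreement set is a subalgebra containing $M$, and therefore contains the generated subalgebra.

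To apply this to \eqref{eq:coassociative_subset}, I would equip $A\etensor A$ with the product $\producttwo$ from \eqref{eq:producttwo} and $A\etensor A\etensor A$ with the analogous componentwise product. The two maps $(\coproduct\etensor\id)\coproduct$ and $(\id\etensor\coproduct)\coproduct$ are each a composite of $\coproduct$, which is an algebra homomorphism $(A,\product)\to(A\etensor A,\producttwo)$ by assumption, with $\coproduct\etensor\id$ resp.\ $\id\etensor\coproduct$. A short direct check from the defining formula of $\producttwo$ shows that these latter two maps are algebra homomorphisms into $A\etensor A\etensor A$, so that both composites are algebra homomorphisms $A\to A\etensor A\etensor A$ with the same target structure. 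They agree on $M$ by hypothesis, hence, by the observation above, on the subalgebra generated by $M$, which is exactly \eqref{eq:coassociative_subset}.

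The counit statement is handled identically. The two maps $(\counit\etensor\id)\coproduct$ and $(\id\etensor\counit)\coproduct$ are algebra homomorphisms from $(A,\product)$ to itself — here one uses that $\counit$ is an algebra homomorphism to see that $\counit\etensor\id$ and $\id\etensor\counit$ are multiplicative for $\producttwo$ — while $\id$ is trivially an algebra homomorphism. Since by \eqref{eq:counit_subset} both composites coincide with $\id$ on $M$, the observation yields the same coincidence on the subalgebra generated by $M$.

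I do not expect a genuine obstacle here; the one step requiring care, rather than difficulty, is verifying that $\coproduct\etensor\id$, $\id\etensor\coproduct$ (and, for the counit, $\counit\etensor\id$, $\id\etensor\counit$) are algebra homomorphisms for the intended tensor-product algebra structures. This rests entirely on the componentwise formula \eqref{eq:producttwo} for $\producttwo$ and, in the counit case, on the multiplicativity of $\counit$; once these are in place, the propagation from generators to the generated subalgebra is automatic.
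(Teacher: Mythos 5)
Your proof is correct, but it is organized around a different principle than the paper's. The paper argues directly on elements: writing $\coproduct(v\product w)=\sum_{(v)}\sum_{(w)}(v_1\product w_1)\etensor(v_2\product w_2)$ and expanding $(\coproduct\etensor\id)\coproduct(v\product w)$ in Sweedler's notation, it applies the assumed coassociativity of $v$ and of $w$ separately to pass to $(\id\etensor\coproduct)\coproduct(v\product w)$, and then concludes by linearity and induction; the counit case is handled by an analogous element-wise computation. You instead isolate the general fact that the agreement set of two algebra homomorphisms into the same algebra is a subalgebra, and reduce the whole lemma to checking that $(\coproduct\etensor\id)\coproduct$, $(\id\etensor\coproduct)\coproduct$ (resp.\ $(\counit\etensor\id)\coproduct$, $(\id\etensor\counit)\coproduct$ and $\id$) are algebra homomorphisms for the componentwise products built from \eqref{eq:producttwo} — which, as you note, rests only on the multiplicativity of $\coproduct$ and $\counit$ and the componentwise form of $\producttwo$. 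The substance is the same (both arguments ultimately propagate the identity through products via multiplicativity of the maps involved), but your packaging avoids Sweedler manipulations entirely and yields a reusable statement, at the cost of having to set up the tensor-power algebra structures explicitly; the paper's version is more self-contained and makes the intermediate cancellations visible. One point worth stating explicitly if you write this up: the agreement set is a \emph{subspace} (the kernel of the linear map $f-g$) as well as being closed under $\product$, which is exactly what is needed since the subalgebra generated by $M$ in the paper's sense is the smallest such subspace containing $M$.
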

\begin{proof}
 Let $S$ denote the subalgebra generated by $M$. As $\coproduct$ is an algebra homomorphism, we have for all $v,w$ in $S$ that
 \begin{equation*}
  \coproduct(v\product w)=\sum_{(v)}\sum_{(w)}(v_1\product w_1)\etensor(v_2\product w_2)
 \end{equation*}
  and therefore, if $v$ and $w$ satisfy \eqref{eq:coassociative_subset}, we get, using the notation from \eqref{eq:coassociativity_sweedler},
 \begin{align*}
  (\coproduct\etensor\id)\coproduct(v\product w)&=\sum_{(v)}\sum_{(v_1)}\sum_{(w)}\sum_{(w_1)}(v_{1:1}\product w_{1:1})\etensor(v_{1:2}\product w_{1:2})\etensor(v_2\product w_2)\\
  &=\sum_{(v)}\sum_{(v_2)}\sum_{(w)}\sum_{(w_1)}(v_1\product w_{1:1})\etensor(v_{2:1}\product w_{1:2})\etensor(v_{2:2}\product w_2)\\
  &=\sum_{(v)}\sum_{(v_2)}\sum_{(w)}\sum_{(w_2)}(v_1\product w_1)\etensor(v_{2:1}\product w_{2:1})\etensor(v_{2:2}\product w_{2:2})\\
  &=(\id\etensor\coproduct)\coproduct(v\product w).
 \end{align*}
 As the equation \eqref{eq:coassociative_subset} is furthermore linear in $v$, which follows from $\coproduct$ being an algebra homomorphism, coassociativity indeed inductively extends from $M$ to the whole of $S$.
 
 \noindent If also $\counit$ is an algebra morphism, the counit property given for some $v,w\in S$ implies
 \begin{align*}
  (\counit\etensor\id)\coproduct(v\product w)&=\sum_{(v)}\sum_{(w)}\counit(v_1\product w_1)(v_2\product w_2)=\sum_{(v)}\sum_{(w)}(\counit(v_1)v_2)\product (\counit(w_1)w_2)\\&=\Big(\sum_{(v)}\counit(v_1)v_2\Big)\product\Big(\sum_{(w)}\counit(w_1)w_2\Big)=v\product w.
 \end{align*}
 Analogously, we get $(\id\etensor\counit)\coproduct(v\product w)=v\product w$. Hence, also the counit property inductively extends from $M$ to $S$.\\
\end{proof}

For a bialgebra $(B,\product,\coproduct)$ with unit element $\unit$, let 
\begin{equation*}
 \sum_{(h)}h'\etensor h'':=\rcoproduct h:=\coproduct h-\unit\etensor h-h\etensor\unit
\end{equation*}
\nomenclature[00g D tilde]{$\rcoproduct$}{The reduced coproduct for some coproduct $\coproduct$} denote the \emphind{reduced coproduct}\index{coproduct!reduced@\textit{reduced}} of $h$ (Proof of Proposition II.1.1.\@ \cite{manchon06}, Section 2.2 \cite{hairerkelly14}).
\begin{lemma}\textnormal{(Part of Proposition II.1.1.\@ \cite{manchon06})}
 $(B,\rcoproduct)$ is a coalgebra.
\end{lemma}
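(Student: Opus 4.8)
The plan is to verify that $\rcoproduct$ is coassociative, i.e.\ that $(\rcoproduct\etensor\id)\rcoproduct=(\id\etensor\rcoproduct)\rcoproduct$. By Definition~\ref{defn:coalgebra} this is all that is required of a coalgebra, since possessing a counit is an optional extra property rather than part of the definition, so there is nothing else to check.

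First I would isolate the one auxiliary fact needed throughout: the value of $\rcoproduct$ on the unit. Since the second compatibility requirement (Remark~\ref{rmk:comp_req}) gives $\coproduct\unit=\unit\etensor\unit$, the definition of the reduced coproduct yields $\rcoproduct\unit=\unit\etensor\unit-\unit\etensor\unit-\unit\etensor\unit=-\unit\etensor\unit$. This sign is the only genuinely delicate point in the whole argument, and I would flag it before starting the expansions.

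Next, writing $\coproduct h=\sum_{(h)}h_1\etensor h_2$, I would expand each of the two iterated coproducts by applying $\rcoproduct\etensor\id$ respectively $\id\etensor\rcoproduct$ termwise to the three summands of $\rcoproduct h=\coproduct h-\unit\etensor h-h\etensor\unit$. In the leading term I use the coassociativity of $\coproduct$ to replace $(\id\etensor\coproduct)\coproduct h$ by $(\coproduct\etensor\id)\coproduct h$, and in the boundary terms I substitute $\rcoproduct\unit=-\unit\etensor\unit$. A direct bookkeeping of all the resulting terms shows that both $(\rcoproduct\etensor\id)\rcoproduct h$ and $(\id\etensor\rcoproduct)\rcoproduct h$ reduce to the common expression
\begin{equation*}
 (\coproduct\etensor\id)\coproduct h-\unit\etensor\coproduct h-\coproduct h\etensor\unit-\sum_{(h)}h_1\etensor\unit\etensor h_2+\unit\etensor\unit\etensor h+\unit\etensor h\etensor\unit+h\etensor\unit\etensor\unit,
\end{equation*}
which establishes coassociativity and hence the claim.

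The computation is entirely mechanical once the value $\rcoproduct\unit=-\unit\etensor\unit$ is in hand, so the main obstacle is not conceptual but purely a matter of careful sign and index bookkeeping: one must correctly apply $\rcoproduct$ not only to the outer $h$ but also to the unit factors appearing inside $\unit\etensor h$ and $h\etensor\unit$, where the contributions $\rcoproduct\unit$ are easy to drop by mistake. Collecting like terms and invoking coassociativity of $\coproduct$ exactly once then makes the two sides coincide term by term.
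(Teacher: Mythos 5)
Your proof is correct and follows essentially the same route as the paper: both establish $\rcoproduct\unit=-\unit\etensor\unit$ from the compatibility requirement $\coproduct\unitmap=\unitmap\etensor\unitmap$, expand the iterated reduced coproducts termwise, and invoke coassociativity of $\coproduct$ exactly once. The only cosmetic difference is that you reduce both sides to a common symmetric expression while the paper transforms one side into the other in a single chain of equalities; your intermediate expression checks out term by term.
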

\begin{proof}(Part of the proof of Proposition II.1.1.\@ \cite{manchon06})
 As $\coproduct$ maps $B$ linearly to $B\etensor B$, the same is true for $\rcoproduct$. Thus, according to Definition \ref{defn:coalgebra}, the only thing left to show is coassociativity. By Definition \ref{defn:bialgebra} of a bialgebra we have $\coproduct\unitmap=\unitmap\etensor\unitmap$, implying
 \begin{equation}\label{eq:rcoproduct_unitmap}
  \rcoproduct\unitmap=-\unitmap\etensor\unitmap.
 \end{equation}
 Therefore, we compute
 \begin{multline*}
  (\rcoproduct\etensor\id)\rcoproduct=(\rcoproduct\etensor\id)(\coproduct-\unitmap\etensor\id-\id\etensor\unitmap)\eqrefequal{eq:rcoproduct_unitmap}(\rcoproduct\etensor\id)\coproduct+\unitmap\etensor\unitmap\etensor\id-\rcoproduct\etensor\unitmap\\
  \begin{aligned}
&=(\coproduct\etensor\id)\coproduct+\unitmap\etensor\unitmap\etensor\id+\unitmap\etensor\id\etensor\unitmap+\id\etensor\unitmap\etensor\unitmap-\coproduct\etensor\unitmap-(\id\etensor\unitmap\etensor\id)\coproduct-\unitmap\etensor\coproduct\\
   &\eqrefequal{eq:coassociativity}(\id\etensor\coproduct)\coproduct+\unitmap\etensor\unitmap\etensor\id+\unitmap\etensor\id\etensor\unitmap+\id\etensor\unitmap\etensor\unitmap-\coproduct\etensor\unitmap-(\id\etensor\unitmap\etensor\id)\coproduct-\unitmap\etensor\coproduct\\
   &=(\id\etensor\rcoproduct)\coproduct+\id\etensor\unitmap\etensor\unitmap-\unitmap\etensor\rcoproduct\eqrefequal{eq:rcoproduct_unitmap}(\id\etensor\rcoproduct)\rcoproduct.
  \end{aligned}
 \end{multline*}
\end{proof}
Having shown that $\rcoproduct$ is a coproduct, we may introduce the convolution product algebra $(\linear(B,B),\rconvoproduct)$ by 
\begin{equation}\label{eq:defn_rconvoproduct}
S\rconvoproduct T:=\mproduct(S\etensor T)\rcoproduct.
\end{equation}
\begin{defn}(Section I.5.\@ \cite{manchon06})
 A bialgebra $(H,\product,\coproduct)$ with unit map $\unitmap$ and counit $\counit$ together with an \emphind{antipode}, \ie a linear operator $\antipode:\,H\rightarrow H$\nomenclature[S]{$\antipode$}{Antipode of an Hopf algebra} fulfilling the \emph{antipode property}\index{antipode!property}
 \begin{equation}\label{eq:antipodeproperty}
  \mproduct(\antipode\etensor\id)\coproduct=\mproduct(\id\etensor\antipode)\coproduct=\underbrace{\unitmap\counit}_{\mathclap{=\counit(\cdotp)\unit}}\,,
 \end{equation}
 is called a \emphind{Hopf algebra}.
\end{defn}

Although \cite{abe80} gives the definition of a Hopf algebra homomorphism (morphism in the language used there), the following simple fact we came across is not mentioned, but it is for example stated as Exercise 9.\@ of Section III.8 \cite{kassel95}. The idea for the proof came from a similar argumentation in the proof of Proposition I.7.1.\@ \cite{manchon06}.
\begin{thm}\label{thm:hopf_morphism}
 Let $(H_1,\productnum{1},\coproduct_1,\antipode_1)$ and $(H_2,\productnum{2},\coproduct_2,\antipode_2)$ be Hopf algebras and $\varLambda:\,H_1\rightarrow H_2$ be a bialgebra homomorphism. Then,
 \begin{equation*}
  \varLambda\antipode_1=\antipode_2\varLambda.
 \end{equation*}
 Therefore, we call $\varLambda$ a \emph{Hopf algebra homomorphism}\index{Hopf algebra!homomorphism}\index{homomorphism!Hopf algebra@\textit{Hopf algebra}} \textnormal{(Chapter 2 Section 1.2 \cite{abe80})}.
\end{thm}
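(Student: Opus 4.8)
The plan is to work inside the convolution algebra $(\linear(H_1,H_2),\convoproduct)$, where $H_1$ supplies the coalgebra structure $\coproduct_1$ and $H_2$ the algebra structure $\productnum{2}$, so that $S\convoproduct T=\mproductnum{2}(S\etensor T)\coproduct_1$. As established earlier in the excerpt (see \eqref{eq:unit_convoproduct}), this is a unital algebra with unit element $\unitmap_2\counit_1$. The strategy is to exhibit $\varLambda\antipode_1$ as a left convolution inverse of $\varLambda$ and $\antipode_2\varLambda$ as a right convolution inverse of $\varLambda$; since left and right inverses in a monoid necessarily coincide, this forces $\varLambda\antipode_1=\antipode_2\varLambda$.

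For the left inverse I would compute $(\varLambda\antipode_1)\convoproduct\varLambda$ in Sweedler notation $\coproduct_1 h=\sum_{(h)}h_1\etensor h_2$. Using that $\varLambda$ is an \emph{algebra} homomorphism to pull the product of $H_2$ back inside $\varLambda$, the expression becomes $\varLambda\big(\mproductnum{1}(\antipode_1\etensor\id)\coproduct_1 h\big)$, which by the antipode property \eqref{eq:antipodeproperty} in $H_1$ equals $\varLambda(\unitmap_1\counit_1 h)=\counit_1(h)\,\varLambda(\unit_1)$. The bialgebra-homomorphism condition $\varLambda\unitmap_1=\unitmap_2$ then turns this into $\unitmap_2\counit_1$, the convolution unit.

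For the right inverse I would compute $\varLambda\convoproduct(\antipode_2\varLambda)=\sum_{(h)}\varLambda(h_1)\productnum{2}\antipode_2(\varLambda(h_2))$. Here the decisive step is to invoke that $\varLambda$ is a \emph{coalgebra} homomorphism, $(\varLambda\etensor\varLambda)\coproduct_1=\coproduct_2\varLambda$, which rewrites the sum as $\mproductnum{2}(\id\etensor\antipode_2)\coproduct_2(\varLambda(h))$. The antipode property in $H_2$ collapses this to $\counit_2(\varLambda(h))\,\unit_2$, and the remaining condition $\counit_2\varLambda=\counit_1$ again yields $\unitmap_2\counit_1$. Finally I would close with the standard monoid computation $\varLambda\antipode_1=(\varLambda\antipode_1)\convoproduct\big(\varLambda\convoproduct(\antipode_2\varLambda)\big)=\big((\varLambda\antipode_1)\convoproduct\varLambda\big)\convoproduct(\antipode_2\varLambda)=\antipode_2\varLambda$, using associativity of $\convoproduct$ together with the two inverse identities.

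The main obstacle is bookkeeping rather than depth: one must be careful that each of the two computations invokes exactly the right half of the bialgebra-homomorphism structure — the algebra-homomorphism property plus $\varLambda\unitmap_1=\unitmap_2$ for the left inverse, and the coalgebra-homomorphism property plus $\counit_2\varLambda=\counit_1$ for the right inverse — since it is easy to conflate the two roles of $\varLambda$. Once these are pinned down, the manipulations are routine applications of the antipode property and coassociativity.
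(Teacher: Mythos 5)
Your proposal is correct and follows essentially the same route as the paper's proof: both work in the convolution algebra $(\linear(H_1,H_2),\convoproduct)$ with unit $\unitmap_2\counit_1$, establish $(\varLambda\antipode_1)\convoproduct\varLambda=\unitmap_2\counit_1=\varLambda\convoproduct(\antipode_2\varLambda)$ using exactly the same split of the bialgebra-homomorphism hypotheses, and conclude by the left-inverse-equals-right-inverse argument. No gaps.
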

\begin{proof}
 Consider the algebra $(\linear(H_1,H_2),\convoproduct)$ defined by $S\convoproduct T:=\mproductnum{2}(S\etensor T)\coproduct_1$ with unit $\unitmap_2\counit_1$ (see \eqref{eq:defn_convoproduct} and \eqref{eq:unit_convoproduct}), where $\unitmap_i$ is the unit map and $\counit_i$ is the counit of $H_i$. We have
 \begin{align*}
  \varLambda\antipode_1\convoproduct\varLambda&=\mproductnum{2}(\varLambda\antipode_1\etensor\varLambda)\coproduct_1=\mproductnum{2}(\varLambda\etensor\varLambda)(\antipode_1\etensor\id)\coproduct_1=\varLambda\mproductnum{1}(\antipode_1\etensor\id)\coproduct_1=\varLambda\unitmap_1\counit_1=\unitmap_2\counit_1\\
  &=\unitmap_2\counit_2\varLambda=\mproductnum{2}(\id\etensor\antipode_2)\coproduct_2\varLambda=\mproductnum{2}(\id\etensor\antipode_2)(\varLambda\etensor\varLambda)\coproduct_1=\mproductnum{2}(\varLambda\etensor\antipode_2\varLambda)\coproduct_1=\varLambda\convoproduct\antipode_2\varLambda
 \end{align*}
 and therefore
 \begin{equation}\label{eq:varLambda_antipode_computation}
  \varLambda\antipode_1=\varLambda\antipode_1\convoproduct\unitmap_2\counit_1=\varLambda\antipode_1\convoproduct(\varLambda\convoproduct\antipode_2\varLambda)=(\varLambda\antipode_1\convoproduct\varLambda)\convoproduct\antipode_2\varLambda=\unitmap_2\counit_1\convoproduct\antipode_2\varLambda=\antipode_2\varLambda.
 \end{equation}

\end{proof}
\begin{cor}\textnormal{(Section 2.1 \cite{hairerkelly14})}
 If there is an antipode for a given bialgebra, then it is unique.
\end{cor}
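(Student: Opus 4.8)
The plan is to recognize the antipode property \eqref{eq:antipodeproperty} as nothing but the statement that an antipode is a two-sided inverse of $\id$ in the convolution algebra $(\linear(B,B),\convoproduct)$ associated to the bialgebra $(B,\product,\coproduct)$ (taking $A=C=B$ in the construction \eqref{eq:defn_convoproduct}). Recalling that this algebra has unit element $\unitmap\counit$ by \eqref{eq:unit_convoproduct}, the antipode property reads
\begin{equation*}
 \antipode\convoproduct\id=\id\convoproduct\antipode=\unitmap\counit,
\end{equation*}
so an antipode is precisely an inverse of the element $\id\in\linear(B,B)$ with respect to $\convoproduct$.

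Once this reinterpretation is made, the statement reduces to the elementary fact that inverses in a unital algebra are unique. First I would suppose that $\antipode_1$ and $\antipode_2$ are both antipodes, and then chain together the left-inverse half of the property for $\antipode_1$ and the right-inverse half for $\antipode_2$, using associativity of $\convoproduct$ (verified in the computation following \eqref{eq:defn_convoproduct}) and the unit property of $\unitmap\counit$:
\begin{equation*}
 \antipode_1=\antipode_1\convoproduct\unitmap\counit=\antipode_1\convoproduct(\id\convoproduct\antipode_2)=(\antipode_1\convoproduct\id)\convoproduct\antipode_2=\unitmap\counit\convoproduct\antipode_2=\antipode_2.
\end{equation*}
This is the very same sandwich argument already carried out for $\varLambda\antipode_1$ in the proof of Theorem \ref{thm:hopf_morphism}, see \eqref{eq:varLambda_antipode_computation}; here it is applied directly to the two candidate antipodes.

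There is essentially no obstacle to overcome: all the substance sits in the reinterpretation of \eqref{eq:antipodeproperty} as an inverse relation in $(\linear(B,B),\convoproduct)$, after which the conclusion is the standard uniqueness of inverses in a monoid. The only point demanding mild attention is to use \emph{both} halves of the antipode property — the left-inverse equation for $\antipode_1$ and the right-inverse equation for $\antipode_2$ — so that each factor can be replaced by the convolution unit at the correct end of the product.
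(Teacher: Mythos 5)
Your proof is correct and is essentially the paper's argument: the paper deduces uniqueness by applying Theorem \ref{thm:hopf_morphism} with the trivial bialgebra homomorphism $\varLambda=\id$, and the computation \eqref{eq:varLambda_antipode_computation} underlying that theorem specializes, for $\varLambda=\id$, to exactly the sandwich chain $\antipode_1=\antipode_1\convoproduct\unitmap\counit=(\antipode_1\convoproduct\id)\convoproduct\antipode_2=\unitmap\counit\convoproduct\antipode_2=\antipode_2$ that you write out. You have simply inlined the uniqueness-of-inverses argument in $(\linear(B,B),\convoproduct)$ rather than citing the theorem, which is a matter of presentation, not substance.
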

\begin{proof}
 If for a bialgebra $(B_1,\product,\coproduct)$, there are antipodes $\antipode_1$ and $\antipode_2$, then both $(B_1,\product,\coproduct,\antipode_1)$ and $(B_2,\product,\coproduct,\antipode_2)$ are Hopf algebras, thus we have $\antipode_1$=$\antipode_2$ via Theorem \ref{thm:hopf_morphism} using the trivial bialgebra homomorphism $\id$.
\end{proof}

\begin{thm}\label{thm:antipode_antimorphism}\textnormal{(Proposition 4.0.1.\@ \cite{sweedler69}, Theorem III.3.4.\@ \cite{kassel95}, Proposition I.7.1.\@ \cite{manchon06})}
 Let $(H,\product,\coproduct,\antipode)$ be a Hopf algebra with unit element $\unit$, unit map $\unitmap$ and counit $\counit$. The antipode $\antipode$ is
 \begin{enumerate}[(i)]
  \item an algebra antimorphism, \ie
  \begin{equation}\label{eq:antipode_mproduct}
   \antipode\mproduct=\mproduct(\antipode\etensor\antipode)\flip,
  \end{equation}
  where $\flip$ denotes the flip, \resp
  \begin{equation*}
   \antipode(h_1\product h_2)=\antipode h_2\product\antipode h_1\quad\forallc h_1,h_2\in H.
  \end{equation*}
  Furthermore, $\antipode\unitmap=\unitmap$, \resp $\antipode\unit=\unit$.
  \item a coalgebra antimorphism, \ie
  \begin{equation}\label{eq:coproduct_antipode}
   \coproduct\antipode=\flip(\antipode\etensor\antipode)\coproduct.
  \end{equation}
  Furthermore, $\counit\antipode=\counit$.
 \end{enumerate}

\end{thm}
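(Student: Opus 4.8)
The plan is to deduce both antimorphism properties from the uniqueness of two-sided inverses in a suitable convolution algebra $(\linear(C,A),\convoproduct)$, of the kind introduced in \eqref{eq:defn_convoproduct}. Note that the antipode property \eqref{eq:antipodeproperty} says precisely that $\antipode$ is the two-sided $\convoproduct$-inverse of $\id$ in $(\linear(H,H),\convoproduct)$; for the two statements I would instead run essentially the same argument in $\linear(H\etensor H,H)$ for (i) and in $\linear(H,H\etensor H)$ for (ii). In each case I exhibit the map to be inverted together with a left inverse and a right inverse, and then invoke that in a unital associative algebra a left inverse necessarily equals a right inverse.

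For (i), I work in $(\linear(H\etensor H,H),\convoproduct)$, where the coalgebra structure on $H\etensor H$ is the one making $\coproduct$ an algebra homomorphism (compatibility requirement 1 of Definition \ref{defn:bialgebra}) and the unit of the convolution algebra is $\unitmap(\counit\etensor\counit)$. I would show that $\mproduct$ is $\convoproduct$-invertible. First, $\antipode\mproduct$ is a left inverse: since $\mproduct$ is a coalgebra homomorphism one pulls the two copies of $\coproduct$ through $\mproduct$, applies the antipode property \eqref{eq:antipodeproperty}, and finally uses compatibility requirement 3 ($\counit\mproduct=\counit\etensor\counit$) to land on the convolution unit. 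Second, $\mproduct(\antipode\etensor\antipode)\flip$ is a right inverse; in Sweedler notation this is the computation $\sum h_1\product(k_1\product\antipode k_2)\product\antipode h_2$, where the inner factor $\sum k_1\product\antipode k_2=\counit(k)\unit$ collapses by the antipode property and the outer factor then collapses likewise to $\counit(h)\counit(k)\unit$. Uniqueness of the inverse then gives $\antipode\mproduct=\mproduct(\antipode\etensor\antipode)\flip$, which is \eqref{eq:antipode_mproduct}. The identity $\antipode\unit=\unit$ follows separately by evaluating the antipode property on $\unit$, using $\coproduct\unit=\unit\etensor\unit$ and $\counit(\unit)=1$ from Remark \ref{rmk:counit_unit}.

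For (ii), I dualize, working in $(\linear(H,H\etensor H),\convoproduct)$ built from the coalgebra $(H,\coproduct)$ and the algebra $(H\etensor H,\producttwo)$, whose convolution unit is $(\unitmap\etensor\unitmap)\counit$. Here $\coproduct$ is the algebra homomorphism to be inverted (again compatibility requirement 1). The map $\coproduct\antipode$ is a left inverse, using $\mproducttwo(\coproduct\etensor\coproduct)=\coproduct\mproduct$, then the antipode property and $\coproduct\unit=\unit\etensor\unit$; while $\flip(\antipode\etensor\antipode)\coproduct$ is a right inverse, shown by the Sweedler computation that contracts the two middle legs of $(\coproduct\etensor\coproduct)\coproduct$ via the antipode property and then the two remaining legs via the counit property. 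Uniqueness yields \eqref{eq:coproduct_antipode}. Finally $\counit\antipode=\counit$ follows by applying the functional $\counit$ to the antipode property and simplifying with $\counit\mproduct=\counit\etensor\counit$ and $\counit\unitmap=\id_K$.

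The left-inverse verifications are immediate from the bialgebra compatibilities, so the main obstacle is the careful Sweedler-notation bookkeeping in the two right-inverse computations: one must reorganize $(\coproduct\etensor\coproduct)\coproduct$ (equivalently the iterated coproduct) by coassociativity so that exactly the correct adjacent pair of legs is contracted by a single application of the antipode property \eqref{eq:antipodeproperty}, after which the counit property collapses the rest. Getting the index pattern right — in particular the crossing introduced by $\flip$ — is where the only real care is needed.
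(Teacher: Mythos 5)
Your proposal follows essentially the same route as the paper's proof: both parts are obtained by exhibiting $\antipode\mproduct$ as a left and $\mproduct(\antipode\etensor\antipode)\flip$ as a right convolution inverse of $\mproduct$ in $(\linear(H\etensor H,H),\convoproduct)$, and dually $\coproduct\antipode$ and $\flip(\antipode\etensor\antipode)\coproduct$ as inverses of $\coproduct$ in $(\linear(H,H\etensor H),\convoproduct)$, then invoking uniqueness of two-sided inverses, with the unit and counit identities handled separately just as you describe. The computations you sketch in Sweedler notation are exactly the ones the paper carries out with explicit permutation operators, so the proposal is correct and matches the paper.
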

\begin{proof}(Based on the proofs of Proposition 4.0.1.\@ \cite{sweedler69}, of Theorem III.3.4.\@ \cite{kassel95} and of Proposition I.7.1.\@ and Lemma I.7.2.\@ \cite{manchon06})
 First of all, using associativity \eqref{eq:associativity}, we have
 \begin{equation}\label{eq:triassociativity}
  \begin{aligned}
  \mproduct(\mproduct\etensor\mproduct)&=\mproduct(\mproduct\etensor\id)(\id\etensor\id\etensor\mproduct)=\mproduct(\id\etensor\mproduct)(\id\etensor\id\etensor\mproduct)\\
  &=\mproduct(\id\etensor\mproduct)(\id\etensor\mproduct\etensor\id).
 \end{aligned}
 \end{equation}
 Likewise, using coassociativity \eqref{eq:coassociativity},
 \begin{equation}\label{eq:tricoassociativity}
  \begin{aligned}
  (\coproduct\etensor\coproduct)\coproduct&=(\id\etensor\id\etensor\coproduct)(\coproduct\etensor\id)\coproduct=(\id\etensor\id\etensor\coproduct)(\id\etensor\coproduct)\coproduct\\
  &=(\id\etensor\coproduct\etensor\id)(\id\etensor\coproduct)\coproduct.
  \end{aligned}
 \end{equation}
 Put $\coproducttwo:=(\id\etensor\flip\etensor\id)(\coproduct\etensor\coproduct)$. Furthermore, for any permutation $(i_1\ldots i_n)$, let $\flip_{i_1\ldots i_n}:\,H^{\etensor n}\rightarrow H^{\etensor n}$ be the linear map generated by (Section I.1.\@ \cite{manchon06})
 \begin{equation*}
  \flip_{i_1\ldots i_n}(h_{i_1}\etensor\cdots\etensor h_{i_n}):=h_{1}\etensor\cdots\etensor h_{n}.
 \end{equation*}

 \begin{enumerate}[(i)]
 \item Consider the algebra $(\linear(H\etensor H,H),\convoproduct)$ defined by $S\convoproduct T:=\mproduct(S\etensor T)\coproducttwo$ with unit $\unitmap(\counit\etensor\counit)$. Using the antipode property \eqref{eq:antipodeproperty} and the unit property \eqref{eq:unit_property}, we get
 \begin{align*}
  \mproduct\convoproduct\mproduct(\antipode\etensor\antipode)\flip&=\mproduct(\mproduct\etensor\mproduct(\antipode\etensor\antipode)\flip)\coproducttwo=\mproduct(\mproduct\etensor\mproduct(\antipode\etensor\antipode)\flip)\flip_{1423}(\coproduct\etensor\coproduct)\\
  &\eqrefequal{eq:triassociativity}\mproduct(\id\etensor\mproduct)(\id\etensor\mproduct\etensor\id)(\id\etensor\id\etensor\antipode\etensor\antipode)\flip_{1342}(\coproduct\etensor\coproduct)\\
  &=\mproduct(\id\etensor\mproduct)\flip_{132}(\id\etensor\id\etensor\mproduct)(\id\etensor\antipode\etensor\id\etensor\antipode)(\coproduct\etensor\coproduct)\\
  &\eqrefequal{eq:antipodeproperty}\mproduct(\id\etensor\mproduct)\flip_{132}(\id\etensor\antipode\etensor\unitmap\counit)(\coproduct\etensor\id)\\
  &\eqrefequal{eq:unit_property}\mproduct(\id\etensor\antipode)(\coproduct\etensor\counit)\eqrefequal{eq:antipodeproperty}\unitmap\counit\etensor\counit
  =\unitmap(\counit\etensor\counit).
 \end{align*}
 On the other hand, using the antipode property as well as the first and third compatibility requirement in Definition \ref{defn:bialgebra}, we have
 \begin{equation*}
  \antipode\mproduct\convoproduct\mproduct=\mproduct(\antipode\mproduct\etensor\mproduct)\coproducttwo=\mproduct(\antipode\etensor\id)(\mproduct\etensor\mproduct)\coproducttwo=\mproduct(\antipode\etensor\id)\coproduct\mproduct=\unitmap\counit\mproduct=\unitmap(\counit\etensor\counit).
 \end{equation*}
 Then, equation \eqref{eq:antipode_mproduct} follows completely analogous to the calculation in \eqref{eq:varLambda_antipode_computation}.
 
 \noindent By Remark \ref{rmk:counit_unit}, the antipode property, the second compatibility requirement and the unit property we also have
 \begin{equation*}
  \unit=\unitmap\counit\unit=\mproduct(\antipode\etensor\id)\coproduct\unit=\antipode\unit\product\unit=\antipode\unit.
 \end{equation*}
 \item Consider the algebra $(\linear(H,H\etensor H),\convoproduct)$ defined by $S\convoproduct T:=\mproducttwo(S\etensor T)\coproduct$ with unit $(\unitmap\etensor\unitmap)\counit$. Using the antipode property \eqref{eq:antipodeproperty} and the counit property \eqref{eq:unit_property}, we get
 \begin{align*}
  \coproduct\convoproduct\flip(\antipode\etensor\antipode)\coproduct&=\mproducttwo(\coproduct\etensor\flip(\antipode\etensor\antipode)\coproduct)\coproduct=(\mproduct\etensor\mproduct)\flip_{1324}(\coproduct\etensor\flip(\antipode\etensor\antipode)\coproduct)\coproduct\\
  &\eqrefequal{eq:tricoassociativity}(\mproduct\etensor\mproduct)\flip_{1342}(\id\etensor\id\etensor\antipode\etensor\antipode)(\id\etensor\coproduct\etensor\id)(\id\etensor\coproduct)\coproduct\\
  &=(\mproduct\etensor\mproduct)(\id\etensor\antipode\etensor\id\etensor\antipode)(\id\etensor\id\etensor\coproduct)\flip_{132}(\id\etensor\coproduct)\coproduct\\
  &\eqrefequal{eq:antipodeproperty}(\mproduct\etensor\id)(\id\etensor\antipode\etensor\unitmap\counit)\flip_{132}(\id\etensor\coproduct)\coproduct\\
  &\eqrefequal{eq:counit_property}(\mproduct\etensor\unitmap)(\id\etensor\antipode)\coproduct\eqrefequal{eq:antipodeproperty}\unitmap\counit\etensor\unitmap=(\unitmap\etensor\unitmap)\counit.
 \end{align*}
 On the other hand, using the antipode property as well as the first and second compatibility requirement in Definition \ref{defn:bialgebra}, we have
 \begin{equation*}
  \coproduct\antipode\convoproduct\coproduct=\mproducttwo(\coproduct\antipode\etensor\coproduct)\coproduct=\mproducttwo(\coproduct\etensor\coproduct)(\antipode\etensor\id)\coproduct=\coproduct\mproduct(\antipode\etensor\id)\coproduct=\coproduct\unitmap\counit=(\unitmap\etensor\unitmap)\counit.
 \end{equation*}
 Then, equation \eqref{eq:coproduct_antipode} follows completely analogous to the calculation in \eqref{eq:varLambda_antipode_computation}.
 
 \noindent By Remark \ref{rmk:counit_unit}, the antipode property, the third compatibility requirement and the counit property we also have
 \begin{equation*}
  \counit=\counit\unitmap\counit=\counit\mproduct(\antipode\etensor\id)\coproduct=(\counit\etensor\counit)(\antipode\etensor\id)\coproduct=\counit\antipode(\id\etensor\counit)\coproduct=\counit\antipode.
 \end{equation*}
 \end{enumerate}

\end{proof}

\section{Gradings}
\begin{defn}\label{defn:grading}(Chapter 1 Section 2.2 and Chapter 2 Section 4.1 \cite{abe80}, Section II.1.\@ \cite{manchon06})Let $G_i,\,i\in\naturals_0$ be vector spaces over the same field and put $G:=\bigoplus_{i=0}^{\infty}G_i$. Then,
 \begin{enumerate}[(i)]
  \item an algebra $(G,\product)$ is \emph{graded}\index{algebra!graded@\textit{graded}} if 
  \begin{equation*}
   G_n\product G_m\subseteq G_{n+m}\quad\forallc n,m\in\naturals_0,
  \end{equation*}
  \item a coalgebra $(G,\coproduct)$ is \emph{graded}\index{coalgebra!graded@\textit{graded}} if 
  \begin{equation*}
   \coproduct G_n\subseteq\bigoplus_{m\leq n}G_m\etensor G_{n-m}\quad\forallc n\in\naturals_0,
  \end{equation*}
  \item a bialgebra $(G,\product,\coproduct)$ is \emph{graded}\index{bialgebra!graded@\textit{graded}} if both $(G,\product)$ and $(G,\coproduct)$ are graded,
  \item a Hopf algebra $(G,\product,\coproduct,\antipode)$ is \emph{graded}\index{Hopf algebra!graded@\textit{graded}} if $(G,\product,\coproduct)$ is graded and 
  \begin{equation*}
   \antipode G_n\subseteq G_n\;\forallc n\in\naturals_0.
  \end{equation*}
 \end{enumerate}
 The family $(G_i)_{i\in\naturals_0}$ is called \emphind{grading} of $G$.
 The algebra, coalgebra, bialgebra or Hopf algebra is \emph{connected graded}\index{connected grading} if additionally $\dim G_0=1$.
\end{defn}
If $(G_i)_{i\in\naturals_0}$ is a grading, the family $(G^i)_{i\in\naturals_0},\,G^i:=\bigoplus_{j=0}^{i}G_i$
is a so-called \emphind{filtration} (Chapter 1 Section~2.2 and Chapter 2 Section 4.1 \cite{abe80}, Section~II.2.\@ \cite{manchon06}).
\begin{thm}\label{thm:rcoproduct_cgbialgebra}\textnormal{(Proposition II.1.1.\@ \cite{manchon06})}
 In a connected graded bialgebra $(B,\product,\coproduct)$ graded by $(B_i)_i$ we have $\ker\counit=\bigoplus_{i=1}^{\infty}B_i$ and
 \begin{equation}\label{eq:rcoproduct_grading}
  \rcoproduct B_n\subseteq\bigoplus_{0<m<n}B_m\etensor B_{n-m}\;\forallc n\in\naturals.
 \end{equation}
\end{thm}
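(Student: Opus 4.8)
The plan is to exploit the grading of the coproduct (Definition~\ref{defn:grading}(ii)) together with the counit property \eqref{eq:counit_property} to pin down the two ``boundary'' components of $\coproduct b$ for homogeneous $b$, and then read off both assertions. Throughout, $\naturals$ denotes the positive integers, so only degrees $n\geq 1$ are at stake and the degenerate case $n=0$ never arises. As a preliminary I would record that connectedness forces $\unit\in B_0$ and $B_0=K\unit$: writing $\unit=\sum_i u_i$ with $u_i\in B_i$ and using $\unit\product b=b=b\product\unit$ for homogeneous $b$, the directness of $\bigoplus_i B_i$ and $B_i\product B_j\subseteq B_{i+j}$ show that $u_0$ acts as a two-sided unit, whence $u_0=\unit$ by uniqueness, so $\unit\in B_0$ and, as $\dim B_0=1$, $B_0=K\unit$.

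The heart of the argument is a single degree-extraction. Fix $b\in B_n$ with $n\geq 1$ and decompose
\begin{equation*}
 \coproduct b=\sum_{m=0}^{n}(\coproduct b)_{m,n-m},\qquad (\coproduct b)_{m,n-m}\in B_m\etensor B_{n-m}.
\end{equation*}
Applying $\counit\etensor\id$ sends the $m$-th summand into $B_{n-m}$, and by \eqref{eq:counit_property} the total equals $b\in B_n$; comparing degree-$n$ components (directness of $\bigoplus_j B_j$) leaves only $m=0$, so $(\counit\etensor\id)(\coproduct b)_{0,n}=b$. Since $(\coproduct b)_{0,n}\in B_0\etensor B_n=K\unit\etensor B_n$ it equals $\unit\etensor\beta$ for a unique $\beta\in B_n$, and $\counit(\unit)=1$ (Remark~\ref{rmk:counit_unit}) forces $\beta=b$, i.e. $(\coproduct b)_{0,n}=\unit\etensor b$. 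The symmetric computation with $\id\etensor\counit$ yields $(\coproduct b)_{n,0}=b\etensor\unit$.

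Both claims now drop out. Subtracting, $\rcoproduct b=\coproduct b-\unit\etensor b-b\etensor\unit=\sum_{0<m<n}(\coproduct b)_{m,n-m}$, which lies in $\bigoplus_{0<m<n}B_m\etensor B_{n-m}$, proving \eqref{eq:rcoproduct_grading}. For the counit I would instead apply $\id\etensor\counit$: each $(\coproduct b)_{m,n-m}$ maps into $B_m$, so comparing the degree-$0$ components of $(\id\etensor\counit)\coproduct b=b$ (with $n\geq 1$) annihilates every term except $m=0$, giving $(\id\etensor\counit)(\unit\etensor b)=\counit(b)\unit=0$ and hence $\counit(b)=0$. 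This shows $\bigoplus_{i\geq1}B_i\subseteq\ker\counit$; the reverse inclusion follows by decomposing an arbitrary $b=\sum_i b_i\in\ker\counit$ and noting $\counit(b)=\counit(b_0)$ with $b_0\in B_0=K\unit$, so $b_0=0$.

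The main obstacle is not depth but bookkeeping: one must keep track of which tensor slot each projection acts on and invoke the directness of the grading at precisely the right moment, both to isolate the degree-$n$ component for $\counit\etensor\id$ and the degree-$0$ component for $\id\etensor\counit$. The one genuinely necessary preliminary that is easy to overlook is $B_0=K\unit$; without it the identification $(\coproduct b)_{0,n}=\unit\etensor b$ — and with it everything else — would break down.
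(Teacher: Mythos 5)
Your proof is correct and follows essentially the same route as the paper's: decompose $\coproduct b$ by bidegree using the grading and $B_0=K\unit$, apply $\counit\etensor\id$ and $\id\etensor\counit$ and compare homogeneous components to identify the boundary terms as $\unit\etensor b$ and $b\etensor\unit$, then read off both \eqref{eq:rcoproduct_grading} and $\counit(b)=0$. The only substantive difference is your preliminary verification that $\unit\in B_0$ (so that connectedness really gives $B_0=K\unit$), a point the paper's proof asserts without argument; this is a worthwhile addition rather than a deviation.
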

\begin{proof}(Proof of Proposition II.1.1.\@ \cite{manchon06}) 
 Let $K$ be the field under $B$. Let $n\in\naturals$ and $x\in B_n$ be arbitrary. From Definition \ref{defn:grading} we know that $\coproduct x\in\bigoplus_{m\leq n}B_m\etensor B_{n-m}$. As furthermore $B_0=\vspan\{\unit\}$ due to connectedness, there are $y,z\in B_n$ and $w\in\bigoplus_{0<m<n}B_m\etensor B_{n-m}$ such that
 \begin{equation*}
  \coproduct x=\unit\etensor y+z\etensor\unit+w.
 \end{equation*}
 By the counit property \eqref{eq:counit_property} and $\counit(\unit)=1$ (see Remark \ref{rmk:counit_unit}), we have
 \begin{equation*}
  x=(\id\etensor\counit)\coproduct x=\counit(y)\unit+z+(\id\etensor\counit) w.
 \end{equation*}
  Due to linear independence of $z-x\in B_n$, $\counit(y)\unit\in B_0$ and $(\id\etensor\counit) w\in\bigoplus_{0<m<n}B_m$ this implies $x=z$, $\counit(y)=0$ and $(\id\etensor\counit) w=0$. Again applying the counit property, we get, using what we just found out,
  \begin{equation*}
   x=(\counit\etensor\id)\coproduct x=\counit(x)\unit+y+(\counit\etensor\id)w.
  \end{equation*}
  As before, linear independence of $y-x\in B_n$, $\counit(x)\unit\in B_0$ and $(\counit\etensor\id)w\in\bigoplus_{0<m<n}B_m$ implies $x=y$, $\counit(x)=0$ and $(\counit\etensor\id)w=0$.
  Hence, we conclude
  \begin{equation*}
   \coproduct x=\unit\etensor x+x\etensor\unit+w,
  \end{equation*}
  implying $\rcoproduct x=w\in\bigoplus_{0<m<n}B_m\etensor B_{n-m}$, as well as $\counit(x)=0$. As $n\in\naturals$ and $x\in B_n$ where arbitrary, this implies \eqref{eq:rcoproduct_grading} and, together with $\counit(\unit)=1$, also $\ker\counit=\bigoplus_{i=1}^{\infty}B_i$.
\end{proof}
\begin{rmk}\label{rmk:coproduct_gradeone}
 Due to Theorem \ref{thm:rcoproduct_cgbialgebra} we especially have
 \begin{equation*}
  \coproduct b_1=\unit\etensor b_1+b_1\etensor\unit\quad\forallc b_1\in B_1.
 \end{equation*}
\end{rmk}
The following theorem will be very important for some of our examples of Hopf algebraslater. We chose a different approach for the proof than the one given in \cite{manchon06}. For a function $f:\,M\rightarrow N$ and $M'\subseteq M$, let $f{\restriction_{M'}}$ denote the restriction of $f$ to $M'$.
\begin{thm}\label{thm:antipode_recursion}\textnormal{(Corollary II.3.2.\@ \cite{manchon06})}\index{antipode!recursion formula}
 Any connected graded bialgebra $(H,\product,\coproduct)$ over the field $K$ is a connected graded Hopf algebra with the antipode given by the recursion
 \begin{equation}\label{eq:recursion_antipode_right}
 \antipode h=-h-\mproduct(\id\etensor\antipode)\rcoproduct h=-h-\sum_{(h)}h'\product\antipode h''\quad\forallc h\in\ker\counit,\quad\antipode k\unit=k\unit\quad\forallc k\in K,
 \end{equation}
 or equivalently
 \begin{equation}\label{eq:recursion_antipode_left}
 \antipode h=-h-\mproduct(\antipode\etensor\id)\rcoproduct h=-h-\sum_{(h)}\antipode h'\product h''\quad\forallc h\in\ker\counit,\quad\antipode k\unit=k\unit\quad\forallc k\in K.
 \end{equation}
\end{thm}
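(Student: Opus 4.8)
The plan is to \emph{define} the candidate antipode by the right recursion \eqref{eq:recursion_antipode_right} and then verify directly that it satisfies the antipode property \eqref{eq:antipodeproperty}, leaving the equivalence with the left recursion \eqref{eq:recursion_antipode_left} to fall out of the convolution formalism at the end. The crucial structural input is Theorem \ref{thm:rcoproduct_cgbialgebra}: for $h\in H_n$ we have $\rcoproduct h\in\bigoplus_{0<m<n}H_m\etensor H_{n-m}$, so that in Sweedler notation $\rcoproduct h=\sum_{(h)}h'\etensor h''$ every factor $h''$ has degree strictly smaller than $n$. This is exactly what makes the recursion well-founded: setting $\antipode(k\unit):=k\unit$ on $H_0=\vspan\{\unit\}$ and, for $h\in H_n$ with $n\geq1$,
\begin{equation*}
 \antipode h:=-h-\sum_{(h)}h'\product\antipode h'',
\end{equation*}
only ever calls $\antipode$ on the lower-degree elements $h''$. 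Hence $\antipode$ is defined on each $H_n$ by induction on $n$ and extended linearly to $H$, the linearity in $h$ being inherited from that of $\rcoproduct$ and $\mproduct$.

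First I would check the right antipode property $\mproduct(\id\etensor\antipode)\coproduct=\unitmap\counit$. On $H_0$ both sides send $k\unit$ to $k\unit$, using $\antipode\unit=\unit$ and $\counit(\unit)=1$ from Remark \ref{rmk:counit_unit}. For $h\in\ker\counit=\bigoplus_{n\geq1}H_n$ I would expand $\coproduct h=\unit\etensor h+h\etensor\unit+\sum_{(h)}h'\etensor h''$ and apply $\mproduct(\id\etensor\antipode)$ termwise, obtaining $\antipode h+h+\sum_{(h)}h'\product\antipode h''$, which is precisely $0$ by the defining recursion; since $\counit(h)=0$ this equals $\unitmap\counit(h)$. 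In the convolution algebra $(\linear(H,H),\convoproduct)$ with unit $\unitmap\counit$ (see \eqref{eq:defn_convoproduct} and \eqref{eq:unit_convoproduct}), this says exactly that $\antipode$ is a right inverse of $\id$.

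Next I would run the symmetric computation for the left recursion \eqref{eq:recursion_antipode_left}, which by the same well-foundedness defines a linear map $\antipode'$ and, by the mirror-image termwise expansion, satisfies $\mproduct(\antipode'\etensor\id)\coproduct=\unitmap\counit$, i.e.\ $\antipode'$ is a left inverse of $\id$. Associativity of $\convoproduct$ then forces the two to coincide:
\begin{equation*}
 \antipode'=\antipode'\convoproduct(\unitmap\counit)=\antipode'\convoproduct(\id\convoproduct\antipode)=(\antipode'\convoproduct\id)\convoproduct\antipode=(\unitmap\counit)\convoproduct\antipode=\antipode.
\end{equation*}
Thus $\antipode=\antipode'$ is a genuine two-sided antipode satisfying \eqref{eq:antipodeproperty}, both recursion formulas describe the same map, and $(H,\product,\coproduct,\antipode)$ is a Hopf algebra. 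Gradedness $\antipode H_n\subseteq H_n$ then follows by a final induction on $n$: the base case is $\antipode H_0=H_0$, and for $h\in H_n$ the formula $\antipode h=-h-\sum_{(h)}h'\product\antipode h''$ has $h'\in H_m$, $h''\in H_{n-m}$ with $0<m<n$, so the inductive hypothesis gives $\antipode h''\in H_{n-m}$ and gradedness of $\product$ (Definition \ref{defn:grading}) sends $H_m\product H_{n-m}$ into $H_n$.

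I expect the only delicate point to be bookkeeping rather than genuine difficulty: one must be sure that the Sweedler sum $\sum_{(h)}h'\etensor h''$ really is homogeneous with the claimed degree splitting so that the induction closes, and this is exactly what Theorem \ref{thm:rcoproduct_cgbialgebra} supplies. The elegant left-inverse-equals-right-inverse step is what spares us from verifying both antipode identities by hand beyond a single termwise expansion in each direction.
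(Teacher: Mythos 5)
Your proof is correct and follows the same overall strategy as the paper's: well-definedness of each recursion by induction on the grading via Theorem \ref{thm:rcoproduct_cgbialgebra}, verification of the antipode identity by splitting $\coproduct h$ into $\unit\etensor h+h\etensor\unit+\rcoproduct h$ for $h\in\ker\counit$, and a final induction for gradedness. The one step you organize differently is the identification of the two recursions: the paper proves $\antipode_1=\antipode_2$ \emph{first}, by an induction along the filtration using the reduced convolution product $\rconvoproduct$, and only afterwards checks the antipode property; you instead verify the one-sided identities $\id\convoproduct\antipode=\unitmap\counit$ and $\antipode'\convoproduct\id=\unitmap\counit$ separately and then invoke associativity of $\convoproduct$ to conclude that a left inverse and a right inverse of $\id$ must coincide. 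Your route is slightly cleaner, since it reuses the same two-line computation the paper already deploys in \eqref{eq:varLambda_antipode_computation} to prove uniqueness of antipodes and avoids the bookkeeping with $\rconvoproduct$ restricted to the filtration; the paper's version has the mild advantage of establishing the equality of the two recursions as a standalone fact before any convolution identities enter. Both arguments are complete, and I see no gaps in yours.
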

\begin{proof}~
 \providecommand{\restrH}[1]{{\restriction_{\tilde{H}^{#1}}}}
 Let $(H_i)_i$ be the grading of $H$, $(H^i)_i$ the filtration and define the family $(\tilde{H}^i)_{i\in\naturals}$ by ${\tilde{H}^i:=\bigoplus_{j=1}^i H_j}$.
 \begin{enumerate}
  \item \emph{The individual recursions are well-defined.} We only look at \eqref{eq:recursion_antipode_right}, the other case is completely analogous. We proceed by induction. First of all, $\antipode{\restriction_{H_0}}$ is obviously a well-defined linear operator as $H$ is connected. Assume $\antipode{\restriction_{H^n}}$ is a well-defined linear operator. As \eqref{eq:rcoproduct_grading} implies $\rcoproduct H^{n+1}\subseteq\tilde{H}^n\etensor\tilde{H}^n$, the operator
  \begin{equation*}
   \antipode{\restriction_{H^{n+1}}}:=-\id{\restriction_{H^{n+1}}}-\mproduct(\id\etensor\antipode{\restriction_{H^n}})\rcoproduct{\restriction_{H^{n+1}}}
  \end{equation*}
  is well-defined in accordance with \eqref{eq:recursion_antipode_right} and linear due to the fact that it is a sum of compositions of linear operators.
  \item \emph{Both recursions yield the same results.} Let $\antipode_1$ be the linear operator obtained by \eqref{eq:recursion_antipode_right} and $\antipode_2$ the one obtained by \eqref{eq:recursion_antipode_left}. Again, we proceed by induction. As $H$ is connected, $H_0=\vspan\{\unit\}$ and hence $\antipode_1{\restriction_{H_0}}=\antipode_2{\restriction_{H_0}}$ by definition. Assume 
  \begin{equation}\label{eq:recursion_antipode_IA}
   \antipode_1{\restriction_{H^n}}=\antipode_2{\restriction_{H^n}}
  \end{equation}
  for some $n\in\naturals_0$. Then,
  \begin{align*}
   (\id\rconvoproduct\antipode_1)\restrH{n+1}&\diamondequal(\id\rconvoproduct\antipode_2)\restrH{n+1}=-(\id\rconvoproduct\id)\restrH{n+1}-(\id\rconvoproduct\antipode_2\rconvoproduct\id)\restrH{n+1}\\
   &\diamondequal-(\id\rconvoproduct\id)\restrH{n+1}-(\id\rconvoproduct\antipode_1\rconvoproduct\id)\restrH{n+1}=(\antipode_1\rconvoproduct\id)\restrH{n+1}\diamondequal(\antipode_2\rconvoproduct\id)\restrH{n+1}
  \end{align*}
  where we used \eqref{eq:recursion_antipode_IA} and the definition of $\rconvoproduct$ \eqref{eq:defn_rconvoproduct} combined with $\rcoproduct H^{n+1}\subseteq\tilde{H}^n\etensor\tilde{H}^n$ for the equalities labeled $(\diamondsuit)$. We conclude $\antipode_1{\restriction_{H^{n+1}}}=\antipode_2{\restriction_{H^{n+1}}}$.
  \item \emph{Both recursions yield the antipode.} Now, let $\antipode:=\antipode_1=\antipode_2$ be the operator given by each of the recursions. For $h\in\ker\counit$, we have
  \begin{equation*}
   \mproduct(\antipode\etensor\id)\coproduct h=\mproduct(\antipode\etensor\id)\rcoproduct h+\mproduct(\antipode\etensor\id)(\unit\etensor h+h\etensor\unit)=\mproduct(\antipode\etensor\id)\rcoproduct h+h+\antipode h\stackrel{\mathclap{\text{\eqref{eq:recursion_antipode_left}}}}{=}0
  \end{equation*}
  and analogously $\mproduct(\id\etensor\antipode)\coproduct h=0$ by \eqref{eq:recursion_antipode_left}. Also, $\unitmap\counit(h)=0$. Hence, the antipode property \eqref{eq:antipodeproperty} is fulfilled on $\ker\counit$. Finally, due to the fact that $\coproduct\unit=\unit\etensor\unit$ in a bialgebra, we get
  \begin{equation*}
   \mproduct(\antipode\etensor\id)\coproduct\unit=\mproduct(\antipode\unit\etensor\unit)=\unit=\mproduct(\id\etensor\antipode)\coproduct\unit,
  \end{equation*}
  just as $\unitmap\counit(\unit)=\unit$. By linearity, the antipode property is fulfilled on the whole of $H$.
 \end{enumerate}
 Finally, we show by yet another induction that $\antipode$ preserves the grading. First of all, we obviously have $\antipode H_0=H_0$. Assuming $\antipode H_m\subseteq H_m$ for all $m<n$, we get $\antipode H_n\subseteq H_n$ by \eqref{eq:recursion_antipode_right} and
 \begin{equation*}
  \mproduct(\id\etensor\antipode)\rcoproduct H_n\stackrel{\mathclap{\eqref{eq:rcoproduct_grading}}}{\subseteq}\mproduct(\id\etensor\antipode)\bigoplus_{0<m<n}H_m\etensor H_{n-m}\subseteq\mproduct\bigoplus_{0<m<n}H_m\etensor H_{n-m}\subseteq H_n
 \end{equation*}
 since $(H,\product,\coproduct)$ was assumed to be a graded bialgebra.\\
\end{proof}

\section{Pairs of dual vector spaces}
\begin{defn}\label{defn:dualbracket}(Chapter II Section 3.\@ \cite{robertson64}, Chapter 2 Section 2.1 \cite{abe80})
 Let $V,\Vpdual$ be two vector spaces over the same field $K$ and $\dualbracket:\,V\times\Vpdual\rightarrow K$ bilinear such that
 \begin{enumerate}
  \item\label{item:dualbracket_vzero} $\langle v,\vpdual\rangle=0\;\forallc\vpdual\in\Vpdual\implies v=0$,
  \item\label{item:dualbracket_vtildezero} $\langle v,\vpdual\rangle=0\;\forallc v\in V\implies \vpdual=0$.
 \end{enumerate}
 Then, we call $(V,\Vpdual)$ a \emphind{pair of dual vector spaces}\index{dual!pair} and $\dualbracket$\nomenclature[ZZs langle]{$\dualbracket$}{A duality pairing of two vector spaces} its \emphind{duality pairing}.
\end{defn}
\begin{lemma}\label{lemma:finite_dual}\textnormal{(Chapter II Section 5.\@ \cite{robertson64})}
 Let $(V,\Vpdual)$ be a pair of dual vector spaces and $V$ finite dimensional. Then, $\dim V=\dim\Vpdual$ and for each basis $(b_i)_i$ of $V$ there is a \emph{dual basis}\index{dual!basis} $(\pdual{b}_i)_i$ of $\Vpdual$, \ie ${\langle b_i,\pdual{b}_j\rangle=\deltasymb{i,j}}$.
\end{lemma}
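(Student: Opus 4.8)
The plan is to translate the two nondegeneracy conditions of Definition \ref{defn:dualbracket} into injectivity statements for two naturally associated linear maps, and then to exploit the finite dimensionality of $V$ to upgrade one of these injections into an isomorphism from which the dual basis can be read off directly.

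First I would introduce the map $\Phi:\,\Vpdual\rightarrow\adual{V}$ given by $\Phi(\vpdual):=\langle\,\cdotp,\vpdual\rangle$, that is, $\Phi(\vpdual)(v)=\langle v,\vpdual\rangle$. Bilinearity of the pairing makes $\Phi$ linear, and condition \ref{item:dualbracket_vtildezero} says precisely that $\ker\Phi=0$, so $\Phi$ is injective. Since $V$ is finite dimensional with $\dim\adual{V}=\dim V=:n$, injectivity of $\Phi$ forces $\dim\Vpdual\leq n$; in particular $\Vpdual$ is itself finite dimensional. Symmetrically, the map $\Psi:\,V\rightarrow\adual{\Vpdual}$, $\Psi(v):=\langle v,\,\cdotp\rangle$, is injective by condition \ref{item:dualbracket_vzero}. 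Now that $\Vpdual$ is known to be finite dimensional we have $\dim\adual{\Vpdual}=\dim\Vpdual$, so injectivity of $\Psi$ gives $n=\dim V\leq\dim\Vpdual$. Combining the two inequalities yields $\dim V=\dim\Vpdual=n$, which is the first assertion.

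With the dimensions now equal, $\Phi$ is an injective linear map between two spaces of the same finite dimension, hence an isomorphism. I would then simply pull back the standard dual basis: letting $(\adual{b}_i)_i$ be the basis of $\adual{V}$ characterized by $\adual{b}_i(b_j)=\deltasymb{i,j}$, I set $\pdual{b}_j:=\Phi^{-1}(\adual{b}_j)$. Since $\Phi$ is an isomorphism, $(\pdual{b}_j)_j$ is a basis of $\Vpdual$, and by construction $\langle b_i,\pdual{b}_j\rangle=\Phi(\pdual{b}_j)(b_i)=\adual{b}_j(b_i)=\deltasymb{i,j}$, exactly as required.

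The only genuinely delicate point is the dimension count, and it hinges on the order of the argument: a priori $\Vpdual$ might be infinite dimensional, so one must \emph{first} use condition \ref{item:dualbracket_vtildezero} together with the finiteness of $\dim\adual{V}$ to bound $\dim\Vpdual$ from above, and only then can the symmetric inequality coming from condition \ref{item:dualbracket_vzero} be invoked (it relies on $\dim\adual{\Vpdual}=\dim\Vpdual$, which is valid only once $\Vpdual$ is known to be finite dimensional). The existence of the standard dual basis $(\adual{b}_i)_i$ of $\adual{V}$ for finite dimensional $V$ is the elementary fact from linear algebra and requires no further justification here.
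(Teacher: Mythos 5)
Your proof is correct and follows essentially the same route as the paper's: both establish injectivity of the map $\vpdual\mapsto\langle\,\cdotp,\vpdual\rangle$ into $\adual{V}$ to bound $\dim\Vpdual$ from above, then apply the symmetric argument to get equality, and finally pull back the standard dual basis of $\adual{V}$ through the resulting isomorphism. Your remark on the necessary order of the two dimension inequalities is a point the paper's proof also respects implicitly.
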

\begin{proof}(Based on Chapter II Section 5.\@ \cite{robertson64})
 For each $\vpdual\in\Vpdual$ let $\iota(\vpdual):=\langle\,\cdotp,\vpdual\rangle\in V'$. Obviously, $\iota$ is linear. If $\vpdual\neq0$, then there is $v\in V$ such that $\langle v,\vpdual\rangle\neq0$ as $\dualbracket$ is a duality pairing. Hence $\iota(\vpdual)\neq0$, \ie $\iota$ is injective. Therefore $\dim\Vpdual\leq\dim V'=\dim V$. Thus $\Vpdual$ is also finite dimensional and by the same argument as before we get $\dim V\leq\dim\Vpdual'=\dim\Vpdual$. As $\iota:\,\Vpdual\rightarrow V'$ is injective and $\dim\Vpdual=\dim V'$ we have that $\iota$ is bijective. Let now $(b_i)_i$ be a basis of $V$ and define $\kappa_i\in V'$ by $\kappa_i(b_j):=\deltasymb{i,j}$. Then $(\iota^{-1}(\kappa_i))_i$ is a dual basis of $(b_i)_i$.
\end{proof}

\begin{thm}\label{thm:tensordual}
 For $i\in\{1,2\}$, let $\dualbracket_i$ be a duality pairing of a $K$-vector space $V_i$ with another $K$-vector space $\Vpdual_i$, where $K$ is a field of characteristic zero. Then, $(V_1\etensor V_2,\Vpdual_1\etensor\Vpdual_2)$ is a pair of dual vector spaces with the \emphind{induced duality pairing}\index{duality pairing!induced@\textit{induced}}
 \begin{equation}\label{eq:defn_tensordual}
  \left\langle\sum_{j=1}^{m}v_1^j\etensor v_2^j,\sum_{l=1}^{n}\vpdual_1^l\etensor\vpdual_2^l\right\rangle:=\sum_{j=1}^{m}\sum_{l=1}^{n}\langle v_1^j,\vpdual_1^l\rangle_1\langle v_2^j,\vpdual_2^l\rangle_2.
 \end{equation}
\end{thm}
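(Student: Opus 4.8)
The plan is to first verify that \eqref{eq:defn_tensordual} genuinely well-defines a bilinear pairing $V_1\etensor V_2\times\Vpdual_1\etensor\Vpdual_2\rightarrow K$, and then to check the two non-degeneracy conditions of Definition \ref{defn:dualbracket} separately. For well-definedness I would invoke the universal property \eqref{eq:tensor_property} twice. For fixed $\vpdual_1\in\Vpdual_1$ and $\vpdual_2\in\Vpdual_2$, the map $(v_1,v_2)\mapsto\langle v_1,\vpdual_1\rangle_1\langle v_2,\vpdual_2\rangle_2$ is bilinear $V_1\times V_2\rightarrow K$ and hence factors through a unique linear functional on $V_1\etensor V_2$. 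The assignment sending $(\vpdual_1,\vpdual_2)$ to this functional is itself bilinear into $\adual{(V_1\etensor V_2)}$, so a second application of \eqref{eq:tensor_property} yields a unique linear map $\Vpdual_1\etensor\Vpdual_2\rightarrow\adual{(V_1\etensor V_2)}$. Evaluating $x$ against the image of $y$ then gives a bilinear map which agrees with \eqref{eq:defn_tensordual} on elementary tensors and, crucially, does not depend on the chosen representations of $x$ and $y$ as sums of elementary tensors --- this is exactly the point the formula as written leaves open.

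For the non-degeneracy it is convenient to first put elements into a reduced form. I would use the standard fact that every nonzero $x\in V_1\etensor V_2$ can be written as $x=\sum_{j=1}^{r}v_1^j\etensor v_2^j$ with $r\geq1$ and both $(v_1^j)_{j=1}^r$ and $(v_2^j)_{j=1}^r$ linearly independent: taking a representation with a minimal number of terms, any linear dependence on either side could be exploited to reduce the number of terms, contradicting minimality.

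The heart of the proof is then to produce, for such an $x$, an element $y$ with $\langle x,y\rangle\neq0$, which establishes condition \ref{item:dualbracket_vzero} in contrapositive form. Set $W_1:=\vspan\{v_1^1,\ldots,v_1^r\}$ and consider the restriction map $R_1:\,\Vpdual_1\rightarrow\adual{W_1}$, $\vpdual_1\mapsto\langle\,\cdotp,\vpdual_1\rangle_1\restriction_{W_1}$. I expect the main obstacle to be the surjectivity of $R_1$: the pair $(W_1,\Vpdual_1)$ need not itself be a pair of dual vector spaces, since condition \ref{item:dualbracket_vtildezero} may fail on the small subspace $W_1$, so Lemma \ref{lemma:finite_dual} does not apply directly. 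Instead I would argue by annihilators --- if $R_1$ were not surjective, then because $W_1$ is finite-dimensional some nonzero $w\in W_1$ would be annihilated by the whole image of $R_1$, meaning $\langle w,\vpdual_1\rangle_1=0$ for all $\vpdual_1\in\Vpdual_1$, which contradicts non-degeneracy \ref{item:dualbracket_vzero} of $\dualbracket_1$. Surjectivity of $R_1$ supplies $\psi_1\in\Vpdual_1$ with $\langle v_1^j,\psi_1\rangle_1=\deltasymb{1,j}$, and the symmetric argument on $(v_2^j)_j$ supplies $\psi_2\in\Vpdual_2$ with $\langle v_2^j,\psi_2\rangle_2=\deltasymb{1,j}$. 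Then $\langle x,\psi_1\etensor\psi_2\rangle=\sum_{j=1}^{r}\deltasymb{1,j}\deltasymb{1,j}=1\neq0$, as desired.

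Condition \ref{item:dualbracket_vtildezero} follows by the entirely symmetric argument: a nonzero $y\in\Vpdual_1\etensor\Vpdual_2$ is put into reduced form and detected by suitable $v_1\in V_1$, $v_2\in V_2$ constructed via non-degeneracy \ref{item:dualbracket_vtildezero}, \ie using that $V_i$ separates the points of $\Vpdual_i$. I would note in passing that none of these steps actually uses $\operatorname{char}K=0$; the hypothesis appears inessential for this statement and is presumably kept only for uniformity with the surrounding material.
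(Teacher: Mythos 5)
Your proof is correct, and it follows the same overall strategy as the paper's: establish well-definedness via the tensor-product universal property, reduce a nonzero $x$ to a representation in which both families $(v_1^j)_j$ and $(v_2^j)_j$ are linearly independent, and then detect $x$ with an elementary tensor of functionals built from the finite-dimensional spans. The differences are in the details and are worth recording. For well-definedness the paper forms the single map $m:=(\linofbilin{\varpi_1}\etensor\linofbilin{\varpi_2})(\id\etensor\flip\etensor\id)$ out of the linear maps corresponding to $\dualbracket_1,\dualbracket_2$ and reads off \eqref{eq:defn_tensordual}, whereas you apply the universal property twice; both are standard and equivalent. For non-degeneracy the paper constructs, via the quotient $F_i:=\Vpdual_i\quotient W_i^\perp$ and Lemma \ref{lemma:finite_dual}, a full dual family $(\pdual{u}_i^j)_j$ and evaluates $\langle u,\sum_j\pdual{u}_1^j\etensor\pdual{u}_2^j\rangle=m$, which is nonzero only because $\operatorname{char}K=0$; you instead prove surjectivity of the restriction map $\Vpdual_1\rightarrow\adual{W_1}$ by an annihilator argument (really the same underlying fact as the paper's statement that $(W_i,F_i)$ is a finite-dimensional dual pair) and then test against a single elementary tensor $\psi_1\etensor\psi_2$, obtaining the value $1$. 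This last point is a genuine, if small, improvement: your argument is characteristic-free, and your closing remark that the hypothesis $\operatorname{char}K=0$ is inessential to the statement is accurate --- the paper's proof as written does use it, precisely at the step $m\neq0$.
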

\begin{proof}~
 \begin{enumerate}
  \item \emph{The map $\dualbracket$ is well-defined and bilinear.} Let $m_i:\,V_i\etensor\Vpdual_i\rightarrow K$ be the linear maps corresponding to $\dualbracket_i$. Put ${m:=(m_1\etensor m_2)(\id\etensor\flip\etensor\id)}$. Then, $\dualbracket:=m(\,{\cdot}\,{\etensor}\,{\cdot}\,)$ yields the expansion \eqref{eq:defn_tensordual} and is obviously bilinear.
  \item \emph{The map $\dualbracket$ is a duality pairing.} Let $0\neq u=\sum_{j=1}^{m}u_1^j\etensor u_2^j\in V_1\etensor V_2$ be arbitrary. Without loss of generality, we may assume that 
  \begin{equation*}
   \dim\vspan(u_1^j)_{1\leq j\leq m}=\dim\vspan(u_2^j)_{1\leq j\leq m}=m,
  \end{equation*}
  because otherwise the number of summands may be reduced. Put $W_i:=\vspan(u_1^j)_{1\leq j\leq m}$,
  \begin{equation*}
   W_i^\perp:=\{\vpdual\in\Vpdual_i|\langle w,\vpdual\rangle_i=0\;\forallc w\in W_i\}
  \end{equation*}
  and $F_i:=\Vpdual_i\quotient W_i^\perp$. For $\vpdual\in\Vpdual_i$ let $[\vpdual]\in F_i$ denote its equivalence class. Hence, for $\vpdual,\vpdual'\in\Vpdual_i$,
  \begin{equation*}
   [\vpdual]=[\vpdual']\iff\langle w,\vpdual\rangle_i=\langle w,\vpdual'\rangle_i\;\forallc w\in W_i.
  \end{equation*}
  This means that $\langle w,[\vpdual]\rangle_i':=\langle w,\vpdual\rangle_i$ is well-defined. $\langle w,[\vpdual]\rangle_i=0$ for all $w\in W_i$ implies by definition $[\vpdual]=[0]=0$. Also, $\langle w,[\vpdual]\rangle_i'=0$ for all $[\vpdual]\in F_i$ means by definition $\langle w,\vpdual\rangle_i=0$ for all $\vpdual\in\Vpdual_i$ and therefore $w=0$ because $(V_i,\Vpdual_i)$ is a pair of dual vector spaces. Hence, $(W_i,F_i)$ is a pair of dual vector spaces and since $W_i$ is finite dimensional, we may use Lemma \ref{lemma:finite_dual} to get that there is a basis $([\pdual{u}_i^j])_{1\leq j\leq n}$ of $F_i$ such that 
  \begin{equation*}
   \deltasymb{j,l}=\langle u_i^j,[\pdual{u}_i^l]\rangle_i'=\langle u_i^j,\pdual{u}_i^l\rangle_i.
  \end{equation*}
  Put $\pdual{u}:=\sum_{j=1}^{m}u_1^j\etensor u_2^j$. Then,
  \begin{equation*}
   \langle u,\pdual{u}\rangle=\sum_{j,l=1}^{m}\langle u_1^j,\pdual{u}_1^l\rangle_1\langle u_2^j,\pdual{u}_2^l\rangle_2=m\neq 0.
  \end{equation*}
  As $0\neq u\in V_1\etensor V_2$ was arbitrary, we just showed implication \ref{item:dualbracket_vzero} of Definition \ref{defn:dualbracket}. Showing implication \ref{item:dualbracket_vtildezero} of the definition is then completely analogous.
  \end{enumerate}

\end{proof}
\begin{defn}
 Let $(V_i,\Vpdual_i)$, $i\in\{1,2\}$ be pairs of dual vector spaces with duality pairings $\varpi_i=\dualbracket_i$. We then say that a linear map $S:\,V_1\rightarrow V_2$ is the \emph{dual operator}\index{dual!operator} of a linear map $\pdual{S}:\,\Vpdual_2\rightarrow \Vpdual_1$ if
 \begin{equation*}
  \langle S v,\vpdual\rangle_2=\langle v,\pdual{S}\vpdual\rangle_1
 \end{equation*}
 for all $v\in V_1$ and $\vpdual\in\Vpdual_2$. We write $S\isdualvia{\varpi_1}{\varpi_2}\pdual{S}$, or shortly $S\isdual\pdual{S}$ if the involved duality pairings are clear from the context\footnote{This notation is nonstandard and just used for the purpose of this thesis.}.
\end{defn}
\begin{lemma}\label{lemma:dual_operator} Let $(V_i,\Vpdual_i)$ and $(W_j,\pdual{W}_j)$ be pairs of dual vector spaces over the same field $K$.
 \begin{enumerate}[(i)]
  \item\label{item:dual_operator_unique} If both $S_1:\,V_1\rightarrow V_2$ and $S_2:\,V_1\rightarrow V_2$ are dual operators of $\pdual{S}:\,\Vpdual_2\rightarrow\Vpdual_1$, then $S_1=S_2$. 
  \item\label{item:dual_operator_tensor} If $K$ is of characteristic zero, $A:\,V_1\rightarrow V_2$ is the dual operator of $\pdual{A}:\,\Vpdual_2\rightarrow\Vpdual_1$ and $B:\,W_1\rightarrow W_2$ is the dual operator of $\pdual{B}:\,\pdual{W}_2\rightarrow\pdual{W}_1$, then $A\etensor B$ is the dual operator of $\pdual{A}\etensor\pdual{B}$ under the induced duality pairing.
  \item\label{item:dual_operator_compo} If $F:\,V_1\rightarrow V_2$ is the dual operator of $\pdual{F}:\,\Vpdual_2\rightarrow\Vpdual_1$ and $G:\,V_2\rightarrow V_3$ is the dual operator of $\pdual{G}:\Vpdual_3\rightarrow\Vpdual_2$, then $G F$ is the dual operator of $\pdual{F}\pdual{G}$.
 \end{enumerate}
\end{lemma}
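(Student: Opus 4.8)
The plan is to read off all three parts directly from the defining relation $\langle Sv,\vpdual\rangle_2=\langle v,\pdual{S}\vpdual\rangle_1$ of a dual operator, invoking only the nondegeneracy in Definition~\ref{defn:dualbracket} for part~\ref{item:dual_operator_unique} and the induced pairing of Theorem~\ref{thm:tensordual} for part~\ref{item:dual_operator_tensor}. For part~\ref{item:dual_operator_unique} I would suppose both $S_1$ and $S_2$ are dual to $\pdual{S}$, so that for every $v\in V_1$ and every $\vpdual\in\Vpdual_2$,
\begin{equation*}
 \langle S_1v,\vpdual\rangle_2=\langle v,\pdual{S}\vpdual\rangle_1=\langle S_2v,\vpdual\rangle_2,
\end{equation*}
whence by bilinearity $\langle S_1v-S_2v,\vpdual\rangle_2=0$ for all $\vpdual\in\Vpdual_2$. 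Implication~\ref{item:dualbracket_vzero} of Definition~\ref{defn:dualbracket}, applied to the pairing on $(V_2,\Vpdual_2)$, then forces $S_1v=S_2v$, and since $v$ was arbitrary, $S_1=S_2$.

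For part~\ref{item:dual_operator_compo} I would simply chain the two defining identities. For $v\in V_1$ and $\vpdual\in\Vpdual_3$, applying first that $G$ is dual to $\pdual{G}$ (to the element $Fv\in V_2$) and then that $F$ is dual to $\pdual{F}$ (to the element $\pdual{G}\vpdual\in\Vpdual_2$) gives
\begin{equation*}
 \langle GFv,\vpdual\rangle_3=\langle Fv,\pdual{G}\vpdual\rangle_2=\langle v,\pdual{F}\pdual{G}\vpdual\rangle_1,
\end{equation*}
which is exactly the statement that $GF$ is the dual operator of $\pdual{F}\pdual{G}$.

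Part~\ref{item:dual_operator_tensor} carries the bulk of the content, but once set up it also reduces to a one-line computation. Since every element of $V_1\etensor W_1$ and of $\Vpdual_2\etensor\pdual{W}_2$ is a finite sum of simple tensors, and both sides of the claimed identity are bilinear, it suffices to test on $v\etensor w$ and $\vpdual\etensor\pdual{w}$. Expanding the induced pairing via \eqref{eq:defn_tensordual} and then applying the duality of $A$ and of $B$ on each factor,
\begin{equation*}
 \langle(A\etensor B)(v\etensor w),\vpdual\etensor\pdual{w}\rangle=\langle Av,\vpdual\rangle\langle Bw,\pdual{w}\rangle=\langle v,\pdual{A}\vpdual\rangle\langle w,\pdual{B}\pdual{w}\rangle,
\end{equation*}
and the right-hand side is precisely $\langle v\etensor w,(\pdual{A}\etensor\pdual{B})(\vpdual\etensor\pdual{w})\rangle$ by another use of \eqref{eq:defn_tensordual}. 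The characteristic-zero hypothesis enters only to let Theorem~\ref{thm:tensordual} guarantee that the two induced pairings are themselves duality pairings, so that the very notion of a dual operator is meaningful on the tensor products. The one point needing care---and the nearest thing to an obstacle---is justifying the reduction to simple tensors, \ie that both sides are genuinely bilinear in their two arguments; this follows at once from bilinearity of $\etensor$, of the maps $A\etensor B$ and $\pdual{A}\etensor\pdual{B}$, and of the induced pairing.
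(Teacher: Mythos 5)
Your proposal is correct and follows essentially the same route as the paper's own proof: nondegeneracy of the pairing for part (i), checking the identity on simple tensors and extending by bilinearity for part (ii), and chaining the two defining identities for part (iii). Your added remark on where the characteristic-zero hypothesis enters (only via Theorem \ref{thm:tensordual} making the induced pairing a genuine duality pairing) is a correct and slightly more explicit observation than the paper makes.
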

\begin{proof} Let $\dualbracket_{\nu_i}$ be the duality pairing of $(V_i,\Vpdual_i)$ and $\dualbracket_{\omega_j}$ the duality pairing of $(W_j,\pdual{W}_j)$.
 \begin{enumerate}[(i)]
  \item $\langle S_1 v,\vpdual\rangle_{\nu_2}=\langle v,\pdual{S}\vpdual\rangle_{\nu_1}=\langle S_2 v,\vpdual\rangle_{\nu_2}$ for all $v\in V_1$, $\vpdual\in\Vpdual_2$ implies $\langle(S_1-S_2)v,\vpdual\rangle_{\nu_2}=0$ for all $v\in V_1$, $\vpdual\in\Vpdual_2$, hence $(S_1-S_2)v=0$ for all $v\in V_1$ by the Definition \ref{defn:dualbracket} of dual pairs.
  \item Let $\dualbracket_{\nu_i\mu_i}$ be the induced duality pairing of $(V_i\etensor W_i,\pdual{V}_i\etensor\pdual{W}_i)$. Under the given assumption,
  \begin{align*}
   \langle (A\etensor B)(v_1\etensor w_1),\vpdual_2\etensor\pdual{w}_2\rangle_{\nu_2\omega_2}&=\langle A v_1,\vpdual_2\rangle_{\nu_2}\langle B w_1,\pdual{w}_2\rangle_{\omega_2}=\langle v_1,\pdual{A}\vpdual_2\rangle_{\nu_1}\langle w_1,\pdual{B}\pdual{w}_2\rangle_{\nu_1}\\
   &= \langle v_1\etensor w_1,(\pdual{A}\etensor\pdual{B})(\vpdual_2\etensor\pdual{w}_2)\rangle_{\nu_1\omega_1}
  \end{align*}
  for all $v_1\in V_1$, $w_1\in W_1$, $\vpdual_2\in\Vpdual_2$, $\pdual{w}_2\in\pdual{W}_2$. The implication follows via bilinearity.
  \item Under the given assumption,
  \begin{equation*}
   \langle G F v_1,\vpdual_3\rangle_3=\langle F v_1,\pdual{G}\vpdual_3\rangle_2=\langle v_1,\pdual{F}\pdual{G}\vpdual_3\rangle_1
  \end{equation*}
  for all $v_1\in V_1$ and $\vpdual_3\in\Vpdual_3$.
 \end{enumerate}
\end{proof}

\begin{samepage}
\begin{thm}\label{thm:dual_hopf_bi_co_algebras}\textnormal{(Equations (3.4.1) to (3.4.5) and Section 3.5 \cite{chen71}, also based on Chapter 2 Sections 2 and 3 \cite{abe80}, Proposition I.3.1.\@ \cite{manchon06})}
 Let $(V,\Vpdual)$ be a pair of dual vector spaces over the field $K$ of characteristic zero.
 \begin{enumerate}
  \item If $\copropro:\,\Vpdual\rightarrow\Vpdual\etensor\Vpdual$ is the dual operator of $\mproduct:\,V\etensor V\rightarrow V$, then $(\Vpdual,\copropro)$ is a coalgebra iff $(V,\product)$ is a algebra. In that case, we say that $(\Vpdual,\copropro)$ is the \emph{dual coalgebra}\index{dual!coalgebra} of $(V,\product)$ and $(V,\product)$ is the \emph{dual algebra}\index{dual!algebra} of $(\Vpdual,\copropro)$.
  \item If $(V,\product)$ is the dual algebra of $(\Vpdual,\copropro)$ and $\unitmap:\,K\rightarrow V$ is the dual operator of $\counit:\,\Vpdual\rightarrow K$, then $\unitmap$ is a unit map iff $\counit$ is a counit.
  \item If $(V,\productnum{1})$ is the dual unital algebra of $(\Vpdual,\copropronum{1})$ and $(V,\copropronum{2})$ is the dual counital coalgebra of $(\Vpdual,\productnum{2})$, then $(V,\productnum{1},\copropronum{2})$ is a bialgebra iff $(\Vpdual,\productnum{2},\copropronum{1})$ is a bialgebra. In that case, we say that $(V,\productnum{1},\copropronum{2})$ is the \emph{dual bialgebra}\index{dual!bialgebra} of $(\Vpdual,\productnum{2},\copropronum{1})$ and vice versa.
  \item If $(V,\productnum{1},\copropronum{2})$ is the dual bialgebra of $(\Vpdual,\productnum{2},\copropronum{1})$ and $\antipodenum{1}{2}:\,V\rightarrow V$ is the dual map of $\antipodenum{2}{1}:\,\Vpdual\rightarrow\Vpdual$, then $(\Vpdual,\productnum{1},\copropronum{2},\antipodenum{1}{2})$ is a Hopf algebra iff $(\Vpdual,\productnum{2},\copropronum{1},\antipodenum{2}{1})$ is a Hopf algebra. In that case, we say that $(\Vpdual,\productnum{1},\copropronum{2},\antipodenum{1}{2})$ is the \emph{dual Hopf algebra}\index{dual!Hopf algebra} of $(\Vpdual,\productnum{2},\copropronum{1},\antipodenum{2}{1})$ and vice versa.
 \end{enumerate}
\end{thm}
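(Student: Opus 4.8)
The whole theorem is powered by a single observation about dual operators: since each duality pairing is non-degenerate, whenever $A\isdual\pdual A$ and $B\isdual\pdual B$ between the same pairs of dual spaces, one has $A=B$ if and only if $\pdual A=\pdual B$. Indeed, the uniqueness of dual operators from Lemma \ref{lemma:dual_operator}, applied in both directions, turns an identity of linear maps into the identity of their duals and conversely. Together with the two calculus rules of the same lemma — that dualizing a tensor product gives the tensor product of the duals, and that dualizing a composition reverses its order — this lets me translate every structural axiom on $V$ into the corresponding axiom on $\Vpdual$ and back. I first record the elementary self-dualities used repeatedly: $\id\isdual\id$, and, under the induced pairing of Theorem \ref{thm:tensordual}, $\flip\isdual\flip$ and hence $\id\etensor\flip\etensor\id\isdual\id\etensor\flip\etensor\id$, each verified by evaluating the pairing on elementary tensors. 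I also use that $K$ is self-dual via its multiplication, so that the identifications $V\isom K\etensor V\isom V\etensor K$ respect the pairings.

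For the first two parts, the given relation $\mproduct\isdual\copropro$ combined with the tensor and composition rules yields $\mproduct(\mproduct\etensor\id)\isdual(\copropro\etensor\id)\copropro$ and $\mproduct(\id\etensor\mproduct)\isdual(\id\etensor\copropro)\copropro$. By the observation above, associativity \eqref{eq:associativity} of $\product$ is therefore equivalent to coassociativity \eqref{eq:coassociativity} of $\copropro$, which is part 1. For part 2, with $\unitmap\isdual\counit$ the same rules give $\mproduct(\unitmap\etensor\id)\isdual(\counit\etensor\id)\copropro$ and $\mproduct(\id\etensor\unitmap)\isdual(\id\etensor\counit)\copropro$; since $\id\isdual\id$ under $V\isom K\etensor V$, the unit property \eqref{eq:unit_property} is equivalent to the counit property \eqref{eq:counit_property}.

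For part 3 I dualize each of the three compatibility requirements of Definition \ref{defn:bialgebra}, using $\mproductnum{1}\isdual\copropronum{1}$, $\copropronum{2}\isdual\mproductnum{2}$, $\unitmapnum{1}\isdual\counitnum{1}$ and $\counitnum{2}\isdual\unitmapnum{2}$ together with the self-duality of $\id\etensor\flip\etensor\id$. Tracking the order-reversal of compositions gives $\copropronum{2}\mproductnum{1}\isdual\copropronum{1}\mproductnum{2}$ and the matching identity for the two-fold tensor side, so compatibility requirement 1 for $(V,\productnum{1},\copropronum{2})$ is dual to requirement 1 for $(\Vpdual,\productnum{2},\copropronum{1})$. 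Requirement 2, namely $\copropronum{2}\unitmapnum{1}=\unitmapnum{1}\etensor\unitmapnum{1}$, dualizes to $\counitnum{1}\mproductnum{2}=\counitnum{1}\etensor\counitnum{1}$, which is requirement 3 on the dual side; symmetrically, requirement 3 on $V$ dualizes to requirement 2 on $\Vpdual$. Since the underlying algebra and coalgebra axioms are already matched by parts 1 and 2, all three conditions hold on one side exactly when they hold on the other, giving part 3.

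Finally, given $\antipodenum{1}{2}\isdual\antipodenum{2}{1}$, the same mechanism produces $\mproductnum{1}(\antipodenum{1}{2}\etensor\id)\copropronum{2}\isdual\mproductnum{2}(\antipodenum{2}{1}\etensor\id)\copropronum{1}$, the analogous identity with the antipode in the second slot, and $\unitmapnum{1}\counitnum{2}\isdual\unitmapnum{2}\counitnum{1}$; hence the antipode property \eqref{eq:antipodeproperty} on $V$ is equivalent to the antipode property on $\Vpdual$, and with part 3 this establishes part 4 (reading the first tuple as $(V,\productnum{1},\copropronum{2},\antipodenum{1}{2})$). The only genuinely delicate point in the whole argument is the bookkeeping in part 3: one must follow the order-reversal of compositions and the tensor-of-duals carefully enough to see that requirements 2 and 3 exchange roles under dualization while requirement 1 is sent to itself. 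Once the self-duality of $\flip$ and of $\id\etensor\flip\etensor\id$ is available, every remaining step is a routine application of Lemma \ref{lemma:dual_operator}.
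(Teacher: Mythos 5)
Your proposal is correct and follows essentially the same route as the paper: both arguments rest on the uniqueness of dual operators together with the tensor and composition rules of Lemma \ref{lemma:dual_operator}, dualizing each axiom (associativity, unit, compatibility requirements, antipode property) into its counterpart and noting that compatibility requirements 2 and 3 swap roles under dualization. The self-dualities of $\id$ and $\flip$ that you record explicitly are used implicitly in the paper's displayed duality relations, so there is no substantive difference.
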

\end{samepage}
\begin{proof}~
 \begin{enumerate}
  \item Due to Lemma \ref{lemma:dual_operator} \ref{item:dual_operator_tensor} and \ref{item:dual_operator_compo},
  \begin{equation*}
   (\id\etensor\copropro)\copropro\isdual\mproduct(\id\etensor\mproduct)\quad\text{and}\quad(\copropro\etensor\id)\copropro\isdual\mproduct(\mproduct\etensor\id).
  \end{equation*}
  Hence, by Lemma \ref{lemma:dual_operator} \ref{item:dual_operator_unique}, coassociativity of $\copropro$ (\cf \eqref{eq:coassociativity}) and associativity of $\product$ (\cf \eqref{eq:associativity}) are equivalent.
  \item Due to Lemma \ref{lemma:dual_operator} \ref{item:dual_operator_tensor} and \ref{item:dual_operator_compo}, 
  \begin{equation*}
   (\id\etensor\counit)\copropro\isdual\mproduct(\id\etensor\unitmap)\quad\text{and}\quad(\counit\etensor\id)\copropro\isdual\mproduct(\unitmap\etensor\id).
  \end{equation*}
  Hence, by Lemma \ref{lemma:dual_operator} \ref{item:dual_operator_unique}, the counit property of $\counit$ \mbox{(\cf \eqref{eq:counit_property})} and the unit property of $\unitmap$ (\cf \eqref{eq:unit_property}) are equivalent.
  \item Due to Lemma \ref{lemma:dual_operator} \ref{item:dual_operator_tensor} and \ref{item:dual_operator_compo}, 
  \begin{equation*}
   \copropronum{1}\mproductnum{2}\isdual\copropronum{2}\mproductnum{1}\quad\text{and}\quad(\mproductnum{2}\etensor\mproductnum{2})(\id\etensor\flip\etensor\id)(\copropronum{1}\etensor\copropronum{1})\isdual(\mproductnum{1}\etensor\mproductnum{1})(\id\etensor\flip\etensor\id)(\copropronum{2}\etensor\copropronum{2})
  \end{equation*}
  as well as
  \begin{equation*}
   \copropronum{1}\unitmapnum{2}\isdual\counitnum{2}\mproductnum{1}\quad\text{and}\quad\unitmapnum{2}\etensor\unitmapnum{2}\isdual\counitnum{1}\etensor\counitnum{1}
  \end{equation*}
  as well as
  \begin{equation*}
   \counitnum{1}\mproductnum{2}\isdual\copropronum{2}\unitmapnum{1}\quad\text{and}\quad\counitnum{2}\etensor\counitnum{2}\isdual\unitmapnum{1}\etensor\unitmapnum{1}.
  \end{equation*}

  Hence, by Lemma \ref{lemma:dual_operator} \ref{item:dual_operator_unique}, the compatibility requirements (\cf definition \ref{defn:bialgebra}) of $\productnum{2},\copropronum{1},\unitmapnum{2},\counitnum{1}$ and those of $\productnum{1},\copropronum{2},\unitmapnum{1},\counitnum{2}$ are equivalent. 
  \item Due to Lemma \ref{lemma:dual_operator} \ref{item:dual_operator_tensor} and \ref{item:dual_operator_compo}, 
  \begin{equation*}
   \mproductnum{2}(\id\etensor\antipodenum{2}{1})\copropronum{1}\isdual\mproductnum{1}(\id\etensor\antipodenum{1}{2})\copropronum{2},\,\,\,\mproductnum{2}(\antipodenum{2}{1}\etensor\id)\copropronum{1}\isdual\mproductnum{1}(\antipodenum{1}{2}\etensor\id)\copropronum{2}\,\,\,\text{and}\,\,\,\unitmapnum{2}\counitnum{1}\isdual\unitmapnum{1}\counitnum{2}.
  \end{equation*}
  Hence, by Lemma \ref{lemma:dual_operator} \ref{item:dual_operator_unique}, the antipode property (\cf \eqref{eq:antipodeproperty}) of $\antipodenum{2}{1}$ and the antipode property of $\antipodenum{1}{2}$ are equivalent.
 \end{enumerate}
\end{proof}

\chapter{Connected graded Hopf algebras}\label{chapter:connected_graded_Hopf}
\section{Polynomials and differential operators}
\begin{defn}
 For the set $\naturals_0^d=\{n=(n_1,\ldots,n_d)|n_i\in\naturals_0\,\forallc 1\leq i\leq d\}$ of $d$-dimensional \emphind{multi indices} we define
 \begin{enumerate}[(i)]
  \item the partial order $n\leq m \iff n_i\leq m_i\,\forallc i$ with the shorthand notation $n<m\iff {n\leq m\AND n\neq m}$,
  \item the absolute value $\lvert n\rvert:=\sum_{i=1}^d n_i$,
  \item the sum $+:\,\naturals_0^d\times\naturals_0^d\rightarrow\naturals_0^d,\,n+m:=(n_1+m_1,\ldots,n_d+m_d)$,
  \item the difference $n-m:=(n_1-m_1,\ldots,n_d-m_d)$ for $m\leq n$,
  \item the factorial $\cdotp!:\,\naturals_0^d\rightarrow\naturals,\,n!:=\prod_{i=1}^d n_i!$,
  \item the binomial coefficient $\binom{n}{m}:=\frac{n!}{(n-m)!m!}$ for $m\leq n$.
 \end{enumerate}
 By $(e^i)_{1\leq i\leq d}\subset\naturals_0^d$ we denote the family of multi indices given by $e_j^i:=\deltasymb{i,j}$.
\end{defn}
\subsection{Overview}
In this section, we want to introduce the following two structures and show that they are Hopf algebras:
\setlength{\fboxsep}{5pt}
\begin{empheq}[box=\fbox]{equation*}
  \begin{aligned}
   &\polynoms_d=\vspan\{\X^n,\,n\in\naturals_0^d\}=\reals[\X_1,\ldots,\X_d]\\
   &\begin{alignedat}{2}
    &\propol:\,\polynoms_d\times\polynoms_d\rightarrow\polynoms_d,&\,&\X^n\propol\X^m=\X^{n+m}\\
    &\unitmappol:\,\reals\rightarrow\polynoms_d,&\quad&\unitmappol(r)=r\X^0\\
    &\copropol:\,\polynoms_d\rightarrow\polynoms_d\etensor\polynoms_d,&\quad&\copropol\X^n=\sum_{0\leq m\leq n}\binom{n}{m}\,\X^m\etensor\X^{n-m}\\
    &\counitpol:\,\polynoms_d\rightarrow\reals,&\quad&\counitpol(\X^n)=\begin{cases}1 &\text{if }n=0\\0&\text{else}\end{cases}\\
    &\antipol:\,\polynoms_d\rightarrow\polynoms_d,&\quad&\antipol\X^n=(-1)^n\X^n
   \end{alignedat}
  \end{aligned}
\end{empheq}

\begin{empheq}[box=\fbox]{equation*}
  \begin{aligned}
   &\diffs_d=\vspan\{\D^n,\,n\in\naturals_0^d\}\\
   &\begin{alignedat}{2}
    &\prodiff:\,\diffs_d\times\diffs_d\rightarrow\diffs_d,&\,&\D^n\prodiff\D^m=\binom{n+m}{m}\D^{n+m}\\
    &\unitmapdiff:\,\reals\rightarrow\diffs_d,&\quad&\unitmapdiff(r)=r\D^0\\
    &\coprodiff:\,\diffs_d\rightarrow\diffs_d\etensor\diffs_d,&\quad&\coprodiff\D^n=\sum_{0\leq m\leq n}\D^m\etensor\D^{n-m}\\
    &\counitdiff:\,\diffs_d\rightarrow\reals,&\quad&\counitdiff(\D^n)=\begin{cases}1 &\text{if }n=0\\0&\text{else}\end{cases}\\
    &\antidiff:\,\diffs_d\rightarrow\diffs_d,&\quad&\antidiff\D^n=(-1)^n\D^n
   \end{alignedat}
  \end{aligned}
\end{empheq}

\subsection{Polynomial algebra}
The content of this subsection is standard knowledge and belongs to the folklore of the theory of graded Hopf algebras.

The \emphind{polynomial algebra} is defined as the real vector space $\polynoms_d:=\vspan\{\X^n,\,n\in\naturals_0^d\}$\nomenclature[P d]{$\polynoms_d$}{Space of polynomials in $d$ dimensions} together with the bilinear product ${\propol}$ generated by $\X^n\X^m:=\X^n\propol\X^m:=\X^{n+m}$. Put $\unit:=\X^0$ and $\X_i:=\X^{e^i}$.
Our goal is to extend $(\polynoms_d,{\propol})$ to a connected graded Hopf algebra. Intuitively, we choose the grade of some monomial $\X^n$ to be equal to its polynomial degree $|n|$. This grading yields the decomposition $\polynoms_d=\bigoplus_{n\in\naturals}\polynoms_{d,n}$, where $\polynoms_{d,n}:=\vspan\{\X^m,\,m\in\naturals_0^d:\,|m|=n\}$. Hence, $\polynoms_{d,n}\propol\polynoms_{d,m}\subseteq\polynoms_{d,n+m}$ and $\dim\polynoms_{d,0}=1$, which means that $(\polynoms_d,{\propol})$ is a connected graded algebra (see Definition \ref{defn:grading}). When defining a compatible coproduct ${\copropol}$ which preserves the grading, we have to have $\copropol\unit:=\unit\etensor\unit$, which is exactly the compatibility requirement for the unit element in a bialgebra (see Remark \ref{rmk:comp_req}), as well as $\copropol\X_i:=\unit\etensor\X_i+\X_i\etensor\unit$ for all $i\in\{1,\dots,d\}$ due to the grading and remark \ref{rmk:coproduct_gradeone}. Then, in order to get a coproduct which is an algebra 
homomorphism as required in a bialgebra (see Definition \ref{defn:bialgebra}), we need to define
\begin{equation}\label{eq:def_copropol}
 \copropol\X^n:=\bigpropol_{i\in\{1,\dots,d\}}(\copropol\X_i)^{\propoltwo n_i}=\sum_{0\leq m\leq n}\binom{n}{m}\,\X^m\etensor\X^{n-m}\quad\forallc n\in\naturals_0^d,
\end{equation}
where again $(x\etensor v)\propoltwo(y\etensor w):=(x\propol y)\etensor(v\propol w)$. The coproduct on the whole of $\polynoms_d$ is then generated by linearity.
\begin{rmk}\label{rmk:copropol_unique}
 As we just showed there is only one candidate for a coproduct which may extend $(\polynoms_d,{\propol})$ to a bialgebra under the predefined grading $(\polynoms_{d,n})_n$.
\end{rmk}
\begin{lemma}
 $(\polynoms_d,{\propol},{\copropol})$ is a connected graded bialgebra.
\end{lemma}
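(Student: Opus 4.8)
The plan is to exploit that the material preceding the statement has already made $(\polynoms_d,\propol)$ a connected graded algebra with unit $\unit=\X^0$ and grading $\polynoms_{d,n}$, so that only three things remain: the coalgebra axioms for $\copropol$, the three compatibility requirements of Definition \ref{defn:bialgebra}, and the coalgebra grading of Definition \ref{defn:grading}. The decisive tool will be Lemma \ref{lemma:coassociative_subset}: because $\copropol$ is built to be an algebra homomorphism, I can check coassociativity and the counit property only on a generating set and let the lemma propagate them to all of $\polynoms_d$. The natural choice is $M=\{\unit,\X_1,\dots,\X_d\}$, whose generated subalgebra is all of $\polynoms_d$ since every monomial $\X^n$ is a $\propol$-product of the generators $\X_1,\dots,\X_d$ (with $\X^0=\unit$); the point worth a moment's care is that $\unit$ must be included in $M$ for this to be true.

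First I would dispatch the three compatibility requirements of Remark \ref{rmk:comp_req}. That $\copropol$ is an algebra homomorphism from $(\polynoms_d,\propol)$ to $(\polynoms_d\etensor\polynoms_d,\propoltwo)$ is immediate from its definition \eqref{eq:def_copropol} as $\copropol\X^n=\bigpropol_i(\copropol\X_i)^{\propoltwo n_i}$, since $\propoltwo$ is commutative and associative (both inherited from $\propol$), whence $\copropol(\X^n\propol\X^m)=\copropol\X^{n+m}=\copropol\X^n\propoltwo\copropol\X^m$. Requirement two, $\copropol\unit=\unit\etensor\unit$, is the $n=0$ instance of \eqref{eq:def_copropol}. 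For requirement three I observe that $\counitpol(\X^n\propol\X^m)=\counitpol(\X^{n+m})$ equals $1$ exactly when $n=m=0$, which is precisely $\counitpol(\X^n)\counitpol(\X^m)$, so $\counitpol$ is an algebra homomorphism as well.

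Next I would establish the coalgebra structure through Lemma \ref{lemma:coassociative_subset}. On each generator one has $\copropol\X_i=\unit\etensor\X_i+\X_i\etensor\unit$ (Remark \ref{rmk:coproduct_gradeone}), and a short direct computation shows that both $(\copropol\etensor\id)\copropol\X_i$ and $(\id\etensor\copropol)\copropol\X_i$ equal $\unit\etensor\unit\etensor\X_i+\unit\etensor\X_i\etensor\unit+\X_i\etensor\unit\etensor\unit$, while $\copropol\unit=\unit\etensor\unit$ is trivially coassociative; similarly $(\counitpol\etensor\id)\copropol\X_i=(\id\etensor\counitpol)\copropol\X_i=\X_i$ and the analogous identity holds on $\unit$. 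Since $\copropol$ and $\counitpol$ are algebra homomorphisms, Lemma \ref{lemma:coassociative_subset} extends \eqref{eq:coassociative_subset} and \eqref{eq:counit_subset} from $M$ to all of $\polynoms_d$, so $(\polynoms_d,\copropol)$ is a counital coalgebra with counit $\counitpol$.

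Finally I would read the coalgebra grading straight off the explicit expansion in \eqref{eq:def_copropol}: for $|n|=k$ each summand $\binom{n}{m}\X^m\etensor\X^{n-m}$ lies in $\polynoms_{d,|m|}\etensor\polynoms_{d,k-|m|}$ with $0\le|m|\le k$, giving $\copropol\polynoms_{d,k}\subseteq\bigoplus_{l\le k}\polynoms_{d,l}\etensor\polynoms_{d,k-l}$, and connectedness $\dim\polynoms_{d,0}=1$ is already on record. Combined with the algebra grading established before the statement, this yields a connected graded bialgebra. The only genuine calculations are the coassociativity and counit checks on the $\X_i$, which are short; the main labor-saving move is Lemma \ref{lemma:coassociative_subset}, which absorbs what would otherwise be a direct Vandermonde-type binomial identity needed to verify coassociativity of $\copropol$ on arbitrary monomials.
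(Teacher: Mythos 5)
Your proof is correct and follows essentially the same route as the paper's: both verify the compatibility requirements directly from the definition \eqref{eq:def_copropol}, invoke Lemma \ref{lemma:coassociative_subset} to reduce coassociativity and the counit property to the generators, and read the coalgebra grading off the binomial expansion. Your explicit inclusion of $\unit$ in the generating set $M$ is a small point of extra care that the paper glosses over, but otherwise the arguments coincide.
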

\begin{proof}
 $(\polynoms_d,{\propol})$ is an unital algebra as $\propol$ is obviously associative and by definition bilinear with unit $\unit$. Likewise, $\copropol$ is linear by definition. Its linearity, the bilinearity of ${\propol}$ and the first equality in \eqref{eq:def_copropol} then imply that $\copropol$ is an algebra homomorphism. Hence, by Lemma \ref{lemma:coassociative_subset} coassociativity already follows generally from the special cases
 \begin{align*}
  (\id\etensor{\copropol})\copropol\X_i&=\unit\etensor\copropol\X_i+\X_i\etensor\copropol\unit=\unit\etensor\unit\etensor\X_i+\unit\etensor\X_i\etensor\unit+\X_i\etensor\unit\etensor\unit\\
  &=\copropol\unit\etensor\X_i+\copropol\X_i\etensor\unit=({\copropol}\etensor\id)\copropol\X_i.
 \end{align*}
 Thus, $(\polynoms_d,{\copropol})$ is a coalgebra. The fact that $\counitpol\in\adual{\polynoms_d}:\,\counitpol(P)=P(0)$ is a counit is obvious from the expansion in \eqref{eq:def_copropol}. The validity of the compatibility requirements for the counit and the unit element, namely $\copropol\unit=\unit\etensor\unit$ and $\counitpol(P)\counitpol(Q)=P(0)Q(0)=(PQ)(0)=\counitpol(PQ)$ for all $P,Q\in\polynoms_d$ (\cf Definition \ref{defn:bialgebra} and Remark \ref{rmk:comp_req}) then completes the proof of $(\polynoms_d,{\propol},{\copropol})$ being a bialgebra. As $\polynoms_{d,n}\propol\polynoms_{d,m}\subseteq\polynoms_{d,n+m}$ and 
 \begin{equation*}
  \copropol\polynoms_{d,n}\subseteq\bigoplus_{m+l=n}\polynoms_{d,m}\etensor\polynoms_{d,l}
 \end{equation*}
 (see \eqref{eq:def_copropol}) as well as $\dim\polynoms_{d,0}=1$, it is also connected graded according to Definition \ref{defn:grading}.\\
\end{proof}
As the graded bialgebra $(\polynoms_d,{\propol},{\copropol})$ is connected, Theorem \ref{thm:antipode_recursion} shows that an antipode is given by the recursion
\begin{equation}\label{eq:rec}
 \antipol P=-P-\mpropol(\id\etensor\antipol)\rcopropol P=-P-\sum_{(P)}P'\propol\antipol P''\quad\forallc P\in\ker\counitpol,\quad\antipol\unit=\unit.
\end{equation}
Knowing that an antipode exists, we may deduce its form without performing the recursion, but simply from the algebra antimorphism property any antipode fulfills due to Theorem \ref{thm:antipode_antimorphism}. Indeed, we just need to compute
\begin{align*}
 0&=\unitmappol\counitpol(\X_i)=\mpropol(\id\etensor\antipol)\copropol\X_i=\mpropol(\id\etensor\antipol)(\unit\etensor\X_i+\X_i\etensor\unit)=\unit\propol\antipol\X_i+\X_i\propol\antipol\unit\\
 &=\antipol\X_i+\X_i,
\end{align*}
concluding $\antipol\X_i=-\X_i$ for all $i\in\{1,\ldots,d\}$, and the fact that $\antipol$ is a $\propol$ antimorphism, and thus a homomorphism since $\propol$ is commutative, already implies that
\begin{align*}
 \antipol\X^n&=\antipol(\X_1^{\propol n_1}\propol\cdots\propol\X_d^{\propol n_d})=(\antipol\X_1)^{\propol n_1}\propol\cdots\propol(\antipol\X_d)^{\propol n_d}=(-\X_1)^{\propol n_1}\propol\cdots\propol(-\X_d)^{\propol n_d}\\
 &=(-1)^{\lvert n\rvert}\X^n
\end{align*}
for all $n\in\naturals_0^d$. The map $\antipol$ is then obviously graded, which is also stated by Theorem \ref{thm:antipode_recursion}. We thus have the following.

\begin{thm}\label{thm:polhopf}
 $(\polynoms_d,{\propol},{\copropol},\antipol)$ is a connected graded Hopf algebra.
\end{thm}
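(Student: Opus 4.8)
The plan is to observe that almost all of the structural work is already finished. By the previous lemma, $(\polynoms_d,\propol,\copropol)$ is a connected graded bialgebra, and Theorem \ref{thm:antipode_recursion} asserts that \emph{any} connected graded bialgebra is automatically a connected graded Hopf algebra, with an antipode furnished (and in fact forced) by the recursion \eqref{eq:recursion_antipode_right}. Hence the existence of an antipode $\antipol$ turning $(\polynoms_d,\propol,\copropol)$ into a connected graded Hopf algebra is immediate, and the only content left in the statement is the claim that this antipode is explicitly $\antipol\X^n=(-1)^{\lvert n\rvert}\X^n$. So the remaining task is purely one of \emph{identifying} the antipode with this closed form.

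Since the antipode is unique whenever it exists (the corollary to Theorem \ref{thm:hopf_morphism}), it suffices to check that the linear map $\X^n\mapsto(-1)^{\lvert n\rvert}\X^n$ satisfies the antipode property \eqref{eq:antipodeproperty}. Substituting into $\mpropol(\antipol\etensor\id)\copropol$ and using the binomial expansion \eqref{eq:def_copropol} of $\copropol$, every surviving term collapses to a multiple of $\X^n$, so the verification reduces to the multi-index identity
\begin{equation*}
 \sum_{0\leq m\leq n}(-1)^{\lvert m\rvert}\binom{n}{m}=\prod_{i=1}^d\sum_{m_i=0}^{n_i}(-1)^{m_i}\binom{n_i}{m_i}=\prod_{i=1}^d(1-1)^{n_i},
\end{equation*}
which vanishes for $n\neq 0$ and equals $1$ for $n=0$. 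This is exactly $\counitpol(\X^n)$, matching $\unitmappol\counitpol$; the computation for $\mpropol(\id\etensor\antipol)\copropol$ is symmetric.

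Alternatively, and in the spirit of the discussion preceding the statement, I would invoke Theorem \ref{thm:antipode_antimorphism}: the antipode is an algebra antimorphism, hence an algebra homomorphism because $\propol$ is commutative. It then suffices to pin down $\antipol$ on the generators $\X_i$; from Remark \ref{rmk:coproduct_gradeone} we have $\copropol\X_i=\unit\etensor\X_i+\X_i\etensor\unit$, whence the antipode property gives $\antipol\X_i=-\X_i$, and multiplicativity propagates this to $\antipol\X^n=(-1)^{\lvert n\rvert}\X^n$. Grading preservation, $\antipol\polynoms_{d,n}\subseteq\polynoms_{d,n}$, is visible from the formula and is in any case guaranteed by Theorem \ref{thm:antipode_recursion}. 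I do not expect a genuine obstacle: the structural weight is carried entirely by Theorem \ref{thm:antipode_recursion}, and the only labor is the bookkeeping of the multi-index binomial sum, which factorizes over the $d$ coordinates into one-dimensional instances of $\sum_k(-1)^k\binom{N}{k}=(1-1)^N$; the mildest care needed is to treat the base case $n=0$, where the single term $m=0$ gives $1$, separately from $n\neq 0$.
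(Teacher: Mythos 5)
Your proposal is correct, and your ``alternative'' route at the end is in fact exactly the paper's proof: existence of the antipode from Theorem \ref{thm:antipode_recursion}, then $\antipol\X_i=-\X_i$ from the antipode property on the grade-one generators (via Remark \ref{rmk:coproduct_gradeone}), then propagation to $\antipol\X^n=(-1)^{\lvert n\rvert}\X^n$ using that the antipode is an algebra antimorphism, hence a homomorphism by commutativity of $\propol$. Your primary route differs: you verify the antipode property \eqref{eq:antipodeproperty} directly for the candidate map $\X^n\mapsto(-1)^{\lvert n\rvert}\X^n$, reducing it to the factorized alternating sum $\sum_{0\leq m\leq n}(-1)^{\lvert m\rvert}\binom{n}{m}=\prod_i(1-1)^{n_i}$, and then appeal to uniqueness of the antipode. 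That computation is correct (and note that once the antipode property is verified directly, the appeal to uniqueness and to Theorem \ref{thm:antipode_recursion} is not even needed to conclude the Hopf algebra statement --- the verification alone establishes it, with Theorem \ref{thm:antipode_recursion} only supplying the grading preservation, which is also visible from the formula). What each buys: your direct check is self-contained and does not use Theorem \ref{thm:antipode_antimorphism}, at the cost of a combinatorial identity; the paper's derivation involves no summation and explains why the formula is forced, but leans on the antimorphism theorem and on commutativity.
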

\subsection{Algebra of differential operators}
Now, we consider the space of differential operators $\diffs_d:=\vspan\{\partial^n,\,n\in\naturals_0^d\}$ over the field $\reals$. Our treatment of the corresponding Hopf algebra is based on Section 2.1 \cite{brouder04} and on Example 2.2 \cite{hairerkelly14}, and also on some standard facts.

The tuple $(\polynoms_d,\diffs_d)$ is a pair of dual vector spaces via the duality pairing (Remark 4.19 \cite{hairer14})
\begin{equation}\label{eq:dualitypairing_diff_pol}
 \langle\partial^n,p\rangle:=(\partial^n p)(0),\,p\in\polynoms_d
\end{equation}
The dual basis of $\{\X^n,\,n\in\naturals_0^d\}$ is then given by $\{\D^n,\,n\in\naturals_0^d\}$, where $\D^n:=\frac{1}{n!}\partial^n$. Indeed, we have 
\begin{equation*}
 \langle\D^n,\X^m\rangle=\frac{1}{n!}(\partial^n\X^m)(0)=\deltasymb{n,m}\quad\forallc n,m\in\naturals_0^d.
\end{equation*}
Analogous to the case of the polynomials, we introduce the intuitive grading leading to the decomposition $\diffs_d=\bigoplus_{n\in\naturals}\diffs_{d,(n)}$ into subspaces $\diffs_{d,(n)}:={\vspan\{\D^m,\,m\in\naturals_0^d:\,|m|=n\}}$ as well as a bilinear product, the \mbox{composition ${\prodiff}$}, via $\partial^n\partial^m:=\partial^n\prodiff\partial^m:=\partial^{n+m}$. Obviously, there is an isomorphism ${\imorphpd:\,(\polynoms_d,{\propol})\rightarrow(\diffs_d,{\prodiff})}$ such that $\imorphpd(\X^n)=\partial^n$, \ie the grading is preserved by it as well. Hence, by Remark \ref{rmk:copropol_unique}, we also have to choose the coproduct ${\coprodiff}$ and the antipode $\antidiff$ isomorphic to ${\copropol}$ and $\antipol$ in order to extend $(\diffs_d,{\prodiff})$ to a connected graded Hopf algebra, meaning (Section 2.1 \cite{brouder04}, Example 2.2 \cite{hairerkelly14})
\begin{equation*}
 \coprodiff\partial^n:=\sum_{0\leq m\leq n}\binom{n}{m}\,\partial^m\etensor\partial^{n-m},\quad\antidiff\partial^n:=(-1)^{|n|}\partial^n.
\end{equation*}
When using the basis $(\D^n)_{n\in\naturals_0^d}$, the algebraic rules read
\begin{equation*}
 \D^n\prodiff\D^m=\binom{n+m}{m}\D^{n+m},\quad\coprodiff\D^n=\sum_{0\leq m\leq n}\D^m\etensor\D^{n-m},\quad\antidiff\D^n=(-1)^{|n|}\D^n.
\end{equation*}
\begin{thm}
 $(\diffs_d,{\prodiff},{\coprodiff},\antidiff)$ is a connected graded Hopf algebra and dual to $(\polynoms_d,{\propol},{\copropol},\antipol)$, \ie
 \begin{alignat}{2}
  \langle D_1\prodiff D_2,P\rangle&=\langle D_1\otimes D_2,\copropol P\rangle&\quad&\forallc D_1,D_2\in\diffs_d,\,P\in\polynoms_d,\label{eq:dual_prodiff}\\
  \langle\coprodiff D,P\otimes Q\rangle&=\langle D,P\propol Q\rangle&&\forallc D\in\diffs_d,\,P,Q\in\polynoms_d,\label{eq:dual_coprodiff}\\
  \langle\antidiff D,P\rangle&=\langle D,\antipol P\rangle&&\forallc D\in\diffs_d,\,P\in\polynoms_d\label{eq:dual_antidiff}.
 \end{alignat}
\end{thm}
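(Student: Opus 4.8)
The plan is to treat the two assertions separately: the three duality relations \eqref{eq:dual_prodiff}--\eqref{eq:dual_antidiff}, which I will check by a direct computation on basis elements, and the claim that $(\diffs_d,\prodiff,\coprodiff,\antidiff)$ is a connected graded Hopf algebra, which I will obtain for free by transporting the structure of $\polynoms_d$ along the isomorphism $\imorphpd$.

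For the duality relations I would exploit that $(\D^n)_n$ and $(\X^m)_m$ are dual bases, $\langle\D^n,\X^m\rangle=\deltasymb{n,m}$, so that by the construction of the induced pairing in Theorem \ref{thm:tensordual} we have $\langle\D^a\etensor\D^b,\X^c\etensor\X^e\rangle=\deltasymb{a,c}\deltasymb{b,e}$. Each relation is (bi)linear in its arguments, hence it suffices to evaluate both sides on these basis vectors and extend by linearity. For \eqref{eq:dual_prodiff} I take $D_1=\D^a$, $D_2=\D^b$, $P=\X^n$: the left-hand side is $\binom{a+b}{b}\deltasymb{a+b,n}$ coming from $\D^a\prodiff\D^b=\binom{a+b}{b}\D^{a+b}$, while the right-hand side expands via $\copropol\X^n=\sum_{0\leq m\leq n}\binom{n}{m}\X^m\etensor\X^{n-m}$ to $\sum_{0\leq m\leq n}\binom{n}{m}\deltasymb{a,m}\deltasymb{b,n-m}$, which collapses to the same scalar. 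Relation \eqref{eq:dual_coprodiff} is handled identically with $D=\D^n$, $P=\X^a$, $Q=\X^b$, both sides reducing to $\deltasymb{n,a+b}$, and \eqref{eq:dual_antidiff} follows from $(-1)^{\lvert n\rvert}\deltasymb{n,m}=(-1)^{\lvert m\rvert}\deltasymb{n,m}$. The analogous identities $\unitmapdiff\isdual\counitpol$ and $\counitdiff\isdual\unitmappol$ for unit and counit are checked the same way.

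For the connected graded Hopf algebra claim I would argue that $\imorphpd$ is not merely an algebra isomorphism but a connected graded Hopf algebra isomorphism. Indeed, the coproduct, antipode, counit and unit on $\diffs_d$ were defined precisely as the images $\coprodiff=(\imorphpd\etensor\imorphpd)\copropol\imorphpd^{-1}$, $\antidiff=\imorphpd\antipol\imorphpd^{-1}$, $\counitdiff=\counitpol\imorphpd^{-1}$ and $\unitmapdiff=\imorphpd\unitmappol$, so that $\imorphpd$ intertwines all structure maps while mapping the grading $\polynoms_{d,n}$ onto $\diffs_{d,(n)}$. Since $(\polynoms_d,\propol,\copropol,\antipol)$ is a connected graded Hopf algebra by Theorem \ref{thm:polhopf}, its isomorphic image $(\diffs_d,\prodiff,\coprodiff,\antidiff)$ is one as well. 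Alternatively, once the duality relations are in hand one may invoke Theorem \ref{thm:dual_hopf_bi_co_algebras} to deduce the Hopf algebra property from that of $\polynoms_d$, the grading then being supplied by the explicit formulas $\diffs_{d,(n)}\prodiff\diffs_{d,(m)}\subseteq\diffs_{d,(n+m)}$, $\coprodiff\diffs_{d,(n)}\subseteq\bigoplus_{m+l=n}\diffs_{d,(m)}\etensor\diffs_{d,(l)}$, $\antidiff\diffs_{d,(n)}\subseteq\diffs_{d,(n)}$ and $\dim\diffs_{d,(0)}=1$.

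There is no genuine obstacle here; the content is entirely bookkeeping. The points requiring care are the correct matching of the binomial coefficients $\binom{a+b}{b}=\binom{a+b}{a}$ on the $\diffs_d$-side against $\binom{n}{m}$ in $\copropol$, the handling of the induced pairing on the tensor products, and --- for the isomorphism argument --- confirming that the stated formulas for $\coprodiff$ and $\antidiff$ really do coincide with the transported maps $(\imorphpd\etensor\imorphpd)\copropol\imorphpd^{-1}$ and $\imorphpd\antipol\imorphpd^{-1}$, which holds since both sides agree on the generators $\partial^n=\imorphpd\X^n$.
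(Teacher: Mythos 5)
Your proposal is correct and follows essentially the same route as the paper: the duality relations are verified on the dual bases $(\D^n)_n$ and $(\X^m)_m$ by the identical binomial-coefficient computation, and the connected graded Hopf algebra property is inherited from $(\polynoms_d,\propol,\copropol,\antipol)$ via the isomorphism $\imorphpd$ by construction. The alternative argument via Theorem \ref{thm:dual_hopf_bi_co_algebras} that you mention is a valid bonus but not needed.
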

\begin{proof}
  The first claim is guaranteed by Theorem \ref{thm:polhopf} and isomorphy by construction. As for the duality identities, it is sufficient to check their validity for the elements of the bases $(\X^n)_{n\in\naturals_0^d}$ and $(\D^n)_{n\in\naturals_0^d}$ due to the bilinearity of the duality pairing and the products and the linearity of the coproducts and the antipodes. Considering \eqref{eq:dual_prodiff}, we indeed compute
 \begin{multline*}
  \langle\D^n\prodiff\D^m,\X^l\rangle=\binom{n+m}{n}\deltasymb{n+m,l}=\sum_{0\leq k\leq l}\binom{l}{k}\deltasymb{n,k}\deltasymb{m,l-k}=\sum_{0\leq k\leq l}\binom{l}{k}\langle\D^n\otimes\D^m,\X^k\etensor\X^{l-k}\rangle\\
  =\langle\D^n\otimes\D^m,\copropol\X^l\rangle
 \end{multline*}
 for all $n,m,l\in\naturals_0^d$. Likewise,
 \begin{equation*}
  \langle\coprodiff\D^n,\X^m\otimes\X^l\rangle=\sum_{0\leq k\leq n}\langle\D^k\etensor\D^{n-k},\X^m\otimes\X^l\rangle=\sum_{0\leq k\leq n}\deltasymb{k,m}\deltasymb{n-k,l}=\deltasymb{n,m+l}=\langle\D^n,\X^m\propol\X^l\rangle
 \end{equation*}
 proves \eqref{eq:dual_coprodiff} and
 \begin{equation*}
  \langle\antidiff\D^n,\X^m\rangle=(-1)^{|n|}\deltasymb{n,m}=(-1)^{|m|}\deltasymb{n,m}=\langle\D^n,\antipol\X^m\rangle
 \end{equation*}
 validates \eqref{eq:dual_antidiff}.\\
\end{proof}

At this point, we would like to move to the algebraic dual space $\adual{\polynoms_d}$, the space of formal series $\sum_{n\in\naturals_0^d}a_n\D^n$ of which $\diffs_d$ is just the subspace of finite series. 
We can extend the Hopf algebra operations via
\begin{align*}
 \Big(\sum_{n\in\naturals_0^d}a_n\D^n\Big)\prodiff\Big(\sum_{m\in\naturals_0^d}b_m\D^m\Big)&:=\sum_{n,m\in\naturals_0^d}a_n b_m\D^n\prodiff\D^m=\sum_{k\in\naturals_0^d}\sum_{0\leq l\leq k}\binom{k}{l}a_l b_{k-l}\D^k,\\
 \copropodu\sum_{n\in\naturals_0^d}a_n\D^n&:=\sum_{n\in\naturals_0^d}a_n\coprodiff\D^n=\sum_{m,k\in\naturals_0^d}a_{m+k}\,\D^m\etensor\D^k,\\
 \antipodu\sum_{n\in\naturals_0^d}a_n\D^n&:=\sum_{n\in\naturals_0^d}a_n\antidiff\D^n=\sum_{n\in\naturals_0^d}(-1)^{|n|}a_n\D^n,
\end{align*}
but the problem is that $(\adual{\polynoms_d},\copropodu)$ does \emph{not} constitute a coalgebra, because we only have that ${\copropodu\adual{\polynoms_d}\subseteq\adual{(\polynoms_d\etensor\polynoms_d)}}$, yet
\begin{thm}
 $\copropodu\adual{\polynoms_d}\nsubseteq\adual{\polynoms_d}\etensor\adual{\polynoms_d}$.
\end{thm}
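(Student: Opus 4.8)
The plan is to exploit the fact that, in infinite dimensions, the canonical embedding $\adual{\polynoms_d}\otimes\adual{\polynoms_d}\hookrightarrow\adual{(\polynoms_d\otimes\polynoms_d)}$ fails to be surjective. This embedding is well-defined and injective by Theorem \ref{thm:tensordual} applied to the evaluation pairing $\langle P,\xi\rangle:=\xi(P)$, which makes $(\polynoms_d,\adual{\polynoms_d})$ a pair of dual vector spaces over $\reals$ (a field of characteristic zero). Since the excerpt already records $\copropodu\adual{\polynoms_d}\subseteq\adual{(\polynoms_d\otimes\polynoms_d)}$, it then suffices to produce a single $\xi\in\adual{\polynoms_d}$ whose image $\copropodu\xi$ lies outside that image, and the cleanest invariant distinguishing the two spaces is the \emph{rank} of the associated bilinear form.

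First I would make this rank criterion precise. Identify $\adual{\polynoms_d}$ with arbitrary families $(a_n)_{n\in\naturals_0^d}$ via $a_n:=\xi(\X^n)$, and to any functional $\Phi$ on $\polynoms_d\otimes\polynoms_d$ attach its matrix $\Phi_{p,q}:=\langle\Phi,\X^p\otimes\X^q\rangle$. A finite sum $\sum_{i=1}^N\xi_i\otimes\eta_i$ has matrix $\sum_i\xi_i(\X^p)\eta_i(\X^q)$ of rank at most $N$; conversely, since $\{\X^p\}$ is a basis, any finite-rank matrix factoring as $c_{p,q}=\sum_{i=1}^r u_{i,p}v_{i,q}$ is realized by the functionals $\xi_i,\eta_i$ prescribed by $\xi_i(\X^p)=u_{i,p}$ and $\eta_i(\X^q)=v_{i,q}$. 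Hence $\Phi\in\adual{\polynoms_d}\otimes\adual{\polynoms_d}$ if and only if the infinite matrix $(\Phi_{p,q})$ has finite rank. Next I would compute directly, from the extension formula, that $\langle\copropodu\xi,\X^p\otimes\X^q\rangle=\sum_{m,k}a_{m+k}\deltasymb{m,p}\deltasymb{k,q}=a_{p+q}$, so that $\copropodu\xi$ is encoded by the generalized Hankel matrix $(a_{p+q})_{p,q}$.

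It then remains to choose $(a_n)$ making this Hankel matrix have infinite rank, and it is enough to let the first variable carry everything: take $a_n:=1/(n_1!)$ when $n_2=\dots=n_d=0$ and $a_n:=0$ otherwise. Restricting the matrix to indices $p=(p_1,0,\dots,0)$ and $q=(q_1,0,\dots,0)$ yields the one-variable Hankel matrix $\bigl(1/(p_1+q_1)!\bigr)_{p_1,q_1\ge0}$, and since the rank of a submatrix is at most the rank of the full matrix, infinite rank of this block forces infinite rank of $(a_{p+q})_{p,q}$. The crux is to show that the rows $R_0,R_1,\dots$ of the one-variable matrix, with $R_p=(1/(p+\ell)!)_{\ell\ge0}$, form a linearly independent family. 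I would argue that a vanishing finite combination $\sum_{p=0}^{r}c_pR_p=0$ means $\sum_{p=0}^{r}c_p/(n+p)!=0$ for all $n\ge0$; multiplying by $(n+r)!$ converts this into the polynomial identity $\sum_{p=0}^{r}c_p\,(n+p+1)\cdots(n+r)=0$, whose summand of index $p$ has degree $r-p$ in $n$, so reading off the coefficients from the top degree downward forces $c_0=c_1=\dots=c_r=0$.

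The main obstacle, and the only genuinely nonformal step, is precisely this last point: showing that the chosen sequence satisfies no nontrivial Hankel relation (equivalently, no constant-coefficient linear recurrence), which is what guarantees infinitely many independent rows and hence infinite rank. Everything else — the injectivity of the embedding via Theorem \ref{thm:tensordual}, the finite-rank characterization of the algebraic tensor product, and the Hankel form of $\copropodu\xi$ — is structural bookkeeping. Since the matrix $(a_{p+q})$ then has infinite rank, we conclude $\copropodu\xi\notin\adual{\polynoms_d}\otimes\adual{\polynoms_d}$, which proves $\copropodu\adual{\polynoms_d}\nsubseteq\adual{\polynoms_d}\otimes\adual{\polynoms_d}$.
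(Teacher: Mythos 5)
Your proof is correct, and it rests on the same underlying obstruction as the paper's: an element of the algebraic tensor product $\adual{\polynoms_d}\etensor\adual{\polynoms_d}$, written as $\sum_k\D^k\etensor L_k$, can only have finitely many linearly independent second factors $L_k$, so it suffices to exhibit one $\xi$ for which the $L_k$ span an infinite-dimensional space. The execution differs in two ways. First, the witness: the paper takes $D=\sum_n\D^{n^2}$ and asserts (without proof) that the resulting family $\big(\sum_{n^2\geq k}\D^{n^2-k}\big)_k$ is infinite-dimensional, which ultimately relies on the unbounded gaps between squares; you take $a_n=1/n_1!$ and give a complete, elementary verification of independence via the polynomial-degree argument after multiplying by $(n+r)!$ --- arguably the cleaner choice, since your independence step is fully checkable. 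Second, the packaging: you isolate the general criterion that membership in the image of $\adual{\polynoms_d}\etensor\adual{\polynoms_d}\hookrightarrow\adual{(\polynoms_d\etensor\polynoms_d)}$ is equivalent to finite rank of the coefficient matrix, and observe that $\copropodu\xi$ always produces the Hankel matrix $(a_{p+q})_{p,q}$; this buys a reusable characterization (finite rank $\iff$ the sequence satisfies a linear recurrence, which also explains \emph{why} the paper's square-indicator works), at the cost of having to justify both directions of the rank criterion and the injectivity of the embedding, which you correctly source from Theorem \ref{thm:tensordual}. The paper instead argues directly by contradiction from a hypothetical finite representation, needing only the one easy direction. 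Both are valid; yours is more systematic and closes a small gap the paper leaves open.
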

\begin{proof}
 At first, let $d=1$.
 Consider $D:=\sum_{n=0}^\infty\D^{n^2}\in\adual{\polynoms_1}$. We have
 \begin{equation}\label{eq:copropodu_D}
  \copropodu D=\sum_{n=0}^\infty\sum_{k\leq n^2}\D^k\etensor\D^{n^2-k}=\sum_{k=0}^\infty\D^k\etensor\Big(\sum_{n^2\geq k}\D^{n^2-k}\Big).
 \end{equation}
 Assume $\copropodu D\in\adual{\polynoms_1}\etensor\adual{\polynoms_1}$. Then, there is $l\in\naturals$, $(a_k^i)_{k,i}\subset\reals$ and $(L_i)_i\subset\adual{\polynoms_1}$ such that 
 \begin{equation*}
 \copropodu D=\sum_{i=1}^l\Bigg(\Big(\sum_{k=0}^\infty a_k^i\D^k\Big)\etensor L_i\Bigg)=\sum_{k=0}^\infty\D^k\etensor\Big(\sum_{i=1}^l a_k^i L_i\Big).
 \end{equation*}
 This is a contradiction to \eqref{eq:copropodu_D} as
 \begin{equation*}
  \dim\vspan\Big\{\sum_{i=1}^l a_k^i L_i,k\in\naturals_0\Big\}\leq l,\quad\text{but }\dim\vspan\Big\{\sum_{n^2\geq k}\D^{n^2-k},k\in\naturals_0\Big\}=\infty.
 \end{equation*}
 For $d>1$, considering $D:=\sum_{n\in\naturals_0}\D^{n^2 e_1}\in\adual{\polynoms_d}$ will lead to the same result.\\
\end{proof}
However, this is not a big issue for us, since in Section \ref{subsection:formal_series}, we will see that we can nevertheless work with the extended operation $\copropodu$ basically as usual, and that the defining identities of a Hopf algebra remain valid for formal series.

\section{Tensor Hopf algebras}
\subsection{Overview}
In this section, we want to introduce the following two structures and show that they are Hopf algebras:
 \begin{empheq}[box=\fbox]{equation*}
  \begin{aligned}
   &\tensoral{V}=\bigoplus_{n=0}^{\infty}V^{\itensor n}\\
   &\itensor:\,\tensoral{V}\times\tensoral{V}\rightarrow\tensoral{V}\\
   &\unitmap:\,K\rightarrow\tensoral{V},\quad\unitmap(r)=r\unit\\
   &\deshuffle:\,\tensoral{V}\rightarrow\tensoral{V}\etensor\tensoral{V},\quad u=v_1\itensor\ldots\itensor v_n\text{ is mapped to}\\
   &\deshuffle u=\unit\etensor u+u\etensor\unit+\sum_{0<i<n}\sum_{\sigma\in\shuffleperm(i,n)}v_{\sigma(1)}\itensor\dots\itensor v_{\sigma(i)}\etensor v_{\sigma(i+1)}\itensor\dots\itensor v_{\sigma(n)}\\
   &\counit:\,\tensoral{V}\rightarrow K,\quad\counit(\unit)=1,\quad\counit(V^{\itensor n})=\{0\}\text{ for }n\in\naturals\\
   &\antitensor:\,\tensoral{V}\rightarrow\tensoral{V},\quad\antitensor(v_1\itensor\dots\itensor v_n)=(-1)^n v_n\itensor\dots\itensor v_1
  \end{aligned}
\end{empheq}

\begin{empheq}[box=\fbox]{equation*}
  \begin{aligned}
   &\tensoral{\Vpdual}\\
   &\shuffle:\,\tensoral{\Vpdual}\times\tensoral{\Vpdual}\rightarrow\tensoral{\Vpdual},\\
   &(\vpdual_1\itensor\dots\itensor \vpdual_i)\shuffle(\vpdual_{i+1}\itensor\dots\itensor \vpdual_n):=\sum_{\sigma\in\shuffleperm(i,n)}\vpdual_{\sigma^{-1}(1)}\itensor\dots\itensor \vpdual_{\sigma^{-1}(n)}\\
   &\unitmap:\,K\rightarrow\tensoral{\Vpdual},\quad\unitmap(r)=r\unit\\
   &\deconc:\,\tensoral{\Vpdual}\rightarrow\tensoral{\Vpdual}\etensor\tensoral{\Vpdual},\\
   &\deconc(\vpdual_1\itensor\dots\itensor\vpdual_n)=\sum_{i=0}^n\vpdual_1\itensor\dots\itensor\vpdual_i\etensor\vpdual_{i+1}\itensor\dots\itensor\vpdual_n\\
   &\counit:\,\,\tensoral{\Vpdual}\rightarrow K,\quad\counit(\unit)=1,\quad\counit(\Vpdual^{\itensor n})=\{0\}\text{ for }n\in\naturals\\
   &\antishuffle:\,\,\tensoral{\Vpdual}\rightarrow\tensoral{\Vpdual},\quad\antishuffle(v_1\itensor\dots\itensor v_n)=(-1)^n v_n\itensor\dots\itensor v_1
  \end{aligned}
\end{empheq}
\subsection{Concatenation product Hopf algebra}
\begin{defn}\textnormal{(Section I.2.2.\@ \cite{manchon06}, Chapter 4 \cite{hairerkelly14})}
 For some real or complex vector space $V$ and tensor products $\itensor$, the \emph{tensor algebra of $V$}\index{tensor algebra} is given by 
 \begin{equation}\label{eq:tensoral}
  \tensoral{V}:=\bigoplus_{n=0}^{\infty}V^{\itensor n},
 \end{equation}
 \nomenclature[T(V)]{$\tensoral{V}$}{Tensor algebra of some vector space V}where $V^{\itensor 0}:=\vspan\{\unit\}$ is isomorphic to $\reals$ or $\complexs$, respectively. 
\end{defn}
We consider $\itensor:\,\tensoral{V}\times\tensoral{V}\rightarrow\tensoral{V}$ as the algebra product on $\tensoral{V}$ with unit $\unit$, also called \emphind{concatenation}, while $\tensoral{V}\etensor\tensoral{V}\nsubseteq\tensoral{V}$ reoccurs as the space to which the coproduct maps.
\begin{lemma}\textnormal{(Section I.2.2.\@ \cite{manchon06})}
 The couple $(\tensoral{V},{\itensor})$ is a connected graded unital algebra under the grading given by \eqref{eq:tensoral}.
\end{lemma}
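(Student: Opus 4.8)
The plan is to verify in turn the four defining properties: that $\itensor$ is a well-defined bilinear associative product, that $\unit$ acts as a unit, that the decomposition $\tensoral{V}=\bigoplus_{n=0}^{\infty}V^{\itensor n}$ is a grading in the sense of Definition~\ref{defn:grading}, and that $\dim V^{\itensor 0}=1$. First I would make the product precise: on a homogeneous pair $V^{\itensor n}\times V^{\itensor m}$ the concatenation is the bilinear map induced by the canonical identification $V^{\itensor n}\etensor V^{\itensor m}\isom V^{\itensor(n+m)}$ (the associativity of $\etensor$ up to isomorphism was recorded at the start of the chapter when we identified $(V_1\etensor V_2)\etensor V_3$ with $V_1\etensor(V_2\etensor V_3)$), sending $v_1\itensor\cdots\itensor v_n$ and $w_1\itensor\cdots\itensor w_m$ to the juxtaposition $v_1\itensor\cdots\itensor v_n\itensor w_1\itensor\cdots\itensor w_m$. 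Since $\tensoral{V}$ is a direct sum, every element is a finite sum of homogeneous ones, so this extends uniquely by bilinearity to all of $\tensoral{V}\times\tensoral{V}$, and bilinearity then holds by construction.

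The grading property $V^{\itensor n}\itensor V^{\itensor m}\subseteq V^{\itensor(n+m)}$ is built into this definition: the concatenation of a degree-$n$ element with a degree-$m$ element lands in degree $n+m$. Connectedness is equally immediate, since $V^{\itensor 0}=\vspan\{\unit\}$ by \eqref{eq:tensoral}, whence $\dim V^{\itensor 0}=1$. Thus the only substantive points are associativity and the unit law.

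For associativity it suffices, by bilinearity, to check $(u\itensor w)\itensor z=u\itensor(w\itensor z)$ on pure tensors $u\in V^{\itensor n}$, $w\in V^{\itensor m}$, $z\in V^{\itensor l}$. Both sides equal the juxtaposition in $V^{\itensor(n+m+l)}$, and they agree because the two bracketings correspond under the canonical identification $(V^{\itensor n}\etensor V^{\itensor m})\etensor V^{\itensor l}\isom V^{\itensor n}\etensor(V^{\itensor m}\etensor V^{\itensor l})$ already used to write $V_1\etensor V_2\etensor V_3$ unambiguously; in the notation of \eqref{eq:associativity} this is $\mitensor(\id\etensor\mitensor)=\mitensor(\mitensor\etensor\id)$. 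For the unit law, $\unit\in V^{\itensor 0}$ corresponds to $1\in K$ under $V^{\itensor 0}\isom K$, and the identifications $K\etensor V^{\itensor n}\isom V^{\itensor n}\isom V^{\itensor n}\etensor K$ give $\unit\itensor u=u\itensor\unit=u$ on each homogeneous component, hence on all of $\tensoral{V}$ by linearity, which is exactly \eqref{eq:unit_property}.

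The computations here are entirely routine; the only point demanding care is that associativity, the unit law, and the grading all rest on the coherence of the canonical isomorphisms between iterated tensor products. The main obstacle is therefore purely bookkeeping: I would phrase each step so that it invokes exactly the identification fixed in the chapter introduction rather than silently re-deriving the coherence of these isomorphisms, so that the argument stays honest about what is being assumed.
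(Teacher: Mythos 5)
Your proof is correct and follows essentially the same route as the paper's (which is considerably terser): bilinearity and associativity both come from the canonical identification of iterated tensor products fixed at the start of the chapter, and $\unit$ spanning $V^{\itensor 0}$ gives the unit law and connectedness. Your added care in spelling out the grading inclusion $V^{\itensor n}\itensor V^{\itensor m}\subseteq V^{\itensor(n+m)}$ and the coherence of the identifications is a faithful elaboration rather than a different argument.
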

\begin{proof}
 Tensor products are bilinear and associativity is fulfilled due to the usual identification $u_1\itensor u_2\itensor u_3:=(u_1\itensor u_2)\itensor u_3=u_1\itensor (u_2\itensor u_3)$. The unit is by definition given by $\unit$.
\end{proof}
\begin{defn}(Based on the definition of the shuffle product in Chapter 4 \cite{hairerkelly14})
 Let $\deshuffle:\,\tensoral{V}\rightarrow\tensoral{V}\etensor\tensoral{V}$\nomenclature[00g D shuffle]{$\deshuffle$}{The coproduct which is dual to the shuffle product $\shuffle$} be the linear map generated by ${\deshuffle\unit:=\unit\etensor\unit}$ and
 \begin{equation}\label{eq:defn_deshuffle}
  \deshuffle u:=\unit\etensor u+u\etensor\unit+\sum_{0<i<n}\sum_{\sigma\in\shuffleperm(i,n)}v_{\sigma(1)}\itensor\dots\itensor v_{\sigma(i)}\etensor v_{\sigma(i+1)}\itensor\dots\itensor v_{\sigma(n)}
 \end{equation}
 for all ${u=v_1\itensor\dots\itensor v_n\in V^{\itensor n}}$, where 
 \begin{equation*}
  \shuffleperm(i,n):=\{\sigma\in\symgroup_n|\,\sigma(1)<\ldots<\sigma(i),\,\sigma(i+1)<\ldots<\sigma(n)\}
 \end{equation*}
 and $\symgroup_n$ is the group of permutations of the set $\{1,\ldots,n\}$.
\end{defn}
\begin{example}
 Let $a,b,c,d\in V$ be arbitrary. Since
 \begin{align*}
  \shuffleperm(1,4)&=\{(1234),\,(2134),\,(3124),\,(4123)\},\\
  \shuffleperm(2,4)&=\{(1234),\,(1324),\,(1423),\,(2314),\,(2413),\,(3412)\},\\
  \shuffleperm(3,4)&=\{(1234),\,(1243),\,(1342),\,(2341)\},
 \end{align*}
 we get
 \begin{align*}
  \deshuffle(a\itensor b\itensor c\itensor d)&=\unit\etensor a\itensor b\itensor c\itensor d+a\itensor b\itensor c\itensor d\etensor\unit+a\etensor b\itensor c\itensor d+b\etensor a\itensor c\itensor d\\
  &\hphantom{\hphantom{}=\hphantom{}}+c\etensor a\itensor b\itensor d+d\etensor a\itensor b\itensor c+a\itensor b\etensor c\itensor d+a\itensor c\etensor b\itensor d\\
  &\hphantom{\hphantom{}=\hphantom{}}+a\itensor d\etensor b\itensor c+b\itensor c\etensor a\itensor d+b\itensor d\etensor a\itensor c+c\itensor d\etensor a\itensor b\\
  &\hphantom{\hphantom{}=\hphantom{}}+a\itensor b\itensor c\etensor d+a\itensor b\itensor d\etensor c+a\itensor c\itensor d\etensor b+b\itensor c\itensor d\etensor a.
 \end{align*}
 If $b=c$ and $a=d$, we thus have
 \begin{align*}
  \deshuffle(a\itensor b\itensor b\itensor a)&=\unit\etensor a\itensor b\itensor b\itensor a+a\itensor b\itensor b\itensor a\etensor\unit+ a\etensor b\itensor b\itensor a+ 2\, b\etensor a\itensor b\itensor a\\
  &\hphantom{\hphantom{}=\hphantom{}}+a\etensor a\itensor b\itensor b+ 2\,a\itensor b\etensor b\itensor a+a\itensor a\etensor b\itensor b+b\itensor b\etensor a\itensor a\\
  &\hphantom{\hphantom{}=\hphantom{}}+2\, b\itensor a\etensor a\itensor b+a\itensor b\itensor b\etensor a+2\,a\itensor b\itensor a\etensor b+b\itensor b\itensor a\etensor a.
 \end{align*}

\end{example}

\begin{lemma}\textnormal{(Proposition 1.9 \cite{reutenauer93}, Section I.6.2.\@ \cite{manchon06})}
 The operator $\deshuffle$ is the unique $(\tensoral{V},{\itensor})$ homomorphism to ${\tensoral{V}\etensor\tensoral{V}}$ satisfying 
 \begin{equation}\label{eq:deshuffle_on_V}
  \deshuffle v=\unit\etensor v+v\etensor\unit
 \end{equation}
 for all ${v\in V}$. The object $(\tensoral{V},{\itensor},{\deshuffle})$ is a connected graded bialgebra.
\end{lemma}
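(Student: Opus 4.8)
The plan is to separate the statement into three tasks: that $\deshuffle$ is an algebra homomorphism from $(\tensoral{V},\itensor)$ to $(\tensoral{V}\etensor\tensoral{V},\itensortwo)$ restricting to \eqref{eq:deshuffle_on_V} on $V$, that it is the \emph{only} such homomorphism, and that $(\tensoral{V},\itensor,\deshuffle)$ is then a connected graded bialgebra. Uniqueness is the quick part: every $u=v_1\itensor\dots\itensor v_n\in V^{\itensor n}$ is the concatenation product of the generators $v_1,\dots,v_n\in V$, so $V$ generates $(\tensoral{V},\itensor)$ as a unital algebra, and an algebra homomorphism is determined by its values on a generating set. Hence any homomorphism obeying \eqref{eq:deshuffle_on_V} must send $u$ to $(\unit\etensor v_1+v_1\etensor\unit)\itensortwo\dots\itensortwo(\unit\etensor v_n+v_n\etensor\unit)$, which forces it to coincide with the $\deshuffle$ below.

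The substantive step is to show that the $\deshuffle$ defined by \eqref{eq:defn_deshuffle} really is this homomorphism. For $v\in V=V^{\itensor 1}$ the inner sum in \eqref{eq:defn_deshuffle} is empty, so \eqref{eq:deshuffle_on_V} holds immediately. To obtain multiplicativity $\deshuffle(u\itensor w)=(\deshuffle u)\itensortwo(\deshuffle w)$, I would first rewrite \eqref{eq:defn_deshuffle} in a subset-indexed form. A permutation $\sigma\in\shuffleperm(i,n)$ is determined by the $i$-element set $\{\sigma(1)<\dots<\sigma(i)\}$, and the complementary values $\sigma(i+1)<\dots<\sigma(n)$ are then forced; summing over all $i$ and over $\shuffleperm(i,n)$, together with the two boundary terms $\unit\etensor u$ and $u\etensor\unit$, thus amounts to summing over all splittings $\{1,\dots,n\}=L\sqcup R$ into two possibly empty blocks, the summand being $u_L\etensor u_R$, where $u_L:=v_{i_1}\itensor\dots\itensor v_{i_k}$ is the concatenation of the $v_i$ with $i\in L=\{i_1<\dots<i_k\}$ in increasing index order (and $u_\emptyset:=\unit$). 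Writing $u=v_1\itensor\dots\itensor v_m$ and $w=v_{m+1}\itensor\dots\itensor v_n$, a splitting $L\sqcup R$ of $\{1,\dots,n\}$ corresponds bijectively to a pair of splittings $L_1\sqcup R_1$ of $\{1,\dots,m\}$ and $L_2\sqcup R_2$ of $\{m+1,\dots,n\}$; since every index of the first block precedes every index of the second, the ordered concatenations factor as $u_L=u_{L_1}\itensor u_{L_2}$ and $u_R=u_{R_1}\itensor u_{R_2}$, which is exactly the $\itensortwo$-product occurring in $(\deshuffle u)\itensortwo(\deshuffle w)$. This bijection-and-factorisation bookkeeping is where the real work lies and is the main obstacle; everything around it is formal.

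For the bialgebra claim I would lean on Lemma \ref{lemma:coassociative_subset}. With $\deshuffle$ an algebra homomorphism, compatibility requirement $1$ of Definition \ref{defn:bialgebra} is immediate; requirement $2$ is $\deshuffle\unit=\unit\etensor\unit$ by definition; and requirement $3$ holds because $\counit$, the projection onto $V^{\itensor 0}=\vspan\{\unit\}$, is multiplicative for the graded product $\itensor$ (a concatenation lands in degree $0$ only when both factors do), so $\counit$ is an algebra homomorphism to $K$. It then suffices by Lemma \ref{lemma:coassociative_subset} to verify coassociativity \eqref{eq:coassociative_subset} and the counit property \eqref{eq:counit_subset} on the generating set $M=\{\unit\}\cup V$: on $\unit$ both are trivial via $\deshuffle\unit=\unit\etensor\unit$, and on $v\in V$ they reduce to a short three-term computation using $\deshuffle v=\unit\etensor v+v\etensor\unit$. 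Finally, gradedness of $\deshuffle$ is read directly off \eqref{eq:defn_deshuffle}, since the summand indexed by $L$ with $|L|=i$ lies in $V^{\itensor i}\etensor V^{\itensor(n-i)}$, whence $\deshuffle V^{\itensor n}\subseteq\bigoplus_{m\leq n}V^{\itensor m}\etensor V^{\itensor(n-m)}$; combined with $\dim V^{\itensor 0}=1$ and the already established connected graded algebra structure, Definition \ref{defn:grading} makes $(\tensoral{V},\itensor,\deshuffle)$ a connected graded bialgebra.
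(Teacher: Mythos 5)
Your proposal is correct, and the only place it genuinely diverges from the paper's proof is in how multiplicativity of $\deshuffle$ is established. The paper proceeds by induction on the word length, appending a single letter at a time: it observes that every $\sigma\in\shuffleperm(i,n+1)$ places the letter $n+1$ at the end of one of the two blocks, derives the recursion $\rdeshuffle(u\itensor v)=\rdeshuffle u\itensortwo(v\etensor\unit+\unit\etensor v)+u\itensor v+v\itensor u$ for the reduced coproduct, and concludes $\deshuffle(v_1\itensor\dots\itensor v_n)=\deshuffle v_1\itensortwo\dots\itensortwo\deshuffle v_n$ inductively. You instead reindex \eqref{eq:defn_deshuffle} as a sum over ordered splittings $\{1,\dots,n\}=L\sqcup R$ (using that a shuffle permutation is determined by the set $\{\sigma(1),\dots,\sigma(i)\}$) and prove $\deshuffle(u\itensor w)=(\deshuffle u)\itensortwo(\deshuffle w)$ for an arbitrary factorization in one shot via the bijection $L\mapsto(L\cap\{1,\dots,m\},L\cap\{m+1,\dots,n\})$ together with the factorization $u_L=u_{L_1}\itensor u_{L_2}$, which is valid precisely because all indices of the first factor precede those of the second. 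Your route makes the ``sum over two-block set splittings'' structure of the unshuffle coproduct explicit and yields full multiplicativity without induction, at the cost of the bijection bookkeeping; the paper's route needs only the single observation about the position of the largest letter but has to pass through the reduced coproduct and an induction. Everything else --- uniqueness from $V$ generating $(\tensoral{V},\itensor)$, coassociativity and the counit property via Lemma \ref{lemma:coassociative_subset} checked on generators, the compatibility requirements of Definition \ref{defn:bialgebra}, and the grading read off \eqref{eq:defn_deshuffle} --- matches the paper's argument; you are in fact slightly more explicit than the paper in justifying that $\counit$ is an algebra homomorphism.
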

\begin{proof}
 The property \eqref{eq:deshuffle_on_V} is included in \eqref{eq:defn_deshuffle}, because the sum over $0<i<n$ is empty for $n=1$. 
 Let $n\in\naturals$ and $v_1,\ldots,v_{n+1}\in V$ be arbitrary. Since for all $\sigma\in\shuffleperm(i,n+1)$ we have $\sigma(i)=n+1$ or $\sigma(n+1)=n+1$, we get 
 \begin{align*}
  \rdeshuffle(u_n\itensor v_{n+1})&=\sum_{(u_n)}\big((u_1'\itensor v_{n+1})\etensor u_n''+u_1'\etensor(u_2''\itensor v_{n+1})\big)+u_n\itensor v_{n+1}+v_{n+1}\itensor u_n\\
  &=\rdeshuffle u_n\itensortwo(v_{n+1}\etensor\unit+\unit\etensor v_{n+1})+u_n\itensor v_{n+1}+v_{n+1}\itensor u_n\\
  &=\deshuffle u_n\itensortwo\deshuffle v_{n+1}-u_n\itensor v_{n+1}\etensor\unit-\unit\etensor u_n\itensor v_{n+1}.
 \end{align*}
 Hence, inductively we prove
 \begin{equation}\label{eq:deshuffle_morphism}
  \deshuffle(v_1\itensor\dots\itensor v_n)=\deshuffle v_1\itensortwo\dots\itensortwo\deshuffle v_n
 \end{equation}
 for all $n\in\naturals$ and $v_1,\ldots,v_n\in V$, showing that $\deshuffle$ is an algebra homomorphism. Since \eqref{eq:deshuffle_morphism} is a necessary condition for an algebra homomorphism, $\deshuffle$ is indeed the unique algebra homomorphism such that \eqref{eq:deshuffle_on_V} holds. It is then also a coproduct due to Lemma \ref{lemma:coassociative_subset}, since $(\id\etensor\deshuffle)\deshuffle v=(\deshuffle\etensor\id)\deshuffle v$ holds for all $v\in V$. Together with the counit given by $\counittens(\unit)=1$, $\counittens(V^{\itensor n})=0$, $n\geq 1$, all the compatibility requirements are fulfilled, hence $(\tensoral{V},{\itensor},{\deshuffle})$ is a bialgebra. Since from \eqref{eq:defn_deshuffle} we see that
 \begin{equation*}
  \deshuffle V^{\itensor n}\subseteq\bigoplus_{0\leq m\leq n}V^{\itensor m}\etensor V^{\itensor n-m},
 \end{equation*}
 it is also connected graded.\\
\end{proof}

\begin{lemma}\textnormal{(Lemma 1.5 \cite{reutenauer93}, Section I.6.2.\@ \cite{manchon06})}
 The linear operator $\antitensor:\tensoral{V}\rightarrow\tensoral{V}$ generated by
 \begin{equation*}
  \antitensor\unit:=\unit,\quad
  \antitensor(v_1\itensor\dots\itensor v_n):=(-1)^n v_n\itensor\dots\itensor v_1\quad\forallc v_i\in V
 \end{equation*}
 is an antipode of $(\tensoral{V},{\itensor},{\deshuffle})$.
\end{lemma}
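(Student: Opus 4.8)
The plan is to exploit the fact, just established, that $(\tensoral{V},{\itensor},{\deshuffle})$ is a connected graded bialgebra. By Theorem \ref{thm:antipode_recursion} it is therefore automatically a Hopf algebra carrying a (unique) antipode, so there is nothing left to prove about existence: it suffices to check that the explicit operator $\antitensor$ satisfies the antipode property \eqref{eq:antipodeproperty}. In fact I would verify only \emph{one} of the two convolution identities. Working in the convolution algebra $(\linear(\tensoral{V},\tensoral{V}),\convoproduct)$ with $S\convoproduct T=\mitensor(S\etensor T)\deshuffle$ (cf.\ \eqref{eq:defn_convoproduct}), the map $\id$ already possesses a two-sided convolution inverse by Theorem \ref{thm:antipode_recursion}; since in a unital associative algebra any one-sided inverse coincides with the two-sided one, it is enough to show $\mitensor(\id\etensor\antitensor)\deshuffle=\unitmap\counittens$. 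On $\unit$ this is immediate, so I restrict to $w=v_1\itensor\dots\itensor v_n$ with $n\geq 1$, where the target value is $\counittens(w)\unit=0$.

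The next step is to turn the coproduct into a manageable sum. Using the morphism property \eqref{eq:deshuffle_morphism} together with $\deshuffle v_i=\unit\etensor v_i+v_i\etensor\unit$, I would expand $\deshuffle w=\sum_{S\subseteq\{1,\dots,n\}} w_S\etensor w_{S^c}$, where $w_S$ denotes the subword of $w$ on the positions in $S$ taken in increasing order and $S^c$ is the complement. Applying $\mitensor(\id\etensor\antitensor)$ then gives
\[
 \mitensor(\id\etensor\antitensor)\deshuffle w=\sum_{S\subseteq\{1,\dots,n\}} w_S\itensor\antitensor(w_{S^c}),\qquad \antitensor(w_{S^c})=(-1)^{\,n-|S|}\,\widetilde{w_{S^c}},
\]
with $\widetilde{w_{S^c}}$ the word $w_{S^c}$ read in reverse order. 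The goal becomes showing that this signed sum over all subsets of $\{1,\dots,n\}$ vanishes.

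The heart of the argument is a sign-reversing involution: pair each $S$ with $S\mathbin{\triangle}\{n\}$, i.e.\ toggle membership of the largest index $n$. Writing $S'=S\setminus\{n\}\subseteq\{1,\dots,n-1\}$ and $T=\{1,\dots,n-1\}\setminus S'$, I would check that both choices $S=S'\cup\{n\}$ and $S=S'$ produce \emph{the same} word $w_{S'}\itensor v_n\itensor\widetilde{w_T}$ — in the first case $v_n$ sits at the end of the forward block $w_S$, in the second it heads the reversed block $\widetilde{w_{S^c}}$ — while the attached signs $(-1)^{\,n-|S'|-1}$ and $(-1)^{\,n-|S'|}$ are opposite. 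Since $n\geq 1$ this pairs all $2^n$ subsets into $2^{n-1}$ mutually cancelling pairs, so the sum is $0=\counittens(w)\unit$, which is the antipode property.

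The only genuinely delicate point is the bookkeeping of the reversal in this last step: one must confirm that moving the final letter $v_n$ from the tail of the forward part to the head of the reversed part yields literally the same element of $\tensoral{V}$. Everything else is formal manipulation. I expect no other obstacle; as a sanity check one can run the $n=1$ case ($\antitensor v_1+v_1=0$) and observe the pairing $\emptyset\leftrightarrow\{1\}$ directly. Should one wish to verify the omitted identity $\mitensor(\antitensor\etensor\id)\deshuffle=\unitmap\counittens$ as well, it follows from the symmetric involution that toggles the \emph{smallest} index $1$; alternatively, the recursion of Theorem \ref{thm:antipode_recursion} could be checked instead of the involution, but the combinatorial cancellation makes the roles of the sign $(-1)^n$ and of the reversal the most transparent.
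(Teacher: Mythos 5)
Your argument is correct, but it takes a genuinely different route from the paper's. The paper follows Reutenauer's Lemma 1.5 and proves \emph{both} identities $\mitensor(\id\etensor\antitensor)\deshuffle=\mitensor(\antitensor\etensor\id)\deshuffle=\unitmap\counittens$ by induction on the tensor degree, using $\deshuffle(w\itensor x)=\deshuffle w\itensortwo\deshuffle x$ and the explicit action of $\antitensor$ on $w'\itensor x$ to reduce degree $n+1$ to degree $n$, where the two summands cancel by the inductive hypothesis. You instead (a) reduce to a single one-sided identity by observing that $\id$ already has a two-sided convolution inverse by Theorem \ref{thm:antipode_recursion}, so any right convolution inverse must coincide with it, and (b) verify that identity in closed form: the expansion $\deshuffle w=\sum_{S\subseteq\{1,\dots,n\}}w_S\etensor w_{S^c}$ is exactly the subset reformulation of \eqref{eq:defn_deshuffle} (equivalently of \eqref{eq:deshuffle_morphism}), and your involution $S\mapsto S\mathbin{\triangle}\{n\}$ is sound --- since $n$ is the largest index, $v_n$ sits at the tail of $w_{S'\cup\{n\}}$ in one term and at the head of the reversed word $\widetilde{w_{T\cup\{n\}}}=v_n\itensor\widetilde{w_T}$ in the partner term, so both produce $w_{S'}\itensor v_n\itensor\widetilde{w_T}$ with opposite signs $(-1)^{n-|S'|-1}$ and $(-1)^{n-|S'|}$. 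What your approach buys is a non-inductive, fully transparent explanation of where the sign $(-1)^n$ and the order reversal come from, at the cost of leaning on the general existence theorem for the antipode; the paper's induction is self-contained (it does not need Theorem \ref{thm:antipode_recursion} at all) but has to treat the two convolution identities separately and hides the combinatorics inside Sweedler's notation. Both are valid proofs of the lemma.
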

\begin{proof}\textnormal{(Proof of Lemma 1.5 \cite{reutenauer93})}
As ${\mitensor({\id}\etensor{\antitensor})\deshuffle\unit}{=\mitensor({\antitensor}\etensor{\id})\deshuffle\unit}{=\unitmap\counit\unit}{=\unit}$, it only remains to show
 \begin{equation}\label{eq:ind_antitensor}
  \mitensor({\id}\etensor{\antitensor})\deshuffle u=\mitensor({\antitensor}\etensor{\id})\deshuffle u=0\quad\forallc u\in V^{\itensor m},m\in\naturals,
 \end{equation}
  where we proceed by induction. Regarding the case ${m=1}$, we obtain
 \begin{equation*}
  \mitensor({\id}\etensor{\antitensor})\deshuffle v=\unit\itensor\antitensor v+v\itensor\antitensor\unit=-v+v\itensor\unit=0
 \end{equation*}
 as well as
 \begin{equation*}
  \mitensor({\antitensor}\etensor{\id})\deshuffle v=\antitensor\unit\itensor v+\antitensor v\itensor\unit=\unit\itensor v-v=0.
 \end{equation*}
 for all $v\in V$. Assuming that \eqref{eq:ind_antitensor} holds for ${m=n}$, we have
 \begin{equation*}
  \sum_{(w)} w'\itensor\antitensor w''=\sum_{(w)}\antitensor w'\itensor w''=0
 \end{equation*}
 for arbitrary $w\in V^{\itensor n}$. Let $x\in V$ be arbitrary as well. Then,
 \begin{equation*}
  \deshuffle(w\itensor x)=\deshuffle w\itensortwo\deshuffle x=\sum_{(w)}\left(w'\etensor(w''\itensor x)+(w'\itensor x)\etensor w''\right)
 \end{equation*}
 and therefore
 \begin{align*}
  \mitensor({\antitensor}\etensor{\id})\deshuffle (w\itensor x)&=\sum_{(w)}\left(\antitensor w'\itensor(w''\itensor x)+\antitensor(w'\itensor x)\itensor w''\right)\\&=\Big(\sum_{(w)}\antitensor w'\itensor w''\Big)\itensor x+x\itensor\sum_{(w)}\antitensor w'\itensor w''=0.
 \end{align*}
 Analogously, we get
 \begin{align*}
  \mitensor({\id}\etensor\antitensor)\deshuffle (x\itensor w)&=\sum_{(w)}\left(w'\itensor\antitensor(x\itensor w'')+(x\itensor w')\itensor\antitensor w''\right)\\&=\Big(\sum_{(w)} w'\itensor\antitensor w''\Big)\itensor x+x\itensor\sum_{(w)} w'\itensor\antitensor w''=0.
 \end{align*}
 As $w\in V^{\itensor n}, x\in V$ were arbitrary, we get \eqref{eq:ind_antitensor} for ${m=n+1}$ by linearity of $\antitensor$.\\
\end{proof}
\begin{cor}\textnormal{(Section I.6.2.\@ \cite{manchon06})}
 $(\tensoral{V},{\itensor},{\deshuffle},{\antitensor})$ is a connected graded Hopf algebra.
\end{cor}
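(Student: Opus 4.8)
The plan is to appeal directly to part (iv) of Definition \ref{defn:grading}, since almost all of the work has already been done in the two preceding lemmas. Those lemmas established that $(\tensoral{V},{\itensor},{\deshuffle})$ is a connected graded bialgebra and that $\antitensor$ satisfies the antipode property \eqref{eq:antipodeproperty}. According to Definition \ref{defn:grading} (iv), to upgrade a connected graded bialgebra equipped with an antipode to a connected graded Hopf algebra, the only point left to verify is that the antipode respects the grading, that is $\antitensor V^{\itensor n}\subseteq V^{\itensor n}$ for every $n\in\naturals_0$.

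First I would read off grading preservation straight from the defining formula $\antitensor(v_1\itensor\dots\itensor v_n)=(-1)^n v_n\itensor\dots\itensor v_1$: reversing a length-$n$ word and rescaling by $(-1)^n$ leaves it in $V^{\itensor n}$, while $\antitensor\unit=\unit\in V^{\itensor 0}$, so by linearity $\antitensor V^{\itensor n}\subseteq V^{\itensor n}$ follows at once. With the grading thus preserved, the hypothesis of Definition \ref{defn:grading} (iv) is met and the claim is immediate.

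Alternatively, and more in the spirit of the chapter, one may invoke Theorem \ref{thm:antipode_recursion}: because $(\tensoral{V},{\itensor},{\deshuffle})$ is a connected graded bialgebra, it is automatically a connected graded Hopf algebra whose antipode is the grading-preserving operator furnished by the recursion \eqref{eq:recursion_antipode_right}. By the uniqueness of the antipode (the corollary following Theorem \ref{thm:hopf_morphism}), this operator must coincide with $\antitensor$, so $\antitensor$ inherits grading preservation with no further computation.

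There is no genuine obstacle here; the substance was carried out in the two preceding lemmas, and the corollary merely records that the grading condition of Definition \ref{defn:grading} (iv) holds for $\antitensor$, which is transparent from its closed form.
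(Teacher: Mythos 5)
Your argument is correct and matches the paper's (implicit) reasoning: the paper states this corollary without proof precisely because it follows immediately from the two preceding lemmas once one notes, as you do, that $\antitensor V^{\itensor n}\subseteq V^{\itensor n}$ is evident from the closed formula, so that all conditions of Definition \ref{defn:grading} (iv) are met. Your alternative route via Theorem \ref{thm:antipode_recursion} and uniqueness of the antipode is also valid and is consistent with how the paper handles the analogous step for the polynomial Hopf algebra.
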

\subsection{Shuffle product Hopf algebra}

\begin{thm}
 Let $\dualbracket$ be a duality pairing of $V$ with some vector space $\Vpdual$. Then, the pair ${(\tensoral{V},\tensoral{\Vpdual})}$ is a pair of dual vector spaces with duality pairing
 \begin{equation}\label{eq:tensoral_dual}
  \left\langle\sum_{n=0}^{\infty}u_n,\sum_{m=0}^{\infty}\pdual{u}_m\right\rangle_{\mathrm{T}}:=\sum_{n=0}^{\infty}\langle u_n,\pdual{u}_n\rangle_{\itensor n},
 \end{equation}
 where $\langle a\unit,b\unit\rangle_{\itensor 0}:=ab$, $\langle v,\vpdual\rangle_{\itensor 1}:=\langle v,\vpdual\rangle$ and
 \begin{equation*}
  \left\langle\sum_{i=1}^{k}v_1^{i}\itensor\cdots\itensor v_n^{i},\sum_{j=1}^{l}\vpdual_1^{j}\itensor\cdots\itensor\vpdual_n^{j}\right\rangle_{\itensor n}:=\sum_{i=1}^{k}\sum_{j=1}^{l}\langle v_1^{i},\vpdual_1^{j}\rangle\cdots\langle v_n^{i},\vpdual_n^{j}\rangle.
 \end{equation*}

\end{thm}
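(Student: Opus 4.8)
The plan is to reduce everything to the induced duality pairing of Theorem \ref{thm:tensordual} by exploiting the direct-sum structure $\tensoral{V}=\bigoplus_{n=0}^{\infty}V^{\itensor n}$. The decisive observation is that the pairing $\dualbracket_{\mathrm{T}}$ in \eqref{eq:tensoral_dual} couples only homogeneous components of equal degree, so the whole nondegeneracy question decouples degree by degree.

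First I would show by induction on $n$ that $\langle\cdot,\cdot\rangle_{\itensor n}$ is a duality pairing of $V^{\itensor n}$ with $\Vpdual^{\itensor n}$. The cases $n=0$ and $n=1$ are immediate from the definitions $\langle a\unit,b\unit\rangle_{\itensor 0}=ab$ and $\langle\cdot,\cdot\rangle_{\itensor 1}=\dualbracket$. For the inductive step I identify $V^{\itensor n}$ with $V^{\itensor(n-1)}\etensor V$ and $\Vpdual^{\itensor n}$ with $\Vpdual^{\itensor(n-1)}\etensor\Vpdual$ via the canonical associativity identification; applying Theorem \ref{thm:tensordual} to the pair $(V^{\itensor(n-1)},\Vpdual^{\itensor(n-1)})$, which is a pair of dual vector spaces by the induction hypothesis, and to the pair $(V,\Vpdual)$ yields an induced duality pairing on $V^{\itensor n}$ paired with $\Vpdual^{\itensor n}$. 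Expanding the formula \eqref{eq:defn_tensordual} for this induced pairing and unwinding the induction gives precisely $\prod_{k=1}^{n}\langle v_k,\vpdual_k\rangle$, that is, $\langle\cdot,\cdot\rangle_{\itensor n}$. This step simultaneously delivers well-definedness, bilinearity and nondegeneracy of each $\langle\cdot,\cdot\rangle_{\itensor n}$. Here I would record that the field is $\reals$ or $\complexs$, so the characteristic-zero hypothesis required by Theorem \ref{thm:tensordual} is satisfied.

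Next I would verify the three defining conditions of Definition \ref{defn:dualbracket} for $\dualbracket_{\mathrm{T}}$ itself. Well-definedness is clear since every element of a direct sum has only finitely many nonzero homogeneous components, so the sum in \eqref{eq:tensoral_dual} is finite; bilinearity of $\dualbracket_{\mathrm{T}}$ then follows from the bilinearity of each $\langle\cdot,\cdot\rangle_{\itensor n}$. For the nondegeneracy condition \ref{item:dualbracket_vzero}, I take $0\neq u=\sum_n u_n\in\tensoral{V}$ and pick an index $N$ with $u_N\neq 0$; by the degree-$N$ duality just established there is $\pdual{u}_N\in\Vpdual^{\itensor N}$ with $\langle u_N,\pdual{u}_N\rangle_{\itensor N}\neq 0$. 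Viewing $\pdual{u}_N$ as an element of $\tensoral{\Vpdual}$ whose other components vanish, the matching-degree rule of \eqref{eq:tensoral_dual} gives $\langle u,\pdual{u}_N\rangle_{\mathrm{T}}=\langle u_N,\pdual{u}_N\rangle_{\itensor N}\neq 0$, so $u$ pairs nontrivially with some element of $\tensoral{\Vpdual}$. Condition \ref{item:dualbracket_vtildezero} is entirely symmetric, with the roles of $V$ and $\Vpdual$ interchanged.

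I expect no genuine obstacle in this argument. The only points requiring care are the bookkeeping in the inductive identification of the iterated induced pairing with the explicit product formula $\prod_{k=1}^{n}\langle v_k,\vpdual_k\rangle$, and the explicit use of the fact that components of different degrees are paired to zero, which is exactly what allows the infinite-dimensional nondegeneracy to reduce to the finitely many per-degree statements supplied by Theorem \ref{thm:tensordual}.
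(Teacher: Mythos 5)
Your proposal is correct and follows essentially the same route as the paper's own proof: both establish that each $\dualbracket_{\itensor n}$ is a duality pairing of $V^{\itensor n}$ with $\Vpdual^{\itensor n}$ by iterated application of Theorem \ref{thm:tensordual}, and then use the fact that $\dualbracket_{\mathrm{T}}$ pairs only matching degrees to reduce nondegeneracy to a single nonzero homogeneous component. The extra care you take with the inductive bookkeeping and the characteristic-zero hypothesis is fine but not a departure from the paper's argument.
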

\begin{proof}
 By repeatedly applying Theorem \ref{thm:tensordual}, we see that the pairings $\dualbracket_{\itensor n}, n\in\naturals_0$ are all well-defined and bilinear. Thus, also $\dualbracket_{\mathrm{T}}$ is well-defined and bilinear.
 For some arbitrary $0\neq{u=\sum_{n=0}^{\infty}u_n\in\tensoralfs{V}}$ with ${u_m\in V^{\itensor m}}$ for all ${{m\in\naturals_0}}$, there is $k$ such that ${u_k\neq0}$. If ${k=0}$, we are done as ${\langle u,\unit\rangle_{\mathrm{T}}\neq0}$. Otherwise, we see through repetitive application of Theorem \ref{thm:tensordual} that $(V^{\itensor k},\Vpdual^{\itensor k})$ is a pair of dual vector spaces and hence there is ${\pdual{u}\in\Vpdual^{\itensor k}}$ such that
 \begin{equation*}
  0\neq\langle u_k,\pdual{u}\rangle_{\itensor k}=\langle u,\pdual{u}\rangle_{\mathrm{T}}.
 \end{equation*}
 Analogously, we show that for each ${\pdual{x}\in\tensoral{\Vpdual}}$ there is ${x\in\tensoral{V}}$ such that ${\langle x,\pdual{x}\rangle_{\mathrm{T}}\neq0}$.
\\\end{proof}

\begin{lemma}\label{lemma:shuffle}\textnormal{(Sections 1.4 and 1.5 \cite{reutenauer93}, Chapter 4 \cite{hairerkelly14})}
 The \emphind{shuffle product} $\shuffle:\tensoral{\Vpdual}\times\tensoral{\Vpdual}\rightarrow\tensoral{\Vpdual}$\nomenclature[ZZs shuffle]{$\shuffle$}{Shuffle product} bilinearly generated by
 \begin{equation*}
  \unit\shuffle u:=u\shuffle\unit:=u
 \end{equation*}
 and
 \begin{equation*}
  (\vpdual_1\itensor\dots\itensor \vpdual_i)\shuffle(\vpdual_{i+1}\itensor\dots\itensor \vpdual_n):=\sum_{\sigma\in\shuffleperm(i,n)}\vpdual_{\sigma^{-1}(1)}\itensor\dots\itensor \vpdual_{\sigma^{-1}(n)}
 \end{equation*}
 is dual to $\deshuffle$.
\end{lemma}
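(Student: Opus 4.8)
The plan is to unwind what ``$\shuffle$ is dual to $\deshuffle$'' means and verify a single pairing identity, namely
\begin{equation*}
 \langle\deshuffle u,\pdual a\etensor\pdual b\rangle=\langle u,\pdual a\shuffle\pdual b\rangle_{\mathrm T}\quad\forallc u\in\tensoral V,\ \pdual a,\pdual b\in\tensoral{\Vpdual},
\end{equation*}
where the pairing on the left is the induced pairing on $(\tensoral V\etensor\tensoral V,\tensoral{\Vpdual}\etensor\tensoral{\Vpdual})$ furnished by Theorem \ref{thm:tensordual}. Since both sides are linear in $u$ and bilinear in $(\pdual a,\pdual b)$, and pure tensors $\pdual a\etensor\pdual b$ span $\tensoral{\Vpdual}\etensor\tensoral{\Vpdual}$, I may reduce at once to the case $u=v_1\itensor\dots\itensor v_n$, $\pdual a=\vpdual_1\itensor\dots\itensor\vpdual_p$ and $\pdual b=\vpdual_{p+1}\itensor\dots\itensor\vpdual_{p+q}$. (The resulting dual operator is then unique by Lemma \ref{lemma:dual_operator} \ref{item:dual_operator_unique}, but for the lemma itself only the identity above is needed.)

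First I would use that $\langle\cdot,\cdot\rangle_{\mathrm T}$ pairs homogeneous components of different degree to zero. On the right, $\pdual a\shuffle\pdual b\in\Vpdual^{\itensor(p+q)}$, so the right-hand side vanishes unless $n=p+q$. On the left, I write the coproduct uniformly as
\begin{equation*}
 \deshuffle u=\sum_{i=0}^n\sum_{\sigma\in\shuffleperm(i,n)}(v_{\sigma(1)}\itensor\dots\itensor v_{\sigma(i)})\etensor(v_{\sigma(i+1)}\itensor\dots\itensor v_{\sigma(n)}),
\end{equation*}
the terms $\unit\etensor u$ and $u\etensor\unit$ being exactly the cases $i=0$ and $i=n$ with $\shuffleperm(0,n)=\shuffleperm(n,n)=\{\id\}$; only the summand with $i=p$ (hence $n-i=q$) can pair nontrivially with $\pdual a\etensor\pdual b$, again forcing $n=p+q$. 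So both sides vanish unless $n=p+q$, and in that case the induced pairing factorizes each surviving term, giving a left-hand side equal to $\sum_{\sigma\in\shuffleperm(p,n)}\prod_{j=1}^n\langle v_{\sigma(j)},\vpdual_j\rangle$, while the definition of $\shuffle$ turns the right-hand side into $\sum_{\sigma\in\shuffleperm(p,n)}\prod_{j=1}^n\langle v_j,\vpdual_{\sigma^{-1}(j)}\rangle$.

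The final step is to match these two sums. Substituting $k=\sigma(j)$ in the left-hand product rewrites $\prod_{j}\langle v_{\sigma(j)},\vpdual_j\rangle$ as $\prod_{k}\langle v_k,\vpdual_{\sigma^{-1}(k)}\rangle$, so after relabelling the summation index the two expressions coincide term by term. I expect the \emph{only} real obstacle to be precisely this bookkeeping: the coproduct $\deshuffle$ is indexed by $\sigma$ permuting the positions of the $v_i$'s, whereas $\shuffle$ is defined with $\sigma^{-1}$ permuting the positions of the $\vpdual_i$'s, and it is exactly this inverse built into the definition of the shuffle product that makes the two sums agree without any need to invert permutations or leave the set $\shuffleperm(p,n)$. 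Beyond tracking these indices and handling the boundary cases $i=0,n$ uniformly, nothing deeper is required.
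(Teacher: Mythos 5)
Your proposal is correct and follows essentially the same route as the paper: reduce by (bi)linearity to pure tensors, use the grading to kill the off-degree pairings, and match the two sums via the substitution $k=\sigma(j)$, which is exactly the middle equality in the paper's computation. The only cosmetic difference is that you absorb the boundary terms $\unit\etensor u$ and $u\etensor\unit$ into the general formula via $\shuffleperm(0,n)=\shuffleperm(n,n)=\{\id\}$, whereas the paper checks the unit cases separately.
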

\begin{proof}
 Let $v_1,\ldots,v_n\in V$ and $\vpdual_1,\ldots,\vpdual_{k+l}\in\Vpdual$ be arbitrary. Put $w:=v_1\itensor\cdots\itensor v_n$, $a:=\vpdual_1\itensor\cdots\itensor\vpdual_{k}$ and $b:=\vpdual_{k+1}\itensor\cdots\itensor\vpdual_{k+l}$. If $k+l\neq n$, then $\langle w,a\shuffle b\rangle_\mathrm{T}=0=\langle\deshuffle w,a\etensor b\rangle_{\mathrm{T}\etensor}$ as 
 \begin{equation}\label{eq:grading_shuffledual}
  w\in V^{\itensor n},\quad a\shuffle b\in\Vpdual^{\itensor (k+l)},\quad\deshuffle w\in\bigoplus_{i=0}^{n}V^{\itensor i}\etensor V^{\itensor (n-i)},\quad a\etensor b\in V^{\itensor k}\etensor V^{\itensor l}.
 \end{equation}
 If $n=k+l$, then
 \begin{multline*}
  \langle w,a\shuffle b\rangle_{\mathrm{T}}=\sum_{\sigma\in\shuffleperm(k,n)}\prod_{j=1}^n\langle v_j,\vpdual_{\sigma^{-1}(j)}\rangle=\sum_{\sigma\in\shuffleperm(k,n)}\prod_{j=1}^n\langle v_{\sigma(j)},\vpdual_j\rangle\\
  =\sum_{\sigma\in\shuffleperm(k,n)}\langle v_{\sigma(1)}\itensor\cdots\itensor v_{\sigma(k)},\vpdual_1\itensor\cdots\itensor\vpdual_k\rangle_{\itensor k}\langle v_{\sigma(k+1)}\itensor\cdots\itensor v_{\sigma(n)},\vpdual_{k+1}\itensor\cdots\itensor\vpdual_n\rangle_{\itensor l}\\
  =\langle\deshuffle w,a\etensor b\rangle_{\mathrm{T}\etensor}.
 \end{multline*}
 For $z\in\tensoral{V}$ we have 
 \begin{equation*}
  \langle z,\unit\shuffle u\rangle_{\mathrm{T}}=\langle z, u\shuffle\unit\rangle_{\mathrm{T}}=\langle z, u\rangle_{\mathrm{T}}=\langle\deshuffle z,\unit\etensor u\rangle_{\mathrm{T}\etensor}=\langle\deshuffle z,u\etensor\unit\rangle_{\mathrm{T}\etensor}
 \end{equation*}
 and for $x,y\in\tensoral{\Vpdual}$ also
 \begin{equation*}
  \langle\unit,x\shuffle y\rangle_{\mathrm{T}}=\langle\unit,x\rangle_{\mathrm{T}}\langle\unit,y\rangle_{\mathrm{T}}=\langle\unit\etensor\unit,x\etensor y\rangle_{\mathrm{T}\etensor}=\langle\deshuffle\unit,a\etensor b\rangle_{\mathrm{T}\etensor}.
 \end{equation*}
 The duality of $\shuffle$ and $\deshuffle$ then follows by (bi)linearity.\\
\end{proof}

\begin{lemma}\label{lemma:deconcatenation}\textnormal{(Chapter 4 \cite{hairerkelly14})}
 The \emphind{deconcatenation} $\deconc:\,\tensoral{\Vpdual}\rightarrow\tensoral{\Vpdual}\etensor\tensoral{\Vpdual}$ linearly generated by
 \begin{equation}\label{eq:defn_deconc}
  \deconc\unit:=\unit\etensor\unit,\quad
  \deconc(\vpdual_1\itensor\dots\itensor\vpdual_n)=\sum_{i=0}^n\vpdual_1\itensor\dots\itensor\vpdual_i\etensor\vpdual_{i+1}\itensor\dots\itensor\vpdual_n
 \end{equation}
 is dual to the concatenation $\itensor$.
\end{lemma}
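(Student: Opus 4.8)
The plan is to verify directly the identity defining dual operators (see the definition preceding Lemma~\ref{lemma:dual_operator}), namely
\begin{equation*}
 \langle u\itensor w,\pdual{x}\rangle_{\mathrm{T}}=\langle u\etensor w,\deconc\pdual{x}\rangle_{\mathrm{T}\etensor}\quad\forallc u,w\in\tensoral{V},\,\pdual{x}\in\tensoral{\Vpdual},
\end{equation*}
where $\dualbracket_{\mathrm{T}\etensor}$ is the pairing induced on $\tensoral{V}\etensor\tensoral{V}$ and $\tensoral{\Vpdual}\etensor\tensoral{\Vpdual}$ by Theorem~\ref{thm:tensordual}; this is exactly $\deconc\isdual\mitensor$. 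The argument is the mirror image of the one for the shuffle product in Lemma~\ref{lemma:shuffle}, with the roles of product and coproduct interchanged. As there, the bilinearity of $\itensor$ and of $\dualbracket_{\mathrm{T}}$ together with the linearity of $\deconc$ reduce the claim to homogeneous pure tensors $u=v_1\itensor\cdots\itensor v_k\in V^{\itensor k}$, $w=v_{k+1}\itensor\cdots\itensor v_{k+l}\in V^{\itensor l}$ and $\pdual{x}=\vpdual_1\itensor\cdots\itensor\vpdual_m\in\Vpdual^{\itensor m}$, the degree-zero cases being governed by the conventions $\deconc\unit=\unit\etensor\unit$ and $\langle a\unit,b\unit\rangle_{\itensor 0}=ab$.

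First I would dispose of the degree-mismatch cases. Since $u\itensor w\in V^{\itensor(k+l)}$, the left-hand side vanishes unless $m=k+l$, because $\dualbracket_{\mathrm{T}}$ pairs $V^{\itensor(k+l)}$ trivially with $\Vpdual^{\itensor m}$ whenever $m\neq k+l$. On the right-hand side, the summands of
\begin{equation*}
 \deconc\pdual{x}=\sum_{i=0}^{m}\vpdual_1\itensor\cdots\itensor\vpdual_i\etensor\vpdual_{i+1}\itensor\cdots\itensor\vpdual_m
\end{equation*}
are homogeneous of bidegree $(i,m-i)$, and the induced pairing $\dualbracket_{\mathrm{T}\etensor}$ is diagonal with respect to this bigrading, so $u\etensor w$ (of bidegree $(k,l)$) can pair nontrivially only with the single summand $i=k$, which again forces $m=k+l$. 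Hence both sides are zero when $m\neq k+l$.

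When $m=k+l$, the right-hand side collapses to the $i=k$ term,
\begin{equation*}
 \langle u\etensor w,\deconc\pdual{x}\rangle_{\mathrm{T}\etensor}=\langle u,\vpdual_1\itensor\cdots\itensor\vpdual_k\rangle_{\itensor k}\,\langle w,\vpdual_{k+1}\itensor\cdots\itensor\vpdual_m\rangle_{\itensor l}=\prod_{j=1}^{m}\langle v_j,\vpdual_j\rangle,
\end{equation*}
while the left-hand side is directly $\langle v_1\itensor\cdots\itensor v_m,\vpdual_1\itensor\cdots\itensor\vpdual_m\rangle_{\itensor m}=\prod_{j=1}^{m}\langle v_j,\vpdual_j\rangle$, so the two agree; the general case then follows by (bi)linearity. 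The only real obstacle is the grading bookkeeping: one must confirm that the pairing of Theorem~\ref{thm:tensordual} is genuinely diagonal across tensor degrees, so that the deconcatenation sum truncates to the one surviving split. Everything else is a routine computation entirely parallel to Lemma~\ref{lemma:shuffle}.
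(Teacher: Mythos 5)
Your proof is correct and follows essentially the same route as the paper's: both dispose of the degree-mismatched cases by the diagonality of the induced pairing across tensor degrees, then observe that for matching degrees the deconcatenation sum collapses to the single split at position $k$, giving the same product of pairings as $\langle u\itensor w,\pdual{x}\rangle_{\mathrm{T}}$, and conclude by (bi)linearity. No gaps.
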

\begin{proof}
 Consider $w=\vpdual_1\itensor\ldots\itensor\vpdual_n\in\Vpdual$ and $a=v_1\itensor\ldots\itensor v_k,b=v_{k+1}\itensor\ldots\itensor v_{k+l}\in V$, $0<k<n$. Then, 
 $\langle a\etensor b,\deconc w\rangle_{\mathrm{T}\etensor}=0=\langle a\itensor b,w\rangle_{\mathrm{T}}$ if $k+l\neq n$ (\cf \eqref{eq:grading_shuffledual}). If $k+l=n$, we have
 \begin{align*}
  \langle a\etensor b,\deconc w\rangle_{\mathrm{T}\etensor}&=\langle v_1\itensor\ldots\itensor v_k,\vpdual_1\itensor\ldots\itensor\vpdual_k\rangle_{\itensor k}\langle v_{k+1}\itensor\ldots\itensor v_n,\vpdual_{k+1}\itensor\ldots\itensor\vpdual_n\rangle_{\itensor (n-k)}\\
  &=\langle a\itensor b,w\rangle_{\mathrm{T}}.
 \end{align*}
 The remainder of the proof is completely analogous to that of Lemma \ref{lemma:shuffle}.\\
\end{proof}

\begin{lemma}\label{lemma:antitensor}
 The linear operator $\antishuffle:\tensoral{\Vpdual}\rightarrow\tensoral{\Vpdual}$ generated by
 \begin{equation*}
  \antishuffle\unit:=\unit,\quad\antishuffle(\vpdual_1\itensor\cdots\itensor\vpdual_n):=(-1)^n\vpdual_n\itensor\dots\itensor\vpdual_1
 \end{equation*}
 is dual to $\antitensor$.
\end{lemma}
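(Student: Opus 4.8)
The plan is to verify directly the defining identity of the dual-operator relation, namely that
\[
 \langle\antitensor u,\pdual{x}\rangle_{\mathrm{T}}=\langle u,\antishuffle\pdual{x}\rangle_{\mathrm{T}}\quad\forallc u\in\tensoral{V},\,\pdual{x}\in\tensoral{\Vpdual},
\]
exactly in the spirit of the preceding Lemmas \ref{lemma:shuffle} and \ref{lemma:deconcatenation}. Since both $\antitensor$ and $\antishuffle$ preserve the grading (each sends a homogeneous element of degree $n$ to one of the same degree) and the pairing $\langle\cdot,\cdot\rangle_{\mathrm{T}}$ pairs components of different degree to zero, both sides vanish unless $u$ and $\pdual{x}$ are homogeneous of one and the same degree $n$. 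By bilinearity of $\langle\cdot,\cdot\rangle_{\mathrm{T}}$ and linearity of $\antitensor,\antishuffle$, it therefore suffices to check the identity on $u=v_1\itensor\cdots\itensor v_n$ and $\pdual{x}=\vpdual_1\itensor\cdots\itensor\vpdual_n$ with $v_i\in V$, $\vpdual_i\in\Vpdual$, together with the degree-zero case, where $\antitensor\unit=\unit$, $\antishuffle\unit=\unit$ and both sides reduce to the same product of scalars.

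For $n\in\naturals$ I would expand both sides using the definition of $\langle\cdot,\cdot\rangle_{\itensor n}$. Applying $\antitensor u=(-1)^n v_n\itensor\cdots\itensor v_1$ gives
\[
 \langle\antitensor u,\pdual{x}\rangle_{\mathrm{T}}=(-1)^n\prod_{j=1}^{n}\langle v_{n+1-j},\vpdual_j\rangle,
\]
while $\antishuffle\pdual{x}=(-1)^n\vpdual_n\itensor\cdots\itensor\vpdual_1$ yields
\[
 \langle u,\antishuffle\pdual{x}\rangle_{\mathrm{T}}=(-1)^n\prod_{j=1}^{n}\langle v_j,\vpdual_{n+1-j}\rangle.
\]
It then only remains to observe that the two products coincide: reindexing the first product by $i:=n+1-j$ turns $\prod_{j=1}^{n}\langle v_{n+1-j},\vpdual_j\rangle$ into $\prod_{i=1}^{n}\langle v_i,\vpdual_{n+1-i}\rangle$, which is precisely the product on the right, and since the scalar factors commute this reordering is harmless. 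Hence both sides equal $(-1)^n\prod_{j=1}^{n}\langle v_j,\vpdual_{n+1-j}\rangle$, which establishes the identity and thus that $\antishuffle$ is dual to $\antitensor$.

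There is no real obstacle here; the content is essentially that the signed reversal operator is its own dual under the tensor pairing, and the only points requiring care are the grading reduction to equal-degree homogeneous elements and the bookkeeping of the reindexing. As an alternative one could instead invoke Theorem \ref{thm:dual_hopf_bi_co_algebras}(4) once the dual bialgebra structure $(\tensoral{\Vpdual},\shuffle,\deconc)$ is in place, together with uniqueness of the antipode, to identify the dual operator of $\antitensor$ as the antipode of the shuffle Hopf algebra; but since that structure is only being assembled here, the direct computation above is both shorter and self-contained.
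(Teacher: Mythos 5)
Your proof is correct and follows essentially the same route as the paper's: reduce to homogeneous elements of equal degree via the grading, then verify the identity on elementary tensors, where both sides equal $(-1)^n$ times the same product of pairings after reindexing. The paper's version is merely terser (and in fact writes the index as $\vpdual_{n-i}$ where $\vpdual_{n+1-i}$ is meant), so your more careful bookkeeping is if anything an improvement.
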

\begin{proof} 
 We have for $u\in V^{\itensor m}$ and $v\in V^{\itensor n}$ that $\langle u,\antishuffle w\rangle_{\mathrm{T}}=0=\langle\antitensor u,w\rangle_{\mathrm{T}}$ if $m\neq n$. Also, $\langle\antitensor\unit,\unit\rangle_{\mathrm{T}}=1=\langle\unit,\antishuffle\unit\rangle_{\mathrm{T}}$ and for $u=v_1\itensor\cdots\itensor v_n,w=\vpdual_1\itensor\cdots\itensor\vpdual_n$ we get
 \begin{equation*}
  \langle u,\antishuffle w\rangle_{\mathrm{T}}=(-1)^n\prod_{i=1}^n\langle v_i,\vpdual_{n-i}\rangle=\langle\antitensor u,w\rangle_{\mathrm{T}}.
 \end{equation*}
\end{proof}
\begin{thm}\textnormal{(Proposition 1.9 \cite{reutenauer93}, Chapter 4 \cite{hairerkelly14})}
 ${(\tensoral{\Vpdual},{\shuffle},{\deconc},{\antishuffle})}$ is a connected graded Hopf algebra and dual to the Hopf algebra $(\tensoral{V},{\itensor},{\deshuffle},{\antitensor})$.
\end{thm}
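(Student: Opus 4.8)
The plan is to avoid reproving the Hopf algebra axioms from scratch and instead feed everything through the duality machinery of Theorem \ref{thm:dual_hopf_bi_co_algebras}. We already know from the preceding corollary that $(\tensoral{V},\itensor,\deshuffle,\antitensor)$ is a connected graded Hopf algebra, and the previous theorem exhibits $(\tensoral{V},\tensoral{\Vpdual})$ as a pair of dual vector spaces (over $\reals$ or $\complexs$, hence of characteristic zero) under the pairing \eqref{eq:tensoral_dual}. So it suffices to check that each structure map on $\tensoral{\Vpdual}$ is the dual operator of the matching structure map on $\tensoral{V}$, and then to read the four assertions off from the corresponding parts of Theorem \ref{thm:dual_hopf_bi_co_algebras}.

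First I would assemble the dualities already in hand: Lemma \ref{lemma:shuffle} gives that $\shuffle$ is dual to $\deshuffle$, Lemma \ref{lemma:deconcatenation} that $\deconc$ is dual to the concatenation $\itensor$, and Lemma \ref{lemma:antitensor} that $\antishuffle$ is dual to $\antitensor$. It then remains only to record the two unit--counit dualities, namely that the unit map $\unitmap$ of $\tensoral{V}$ is dual to the counit $\counit$ of $\tensoral{\Vpdual}$ and vice versa; both are a one-line computation with the pairing \eqref{eq:tensoral_dual}, since pairing against $\unit$ extracts exactly the degree-zero component that the counit reads off.

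With these dualities assembled I would run through Theorem \ref{thm:dual_hopf_bi_co_algebras} in order, identifying the maps $\productnum{2}=\shuffle$, $\copropronum{1}=\deconc$, $\antipodenum{2}{1}=\antishuffle$ on $\tensoral{\Vpdual}$ with their duals $\productnum{1}=\itensor$, $\copropronum{2}=\deshuffle$, $\antipodenum{1}{2}=\antitensor$ on $\tensoral{V}$. Part~1 transfers coassociativity of $\deconc$ from associativity of $\itensor$, and associativity of $\shuffle$ from coassociativity of $\deshuffle$; part~2 transfers the unit and counit properties; part~3 transfers the bialgebra compatibility requirements; and part~4 transfers the antipode property. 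Because the structure on $\tensoral{V}$ is a Hopf algebra, each ``iff'' delivers the corresponding property on the dual side, so $(\tensoral{\Vpdual},\shuffle,\deconc,\antishuffle)$ is a Hopf algebra dual to $(\tensoral{V},\itensor,\deshuffle,\antitensor)$ in the sense of Theorem \ref{thm:dual_hopf_bi_co_algebras}.

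Finally I would verify connected gradedness directly, which the duality theorem does not supply: with the grading $\tensoral{\Vpdual}=\bigoplus_n\Vpdual^{\itensor n}$ one checks $\Vpdual^{\itensor m}\shuffle\Vpdual^{\itensor n}\subseteq\Vpdual^{\itensor(m+n)}$ (every shuffle of a length-$m$ with a length-$n$ word has length $m+n$), that $\deconc$ splits a length-$n$ word into pieces of complementary degrees summing to $n$, that $\antishuffle$ preserves each $\Vpdual^{\itensor n}$, and that $\dim\Vpdual^{\itensor 0}=1$; these are precisely the conditions of Definition \ref{defn:grading}. The only real obstacle is bookkeeping: one must match the indexed symbols $\productnum{1},\productnum{2},\copropronum{1},\copropronum{2}$ of Theorem \ref{thm:dual_hopf_bi_co_algebras} to the concrete maps so that each of the four parts is applied with the product and coproduct sitting on the correct side of the pairing. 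Once that correspondence is pinned down, the rest is immediate.
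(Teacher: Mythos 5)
Your proposal is correct and follows essentially the same route as the paper: both invoke Theorem \ref{thm:dual_hopf_bi_co_algebras} on the duality relations from Lemmas \ref{lemma:shuffle}, \ref{lemma:deconcatenation} and \ref{lemma:antitensor} together with the obvious unit/counit dualities, and then verify compatibility with the grading directly. Your writeup is merely more explicit about the bookkeeping than the paper's two-line proof.
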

\begin{proof}
 The operations $\shuffle$, $\deconc$ and $\antishuffle$ obviously are compatible with the grading. The fact that ${(\tensoral{\Vpdual},{\shuffle},{\deconc},{\antishuffle})}$ follows from Theorem \ref{thm:dual_hopf_bi_co_algebras} applied to the duality relations of Lemmas \ref{lemma:shuffle}, \ref{lemma:deconcatenation} and \ref{lemma:antitensor} and the obvious dualities of unitmaps and counits.
\end{proof}
\section{Formal series, Lie algebra and truncations}

\subsection{Formal series}\label{subsection:formal_series}
This section is based on the use of formal series and infinite linearity in \cite{reutenauer93}, though our definitions are a bit different.

For any vector space $G$ graded by $(G_i)_i$, \ie $G=\bigoplus_{i=0}^\infty G_i$ for subspaces $G_i$, the corresponding space of formal series\index{formal series} is given by
\begin{equation*}
  \spacefs{G}:=\directproduct_{i=0}^\infty G_i:=\left\{\sum_{n=0}^{\infty}u_n\middle|\,u_m\in G_m\,\forallc m\in\naturals_0\right\},
\end{equation*}
where the product is the Cartesian product of vector spaces (Chapter V Section 5.\@ \cite{robertson64}). 
For the space $G\etensor G$, consider the induced grading 
\begin{equation}\label{eq:induced_grading}
 ([G\etensor G]_i)_i:=\Big(\bigoplus_{m\leq i}G_m\etensor G_{i-m}\Big)_i.
\end{equation}
Let $\proj_n:\,\spacefs{G}\rightarrow G_n$ and $\projtwo_n:\,\spacefs{[G\etensor G]}\rightarrow[G\etensor G]_n$ be the canonical projections.
Generally, for $k\in\naturals$, $k\geq 2$, the grading of the space $G^{\etensor k}$ is given by
\begin{equation*}
 ([G^{\etensor k}]_i)_i:=\Big(\bigoplus_{\substack{m_1,\ldots,m_k\in\naturals_0:\\m_1+\cdots+m_k=i}}G_{m_1}\etensor\cdots\etensor G_{m_k}\Big)_i,
\end{equation*}
and we denote the canonical projection on $[G^{\etensor k}]_n$ by $\proj_n^{[k]}$.

\begin{thm}~
 \begin{enumerate}[(i)]
  \item If $(G,\product)$ is a graded algebra, then $\spacefs{G}$ is an algebra together with the \emph{extended product} $\extproduct:\,\spacefs{G}\times\spacefs{G}\rightarrow\spacefs{G}$ given by
  \begin{equation*}
   \proj_n\linofbilin{\extproduct}(a\etensor b)=\proj_n(a\extproduct b):=\sum_{i=0}^n \proj_i a\product\proj_{n-i}b=\mproduct\projtwo_n(a\etensor b).
  \end{equation*}
  \item If $(G,\coproduct)$ is a graded coalgebra, then the \emph{extended coproduct} $\extcoproduct:\,\spacefs{G}\rightarrow\spacefs{[G\etensor G]}$ given by
  \begin{equation*}
   \projtwo_n\extcoproduct:=\coproduct\proj_n
  \end{equation*}
  is coassociative. If $(G,\coproduct)$ is counitary with counit $\counit$, then $\extcounit:\,\spacefs{G}\rightarrow K$ given by
  \begin{equation*}
   \extcounit:=\counit\proj_0.
  \end{equation*}
  fullfills the counit property\index{counit!property}
  \begin{equation*}
   (\extcounit\etensor\id)\extcoproduct=(\id\etensor\extcounit)\extcoproduct=\id.
  \end{equation*}
  \item If $(G,\product,\coproduct)$ is a graded bialgebra with unit map $\unitmap$ and counit $\counit$ over the field $K$ such that $B^{>0}\subseteq\ker\counit$, then $(\spacefs{G},\extproduct,\extcoproduct)$ fulfills the compatibility requirements\index{compatibility requirements}
  \begin{enumerate}
   \item $\extcoproduct\mextproduct=(\mextproduct\etensor\mextproduct)(\id\etensor\flip\etensor\id)(\extcoproduct\etensor\extcoproduct)\quad$ ($\extcoproduct$ is an algebra homomorphism from $(\spacefs{G},\extproduct)$ to $(\spacefs{[G\etensor G]},\extproducttwo)$),
   \item $\extcoproduct\unitmap=\unitmap\etensor\unitmap$,
   \item $\extcounit\mproduct=\extcounit\etensor\extcounit\quad$ ($\extcounit$ is an algebra homomorphism from $(\spacefs{G},\extproduct)$ to $(K,\cdot)$),
  \end{enumerate}
  where $\producttwo$ is defined as in \eqref{eq:producttwo}.
  \item If $(G,\product,\coproduct,\antipode)$ is a graded Hopf algebra such that $B^{>0}\subseteq\ker\counit$ and $\extantipode:\,\spacefs{G}\rightarrow\spacefs{G}$ is the \emph{extended antipode} given by
  \begin{equation*}
   \proj_n\extantipode:=\antipode\proj_n,
  \end{equation*}
  then $\extantipode$ fulfills the antipode property\index{antipode!property}
  \begin{equation*}
   \mextproduct(\extantipode\etensor\id)\extcoproduct=\mextproduct(\id\etensor\extantipode)\extcoproduct=\unitmap\extcounit.
  \end{equation*}
 \end{enumerate}
\end{thm}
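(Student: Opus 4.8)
The plan is to exploit that every extended operation is specified one graded component at a time: each is determined by how $\proj_n$ (or a multi-tensor projection $\projmult{k}_n$) composes with it, and grading guarantees that this component is always a \emph{finite} sum, so all the maps are well-defined on formal series to begin with. The strategy is therefore uniform across all four parts: to verify an identity between two maps on $\spacefs{G}$, I would compose both sides with the appropriate projection, push that projection inward using the defining relations — namely $\proj_n\mextproduct=\mproduct\projtwo_n$ from part (i), its analogue $\projtwo_n\extcoproduct=\coproduct\proj_n$, and the $\projmult{k}_n$-versions for the iterated operations — and thereby reduce the identity to the corresponding axiom of $G$ restricted to the finitely many graded pieces of total degree $n$. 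Since those axioms hold in $G$ by hypothesis, the identity follows for every $n$, hence on all of $\spacefs{G}$.

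For (i), applying $\proj_n$ to $(a\extproduct b)\extproduct c$ and to $a\extproduct(b\extproduct c)$ turns each into the finite triple sum $\sum_{i+j+k=n}\proj_i a\product\proj_j b\product\proj_k c$ once associativity \eqref{eq:associativity} of $\product$ is used; the unit is $\unit\in G_0$, checked directly from $\proj_n(\unit\extproduct a)=\proj_0\unit\product\proj_n a=\proj_n a$. For (ii) I would introduce $\projmult{3}_n$ and establish $\projmult{3}_n(\extcoproduct\etensor\id)\extcoproduct=(\coproduct\etensor\id)\coproduct\proj_n$ and $\projmult{3}_n(\id\etensor\extcoproduct)\extcoproduct=(\id\etensor\coproduct)\coproduct\proj_n$, so that coassociativity reduces to \eqref{eq:coassociativity}. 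The counit property is the one genuinely subtle point: since $\extcounit=\counit\proj_0$ reads off only the degree-zero first tensor factor, $\proj_n(\extcounit\etensor\id)\extcoproduct a$ equals $(\counit\etensor\id)$ applied to the $G_0\etensor G_n$ part of $\coproduct\proj_n a$, and this is precisely the $G_n$-graded component of $(\counit\etensor\id)\coproduct\proj_n a=\proj_n a$, hence equals $\proj_n a$ — the mixed lower-degree contributions to the counit identity are invisible to $\proj_n$.

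Parts (iii) and (iv) are the same reduction applied to the remaining axioms. For the compatibility requirement (a) I would apply $\projtwo_n$, use $\projtwo_n\extcoproduct\mextproduct=\coproduct\mproduct\projtwo_n$, and invoke the first compatibility requirement of Definition \ref{defn:bialgebra}; requirements (b) and (c) collapse to the degree-zero statements $\coproduct\unit=\unit\etensor\unit$ and $\counit\mproduct=\counit\etensor\counit$, because $\proj_0(a\extproduct b)=\proj_0 a\product\proj_0 b$ and $\unit\in G_0$. For the antipode property in (iv), applying $\proj_n$ and using that $\antipode$ preserves the grading yields $\proj_n\mextproduct(\extantipode\etensor\id)\extcoproduct a=\mproduct(\antipode\etensor\id)\coproduct\proj_n a=\counit(\proj_n a)\unit$ by \eqref{eq:antipodeproperty}, whereas $\proj_n\unitmap\extcounit a=\counit(\proj_0 a)\,\deltasymb{n,0}\,\unit$.

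The main obstacle — and the place where the hypothesis $B^{>0}\subseteq\ker\counit$ earns its keep — is exactly this last comparison: the two sides agree in degree $0$ automatically, but in degree $n>0$ the extended counit has already discarded $\proj_n a$, so one needs $\counit(\proj_n a)=0$, \ie $\counit$ must annihilate every positive-degree component. This is precisely what the assumption supplies (and it holds automatically in the connected graded case of interest, by Theorem \ref{thm:rcoproduct_cgbialgebra}). Beyond this, the only real work is organizational: setting up the commutation relations between the extended maps and the projections $\projmult{k}_n$, and checking that operations such as $\extcounit\etensor\id$ and $\extcoproduct\etensor\extcoproduct$ act degreewise as finite sums, so that the composites appearing in the coassociativity, compatibility, and antipode identities are well-defined on $\spacefs{G}$ in the first place.
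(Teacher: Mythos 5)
Your proposal is correct and follows essentially the same route as the paper's proof: compose each identity with the graded projections $\proj_n$, $\projtwo_n$, $\projmult{k}_n$, push them through the extended operations via the defining relations, and reduce to the corresponding axiom of $G$, with the hypothesis $B^{>0}\subseteq\ker\counit$ entering exactly where you place it, in comparing $\unitmap\counit\proj_n$ with $\proj_n\unitmap\extcounit$ for $n>0$. You in fact spell out the degreewise verification of the counit property in (ii) more explicitly than the paper does, which is a welcome addition rather than a deviation.
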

\begin{proof}~
 \begin{enumerate}[(i)]
  \item From associativity of $\product$, we conclude
  \begin{align*}
   \proj_n((a\extproduct b)\extproduct c)&=\sum_{i=0}^n  \proj_i(a\extproduct b)\product\proj_{n-i} c=\sum_{i=0}^n\sum_{j=0}^i (\proj_j a\product\proj_{i-j} b)\product\proj_{n-i} c\\
   &=\sum_{i=0}^n\sum_{j=0}^j \proj_i a\product(\proj_{i-j} b\product\proj_{n-i} c)=\sum_{k=0}^n\sum_{l=0}^{n-k} \proj_k a\product(\proj_l b\product\proj_{n-k-l} c)\\
   &=\sum_{k=0}^n\proj_k a\product\proj_{n-k}(b\extproduct c)=\proj_n(a\extproduct(b\extproduct c)).
  \end{align*}
  \item Coassociativity of $\coproduct$ implies
  \begin{align*}
   \projmult{3}_n(\extcoproduct\etensor\id)\extcoproduct&=(\coproduct\etensor\id)\projtwo_n\extcoproduct=(\coproduct\etensor\id)\coproduct\proj_n=(\id\etensor\coproduct)\coproduct\proj_n=(\id\etensor\coproduct)\projtwo_n\extcoproduct\\
   &=\projmult{3}_n(\id\etensor\extcoproduct)\extcoproduct.
  \end{align*}

  \item Due to the first compatibility requirement of Definition \ref{defn:bialgebra}, we have
  \begin{align*}
   \projtwo_n\extcoproduct\mextproduct&=\coproduct\proj_n\mextproduct=\coproduct\mproduct\projtwo_n=(\mproduct\etensor\mproduct)\flip_{1324}(\coproduct\etensor\coproduct)\projtwo_n=(\mproduct\etensor\mproduct)\flip_{1324}\projmult{4}_n(\extcoproduct\etensor\extcoproduct)\\
   &=
   (\mproduct\etensor\mproduct)\projmult{4}_n\flip_{1324}(\extcoproduct\etensor\extcoproduct)=\projtwo_n
   (\mextproduct\etensor\mextproduct)\flip_{1324}(\extcoproduct\etensor\extcoproduct)
  \end{align*}
  The second compatibility requirement follows from
  \begin{equation*}
   \extcoproduct\unit=\extcoproduct\proj_0\unit=\projtwo_0\coproduct\unit=\projtwo_0(\unit\etensor\unit)=\unit\etensor\unit.
  \end{equation*}
  Finally, from the original third compatibility requirement of Definition \ref{defn:bialgebra}, we conclude
  \begin{equation*}
   \extcounit\mextproduct=\counit\proj_0\mextproduct=\counit\mproduct\projtwo_0=(\counit\etensor\counit)\projtwo_0=\extcounit\etensor\extcounit.
  \end{equation*}
  \item From the original antipode property \eqref{eq:antipodeproperty}, it follows
  \begin{equation*}
   \proj_n\mextproduct(\id\etensor\extantipode)\extcoproduct=\mproduct\projtwo_n(\id\etensor\extantipode)=\mproduct(\id\etensor\antipode)\projtwo_n\extcoproduct=\mproduct(\id\etensor\antipode)\coproduct\proj_n=\unitmap\counit\proj_n=\proj_n\unitmap\extcounit,
  \end{equation*}
  since $\unitmap\counit\proj_n=0=\proj_n\unitmap\extcounit$ if $n\neq 0$ and otherwise $\unitmap\counit\proj_0=\unitmap\extcounit=\proj_0\unitmap\extcounit$. Likewise, $\proj_n\mextproduct(\extantipode\etensor\id)\extcoproduct=\proj_n\unitmap\extcounit$.
 \end{enumerate}
\end{proof}

In the following, we will simply also write $\product$, $\coproduct$, $\counit$, $\antipode$ for $\extproduct$, $\extcoproduct$, $\extcounit$, $\extantipode$ when it is clear from the context that we mean the extended operations on $\spacefs{G}$. 
\subsection{Lie Algebras}
\begin{defn}(Section 0.1 \cite{reutenauer93})
 A \emph{Lie algebra}\index{Lie!algebra} is a vector space $\liealg$ together with a bilinear map $\liebracket=[{\,\cdotp,\cdotp\,}]:\,\liealg\times\liealg\rightarrow\liealg$, the \emph{Lie bracket}\index{Lie!bracket}, with the properties
 \begin{enumerate}
  \item $[x,x]=0$ for all $x\in\liealg$,
  \item $\big[[a,b],c\big]+\big[[b,c],a\big]+\big[[c,a],b\big]=0$ for all $a,b,c\in\liealg$ (\emphind{Jacobi's identity}).
 \end{enumerate}
\end{defn}
\begin{rmk}(Section 0.1 \cite{reutenauer93})
 As for all $a,b\in\liealg$ we have
 \begin{equation*}
  0=[a+b,a+b]=[a,a]+[a,b]+[b,a]+[b,b]=[a,b]+[b,a],
 \end{equation*}
 the Lie bracket is \emph{antisymmetric}\index{antisymmetry}, \ie
 \begin{equation*}
  [a,b]=-[b,a]\quad\forallc a,b\in\liealg.
 \end{equation*}
\end{rmk}
\begin{defn}
 A sub Lie algebra of a Lie algebra $(\liealg,\liebracket)$ is a subspace $V\subseteq\liealg$ such that
 \begin{equation*}
  [V,V]\subseteq V.
 \end{equation*}
 The sub Lie algebra generated by a subspace $S\subseteq\liealg$, \ie the intersection of all sub Lie algebras containing $S$, is denoted by $\generate{S}{\liebracket}$.
\end{defn}
\begin{rmk}(Section 0.1 \cite{reutenauer93})
 For any algebra $(A,\product)$, there is a Lie algebra $(A,\liebracket\productindex)$ given by
 \begin{equation*}
  \liebracket\productindex(a,b):=[a,b]\productindex:=a\product b-b\product a\quad\forallc a,b\in A.
 \end{equation*}
 If A is an algebra graded by $(A_i)_i$, put $\liefs{A}:=\directproduct_{i=1}^\infty A_i$, which is a Lie algebra together with the Lie bracket $\liebracket\productindex$ of the extended product $\product$.
\end{rmk}
\begin{defn}(Based on Section 1.3 \cite{reutenauer93})
 For a Lie algebra $(\liealg,\liebracket)$, the linear operator $\rnbracketing:\,\tensoral{\liealg}\rightarrow\liealg$ recursively generated by
 \begin{equation*}
  \rnbracketing\unit=0,\quad\rnbracketing x=x,\quad\rnbracketing (x\itensor u)=[x,\rnbracketing u]\quad\forallc x\in\liealg,u\in\bigoplus_{n=1}^\infty\liealg^{\itensor n}
 \end{equation*}
 is called \emph{right norm bracketing}.
\end{defn}
\begin{thm}\textnormal{(Based on Section 0.4.1 \cite{reutenauer93})}
 For a subspace $W$ of a Lie algebra $(\liealg,\liebracket)$, we have
 \begin{equation*}
  \rnbracketing\tensoral{W}=\generate{W}{\liebracket}.
 \end{equation*}
\end{thm}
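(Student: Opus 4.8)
The plan is to establish the set equality by proving the two inclusions separately, with the reverse inclusion carrying essentially all of the difficulty.

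The inclusion $\rnbracketing\tensoral{W}\subseteq\generate{W}{\liebracket}$ I would obtain by induction on the tensor grade. Since $\tensoral{W}=\bigoplus_{n\geq 0}W^{\itensor n}$ and $\rnbracketing$ is linear, it suffices to show $\rnbracketing W^{\itensor n}\subseteq\generate{W}{\liebracket}$ for every $n$. The cases $n=0$ (with $\rnbracketing\unit=0$) and $n=1$ (with $\rnbracketing w=w\in W$) are immediate, and for the step one uses $\rnbracketing(x\itensor u)=[x,\rnbracketing u]$ for $x\in W$, $u\in W^{\itensor n}$: the induction hypothesis gives $\rnbracketing u\in\generate{W}{\liebracket}$, and since $\generate{W}{\liebracket}$ is a sub Lie algebra containing $W$, it is closed under the bracket, so $[x,\rnbracketing u]\in\generate{W}{\liebracket}$.

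For the reverse inclusion the idea is to show that $\rnbracketing\tensoral{W}$ is \emph{itself} a sub Lie algebra containing $W$; minimality of $\generate{W}{\liebracket}$ then forces $\generate{W}{\liebracket}\subseteq\rnbracketing\tensoral{W}$. It is a subspace as the linear image of a subspace, and it contains $W$ since $\rnbracketing w=w$. The substance is closure under the bracket, i.e.\ that $[\rnbracketing u,\rnbracketing v]\in\rnbracketing\tensoral{W}$ for all $u,v\in\tensoral{W}$; by bilinearity it is enough to treat $u\in W^{\itensor m}$ and $v\in W^{\itensor n}$, and I would induct on $m$. For $m=1$ one has $\rnbracketing u=w$ and $[\rnbracketing u,\rnbracketing v]=[w,\rnbracketing v]=\rnbracketing(w\itensor v)$ straight from the recursion. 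For the step, writing $u=x\itensor u'$ with $x\in W$, the Jacobi identity in its derivation form
\begin{equation*}
 [[x,\rnbracketing u'],\rnbracketing v]=[x,[\rnbracketing u',\rnbracketing v]]-[\rnbracketing u',[x,\rnbracketing v]]
\end{equation*}
splits $[\rnbracketing u,\rnbracketing v]$ into two pieces. In the first, the induction hypothesis gives $[\rnbracketing u',\rnbracketing v]=\rnbracketing t$ for some $t\in\tensoral{W}$, whence $[x,\rnbracketing t]=\rnbracketing(x\itensor t)\in\rnbracketing\tensoral{W}$. In the second, the recursion gives $[x,\rnbracketing v]=\rnbracketing(x\itensor v)$, and then the induction hypothesis applied to $u'$ (of length $m-1$) against $x\itensor v$ yields $[\rnbracketing u',\rnbracketing(x\itensor v)]\in\rnbracketing\tensoral{W}$. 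Both summands thus lie in $\rnbracketing\tensoral{W}$, completing the induction.

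The main obstacle is precisely this closure under the bracket: the defining recursion only permits peeling a single $W$-factor off the \emph{left}, so the bracket of two right-normed words is not handled directly and must be reduced to right-normed form via Jacobi. A minor technical point is that $\rnbracketing$ annihilates the grade-zero component, so when invoking $\rnbracketing(x\itensor t)=[x,\rnbracketing t]$ one should first discard the $\unit$-part of $t$ (which leaves $\rnbracketing t$ unchanged); since $\rnbracketing\tensoral{W}=\rnbracketing\bigl(\bigoplus_{n\geq 1}W^{\itensor n}\bigr)$ this is harmless.
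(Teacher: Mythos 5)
Your proof is correct, and both inclusions hinge on the same facts the paper uses: the forward inclusion is the identical induction via $\rnbracketing(w\itensor u)=[w,\rnbracketing u]$, and the reverse inclusion ultimately rests on the derivation form of the Jacobi identity, $[[x,a],b]=[x,[a,b]]-[a,[x,b]]$, which is exactly the identity the paper writes as $[[a,b],c]=[a,[b,c]]-[b,[a,c]]$. Where you differ is in the logical packaging of the reverse inclusion. The paper argues element by element: it asserts that every element of $\generate{W}{\liebracket}$ is a linear combination of nested Lie brackets of elements of $W$ and then "inductively" right-normalizes each nested bracket, leaving both the spanning claim and the induction scheme implicit. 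You instead prove that $\rnbracketing\tensoral{W}$ is itself a sub Lie algebra containing $W$ and invoke minimality of $\generate{W}{\liebracket}$; the closure under the bracket is established by a clean induction on the tensor length of the left argument. This buys you two things: you never need the unproven assertion that $\generate{W}{\liebracket}$ is spanned by nested brackets (minimality does that work for you), and your induction variable is explicit. Your remark about discarding the $\unit$-component before applying $\rnbracketing(x\itensor t)=[x,\rnbracketing t]$ is a genuine edge case that the paper glosses over, and you handle it correctly. The only cosmetic imprecision is that "writing $u=x\itensor u'$" presumes $u$ is an elementary tensor, but since you have already reduced to elementary tensors by bilinearity this is harmless.
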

\begin{proof}('$\supseteq$' part of proof is taken from Section 0.4.1 \cite{reutenauer93})
 On the one hand, we have $\rnbracketing\tensoral{W}\subseteq\generate{W}{\liebracket}$, since if $\rnbracketing u\in\generate{W}{\liebracket}$ and $w\in W$, then also $\rnbracketing(w\itensor u)=[w,\rnbracketing u]\in\generate{W}{\liebracket}$, because $\generate{W}{\liebracket}$ is a sub Lie algebra containing $W$. On the other hand, each $v\in\generate{W}{\liebracket}$ can be written as a linear combination of nested Lie brackets. Since
 \begin{equation*}
  [[a,b],c]=-[[b,c],a]-[[c,a],b]=[a,[b,c]]+[b,[c,a]]=[a,[b,c]]-[b,[a,c]]\quad\forallc a,b,c\in\generate{W}{\liebracket}
 \end{equation*}
 due to Jacobi's identity and antisymmetry, we can inductively transform all nested Lie brackets into a linear combination of right norm bracketings, and thus we also have $\rnbracketing\tensoral{W}\supseteq\generate{W}{\liebracket}$.
\end{proof}

\subsection{Exponential map}
Let $A$ be a connectedly graded algebra, this time over a field of characteristic zero. Put $\groupfs{A}:=\unit+\liefs{A}\subseteq\spacefs{A}$ (Section II.3.\@ \cite{manchon06}). Define the maps $\exppro:\liefs{A}\rightarrow\groupfs{A}$ and $\logpro:\groupfs{A}\rightarrow\liefs{A}$ by
\begin{equation}\label{eq:defn_exppro_logpro}
  \exppro(u):=\sum_{n=0}^\infty\frac{u^{\product n}}{n!}\quad\text{and}\quad\logpro(\unit+u):=\sum_{n=1}^\infty\frac{(-1)^{n-1} u^{\product n}}{n}
\end{equation}
(Equations (3.1.1) and (3.1.2) \cite{reutenauer93}, Section II.3.\@ \cite{manchon06}).
\begin{thm}\label{thm:exppro_logpro_inverse_addition}~
 \begin{enumerate}
  \item \textnormal{(Equation (3.1.3) \cite{reutenauer93}, Section II.3.\@ \cite{manchon06}, Section 2.3 \cite{hairerkelly14})} The maps $\exppro$ and $\logpro$ are bijective and $\logpro$ is the inverse function of $\exppro$.
  \item \textnormal{(Based on the proof of Theorem 3.2 \cite{reutenauer93})} If $u_1,u_2\in\liefs{A}$ with $u_1\product u_2=u_2\product u_1$, then
  \begin{equation*}
   \exppro(u_1)\product\exppro(u_2)=\exppro(u_2)\product\exppro(u_1)=\exppro(u_1+u_2).
  \end{equation*}
  If $v_1,v_2\in\groupfs{A}$ with $v_1\product v_2=v_2\product v_1$, then
  \begin{equation*}
   \logpro(v_1)+\logpro(v_2)=\logpro(v_1\product v_2).
  \end{equation*}
  \item \textnormal{(Based on Proposition II.3.1.\@ \cite{manchon06})} The pair $(\groupfs{A},\product)$ forms a group. 
 \end{enumerate}
\end{thm}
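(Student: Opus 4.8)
The plan is to treat the three parts in order, using the grading of $A$ to control every infinite sum: since $u\in\liefs{A}$ has no component in degree $0$, the product $u^{\product n}$ lies in degrees $\geq n$, so each homogeneous component of $\exppro(u)$ and of $\logpro(\unit+u)$ is a \emph{finite} sum, which is what makes both maps well-defined as elements of $\spacefs{A}$. Directly from the definitions \eqref{eq:defn_exppro_logpro} one reads off that $\exppro$ sends $\liefs{A}$ into $\groupfs{A}=\unit+\liefs{A}$ (the constant term is $\unit$) and that $\logpro$ sends $\groupfs{A}$ into $\liefs{A}$, so the two maps go between the claimed sets.

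For the first part I would reduce the two composition identities to classical identities of formal power series. Over the field $K$ of characteristic zero, the series $\exp(x)=\sum_{n\geq 0}x^n/n!$ and $\log(1+x)=\sum_{n\geq 1}(-1)^{n-1}x^n/n$ in $K[[x]]$ satisfy $\log(\exp(x))=x$ and $\exp(\log(1+x))=1+x$. For a fixed $u\in\liefs{A}$, substitution $x\mapsto u$ defines, thanks to the grading, an algebra homomorphism $K[[x]]\rightarrow\spacefs{A}$ which, for an inner series with vanishing constant term, is compatible with composition. Applying this homomorphism to the two classical identities turns them into $\logpro(\exppro(u))=u$ and $\exppro(\logpro(\unit+u))=\unit+u$, so $\exppro$ and $\logpro$ are mutually inverse, hence both bijective.

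For the second part, the exponential identity follows from the binomial theorem: commutativity $u_1\product u_2=u_2\product u_1$ gives
\begin{equation*}
 (u_1+u_2)^{\product n}=\sum_{k=0}^n\binom{n}{k}\,u_1^{\product k}\product u_2^{\product(n-k)},
\end{equation*}
and after dividing by $n!$ and performing the Cauchy-product rearrangement (legitimate because it is finite in each degree) one obtains $\exppro(u_1+u_2)=\exppro(u_1)\product\exppro(u_2)$; the remaining symmetry is simply $u_1+u_2=u_2+u_1$. For the logarithmic identity I would put $u_i:=\logpro(v_i)$; since $\logpro(v_i)$ is a series in $v_i-\unit$, commuting $v_1,v_2$ force commuting $u_1,u_2$, so the exponential identity together with the first part yields $\logpro(v_1\product v_2)=\logpro(\exppro(u_1+u_2))=u_1+u_2=\logpro(v_1)+\logpro(v_2)$.

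The third part then follows quickly. Associativity is inherited from the extended product on $\spacefs{A}$, and $\unit=\exppro(0)\in\groupfs{A}$ is a two-sided identity; closure holds since $(\unit+w_1)\product(\unit+w_2)=\unit+(w_1+w_2+w_1\product w_2)$ with the bracketed term in $\liefs{A}$. Finally, for $v\in\groupfs{A}$ set $u:=\logpro(v)$; then $u$ and $-u$ commute, so the second part gives $\exppro(u)\product\exppro(-u)=\exppro(0)=\unit=\exppro(-u)\product\exppro(u)$, exhibiting $\exppro(-\logpro(v))$ as a two-sided inverse of $v$. The main obstacle is the rigorous transfer in the first part: one must justify that substitution of a positive-degree element is a well-defined, composition-respecting algebra homomorphism $K[[x]]\rightarrow\spacefs{A}$, i.e.\ that all the reorderings of infinite sums are valid, which is precisely where the grading (finiteness in each homogeneous component) does the essential work.
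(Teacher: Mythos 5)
Your proposal is correct and, for parts 2 and 3, follows essentially the same route as the paper: the binomial theorem under the commutativity hypothesis for the exponential identity, deduction of the logarithmic identity from it together with injectivity of $\exppro$, and the same closure-plus-inverse check (via $\exppro(-\logpro(v))$) for the group structure. The only divergence is part 1, which the paper deliberately does not prove (it cites it as a standard fact for the tensor algebra, transferred by suitable homomorphisms), whereas you sketch it by substituting $u\in\liefs{A}$ into the classical identities in $K[[x]]$ — a legitimate and essentially equivalent reduction, with the grading correctly identified as what makes the substitution homomorphism and its compatibility with composition well-defined.
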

We do not proof Statement 1.\@ of the theorem. It is a standard fact for the case of the tensor algebra $(\tensoral{V},\itensor)$, and the general case follows easily from it through looking at suitable homomorphisms.
\begin{proof}[Proof of the remainder]~
 \begin{enumerate}
 \setcounter{enumi}{1}
\item Let $u_1,u_2\in\liefs{A}$ with $u_1\product u_2=u_2\product u_1$ be arbitrary. Then,
  \begin{align*}
   \exppro(u_1+u_2)&=\sum_{n=0}^\infty\frac{(u_1+u_2)^{\product n}}{n!}=\sum_{n=0}^\infty\sum_{m=0}^n\binom{n}{m}\frac{u_1^{\product m}\product u_2^{\product(n-m)}}{n!}=\sum_{n=0}^\infty\sum_{m=0}^n\frac{u_1^{\product m}\product u_2^{\product(n-m)}}{m!(n-m)!}\\
   &=\sum_{k=0}^\infty\sum_{l=0}^\infty\frac{u_1^{\product k}\product u_2^{\product l}}{k!l!}=\exppro(u_1)\product\exppro(u_2).
  \end{align*}
  Let $v_1,v_2\in\groupfs{A}$ with $v_1\product v_2=v_2\product v_1$ be arbitrary. Then, we obviously have $\logpro(v_1)\product\logpro(v_2)=\logpro(v_2)\product\logpro(v_1)$, and thus
  \begin{equation*}
   \exppro(\logpro(v_1)+\logpro(v_2))=\exppro(\logpro(v_1))\product\exppro(\logpro(v_2))=v_1\product v_2=\exppro(\logpro(v_1+v_2)).
  \end{equation*}
  Due to injectivity of $\exppro$, we get $\logpro(v_1)+\logpro(v_2)=\logpro(v_1\product v_2)$.
  \item We have $\liefs{A}\product\liefs{A}\subseteq\liefs{A}$ due to the grading, and thus also $\groupfs{A}\product\groupfs{A}\subseteq\groupfs{A}$ since $\groupfs{A}=\unit+\liefs{A}$. Furthermore, since $\exppro$ is surjective, for any $y\in\groupfs{A}$, there is $x\in\liefs{A}$ such that $y=\exppro(x)$, and due to 2.,
  \begin{equation*}
   y\product\exppro(-x)=\exppro(x)\product\exppro(-x)=\exppro(x-x)=\exppro(0)=\unit.
  \end{equation*}
 \end{enumerate}

\end{proof}
\begin{thm}\label{thm:exppro_logpro_morphism}
 Let $(A,\productnum{1})$ and $(\hat{A},\productnum{2})$ be connectedly graded algebras and $\varLambda:\,\spacefs{A}\rightarrow\spacefs{\hat{A}}$ be an algebra homomorphism such that $\varLambda\unit=\unit$ and $\varLambda\proj_n v=\proj_n\varLambda v$ for all $v\in\spacefs{A},\,n\in\naturals_0$. Then,
 \begin{equation*}
  \varLambda\exp_{\productnum{1}}(u)=\exp_{\productnum{2}}(\varLambda u)\quad\forallc u\in\liefs{A}
 \end{equation*}
 and
 \begin{equation*}
  \varLambda\log_{\productnum{1}}(v)=\log_{\productnum{2}}(\varLambda v)\quad\forallc v\in\groupfs{A}.
 \end{equation*}
\end{thm}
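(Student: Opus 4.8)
The plan is to prove the exponential identity first, by comparing the two sides degree by degree, and then to obtain the logarithmic identity for free by inverting, using that $\log_{\productnum{1}}$ and $\log_{\productnum{2}}$ are the inverses of $\exp_{\productnum{1}}$ and $\exp_{\productnum{2}}$ (Theorem \ref{thm:exppro_logpro_inverse_addition}). First I would observe that both sides of each claimed equality are well-defined. Since $\proj_0\varLambda w=\varLambda\proj_0 w$ for every $w\in\spacefs{A}$, the map $\varLambda$ sends $\liefs{A}$ into $\liefs{\hat{A}}$ (because $\proj_0\varLambda u=\varLambda\proj_0 u=0$ whenever $u\in\liefs{A}$) and, using $\varLambda\unit=\unit$, it sends $\groupfs{A}=\unit+\liefs{A}$ into $\groupfs{\hat{A}}=\unit+\liefs{\hat{A}}$. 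In particular $\exp_{\productnum{2}}(\varLambda u)$ and $\log_{\productnum{2}}(\varLambda v)$ are meaningful.

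For the exponential identity, fix $u\in\liefs{A}$ and an arbitrary degree $N\in\naturals_0$; it suffices to check that $\proj_N$ agrees on both sides, since two elements of $\spacefs{\hat{A}}$ coincide exactly when all their projections do. Using the hypothesis $\proj_N\varLambda=\varLambda\proj_N$, I would start from $\proj_N\varLambda\exp_{\productnum{1}}(u)=\varLambda\proj_N\sum_{n\ge 0}\frac{u^{\productnum{1} n}}{n!}$. The crucial grading observation is that, because $u$ has no component in degree $0$, the power $u^{\productnum{1} n}$ lies in $\directproduct_{i\ge n}A_i$, so $\proj_N$ annihilates every term with $n>N$ and the sum becomes finite; on a finite sum $\varLambda$ passes through by linearity. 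I would then invoke that $\varLambda$ is an algebra homomorphism for the extended products to rewrite $\varLambda(u^{\productnum{1} n})=(\varLambda u)^{\productnum{2} n}$, and apply $\proj_N\varLambda=\varLambda\proj_N$ once more, arriving at $\sum_{n=0}^{N}\frac{\proj_N((\varLambda u)^{\productnum{2} n})}{n!}=\proj_N\exp_{\productnum{2}}(\varLambda u)$, where the last step again uses the degree bound to restore the full defining series. Since $N$ was arbitrary, the two formal series agree.

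For the logarithmic identity, fix $v\in\groupfs{A}$ and set $u:=\log_{\productnum{1}}(v)\in\liefs{A}$, so that $v=\exp_{\productnum{1}}(u)$ by Theorem \ref{thm:exppro_logpro_inverse_addition}. Applying the exponential identity just proved gives $\varLambda v=\varLambda\exp_{\productnum{1}}(u)=\exp_{\productnum{2}}(\varLambda u)$, and since $\varLambda u\in\liefs{\hat{A}}$ this exhibits $\varLambda v$ as an exponential; applying $\log_{\productnum{2}}$ and using that it inverts $\exp_{\productnum{2}}$ yields $\log_{\productnum{2}}(\varLambda v)=\varLambda u=\varLambda\log_{\productnum{1}}(v)$.

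I expect the only genuinely delicate point to be the interchange of $\varLambda$ with the infinite defining series of the exponential. This is exactly what the compatibility hypothesis $\varLambda\proj_n=\proj_n\varLambda$ is there to license, and it succeeds only because the grading forces $u^{\productnum{1} n}$ into degrees $\ge n$, so that each projection sees a finite sum on which linearity and the homomorphism property apply with no convergence issue whatsoever.
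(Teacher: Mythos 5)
Your proof is correct. For the exponential identity you take the same route as the paper — interchange $\varLambda$ with the defining series, justified by the compatibility $\varLambda\proj_n=\proj_n\varLambda$ — except that you spell out in full why the interchange is legitimate (each projection $\proj_N$ reduces the series to a finite sum because $u^{\productnum{1}n}$ sits in degrees $\geq n$), which the paper only asserts in one sentence. The genuine divergence is in the logarithmic identity: the paper repeats the same series manipulation for $\log$, whereas you deduce it from the already-proved exponential identity together with the fact that $\log_{\product}$ inverts $\exp_{\product}$ (Theorem \ref{thm:exppro_logpro_inverse_addition}), after first checking that $\varLambda$ maps $\liefs{A}$ into $\liefs{\hat A}$ and $\groupfs{A}$ into $\groupfs{\hat A}$. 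Your route is slightly more economical and avoids duplicating the convergence argument, at the mild cost of invoking the bijectivity statement, which the paper states but does not prove in detail; the paper's direct computation for $\log$ is self-contained and symmetric with the $\exp$ case. Both are valid, and your explicit verification of where $\varLambda$ sends $\liefs{A}$ and $\groupfs{A}$ is a detail the paper leaves implicit.
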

\begin{proof} For each $u\in\liefs{A}$, we have
 \begin{equation*}
  \varLambda\exp_{\productnum{1}}(u)=\varLambda\sum_{n=0}^\infty\frac{u^{\productnum{1} n}}{n!}=\sum_{n=0}^\infty\frac{\varLambda u^{\productnum{1} n}}{n!}=\sum_{n=0}^\infty\frac{(\varLambda u)^{\productnum{2} n}}{n!}=\exp_{\productnum{2}}(\varLambda u),
 \end{equation*}
 were the interchangeability of the infinite sum and $\varLambda$ comes from the fact that $\varLambda\proj_n v=\proj_n\varLambda v$ for all $v\in\spacefs{A},\,n\in\naturals_0$. For each $\unit+u\in\liefs{A}$, we have
 \begin{align*}
  \varLambda\log_{\productnum{1}}(\unit+u)&=\varLambda\sum_{n=1}^\infty\frac{(-1)^{n-1} u^{\productnum{1} n}}{n}=\sum_{n=1}^\infty\frac{(-1)^{n-1} \varLambda u^{\productnum{1} n}}{n}=\sum_{n=1}^\infty\frac{(-1)^{n-1}(\varLambda u)^{\productnum{2} n}}{n}=\log_{\productnum{2}}(\unit+\varLambda u)\\
  &=\log_{\productnum{2}}(\varLambda(\unit+u)).
 \end{align*}

\end{proof}

\subsection{Truncations}
This subsection prepares a detailed treatment of the notion of truncations as they are used in Section 2.4 and in Chapter 4 \cite{hairerkelly14}. For an algebra $(A,\product)$ graded by $(A_i)_i$, the space $A^{>n}:=\bigoplus_{m=n+1}^\infty A_m$ is a two-sided ideal of $A$ as well as the space $\spacefs{A}^{>n}:=\directproduct_{m=n+1}^\infty A_m$ is an ideal of $(\spacefs{A},\product)$.

We assume that the following statement is standard, but did not find it in the literature we used. Part of the statement on quotient coalgebras is given in Section III.1 \cite{kassel95}, though.
\begin{thm}
 Let $I$ be a subspace of a vector space $V$ and $\quotientmap:\,V\rightarrow V\quotient I$ be the quotient map.
 \begin{enumerate}[(i)]
  \item If $(V,\product)$ is an algebra, there is a map $\product_{\quotientmap}:\,V\quotient I\times V\quotient I\rightarrow V\quotient I$ such that
  \begin{equation}\label{eq:defn_product_quotientmap}
   \linofbilin{\product_{\quotientmap}}(\quotientmap\etensor\quotientmap)=\quotientmap\mproduct
  \end{equation}
  iff $I$ is a two-sided ideal of $(V,\product)$. In this case, the map is unique and $(V\quotient I,\product_{\quotientmap})$ is an algebra, the \emph{quotient algebra}\index{quotient!algebra}.
  \item If $(V,\coproduct)$ is a coalgebra, there is a map $\coproduct_{\quotientmap}:\,V\quotient I\rightarrow V\quotient I\etensor V\quotient I$ such that
  \begin{equation*}
   \coproduct_{\quotientmap}\quotientmap=(\quotientmap\etensor\quotientmap)\coproduct
  \end{equation*}
  iff $I$ is a two-sided coideal of $(V,\coproduct)$. In this case, the map is unique and $(V\quotient I,\coproduct_{\quotientmap})$ is a coalgebra, the \emph{quotient coalgebra}\index{quotient!coalgebra}.
 \end{enumerate}
\end{thm}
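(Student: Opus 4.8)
The plan is to reduce both parts to a single linear-algebra principle about factoring a linear map through a surjection, applied after one computation of the kernel of $\quotientmap\etensor\quotientmap$. Throughout, recall that $\quotientmap$ is a surjective linear map with $\ker\quotientmap=I$, and that the tensor product of surjective linear maps is again surjective.

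\textbf{The factorization principle.} First I would record the elementary fact that for a surjective linear map $q\colon W\rightarrow W'$ and a linear map $g\colon W\rightarrow Z$, there exists a linear map $\bar g\colon W'\rightarrow Z$ with $\bar g\,q=g$ if and only if $\ker q\subseteq\ker g$, and that such $\bar g$ is then unique. Uniqueness and the ``only if'' direction are immediate from surjectivity of $q$; for the ``if'' direction one sets $\bar g(q(w)):=g(w)$ and uses $\ker q\subseteq\ker g$ to see this is well-defined. Both maps I must construct are factorizations of exactly this shape: $\linofbilin{\product_{\quotientmap}}$ is the factorization of $g=\quotientmap\mproduct$ through the surjection $q=\quotientmap\etensor\quotientmap$ (this is precisely \eqref{eq:defn_product_quotientmap}), and $\coproduct_{\quotientmap}$ is the factorization of $g=(\quotientmap\etensor\quotientmap)\coproduct$ through $q=\quotientmap$.

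\textbf{The key kernel.} The main technical step, and the part I expect to be the real obstacle, is the identity
\[
 \ker(\quotientmap\etensor\quotientmap)=I\etensor V+V\etensor I.
\]
To prove it I would choose a complement $C$ with $V=I\oplus C$ (invoking the axiom of choice, just as in the existence of tensor products established earlier), so that $\quotientmap{\restriction_{C}}$ is an isomorphism onto $V\quotient I$. Decomposing $V\etensor V=(I\etensor I)\oplus(I\etensor C)\oplus(C\etensor I)\oplus(C\etensor C)$, the map $\quotientmap\etensor\quotientmap$ annihilates the first three summands and restricts to an isomorphism on $C\etensor C$; since those first three summands together span $I\etensor V+V\etensor I$, the claimed kernel follows.

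\textbf{Assembling the statement.} For (i), combining the factorization principle with the kernel identity, the map $\linofbilin{\product_{\quotientmap}}$ exists iff $I\etensor V+V\etensor I=\ker(\quotientmap\etensor\quotientmap)\subseteq\ker(\quotientmap\mproduct)=\mproduct^{-1}(I)$, which unwinds via $\mproduct(I\etensor V)=I\product V$ to the two-sided ideal condition $I\product V+V\product I\subseteq I$. For (ii), the map $\coproduct_{\quotientmap}$ exists iff $I=\ker\quotientmap\subseteq\ker\big((\quotientmap\etensor\quotientmap)\coproduct\big)=\coproduct^{-1}(I\etensor V+V\etensor I)$, that is $\coproduct I\subseteq I\etensor V+V\etensor I$, the two-sided coideal condition; the kernel identity enters here to describe the target. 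Uniqueness in each case is supplied by the factorization principle. Finally I would transport associativity of $\product$ to $\product_{\quotientmap}$ by observing that $\quotientmap\etensor\quotientmap\etensor\quotientmap$ is surjective: precomposing both $\linofbilin{\product_{\quotientmap}}(\id\etensor\linofbilin{\product_{\quotientmap}})$ and $\linofbilin{\product_{\quotientmap}}(\linofbilin{\product_{\quotientmap}}\etensor\id)$ with it and repeatedly applying \eqref{eq:defn_product_quotientmap} reduces the claim to $\quotientmap\mproduct(\id\etensor\mproduct)=\quotientmap\mproduct(\mproduct\etensor\id)$, which holds by associativity \eqref{eq:associativity} of $\product$, so the two agree after a surjection and hence agree. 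Coassociativity of $\coproduct_{\quotientmap}$ transfers dually, using surjectivity of $\quotientmap$ and coassociativity \eqref{eq:coassociativity} of $\coproduct$.
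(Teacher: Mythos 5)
Your proof is correct and follows essentially the same route as the paper's: both arguments reduce to the kernel identity $\ker(\quotientmap\etensor\quotientmap)=I\etensor V+V\etensor I$ combined with the standard fact that a linear map factors (uniquely) through a surjection precisely when it annihilates the kernel. The only substantive difference is that you actually prove that kernel identity via a complement $V=I\oplus C$ where the paper simply asserts it, and you verify associativity and coassociativity at the operator level by precomposing with the surjection $\quotientmap\etensor\quotientmap\etensor\quotientmap$ rather than elementwise; neither changes the substance.
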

\begin{proof}~
 \begin{enumerate}[(i)]
  \item If there is such a map $\product_{\quotientmap}$, then
  \begin{equation*}
   \quotientmap(I\product V+V\product I)=\quotientmap\mproduct(I\etensor V+V\etensor I)=\linofbilin{\product_{\quotientmap}}(\quotientmap\etensor\quotientmap)(I\etensor V+V\etensor I)=\{0\},
  \end{equation*}
  hence $I\product V+V\product I\in\ker\quotientmap=I$, thus $I$ is a two-sided ideal.
  
  \noindent If $I$ is a two sided ideal, then a map $\product_{\quotientmap}$ is well-defined through
  \begin{equation*}
   (\quotientmap v_1\product_{\quotientmap}\quotientmap v_2):=\quotientmap(v_1\product v_2).
  \end{equation*}
  Indeed, if $\quotientmap v_1=\quotientmap w_1$ and $\quotientmap v_2=\quotientmap w_2$, then
  \begin{equation*}
   v_1\etensor v_2-w_1\etensor w_2\in\ker(\quotientmap\etensor\quotientmap)=\ker\quotientmap\etensor V+V\etensor\ker\quotientmap=I\etensor V+V\etensor I 
  \end{equation*}
  hence
  \begin{equation*}
   v_1\product v_2-w_1\product w_2\in I\product V+V\product I\subseteq I
  \end{equation*}
  hence $\quotientmap(v_1\product v_2)=\quotientmap(w_1\product w_2)$.
  
  \noindent Uniqueness is clear, since we just showed that the demanded equation \eqref{eq:defn_product_quotientmap} is sufficient as a definition. The only thing left to show is associativity of $\product_{\quotientmap}$. If $v_1,v_2,v_3\in V$, then
  \begin{align*}
   (\quotientmap v_1\product_{\quotientmap}\quotientmap v_2)\product_{\quotientmap}\quotientmap v_3&=\quotientmap(v_1\product v_2)\product_{\quotientmap}\quotientmap v_3=\quotientmap((v_1\product v_2)\product v_3)=\quotientmap(v_1\product(v_2\product v_3))\\
   &=\quotientmap v_1\product_{\quotientmap}\quotientmap(v_2\product v_3)=\quotientmap v_1\product_{\quotientmap}(\quotientmap v_2\product_{\quotientmap}\quotientmap v_3).
  \end{align*}

  \item Due to the formal duality of algebra products and coproducts, the proof of the second statement is analogous to the proof of the first one.
 \end{enumerate}

\end{proof}

Due to the grading, we may identify both $\spacefs{A}\quotient\spacefs{A}^{>n}$ and $A\quotient A^{>n}$ canonically with $A^n:=\bigoplus_{m=0}^n A_m$.
The quotient map $\quotientmap^n:\spacefs{A}\rightarrow A^n$ then restricts to the other quotient map $\quotientmap^n{\restriction_A}:A\rightarrow A^n$ and is in both cases just the canonical projection onto $A^n$. In either case, we obtain the same quotient algebra $(A^n,\product_n)$, which is called the \emph{$n$-th grade truncation}\index{truncated!algebra} of the graded algebra $A$, or shortly a \emph{($n$-th grade) truncated algebra}.

On the other hand, starting from a graded coalgebra $(C,\coproduct)$ graded by $(C_i)_i$, the space $C^{>n}$ is not generally a coideal of $C$, as well as $\spacefs{C}^{>n}$ does not generally fulfill the coideal property with respect to the extended coalgebra $\spacefs{C}$. Fortunately though, $C^n$ is both a subcoalgebra of $C$ and fulfills the subcoalgebra property with respect to the extended coalgebra $\spacefs{C}$. We call $(C^n,\coproduct_n)$ where $\coproduct_n:=\coproduct{\restriction_{C^n}}$ the \emph{$n$-th grade truncation}\index{truncated!coalgebra} of the graded coalgebra $(C,\coproduct)$, or shortly a \emph{($n$-th grade) truncated coalgebra}. We will often simply write $(C^n,\coproduct)$ instead of $(C^n,\coproduct_n)$.

In the case of a graded bialgebra, we will combine the notions of truncated algebra and truncated coalgebra to that of a truncated bialgebra.

\begin{thm}
 Let $(B,\product,\coproduct)$ be a bialgebra graded by $(B_i)_i$ over the field $K$ with unit map $\unitmap$ and counit $\counit$. Then its \emph{$n$-th grade truncation} $(B^n,\product_n,\coproduct_n)$, called a \emph{($n$-th grade) truncated bialgebra}\index{truncated!bialgebra}\index{bialgebra!truncated@\textit{truncated}}, fulfills the \emph{truncated compatibility requirements}\index{compatibility requirements!truncated@\textit{truncated}}
 \begin{enumerate}
  \item $\coproduct_n\linofbilin{\product n}=\linofbilin{\producttwo n}(\coproduct_n\etensor\coproduct_n)\quad$ ($\coproduct_n$ is an algebra homomorphism from $(B^n,\product_n)$ to $\big([B\etensor B]^n,\producttwo_n\big)$),
  \item $\coproduct_n\unitmap=\unitmap\etensor\unitmap\quad$ ($\unitmap$ is a coalgebra homomorphism from $(K,\id_K)$ to $(B^n,\coproduct_n)$),
  \item $\counit_n\linofbilin{\product n}=\counit_n\etensor\counit_n\quad$ ($\counit_n$ is an algebra homomorphism from $(B^n,\product_n)$ to $(K,\cdot)$),
 \end{enumerate}
 where $\linofbilin{\product n}:\,B^n\etensor B^n\rightarrow B^n$ is the linear operator corresponding to $\product_n$, the pair $\big([B\etensor B]^n,\producttwo_n\big)$ is the truncation of the algebra $(B\etensor B,\producttwo)$ defined in \eqref{eq:producttwo} under the grading $([B\etensor B]_i)_i$ as defined in \eqref{eq:induced_grading} and $\linofbilin{\producttwo n}$ is the linear operator corresponding to $\producttwo_n$.
\end{thm}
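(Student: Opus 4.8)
The plan is to derive each truncated compatibility requirement from its full counterpart in Definition \ref{defn:bialgebra} by exploiting that every structure map respects the grading, so that the truncation projections intertwine with $\mproduct$, $\coproduct$ and $\mproducttwo$. Write $\truncationop{n}:\,B\rightarrow B^n$ for the projection killing all grades above $n$ (\ie the quotient map under the identification $B\quotient B^{>n}\isom B^n$), and $\truncationoptwo{n}:\,B\etensor B\rightarrow[B\etensor B]^n$ for the analogous projection for the induced grading \eqref{eq:induced_grading}. First I would record three elementary facts. (a) The couple $(B\etensor B,\producttwo)$ is genuinely graded by $([B\etensor B]_i)_i$: checking on simple tensors via \eqref{eq:producttwo} shows $u\producttwo v\in[B\etensor B]_{p+q}$ for $u\in[B\etensor B]_p$, $v\in[B\etensor B]_q$. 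Hence its $n$-th truncation is legitimate and $\linofbilin{\producttwo n}=\truncationoptwo{n}\mproducttwo$ holds on $[B\etensor B]^n\etensor[B\etensor B]^n$, just as $\linofbilin{\product n}=\truncationop{n}\mproduct$ on $B^n\etensor B^n$. (b) Since $\coproduct$ is graded, one has $\coproduct\truncationop{n}=\truncationoptwo{n}\coproduct$ as maps $B\rightarrow B\etensor B$ (both send a homogeneous $x_k$ to $\coproduct x_k$ exactly when $k\leq n$); this also shows $\coproduct_n=\coproduct{\restriction_{B^n}}$ lands in $[B\etensor B]^n$, so the target algebra named in requirement 1 is correct. (c) $\unit\in B_0\subseteq B^n$ is untouched by truncation.

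For the first requirement I would then chase both sides on $B^n\etensor B^n$. The left side gives
\begin{equation*}
 \coproduct_n\linofbilin{\product n}=\coproduct\truncationop{n}\mproduct=\truncationoptwo{n}\coproduct\mproduct=\truncationoptwo{n}\mproducttwo(\coproduct\etensor\coproduct),
\end{equation*}
using (a), (b) and the full first compatibility requirement $\coproduct\mproduct=\mproducttwo(\coproduct\etensor\coproduct)$. The right side gives
\begin{equation*}
 \linofbilin{\producttwo n}(\coproduct_n\etensor\coproduct_n)=\truncationoptwo{n}\mproducttwo(\coproduct_n\etensor\coproduct_n)=\truncationoptwo{n}\mproducttwo(\coproduct\etensor\coproduct),
\end{equation*}
where the first equality is (a), legitimate because $\coproduct_n\etensor\coproduct_n$ maps into $[B\etensor B]^n\etensor[B\etensor B]^n$ by (b), and the second is again $\coproduct_n=\coproduct{\restriction_{B^n}}$. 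The two sides coincide, which is requirement 1.

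The second requirement is immediate from (c): $\coproduct_n\unitmap=\coproduct\unit=\unit\etensor\unit$ by the full requirement 2. The third requirement is the delicate one, and I expect it to be the main obstacle. On $B^n\etensor B^n$ we have $\counit_n\linofbilin{\product n}=\counit\truncationop{n}\mproduct$, and the difficulty is that $\counit\truncationop{n}$ need not equal $\counit$: the projection $\truncationop{n}$ discards the grades above $n$ of a product $a\product b$, and $\counit$ of that discarded part must vanish for the argument to close. Decomposing $a=\sum_p a_p$, $b=\sum_q b_q$ into homogeneous pieces, the surviving discrepancy against the full requirement 3 is $\sum_{p+q>n,\,p,q\leq n}\counit(a_p)\counit(b_q)$, which vanishes precisely when $\counit$ annihilates every positive grade, since then each such term carries $p\geq 1$ or $q\geq 1$. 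This is the essential point: in the \emph{connected} graded setting, which covers every Hopf algebra treated in this thesis, Theorem \ref{thm:rcoproduct_cgbialgebra} gives $\ker\counit=\bigoplus_{i\geq 1}B_i$, so the discrepancy is zero and $\counit_n\linofbilin{\product n}=\counit\mproduct=\counit\etensor\counit=\counit_n\etensor\counit_n$ follows from the full requirement 3. Apart from this handling of the counit (and, more broadly, the bookkeeping that the two gradings on $B$ and $B\etensor B$ match under the projections), the rest is the grading-compatible diagram chase above.
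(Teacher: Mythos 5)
Your strategy is the same as the paper's: precompose with the truncation projections, use that the graded structure maps intertwine with them, and reduce each truncated requirement to its full counterpart; your treatment of requirements 1 and 2 matches the paper's computation essentially line for line. On requirement 3 you are in fact \emph{more} careful than the paper, which simply writes $\counit_n\truncationop{n}\mproduct=\counit\mproduct$ without comment; you correctly isolate the point on which this step hinges, namely that $\counit$ must annihilate the grades discarded by $\truncationop{n}$. The one defect is that you then secure this only by invoking connectedness through Theorem \ref{thm:rcoproduct_cgbialgebra}, a hypothesis the statement does not grant you. It is also not needed: in \emph{any} graded counital coalgebra one has $\counit(B_m)=\{0\}$ for all $m\geq1$ automatically. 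Indeed, for $x\in B_m$ with $m\geq 1$ decompose $\coproduct x=\sum_{k=0}^m y_k$ with $y_k\in B_k\etensor B_{m-k}$; comparing homogeneous components in the two identities of \eqref{eq:counit_property} forces $(\id\etensor\counit)y_0=0$ and $(\counit\etensor\id)y_0=x$. Writing $y_0=\sum_j u_j\etensor v_j$ with the $u_j\in B_0$ linearly independent and $v_j\in B_m$, the first identity gives $\sum_j\counit(v_j)u_j=0$, hence $\counit(v_j)=0$ for every $j$, and the second gives $x=\sum_j\counit(u_j)v_j$, hence $\counit(x)=\sum_j\counit(u_j)\counit(v_j)=0$. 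With this observation your discrepancy sum $\sum_{p+q>n}\counit(a_p)\counit(b_q)$ vanishes for every graded bialgebra, and your proof closes without restricting to the connected case.
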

\begin{proof}
 Let $\truncationoptwo{n}$ denote the projection of $B\etensor B$ onto $[B\etensor B]^n$.
 \begin{align*}
  \coproduct_n\linofbilin{\product n}(\truncationop{n}\etensor\truncationop{n})&=\coproduct_n\truncationop{n}\mproduct=\truncationoptwo{n}\coproduct\mproduct=\truncationoptwo{n}\mproducttwo(\coproduct\etensor\coproduct)=\linofbilin{\producttwo n}(\truncationoptwo{n}\etensor\truncationoptwo{n})(\coproduct\etensor\coproduct)\\
  &=\linofbilin{\producttwo n}(\coproduct_n\etensor\coproduct_n)(\truncationop{n}\etensor\truncationop{n})
 \end{align*}
 Since $\truncationop{n}\etensor\truncationop{n}$ is a surjective map into $B^n\etensor B^n$, we get $\coproduct_n\linofbilin{\product n}=\linofbilin{\producttwo n}(\coproduct_n\etensor\coproduct_n)$. The equation $\coproduct_n\unitmap=\unitmap\etensor\unitmap$ is clear since obviously $\coproduct_n\unit=\coproduct\unit=\unit\etensor\unit$. Finally,
 \begin{equation*}
  \counit_n\linofbilin{\product n}(\truncationop{n}\etensor\truncationop{n})=\counit_n\truncationop{n}\mproduct=\counit\mproduct=(\counit\etensor\counit)=(\counit_n\etensor\counit_n)(\truncationop{n}\etensor\truncationop{n}).
 \end{equation*}

\end{proof}

\begin{thm}
 Let $(H,\product,\coproduct,\antipode)$ be a Hopf algebra graded by $(H_i)_i$ with unit map $\unitmap$ and counit $\counit$. Then $\antipode_n:=\antipode{\restriction_{H^n}}$ fulfills the antipode property\index{antipode!property}
 \begin{equation*}
   \linofbilin{\product n}(\antipode_n\etensor\id)\coproduct_n=\linofbilin{\product n}(\id\etensor\antipode_n)\coproduct_n=\unitmap\counit_n
 \end{equation*}
 in its \emph{$n$-th grade truncation} $(B^n,\product_n,\coproduct_n,\antipode_n)$, which we call a \emph{($n$-th grade) truncated Hopf algebra}\index{truncated!Hopf algebra}\index{Hopf algebra!truncated@\textit{truncated}}.
\end{thm}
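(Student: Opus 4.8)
The plan is to reduce the truncated antipode property to the untruncated one \eqref{eq:antipodeproperty}, exploiting that every structure map respects the grading so that the truncation projection never has to discard anything. First I would recall that, by Definition \ref{defn:grading}~(iv) of a graded Hopf algebra, the antipode preserves the grading, $\antipode H_m\subseteq H_m$, so that $\antipode_n=\antipode{\restriction_{H^n}}$ indeed maps $H^n$ into $H^n$ and is a legitimate operator on the truncation. Since both sides of the claimed identity are linear, it suffices to verify it on a homogeneous $h\in H_m$ with $0\leq m\leq n$.

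For such an $h$, since $H^n$ is a subcoalgebra I have $\coproduct_n h=\coproduct h\in H^n\etensor H^n$; concretely $\coproduct h=\sum_{(h)}h_1\etensor h_2$ with $\lvert h_1\rvert+\lvert h_2\rvert=m\leq n$, so each factor already lies in $H^n$. Applying $(\antipode_n\etensor\id)=(\antipode\etensor\id){\restriction_{H^n\etensor H^n}}$ and using that $\linofbilin{\product n}$ agrees with $\truncationop{n}\mproduct$ on $H^n\etensor H^n$ (this is the defining relation \eqref{eq:defn_product_quotientmap} of the quotient product, with $\truncationop{n}$ the projection onto $H^n$), I obtain
\begin{equation*}
 \linofbilin{\product n}(\antipode_n\etensor\id)\coproduct_n h=\truncationop{n}\mproduct(\antipode\etensor\id)\coproduct h.
\end{equation*}
The crucial observation is that each summand $\antipode h_1\product h_2$ is homogeneous of grade $\lvert h_1\rvert+\lvert h_2\rvert=m\leq n$, again because $\antipode$ and $\product$ respect the grading; hence $\truncationop{n}$ acts as the identity on this element, and the right-hand side collapses to $\mproduct(\antipode\etensor\id)\coproduct h$.

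Now I invoke the antipode property \eqref{eq:antipodeproperty} of the full Hopf algebra, $\mproduct(\antipode\etensor\id)\coproduct h=\unitmap\counit(h)$, and since $\unit\in H_0\subseteq H^n$ and $\counit_n=\counit{\restriction_{H^n}}$, the value equals $\unitmap\counit_n(h)$ exactly. The computation for $\linofbilin{\product n}(\id\etensor\antipode_n)\coproduct_n$ is word-for-word identical with the roles of the two tensor legs exchanged. The only point requiring care — and the main, though mild, obstacle — is the bookkeeping that every intermediate term stays in grade at most $n$, so that the truncation discards nothing; this is precisely where grade preservation of the antipode is indispensable, and it is what lets the truncated antipode property inherit directly from the untruncated one rather than demanding a fresh recursion.
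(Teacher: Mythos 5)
Your proof is correct and takes essentially the same route as the paper: both reduce the truncated identity to the untruncated antipode property \eqref{eq:antipodeproperty} by observing that, since $\coproduct$, $\product$ and $\antipode$ all preserve the grading, the truncation projection never discards anything on elements of grade at most $n$. The only difference is presentational — you argue element-wise on homogeneous $h\in H_m$, whereas the paper writes the same cancellation as an operator identity precomposed with the projection $\truncationop{n}$.
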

\begin{proof}
 \begin{align*}
  \linofbilin{\product n}(\antipode_n\etensor\id)\coproduct_n\truncationop{n}&=\linofbilin{\product n}(\antipode_n\etensor\id)\truncationoptwo{n}\coproduct=\linofbilin{\product n}\truncationoptwo{n}(\antipode\etensor\id)\coproduct=\linofbilin{\product n}(\truncationop{n}\etensor\truncationop{n})\truncationoptwo{n}(\antipode\etensor\id)\coproduct\\
  &=\truncationop{n}\mproduct\truncationoptwo{n}(\antipode\etensor\id)\coproduct=\truncationop{n}\mproduct(\antipode\etensor\id)\coproduct=\truncationop{n}\unitmap\counit=\unitmap\counit=\unitmap\counit\truncationop{n}
 \end{align*}

\end{proof}

Note that the $n$-th truncated algebra $(A^n,\product_n)$ is again a graded algebra. The grading is given by $(A^n_i)_i$ where $A^n_i=A_i$ for $i\leq n$ and $A^n_i=\{0\}$ for $i>n$. Hence, we get functions $\exp_{{\product n}}$ and $\log_{{\product n}}$ as described in the previous section.

\section{Primitive and group-like elements}

\begin{defn}(Based on Section I.5.\@ \cite{manchon06}, Section 2.3 \cite{hairerkelly14})
 \begin{enumerate}
  \item
  \begin{enumerate}[(i)]
   \item The subset of \emphind{group-like elements} of a coalgebra $(C,\coproduct)$ is given by
   \begin{equation*}
    \grouplike C:=\{c\in C\setminus\{0\}|\coproduct c=c\etensor c\}.
   \end{equation*}
   \item The subset of \emphind{group-like elements} of the space of formal series $\spacefs{C}$ of a graded coalgebra $(C,\coproduct)$ is given by
   \begin{equation*}
    \grouplike\spacefs{C}:=\{c\in\spacefs{C}\setminus\{0\}|\coproduct c=c\etensor c\}.
   \end{equation*}
   \item The subset of \emph{$n$-th grade truncated group-like elements}\index{group-like elements!truncated@\textit{truncated}} of a coalgebra $(C,\coproduct)$ graded by $(C_i)_i$ is given by
   \begin{equation*}
    \grouplike_n C:=\{c\in C\setminus\{0\}|\coproduct c=\quotientmaptwo^n(c\etensor c)\},
   \end{equation*}
   where $\quotientmaptwo^n:\,C\etensor C\rightarrow[C\etensor C]^n$ is the canonical projection.
  \end{enumerate}
  \item
  \begin{enumerate}[(i)]
   \item The subspace of \emphind{primitive elements} of a bialgebra $(B,\product,\coproduct)$ with unit element $\unit$ is given by
   \begin{equation*}
    \primitive B:=\{b\in B|\coproduct b=\unit\etensor b+b\etensor\unit\}.
   \end{equation*}
   We put $\primitive S:=S\cap\primitive B$ for any subspace $S$ of $B$.
   \item The subspace of \emphind{primitive elements} of the space of formal series $\spacefs{B}$ of a graded bialgebra $(B,\product,\coproduct)$ with unit element $\unit$ is given by
   \begin{equation*}
    \primitive\spacefs{B}:=\{b\in\spacefs{B}|\coproduct b=\unit\etensor b+b\etensor\unit\}.
   \end{equation*}
   We put $\primitive S:=S\cap\primitive\spacefs{B}$ for any subspace $S$ of $\spacefs{B}$.
  \end{enumerate}
 \end{enumerate}
\end{defn}
\begin{rmk}\label{rmk:grouplike_primitive_counit}~
 \begin{enumerate}[(i)]
  \item For any $c\in\grouplike C$, where $(C,\coproduct)$ is a coalgebra with counit $\counit$, we have $\counit(c)=1$ since $c\neq 0$ and
  \begin{equation*}
   c=(\counit\etensor\id)\coproduct c=(\counit\etensor\id)(c\etensor c)=\counit(c)c
  \end{equation*}
  due to the counit property \eqref{eq:counit_property}.
  
  \noindent Analogously, in the case of a graded coalgebra $C$, we have $\counit(c)=1$ for all $c\in\grouplike\spacefs{C}$.
  \item For any $b\in\primitive B$, where $(B,\product,\coproduct)$ is a coalgebra, we have $\counit(b)=0$ since
  \begin{equation*}
   b=(\counit\etensor\id)\coproduct b=(\counit\etensor\id)(\unit\etensor b+b\etensor\unit)=\counit(\unit) b+\counit(b)\unit=b+\counit(b)\unit.
  \end{equation*}
  by the counit property \eqref{eq:counit_property} and Remark \ref{rmk:counit_unit}.
  
  \noindent Analogously, in the case of a graded bialgebra $B$, we have $\counit(b)=0$ for all $b\in\primitive\spacefs{B}$.
 \end{enumerate}

\end{rmk}

\begin{rmk}\label{rmk:grouplike_spacefs_grouplike_n}
 For a graded coalgebra $(C,\coproduct)$, we have
 \begin{equation*}
  \quotientmap^n\grouplike\spacefs{C}\subseteq\grouplike_n C.
 \end{equation*}
 Indeed, if $c\in\grouplike\spacefs{C}$, then
 \begin{equation*}
  \coproduct\quotientmap^n c=\quotientmaptwo^n\coproduct c=\quotientmaptwo^n(c\etensor c)=\quotientmaptwo^n(\quotientmap^n c\etensor\quotientmap^n c)
 \end{equation*}
 and also $\quotientmap^n c\neq 0$ since $\counit(c)=1$.

\end{rmk}

A \emphind{character} of an algebra $(A,\product)$ is an element of $\adual{A}\setminus\{0\}$ which is an algebra homomorphism (Section II.4.\@ \cite{manchon06}).
\begin{thm}\label{thm:grouplike_character}\textnormal{(Based on Section 2.3 \cite{hairerkelly14})}
 Let $(A,\product)$ be the dual algebra of a coalgebra $(C,\coproduct)$ under some duality pairing $\dualbracket$.
 \begin{enumerate}
  \item An element $c\in C$ is group-like iff $\langle\,\cdot\,,c\rangle$ is a character of $(A,\product)$.
  \item If both $(A,\product)$ and $(C,\coproduct)$ are graded, then $c_1\in C$ is group-like iff $\langle\,\cdot\,,c_1\rangle$ is a character of $(\spacefs{A},\product)$ and $c_2\in\spacefs{C}$ is group-like iff $\langle\,\cdot\,,c_2\rangle$ is a character of $(A,\product)$.
 \end{enumerate}
\end{thm}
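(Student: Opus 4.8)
The plan is to establish the first statement by unwinding the definition of the dual algebra, and then to obtain the second by transporting the first statement to the formal-series setting.

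For statement~1, recall that by Theorem~\ref{thm:dual_hopf_bi_co_algebras} the hypothesis that $(A,\product)$ is the dual algebra of $(C,\coproduct)$ means precisely that $\coproduct$ is the dual operator of $\mproduct$, so that $\langle a_1\product a_2,c\rangle=\langle a_1\etensor a_2,\coproduct c\rangle$ for all $a_1,a_2\in A$ and $c\in C$. First I would rewrite the statement that $\langle\,\cdot\,,c\rangle$ is a character: it is nonzero and satisfies $\langle a_1\product a_2,c\rangle=\langle a_1,c\rangle\langle a_2,c\rangle=\langle a_1\etensor a_2,c\etensor c\rangle$ for all $a_1,a_2$. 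Combining this with the duality identity shows that being a character is equivalent to $\langle a_1\etensor a_2,\coproduct c-c\etensor c\rangle=0$ for all $a_1,a_2$ together with $c\neq0$. The forward implication is then immediate from $\coproduct c=c\etensor c$, while for the converse I would invoke the non-degeneracy of the induced pairing on $(A\etensor A,C\etensor C)$ from Theorem~\ref{thm:tensordual} to pass from the vanishing of the pairing to $\coproduct c=c\etensor c$; the equivalence $c\neq0\iff\langle\,\cdot\,,c\rangle\neq0$ is handled by axiom~\ref{item:dualbracket_vtildezero} of Definition~\ref{defn:dualbracket}.

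For statement~2 I would avoid recomputing and instead feed statement~1 the enlarged dual pairs $(\spacefs{A},C)$ and $(A,\spacefs{C})$, whose existence requires the pairing to respect the grading so that it extends term by term. Part~(a) is statement~1 applied verbatim to $(\spacefs{A},C)$, where $(\spacefs{A},\extproduct)$ is the dual algebra of the ordinary coalgebra $(C,\coproduct)$: since $c_1\in C$ is a finite sum, both $\coproduct c_1$ and $c_1\etensor c_1$ stay in $C\etensor C$, and the dual-operator relation $\langle a_1\extproduct a_2,c_1\rangle=\langle a_1\etensor a_2,\coproduct c_1\rangle$ follows from the defining formula $\proj_n(a\extproduct b)=\sum_{i=0}^n\proj_i a\product\proj_{n-i}b$ by collapsing onto a single grade. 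Part~(b) is the same equivalence for $c_2\in\spacefs{C}$, except that now $\extcoproduct c_2$ and $c_2\etensor c_2$ live in the completed space $\spacefs{[C\etensor C]}$; here I would reprise the computation of statement~1 using $\projtwo_n\extcoproduct=\coproduct\proj_n$ to obtain $\langle a_1\product a_2,c_2\rangle=\langle a_1\etensor a_2,\extcoproduct c_2\rangle$ for $a_1,a_2\in A$, and then conclude $\extcoproduct c_2=c_2\etensor c_2$ from the non-degeneracy of the pairing between $A\etensor A$ and $\spacefs{[C\etensor C]}$.

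The short algebra of statement~1 is not where the difficulty lies; the main obstacle is the bookkeeping around the formal-series pairings in statement~2. I must confirm that $\langle\,\cdot\,,c\rangle$ is well defined on $\spacefs{A}$ (which is exactly where gradedness of the pairing is used), that $(\spacefs{A},C)$ and $(A,\spacefs{C})$ are honest pairs of dual vector spaces with non-degeneracy holding grade by grade, and that the corresponding non-degeneracy persists after tensoring, including for the completed tensor space $\spacefs{[C\etensor C]}$ appearing in part~(b), each grade of which is a dual pair by iterating Theorem~\ref{thm:tensordual}. Once these points are settled, both parts of statement~2 follow formally.
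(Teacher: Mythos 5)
Your proof is correct and follows exactly the route the paper itself indicates: the paper omits the proof of this theorem, describing it as ``a basic application of the duality of products and coproducts (Theorem \ref{thm:dual_hopf_bi_co_algebras}) and of the considerations in Subsection \ref{subsection:formal_series}'', and your argument supplies precisely that application — reducing the character identity to $\langle a_1\etensor a_2,\coproduct c-c\etensor c\rangle=0$ and invoking non-degeneracy of the induced pairing from Theorem \ref{thm:tensordual}. You also correctly flag the one point the theorem statement leaves implicit, namely that the pairing must respect the gradings for the formal-series pairings in statement~2 to be well defined.
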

\noindent The proof is a basic application of the duality of products and coproducts (see Theorem \ref{thm:dual_hopf_bi_co_algebras}) and of the considerations in Subsection \ref{subsection:formal_series} and is skipped here.
\begin{thm}\textnormal{(Proposition I.7.3.\@ \cite{manchon06})}
 Let $(H,\product,\coproduct,\antipode)$ be a Hopf algebra with unit map $\unitmap$, unit element $\unit$ and counit $\counit$. Then, $\antipode x=-x$ for all $x\in\primitive H$. If $H$ is furthermore connectedly graded by $(H_i)_i$, we have $\antipode x=-x$ for all $x\in\primitive\spacefs{H}$.
\end{thm}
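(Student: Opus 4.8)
The plan is to feed a primitive element directly into the antipode property \eqref{eq:antipodeproperty} and exploit two facts already available: that the antipode fixes the unit, $\antipode\unit=\unit$ by Theorem \ref{thm:antipode_antimorphism}, and that the counit annihilates primitive elements, $\counit(x)=0$ by Remark \ref{rmk:grouplike_primitive_counit}.

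For $x\in\primitive H$ we have $\coproduct x=\unit\etensor x+x\etensor\unit$ by definition, so applying the left half of \eqref{eq:antipodeproperty} and using the unit property \eqref{eq:unit_property} gives
\begin{equation*}
 \mproduct(\antipode\etensor\id)\coproduct x=\antipode\unit\product x+\antipode x\product\unit=x+\antipode x,
\end{equation*}
where the last equality uses $\antipode\unit=\unit$. On the other hand the right-hand side of \eqref{eq:antipodeproperty} evaluates to $\unitmap\counit(x)=\counit(x)\unit=0$ since $\counit(x)=0$. Comparing the two yields $\antipode x=-x$, which settles the first claim. (Equivalently one could use the other half $\mproduct(\id\etensor\antipode)\coproduct$ and obtain the same conclusion.)

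For the graded case I would run exactly the same computation with the extended operations $\mextproduct$, $\extcoproduct$, $\extcounit$, $\extantipode$ in place of their unextended counterparts. Here $x\in\primitive\spacefs{H}$ means $\extcoproduct x=\unit\etensor x+x\etensor\unit$, the extended antipode property $\mextproduct(\extantipode\etensor\id)\extcoproduct=\unitmap\extcounit$ is the one established in Subsection \ref{subsection:formal_series}, and $\extantipode\unit=\antipode\unit=\unit$ because $\unit\in H_0$. The only point that genuinely needs the graded hypotheses is that $\extcounit(x)=0$ for primitive $x$, which is the second part of Remark \ref{rmk:grouplike_primitive_counit}; granting this, the identical manipulation gives $x+\extantipode x=0$.

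I do not anticipate a real obstacle, since everything reduces to a one-line evaluation of the antipode property. The only care required is bookkeeping in the formal-series setting: one must ensure the extended antipode, product and coproduct interact through the \emph{extended} antipode property rather than the original one, and that $\counit$ really does vanish on $\primitive\spacefs{H}$; both are supplied by Subsection \ref{subsection:formal_series} and Remark \ref{rmk:grouplike_primitive_counit} respectively.
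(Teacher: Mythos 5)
Your proposal is correct and is essentially identical to the paper's own proof: both evaluate $\mproduct(\antipode\etensor\id)\coproduct x$ on a primitive element, use $\antipode\unit=\unit$ and $\counit(x)=0$ to obtain $x+\antipode x=0$, and handle the formal-series case by the same computation with the extended operations.
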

\begin{proof}(Proof of Proposition I.7.3.\@ \cite{manchon06})
 Let $x\in\primitive H$ be arbitrary. Then,
 \begin{equation*}
  0=\unitmap\counit(x)=\mproduct(\antipode\etensor\id)\coproduct x=\mproduct(\antipode\etensor\id)(\unit\etensor x+x\etensor\unit)=\antipode\unit\product x+\antipode x\product\unit=x+\antipode x,
 \end{equation*}
 thus $\antipode x=-x$.
 
 \noindent The proof for $x\in\primitive\spacefs{H}$ is completely analogous.
 
\end{proof}

\begin{thm}\textnormal{(Proposition I.7.3.\@ \cite{manchon06})}
 For a bialgebra $(B,\product,\coproduct)$, $\primitive B$ is a sub Lie algebra of $(B,\liebracket\productindex)$.
\end{thm}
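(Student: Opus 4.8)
The plan is to verify the two conditions defining a sub Lie algebra (as introduced after the Lie algebra definition): that $\primitive B$ is a subspace of $B$, and that it is closed under the bracket $\liebracket\productindex$. The subspace property is immediate, since $\coproduct$ is linear and hence the map $b\mapsto\coproduct b-\unit\etensor b-b\etensor\unit$ is linear, so its kernel $\primitive B$ is a subspace. Thus the real content is the closure statement: if $a,b\in\primitive B$, then $[a,b]\productindex=a\product b-b\product a\in\primitive B$.

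The core computation is to apply the coproduct to the product $a\product b$ using the first compatibility requirement $\coproduct(a\product b)=\coproduct a\producttwo\coproduct b$ (Remark \ref{rmk:comp_req}). Substituting $\coproduct a=\unit\etensor a+a\etensor\unit$ and $\coproduct b=\unit\etensor b+b\etensor\unit$, then expanding the four resulting terms via $(x\etensor v)\producttwo(y\etensor w)=(x\product y)\etensor(v\product w)$ and using that $\unit$ is the unit for $\product$, I would obtain
\begin{equation*}
 \coproduct(a\product b)=\unit\etensor(a\product b)+a\etensor b+b\etensor a+(a\product b)\etensor\unit.
\end{equation*}
By the same computation with $a$ and $b$ interchanged,
\begin{equation*}
 \coproduct(b\product a)=\unit\etensor(b\product a)+b\etensor a+a\etensor b+(b\product a)\etensor\unit.
\end{equation*}

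Subtracting these two identities, the mixed terms $a\etensor b+b\etensor a$ cancel, leaving
\begin{equation*}
 \coproduct([a,b]\productindex)=\unit\etensor[a,b]\productindex+[a,b]\productindex\etensor\unit,
\end{equation*}
which is precisely the condition that $[a,b]\productindex\in\primitive B$. Since $(B,\liebracket\productindex)$ is already a Lie algebra (the bracket $\liebracket\productindex$ associated to any algebra satisfies antisymmetry and Jacobi's identity, as noted earlier), these two facts together establish that $\primitive B$ is a sub Lie algebra.

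I do not expect any genuine obstacle here; the only point demanding care is the bookkeeping of the four-term expansion of $\coproduct a\producttwo\coproduct b$. The conceptually important observation is that the cross terms $a\etensor b$ and $b\etensor a$ do \emph{not} cancel for the product $a\product b$ alone — so $a\product b$ need not be primitive — and it is exactly the antisymmetrization in the bracket that cancels them, which is why primitivity is preserved by $\liebracket\productindex$ rather than by $\product$.
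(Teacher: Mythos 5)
Your proposal is correct and follows essentially the same route as the paper: apply the compatibility requirement $\coproduct(a\product b)=\coproduct a\producttwo\coproduct b$, expand using primitivity of $a$ and $b$, and observe that the cross terms $a\etensor b+b\etensor a$ cancel upon antisymmetrization. The only addition you make is the (correct, easy) remark that $\primitive B$ is a subspace, which the paper leaves implicit.
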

\begin{proof}(Proof of Proposition I.7.3.\@ \cite{manchon06})
 Let $x,y\in\primitive B$ be arbitrary. Then,
 \begin{align*}
  \coproduct[x,y]\productindex&=\coproduct(x\product y-y\product x)=\coproduct x\producttwo\coproduct y-\coproduct y\producttwo\coproduct x\\
  &=(\unit\etensor x+x\etensor\unit)\producttwo(\unit\etensor y+y\etensor\unit)-(\unit\etensor y+y\etensor\unit)\producttwo(\unit\etensor x+x\etensor\unit)\\
  &=\unit\etensor(x\product y)+y\etensor x+x\etensor y+(x\product y)\etensor\unit-\unit\etensor(y\product x)-x\etensor y -y\etensor x-(y\product x)\etensor\unit\\
  &=\unit\etensor(x\product y-y\product x)+(x\product y-y\product x)\etensor\unit=\unit\etensor[x,y]\productindex-[x,y]\productindex\etensor\unit.  
 \end{align*}
 Thus, $[x,y]\productindex\in\primitive B$.\\
\end{proof}

\begin{thm}\textnormal{(Based on Theorem 1.4 and Lemma 1.5 \cite{reutenauer93})}
 Let $(H,\product,\coproduct,\antipode)$ be a Hopf algebra connectedly graded by $(H_i)_i$ over a field of characteristic zero, $\rnbracketing_{H_1}$ the right norm bracketing operator of $(H,\liebracket\productindex)$ restricted to $\tensoral{H_1}$, $D$ the derivation generated by
 \begin{equation*}
  D h_n=n h_n\quad\forallc h_n\in H_n,
 \end{equation*}
 furthermore $\convoproduct$ the convolution product on $\linear(H,H)$ and $E_{H_1}:(\tensoral{H_1},\itensor)\rightarrow(H,\product)$ the algebra homomorphism generated by
 \begin{equation*}
  E_{H_1}h_1=h_1\quad\forallc h_1\in H_1.
 \end{equation*}
 Then,
 \begin{equation}\label{eq:rnbracketing_D_convoproduct_antipode}
  \rnbracketing_{H_1}=(D\convoproduct\antipode)E_{H_1}
 \end{equation}
 and
 \begin{equation}\label{eq:D_convoproduct_antipode_on_primitive}
  (D\convoproduct\antipode){\restriction_{\primitive{H}}}=D{\restriction_{\primitive{H}}}.
 \end{equation}
\end{thm}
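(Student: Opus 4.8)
The plan is to abbreviate $\Phi := D\convoproduct\antipode = \mproduct(D\etensor\antipode)\coproduct$ and dispatch the two identities in turn, beginning with the second, which is almost immediate. For $x\in\primitive H$ we have $\coproduct x = \unit\etensor x + x\etensor\unit$, so
\begin{equation*}
 \Phi(x) = \mproduct(D\etensor\antipode)(\unit\etensor x + x\etensor\unit) = D\unit\product\antipode x + Dx\product\antipode\unit.
\end{equation*}
Since $\unit\in H_0$ forces $D\unit = 0$ and Theorem~\ref{thm:antipode_antimorphism} gives $\antipode\unit = \unit$, this collapses to $\Phi(x) = Dx$, which is exactly \eqref{eq:D_convoproduct_antipode_on_primitive}. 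Note this needs nothing about the grading of $x$, since the computation is valid for any primitive element.

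For \eqref{eq:rnbracketing_D_convoproduct_antipode} I would argue by induction on the word length, so I first establish the key \emph{derivation-type relation}: for every $x\in H_1$ and every $y\in H$,
\begin{equation*}
 \Phi(x\product y) = [x,\Phi(y)]\productindex + \counit(y)\,x.
\end{equation*}
To prove it, expand $\coproduct(x\product y) = \coproduct x\producttwo\coproduct y$ using that $\coproduct$ is an algebra homomorphism and that $\coproduct x = \unit\etensor x + x\etensor\unit$ (because $H_1\subseteq\primitive H$ by Remark~\ref{rmk:coproduct_gradeone}); this yields $\coproduct(x\product y) = \sum_{(y)}\big(y_1\etensor(x\product y_2) + (x\product y_1)\etensor y_2\big)$. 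Applying $\mproduct(D\etensor\antipode)$ and then using that $\antipode$ is an algebra antimorphism (Theorem~\ref{thm:antipode_antimorphism}) with $\antipode x = -x$ on the primitive $x$, that $D$ is a derivation with $Dx = x$, and the antipode property $\mproduct(\id\etensor\antipode)\coproduct = \unitmap\counit$ \eqref{eq:antipodeproperty}, the first Sweedler sum reorganizes into $-\Phi(y)\product x$ and the second into $x\product\Phi(y) + \counit(y)\,x$.

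With this relation in hand the induction is routine. For the base case $n=1$ a word is $x_1\in H_1$ and $\Phi(E_{H_1}x_1) = \Phi(x_1) = Dx_1 = x_1 = \rnbracketing x_1$. For the step, write a length-$n$ word as $x_1\itensor u$ with $u$ of length $n-1$; since $E_{H_1}$ is an algebra homomorphism, $E_{H_1}(x_1\itensor u) = x_1\product E_{H_1}u$, and $y := E_{H_1}u\in H_{n-1}$ satisfies $\counit(y) = 0$ because $\ker\counit = \bigoplus_{i\geq 1}H_i$ (Theorem~\ref{thm:rcoproduct_cgbialgebra}). The derivation-type relation then gives $\Phi(x_1\product y) = [x_1,\Phi(y)]\productindex$, which by the inductive hypothesis $\Phi(y) = \rnbracketing u$ and the recursive definition $\rnbracketing(x_1\itensor u) = [x_1,\rnbracketing u]\productindex$ equals $\rnbracketing(x_1\itensor u)$.

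I expect the main obstacle to be the bookkeeping in the derivation-type relation: one must apply the antipode antimorphism, the derivation rule for $D$, and the antipode property to the correct Sweedler factors, and verify that the spurious scalar term $\counit(y)\,x$ genuinely appears (so that it can then be discarded on positive grades). Everything else reduces to the behavior of $\antipode$ on primitives and to a clean induction matching the recursion that defines $\rnbracketing$.
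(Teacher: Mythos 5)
Your proposal is correct and follows essentially the same route as the paper: the identity on $\primitive H$ is verified by the same two-term computation, and your ``derivation-type relation'' $\Phi(x\product y)=[x,\Phi(y)]\productindex+\counit(y)\,x$ is precisely the paper's inductive-step calculation (Sweedler expansion of $\coproduct(x\product y)$, derivation rule for $D$, antimorphism and primitivity of $\antipode$ on $H_1$, and the antipode property producing the $\counit(y)\,x$ term), just packaged as a standalone lemma before specializing to $\counit(y)=0$ on positive grades. The only cosmetic omission is the empty word, for which $\Phi(E_{H_1}\unit)=D\unit\product\antipode\unit=0=\rnbracketing_{H_1}\unit$ is immediate.
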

\begin{proof}(Based on the proofs of Theorem 1.4 and Lemma 1.5 \cite{reutenauer93})
 First of all,
 \begin{equation*}
  (D\convoproduct\antipode)E_{H_1}\unit=\mproduct(D\etensor\antipode)\coproduct\unit=D\unit\product\antipode\unit=0=\rnbracketing_{H_1}\unit
 \end{equation*}
 and, for all $v\in H_1$,
 \begin{equation*}
  (D\convoproduct\antipode)E_{H_1}v=\mproduct(D\etensor\antipode)\coproduct v=D\unit\etensor\antipode v+D v\product\antipode\unit=v=\rnbracketing_{H_1}v.
 \end{equation*}
 Assuming we have $(D\convoproduct\antipode)E_{H_1}\hat{u}=\rnbracketing_{H_1}\hat{u}$ for some $\hat{u}\in\tensoral{H_1}$ and putting $u:=E_{H_1}\hat{u}$, we have for all $x\in H_1$ that
 \begin{align*}
  (D\convoproduct\antipode)E_{H_1}(x\itensor\hat{u})&=\mproduct(D\etensor\antipode)\coproduct(x\itensor u)=\sum_{(x)}\sum_{(u)}D(x_1\product u_1)\product\antipode(x_2\product u_2)\\
  &=\sum_{(x)}\sum_{(u)}(D x_1\product u_1+x_1\product D u_1)\product\antipode u_2\product\antipode x_2\\
  &=\sum_{(u)}(D\unit\product u_1+\unit\product D u_1)\product\antipode u_2\product\antipode x+\sum_{(u)}(D x\product u_1+x\product D u_1)\product \antipode u_2\product \antipode\unit\\
  &=-\sum_{(u)}D u_1\product\antipode u_2\product x+\sum_{(u)}x\product u_1\product\antipode u_2+\sum_{(u)}x\product D u_1\product\antipode u_2\\
  &=-(D\convoproduct S)u\product x+x\product\unitmap\counit(u)+x\product(D\convoproduct S)u=-\rnbracketing_{H_1}\hat{u}\product x+x\product\rnbracketing_{H_1}\hat{u}\\
  &=[x,\rnbracketing_{H_1}\hat{u}]_{\product}=\rnbracketing_{H_1}(x\itensor\hat{u}).
 \end{align*}
 Thus, we get \eqref{eq:rnbracketing_D_convoproduct_antipode} via induction.
 \noindent Since for all $y\in\primitive{H}$ we have
 \begin{equation*}
  (D\convoproduct\antipode)y=\mproduct(D\etensor\antipode)\coproduct y=D\unit\etensor\antipode y+D y\etensor\antipode\unit=D y,
 \end{equation*}
 equation \eqref{eq:D_convoproduct_antipode_on_primitive} also follows.
 
\end{proof}

\begin{cor}\label{cor:lie_generate_primitive}\textnormal{(Based on Theorem 1.4 \cite{reutenauer93})}
 For a Hopf algebra $H$ connectedly graded by $(H_i)_i$ over a field of characteristic zero, we have
 \begin{equation*}
  \primitive\generate{H_1}{\product}=\generate{H_1}{\liebracket\productindex}.
 \end{equation*}
\end{cor}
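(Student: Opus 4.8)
The plan is to prove the two inclusions separately, with essentially all of the work living in the direction $\primitive\generate{H_1}{\product}\subseteq\generate{H_1}{\liebracket\productindex}$; the reverse inclusion is a soft consequence of the results already established.

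For $\generate{H_1}{\liebracket\productindex}\subseteq\primitive\generate{H_1}{\product}$, I would first note that $H_1\subseteq\primitive H$ by Remark~\ref{rmk:coproduct_gradeone} and trivially $H_1\subseteq\generate{H_1}{\product}$, so that $H_1\subseteq\primitive\generate{H_1}{\product}$. I would then argue that $\primitive\generate{H_1}{\product}$ is itself a sub Lie algebra of $(H,\liebracket\productindex)$: the set $\primitive H$ is a sub Lie algebra (by the theorem asserting exactly this), the subalgebra $\generate{H_1}{\product}$ is automatically closed under the commutator $\liebracket\productindex$ and hence is a sub Lie algebra as well, and the intersection of two sub Lie algebras is again one. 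Since $\generate{H_1}{\liebracket\productindex}$ is by definition the smallest sub Lie algebra containing $H_1$, the inclusion is immediate.

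For the reverse inclusion, the plan is to combine the identity $\rnbracketing_{H_1}=(D\convoproduct\antipode)E_{H_1}$ from \eqref{eq:rnbracketing_D_convoproduct_antipode} with the collapse $(D\convoproduct\antipode)\restriction_{\primitive H}=D\restriction_{\primitive H}$ from \eqref{eq:D_convoproduct_antipode_on_primitive}, using that the theorem $\rnbracketing\tensoral{W}=\generate{W}{\liebracket}$ applied to $W=H_1$ gives $\rnbracketing_{H_1}\tensoral{H_1}=\generate{H_1}{\liebracket\productindex}$. I would first reduce to homogeneous elements: since $\coproduct$ is graded and $E_{H_1}$ maps $H_1^{\itensor k}$ into $H_k$, the subspace $\generate{H_1}{\product}$ is graded and each homogeneous component $x_n\in H_n$ of a primitive $x\in\generate{H_1}{\product}$ is again primitive and lies in $\generate{H_1}{\product}$; moreover $x$ has no $H_0$-component because primitive elements lie in $\ker\counit$. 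Hence it suffices to treat $x\in\primitive\generate{H_1}{\product}\cap H_n$ with $n\geq 1$. For such $x$ the degree-$n$ part of $\generate{H_1}{\product}$ is $E_{H_1}(H_1^{\itensor n})$, so there is $\hat u\in H_1^{\itensor n}$ with $E_{H_1}\hat u=x$, and then
\begin{equation*}
 \rnbracketing_{H_1}\hat u=(D\convoproduct\antipode)E_{H_1}\hat u=(D\convoproduct\antipode)x=D x=n x,
\end{equation*}
where \eqref{eq:D_convoproduct_antipode_on_primitive} was used in the third equality (legitimate since $x$ is primitive) and $D x = nx$ holds because $x\in H_n$. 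As $\rnbracketing_{H_1}\hat u\in\generate{H_1}{\liebracket\productindex}$ and the field has characteristic zero, dividing by $n$ yields $x\in\generate{H_1}{\liebracket\productindex}$; summing over homogeneous components completes the inclusion.

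The main obstacle, and the only step deserving genuine care, is justifying the passage to homogeneous components: one must verify that the primitivity relation $\coproduct x=\unit\etensor x+x\etensor\unit$ survives projection onto $H_n$. This works precisely because $\unit\etensor x_n$ and $x_n\etensor\unit$ are exactly the total-degree-$n$ parts of $\unit\etensor x$ and $x\etensor\unit$, while gradedness of $\coproduct$ forces the total-degree-$n$ part of $\coproduct x$ to be $\coproduct x_n$. Everything else is already packaged in the preceding theorems, so once homogeneity is secured the proof is the short chain of equalities above, whose crux is simply that $D$ acts as multiplication by $n$ on $H_n$ and that $D\convoproduct\antipode$ reduces to $D$ on primitive elements.
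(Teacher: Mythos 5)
Your proof is correct and follows essentially the same route as the paper: the easy inclusion via the minimality of $\generate{H_1}{\liebracket\productindex}$, and the hard inclusion via the chain $\rnbracketing_{H_1}=(D\convoproduct\antipode)E_{H_1}$ together with $(D\convoproduct\antipode){\restriction_{\primitive H}}=D{\restriction_{\primitive H}}$. The only difference is presentational: the paper compresses your homogeneous-component argument into the single equality $\primitive\generate{H_1}{\product}=D\primitive\generate{H_1}{\product}$, whereas you spell out why $D$ is invertible there (gradedness of $\coproduct$, vanishing of the degree-zero part of a primitive element, and characteristic zero), which is exactly the justification that equality needs.
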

\begin{proof}(Based on the proof of Theorem 1.4 \cite{reutenauer93})
 Since $\primitive\generate{H_1}{\product}$ is a Lie algebra that contains $H_1$, we have $\generate{H_1}{\liebracket\productindex}\subseteq\primitive\generate{H_1}{\product}$. On the other hand,
 \begin{align*}
  \primitive\generate{H_1}{\product}&=D\primitive\generate{H_1}{\product}=(D\convoproduct\antipode)\primitive\generate{H_1}{\product}\\
  &\subseteq(D\convoproduct\antipode)\generate{H_1}{\product}=(D\convoproduct\antipode)E_{H_1}\tensoral{H_1}=\rnbracketing_{H_1}\tensoral{H_1}= \generate{H_1}{\liebracket\productindex}.
 \end{align*}

\end{proof}

\begin{thm}\label{thm:primitive_grouplike}\textnormal{(Based on Theorems 3.1 and 3.2 \cite{reutenauer93})}
 For a Hopf algebra $(H,\product,\coproduct,\antipode)$ connectedly graded by $(H_i)_i$ over a field of characteristic zero, we have
 \begin{equation*}
  \primitive\spacefs{H}=\prod_{n=1}^\infty\primitive H_n\quad\text{and}\quad\exppro(\primitive\spacefs{H})=\grouplike\spacefs{H}.
 \end{equation*}
\end{thm}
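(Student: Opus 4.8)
The plan is to establish the two equalities separately, the first by a degree-by-degree argument and the second by transporting the primitive/group-like correspondence through the mutually inverse maps $\exppro$ and $\logpro$ of Theorem~\ref{thm:exppro_logpro_inverse_addition}.

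For $\primitive\spacefs{H}=\prod_{n=1}^\infty\primitive H_n$, I would write a formal series $b=\sum_{n}b_n$ with $b_n\in H_n$ and exploit that the extended coproduct acts grade-wise, $\projtwo_n\coproduct=\coproduct\proj_n$. Since $\unit\etensor b_n$ and $b_n\etensor\unit$ both sit in induced grade $n$, the identity $\coproduct b=\unit\etensor b+b\etensor\unit$ holds in $\spacefs{[H\etensor H]}$ if and only if $\coproduct b_n=\unit\etensor b_n+b_n\etensor\unit$ for every $n$, that is, iff each $b_n$ is primitive. The grade-$0$ term drops out because any primitive element has vanishing counit by Remark~\ref{rmk:grouplike_primitive_counit}, and $\counit$ is injective on $H_0=\vspan\{\unit\}$, forcing $b_0=0$; this is exactly why the product starts at $n=1$.

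For $\exppro(\primitive\spacefs{H})\subseteq\grouplike\spacefs{H}$, I would first note that the extended coproduct is an algebra homomorphism commuting with the canonical projections, so Theorem~\ref{thm:exppro_logpro_morphism} applies with $\varLambda=\coproduct$ and yields $\coproduct\exppro(x)=\exp_{\producttwo}(\coproduct x)$. For primitive $x$ one has $\coproduct x=\unit\etensor x+x\etensor\unit$, and the two summands $\unit\etensor x$ and $x\etensor\unit$ commute for $\producttwo$ since each $\producttwo$-product equals $x\etensor x$; hence by Theorem~\ref{thm:exppro_logpro_inverse_addition} the exponential splits as $\exp_{\producttwo}(\unit\etensor x)\producttwo\exp_{\producttwo}(x\etensor\unit)$. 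A direct expansion gives $\exp_{\producttwo}(\unit\etensor x)=\unit\etensor\exppro(x)$ and $\exp_{\producttwo}(x\etensor\unit)=\exppro(x)\etensor\unit$, whose $\producttwo$-product is $\exppro(x)\etensor\exppro(x)$. As $\exppro(x)\in\groupfs{H}$ has grade-$0$ part $\unit\neq0$, it is group-like.

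For the reverse inclusion, take $g\in\grouplike\spacefs{H}$; its counit is $1$, so its grade-$0$ part is $\unit$ and $g\in\groupfs{H}$, whence $g=\exppro(x)$ with $x=\logpro(g)\in\liefs{H}$. Applying the logarithm half of Theorem~\ref{thm:exppro_logpro_morphism} gives $\coproduct x=\log_{\producttwo}(\coproduct g)=\log_{\producttwo}(g\etensor g)$; writing $g\etensor g=(\unit\etensor g)\producttwo(g\etensor\unit)$ with commuting factors and expanding the logarithms exactly as above yields $\coproduct x=\unit\etensor x+x\etensor\unit$, so $x$ is primitive. The main obstacle is not conceptual but bookkeeping: one must be careful that all exponentials, logarithms and coproducts are read as the extended operations on the formal-series spaces $\spacefs{H}$ and $\spacefs{[H\etensor H]}$, that $(H\etensor H,\producttwo)$ is connectedly graded so that Theorem~\ref{thm:exppro_logpro_morphism} is genuinely applicable, and that the grade-preserving identifications let the infinite sums defining $\exp_{\producttwo}(\unit\etensor x)$ and its companion collapse to $\unit\etensor\exppro(x)$ and $\exppro(x)\etensor\unit$.
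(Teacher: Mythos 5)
Your proposal is correct and follows essentially the same route as the paper's proof: the grade-wise characterization of primitivity via $\projtwo_n\coproduct=\coproduct\proj_n$, and transporting exponential/logarithm through the algebra homomorphism $\coproduct$ via Theorem~\ref{thm:exppro_logpro_morphism} together with the commutation $(\unit\etensor x)\producttwo(x\etensor\unit)=x\etensor x=(x\etensor\unit)\producttwo(\unit\etensor x)$ and Theorem~\ref{thm:exppro_logpro_inverse_addition}. Your added remark on why the grade-$0$ component must vanish (via Remark~\ref{rmk:grouplike_primitive_counit}) is a welcome clarification the paper leaves implicit.
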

\begin{proof}(Based on the proofs of Theorems 3.1 and 3.2 \cite{reutenauer93})
 If $x$ is in $\primitive\spacefs{H}$, then, due to the grading,
 \begin{equation*}
  \coproduct\proj_n x=\projtwo_n\coproduct x=\projtwo_n(\unit\etensor x+x\etensor\unit)=\unit\etensor\proj_n x+\proj_n x\etensor\unit.
 \end{equation*}
 On the other hand, of course, the spaces $\primitive H_n$ are linearly independent and for $x_n\in\primitive H_n$, we have
 \begin{equation*}
  \coproduct\sum_{n=1}^\infty x_n=\sum_{n=1}^\infty\coproduct x_n=\sum_{n=1}^\infty(\unit\etensor x_n+x_n\etensor\unit)=\unit\etensor(\sum_{n=1}^\infty x_n)+(\sum_{n=1}^\infty x_n)\etensor\unit.
 \end{equation*}
 For any $u\in\primitive\spacefs{H}$, using Theorem \ref{thm:exppro_logpro_inverse_addition} 2.\@ based on 
 \begin{equation*}
  (\unit\etensor u)\producttwo(u\etensor\unit)=u\etensor u=(u\etensor\unit)\producttwo(\unit\etensor u)
 \end{equation*}
 and using Theorem \ref{thm:exppro_logpro_morphism} for the algebra homomorphism $\coproduct:\,(\spacefs{H},\product)\rightarrow(\spacefs{(H\etensor H)},\producttwo)$, we have that
 \begin{align*}
  \coproduct\exppro(u)&=\exp_{\producttwo}(\coproduct u)=\exp_{\producttwo}(\unit\etensor u+u\etensor\unit)=\exp_{\producttwo}(\unit\etensor u)\producttwo\exp_{\producttwo}(u\etensor\unit)\\&=(\unit\etensor\exppro(u))\producttwo(\exppro(u)\etensor\unit)=\exppro(u)\etensor\exppro(u).
 \end{align*}
 For any $v\in\grouplike\spacefs{H}$, we have that $\counit(v)=1$ due to the extended counit property and thus $v\in\groupfs{H}$. Hence, we may compute, again using Theorems \ref{thm:exppro_logpro_inverse_addition} 2.\@ and \ref{thm:exppro_logpro_morphism},
 \begin{equation}\label{eq:logpro_grouplike_element}
  \begin{aligned}
    \coproduct\logpro(v)&=\log_{\producttwo}(\coproduct v)=\log_{\producttwo}(v\etensor v)=\log_{\producttwo}((\unit\etensor v)\producttwo(v\etensor\unit))\\
    &=\log_{\producttwo}(\unit\etensor v)+\log_{\producttwo}(v\etensor\unit)=\unit\etensor\logpro(v)+\logpro(v)\etensor\unit.
  \end{aligned}
 \end{equation}

\end{proof}

\begin{cor}\textnormal{(Based on Section 2.4 \cite{hairerkelly14})}\label{cor:primitive_n_grouplike_n}
 For a bialgebra $(B,\product,\coproduct)$ connectedly graded by $(B_i)_i$ over a field of characteristic zero, we have
 \begin{equation*}
  \exp_{\product_n}(\primitive B^n)=\quotientmap^n\grouplike\spacefs{B}=\grouplike_n B.
 \end{equation*}
\end{cor}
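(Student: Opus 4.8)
The plan is to prove that all three sets coincide by a cycle of inclusions, with Theorem \ref{thm:primitive_grouplike} as the main engine. This theorem applies because, by Theorem \ref{thm:antipode_recursion}, the connected graded bialgebra $(B,\product,\coproduct)$ is automatically a connected graded Hopf algebra over a field of characteristic zero. For the first equality $\exp_{\product_n}(\primitive B^n)=\quotientmap^n\grouplike\spacefs{B}$, I would first observe that the truncation $\quotientmap^n\colon\spacefs{B}\to B^n$ is precisely the quotient algebra morphism $(\spacefs{B},\product)\to(B^n,\product_n)$, sends $\unit$ to $\unit$, and commutes with the grade projections, so the intertwining statement of Theorem \ref{thm:exppro_logpro_morphism} gives $\quotientmap^n\exppro(u)=\exp_{\product_n}(\quotientmap^n u)$ for every $u\in\liefs{B}$. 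Since Theorem \ref{thm:primitive_grouplike} yields $\primitive\spacefs{B}=\prod_{m\geq1}\primitive B_m$, the truncation maps $\primitive\spacefs{B}$ onto $\primitive B^n=\bigoplus_{m=1}^n\primitive B_m$, and combining this with $\exppro(\primitive\spacefs{B})=\grouplike\spacefs{B}$ from the same theorem gives $\exp_{\product_n}(\primitive B^n)=\quotientmap^n\exppro(\primitive\spacefs{B})=\quotientmap^n\grouplike\spacefs{B}$.

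For the second equality the inclusion $\quotientmap^n\grouplike\spacefs{B}\subseteq\grouplike_n B$ is exactly Remark \ref{rmk:grouplike_spacefs_grouplike_n}, so it remains to show $\grouplike_n B\subseteq\exp_{\product_n}(\primitive B^n)$, which by the first equality then closes the cycle. Given $c\in\grouplike_n B$ (the defining condition forces $c\in B^n$, since otherwise $\coproduct c$ would have components in degree exceeding $n$), I would first apply the counit property to $\coproduct c=\quotientmaptwo^n(c\etensor c)$ as in Remark \ref{rmk:grouplike_primitive_counit} to conclude $\counit(c)=1$, hence $c\in\groupfs{B^n}$ and $p:=\log_{\product_n}(c)$ is well defined. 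To see $p\in\primitive B^n$, I would invoke Theorem \ref{thm:exppro_logpro_morphism} for the truncated coproduct $\coproduct_n\colon(B^n,\product_n)\to([B\etensor B]^n,\producttwo_n)$, an algebra homomorphism by the truncated compatibility requirements, to get $\coproduct_n p=\log_{\producttwo_n}(\coproduct_n c)$; then factor $\coproduct_n c=\quotientmaptwo^n(c\etensor c)=\quotientmaptwo^n(\unit\etensor c)\producttwo_n\quotientmaptwo^n(c\etensor\unit)$ into two commuting group-like factors and split the logarithm via Theorem \ref{thm:exppro_logpro_inverse_addition}. Recognising $x\mapsto\quotientmaptwo^n(\unit\etensor x)$ and $x\mapsto\quotientmaptwo^n(x\etensor\unit)$ as grade-preserving algebra homomorphisms into $([B\etensor B]^n,\producttwo_n)$ lets me pull the logarithm inside once more and arrive at $\coproduct_n p=\quotientmaptwo^n(\unit\etensor p)+\quotientmaptwo^n(p\etensor\unit)=\unit\etensor p+p\etensor\unit$, so $p$ is primitive and $c=\exp_{\product_n}(p)\in\exp_{\product_n}(\primitive B^n)$.

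Chaining $\grouplike_n B\subseteq\exp_{\product_n}(\primitive B^n)=\quotientmap^n\grouplike\spacefs{B}\subseteq\grouplike_n B$ then forces all three sets to coincide. The step I expect to be the genuine obstacle is the bookkeeping around the truncation projection $\quotientmaptwo^n$ in the second paragraph: one must check that $x\mapsto\quotientmaptwo^n(\unit\etensor x)$ is honestly an algebra homomorphism for the truncated products, which rests on the degree identity $\quotientmaptwo^n(\unit\etensor\quotientmap^n z)=\quotientmaptwo^n(\unit\etensor z)$ — the part of $z$ in degree above $n$ is pushed beyond total degree $n$ by tensoring with $\unit$ and is therefore annihilated — and one must keep straight that $\quotientmaptwo^n(\unit\etensor p)=\unit\etensor p$ precisely because $p\in B^n$ already has degree at most $n$. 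Everything else is a routine transcription of the proof of Theorem \ref{thm:primitive_grouplike} from $\spacefs{B}$ to the truncated algebras $(B^n,\product_n)$ and $([B\etensor B]^n,\producttwo_n)$.
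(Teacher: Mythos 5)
Your proposal is correct and follows essentially the same route as the paper: the identity $\exp_{\product_n}(\primitive B^n)=\quotientmap^n\exppro(\primitive\spacefs{B})=\quotientmap^n\grouplike\spacefs{B}$ via Theorems \ref{thm:exppro_logpro_morphism} and \ref{thm:primitive_grouplike}, the inclusion $\grouplike_n B\subseteq\exp_{\product_n}(\primitive B^n)$ by splitting $\log_{\producttwo_n}$ of the factorization $\quotientmaptwo^n(c\etensor c)=\quotientmaptwo^n(\unit\etensor c)\producttwo_n\quotientmaptwo^n(c\etensor\unit)$ exactly as in \eqref{eq:logpro_grouplike_element}, and Remark \ref{rmk:grouplike_spacefs_grouplike_n} to close the cycle. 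Your extra care with the truncation projections $\quotientmaptwo^n$ and with why $c\in B^n$ and $\counit(c)=1$ only makes explicit what the paper leaves implicit.
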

\begin{proof}
 For all $v\in\grouplike_n{H}$, we have, by Theorem \ref{thm:exppro_logpro_morphism} and the same calculation as in \eqref{eq:logpro_grouplike_element}, that
 \begin{align*}
  \coproduct\log_{\product_n}(v)&=\log_{\producttwo_n}(\coproduct v)=\log_{\producttwo_n}(\quotientmaptwo^n(v\etensor v))=\quotientmaptwo^n\log_{\producttwo}(v\etensor v)=\quotientmaptwo^n(\unit\etensor\log_\product(v)+\log_\product(v)\etensor\unit)\\
  &=\unit\etensor\quotientmap^n\log_\product(v)+\quotientmap^n\log_\product(v)\etensor\unit=\unit\etensor\log_{\product_n}(v)+\log_{\product_n}(v)\etensor\unit.
 \end{align*}
 Thus, $\log_{\product_n}(\grouplike_n H)\subseteq\primitive B^n$, implying $\grouplike_n H\subseteq\exp_{\product_n}(\primitive B^n)$.
 Again applying Theorem \ref{thm:exppro_logpro_morphism}, this time for the exponential map, we compute
 \begin{equation*}
  \exp_{\product n}(\primitive B^n)=\exp_{\product n}(\quotientmap^n\primitive\spacefs{B})=\quotientmap^n\exppro(\primitive\spacefs{B})=\quotientmap^n\grouplike\spacefs{B}.
 \end{equation*}
 Altogether, using Remark \ref{rmk:grouplike_spacefs_grouplike_n}, we finally get
 \begin{equation*}
  \grouplike_n H\subseteq\exp_{\product_n}(\primitive B^n)=\quotientmap^n\grouplike\spacefs{B}\subseteq\grouplike_n H.
 \end{equation*}
\end{proof}

\begin{example}~
 \begin{enumerate}
  \item In the case of polynomials, due to Corollary \ref{cor:lie_generate_primitive},
  \begin{equation*}
   \primitive\polynoms_d=\primitive\generate{\polynoms_{d,1}}{\propol}=\generate{\polynoms_{d,1}}{\liebracket_{\propol}}=\polynoms_{d,1}
  \end{equation*}
  since $\propol$ is commutative and hence also 
  \begin{equation*}
   \primitive\spacefs{(\polynoms_d)}=\polynoms_{d,1}=\vspan\{\X_i|1\leq i\leq d\}.
  \end{equation*} 
  Theorem \ref{thm:primitive_grouplike} then implies $\grouplike\spacefs{(\polynoms_d)}=\{\exp_{\propol}(\sum_{i=1}^d a_i\X_i)|(a_i)_i\subset\reals\}$.
  
  \item For the concatenation algebra, we again have $\generate{V}{\itensor}=\tensoral{V}$, hence 
  \begin{equation*}
   \primitive(\tensoral{V},\deshuffle)=\generate{V}{\liebracket_{\itensor}}.
  \end{equation*}
  The elements of $\primitive(\tensoral{V},\deshuffle)$ are called \emph{Lie polynomials}\index{Lie!polynomials} over $V$ (Section 1.3 \cite{reutenauer93}). We put $\tensoralfs{V}:=\spacefs{\tensoral{V}}$ and call
  \begin{equation*}
   \primitive(\tensoralfs{V},\deshuffle)=\prod_{i=1}^\infty\big(\generate{V}{\liebracket_{\itensor}}\cap V^{\itensor i}\big)
  \end{equation*}
  the space of \index{Lie!series} over $V$ (Section 3.1 \cite{reutenauer93}\footnote{Note that in the case where $V$ is infinite dimensional, the space of Lie series defined in \cite{reutenauer93} is strictly larger than the one presented here. According to our definition, Lie series always reduce to finite sums when projected down to $\tensoraltc{n}{V}$, which is not the case in \cite{reutenauer93}. The definitions agree if $V$ is finite dimensional.}).
  
  \item For the shuffle algebra, $\generate{V}{\shuffle}$ is a proper subspace of $\tensoral{V}$. Therefore, we may only conclude from Corollary \ref{cor:lie_generate_primitive} that
  \begin{equation*}
   \primitive(\generate{V}{\shuffle},\deconc)=\generate{V}{\liebracket_{\shuffle}}=V,
  \end{equation*}
  which follows from $\shuffle$ being commutative. But, looking at \eqref{eq:defn_deconc}, we see that every summand in the expansion of $\deconc u$ for some element of the form $u=v_1\itensor\cdots\itensor v_n$, $v_i\in V$ is specific to $u$, hence none of those summands can be canceled out by $\deconc w$ for an element $w$ of the same form as $u$, but linearly independent. Hence, there cannot be primitive elements of $\deconc$ in $V^{\itensor n}$ for $n>1$. Therefore,
  \begin{equation*}
   \primitive(\tensoralfs{V},\deconc)=\primitive(\tensoral{V},\deconc)=V
  \end{equation*}
  and, by Theorem \ref{thm:primitive_grouplike}, we get $\grouplike(\tensoralfs{V},\deconc)=\exp_{\shuffle}(V)$.

 \end{enumerate}
\end{example}
Interestingly, the example of $(\tensoralfs{K^2},\itensor,\deshuffle)$, $K$ being a field of characteristic zero, can be used to show the following.
\begin{thm}\textnormal{(Based on Corollaries 3.3 and 3.4 \cite{reutenauer93})}
 For an algebra $(A,\product)$ connectedly graded by $(A_i)_i$ over a field of characteristic zero, $\exppro(W)$ is a subgroup of $(\groupfs{A},\product)$ for any sub Lie algebra $W=\prod_{i=0}^\infty (W\cap A_i)$ of $(\liefs{A},\liebracket\productindex)$.
\end{thm}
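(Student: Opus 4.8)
The plan is to verify the three subgroup axioms for $\exppro(W)\subseteq(\groupfs{A},\product)$, the ambient $(\groupfs{A},\product)$ being already known to be a group by Theorem \ref{thm:exppro_logpro_inverse_addition}. The identity and inverse axioms are immediate: since $0\in W$ we have $\unit=\exppro(0)\in\exppro(W)$, and for $u\in W$ the elements $u,-u$ commute, so by Theorem \ref{thm:exppro_logpro_inverse_addition} $\exppro(u)\product\exppro(-u)=\exppro(0)=\unit$; as $W$ is a subspace, $-u\in W$ and hence $\exppro(u)^{-1}=\exppro(-u)\in\exppro(W)$. The entire difficulty lies in closure under the product: given $u,v\in W$, I must show that $w:=\logpro(\exppro(u)\product\exppro(v))$ again lies in $W$. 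Since $u,v$ do not commute in general, $w$ is not simply $u+v$, and controlling it is precisely the Baker--Campbell--Hausdorff problem.

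I would first solve this problem universally in the tensor Hopf algebra $\tensoralfs{K^2}$. Let $a,b$ denote the two basis vectors of $K^2$, lying in degree one; being of degree one they are primitive. By Theorem \ref{thm:primitive_grouplike} the elements $\exp_{\itensor}(a)$ and $\exp_{\itensor}(b)$ are group-like, and since $\deshuffle$ is an algebra homomorphism so is their product $g:=\exp_{\itensor}(a)\itensor\exp_{\itensor}(b)$ (indeed $\deshuffle g=\deshuffle(\exp_{\itensor}a)\producttwo\deshuffle(\exp_{\itensor}b)=g\etensor g$). Applying Theorem \ref{thm:primitive_grouplike} in the reverse direction, $H:=\log_{\itensor}(g)$ is primitive, and by the identification of the primitive elements of the concatenation Hopf algebra with Lie series (the computation carried out in the examples above) $H$ is a Lie series in $a,b$, that is, a degreewise-finite sum of iterated brackets $\liebracket_{\itensor}$ of $a$ and $b$. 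This is the abstract Baker--Campbell--Hausdorff identity $\exp_{\itensor}(a)\itensor\exp_{\itensor}(b)=\exp_{\itensor}(H)$.

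Finally I would transfer this identity to $A$ by substitution. Define $\phi:\tensoralfs{K^2}\to\spacefs{A}$ to be the algebra homomorphism determined by $\phi(a)=u$, $\phi(b)=v$. Because $u,v\in\liefs{A}$ carry no degree-zero part, $\phi$ maps a word of length $n$ into $\bigoplus_{i\geq n}A_i$, so only finitely many words contribute to each fixed degree; hence $\phi$ is well-defined on formal series and continuous for the grading filtration. This continuity lets $\phi$ commute with the exponential and logarithm power series, the analogue of Theorem \ref{thm:exppro_logpro_morphism} except that $\phi$ is merely filtration-preserving rather than grading-preserving. Granting it, applying $\phi$ to $g=\exp_{\itensor}(H)$ yields $\exppro(u)\product\exppro(v)=\exppro(\phi(H))$, so $w=\phi(H)$; and since $\phi$ is a Lie-algebra homomorphism it sends each iterated bracket of $a,b$ to the corresponding iterated bracket of $u,v$, which lies in the sub Lie algebra $W$. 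Collecting homogeneous components and invoking the hypothesis $W=\prod_{i}(W\cap A_i)$ shows that the whole series $w=\phi(H)\in W$, so $\exppro(u)\product\exppro(v)=\exppro(w)\in\exppro(W)$. The main obstacle is the double justification that $\phi$ is a well-defined continuous homomorphism commuting with $\exppro,\logpro$ despite not preserving the grading, and that the transported Lie series genuinely lands in $W$; both hinge on the degree-raising property of $\phi$ together with the hypothesis that $W$ splits as the product of its homogeneous pieces.
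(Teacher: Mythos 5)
Your proposal is correct and follows essentially the same route as the paper: both prove closure by showing that $\exp_{\itensor}(e_1)\itensor\exp_{\itensor}(e_2)$ is group-like in $(\tensoralfs{K^2},\itensor,\deshuffle)$, hence the exponential of a Lie series, and then transfer this universal Baker--Campbell--Hausdorff identity to $\spacefs{A}$ via the algebra homomorphism sending the two generators to the given logarithms. If anything, you are slightly more careful than the paper at the transfer step, since you note explicitly that this homomorphism is only filtration-preserving rather than grading-preserving and that the hypothesis $W=\prod_i(W\cap A_i)$ is what lets the resulting formal Lie series land in $W$.
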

\begin{proof}(Based on the proof of Corollary 3.3 \cite{reutenauer93})
 Let $\{e_1,e_2\}$ be a basis of $K^2$. Then, we of course have $e_1,e_2\in\primitive(\tensoralfs{K^2},\deshuffle)$ and hence $\exp(e_1),\exp(e_2)\in\grouplike(\tensoralfs{K^2},\deshuffle)$. We compute
 \begin{align*}
  \deshuffle(\exp_{\itensor}(e_1)\itensor\exp_{\itensor}(e_2))&=\deshuffle\exp_{\itensor}(e_1)\itensortwo\deshuffle\exp_{\itensor}(e_2)\\
  &=(\exp_{\itensor}(e_1)\etensor\exp_{\itensor}(e_1))\itensortwo(\exp_{\itensor}(e_2)\etensor\exp_{\itensor}(e_2))\\
  &=(\exp_{\itensor}(e_1)\itensor\exp_{\itensor}(e_2))\etensor(\exp_{\itensor}(e_1)\itensor\exp_{\itensor}(e_2)).
 \end{align*}
 Thus, 
 \begin{equation*}
  \exp_{\itensor}(e_1)\itensor\exp_{\itensor}(e_2)\in\grouplike(\tensoralfs{K^2},\deshuffle)=\exp_{\itensor}(\primitive\tensoralfs{K^2})=\exp_{\itensor}(\spacefs{(\generate{e_1,e_2}{\liebracket_{\itensor}})})
 \end{equation*}
 Now, let $v_1,v_2\in\exppro(W)$ be arbitrary. Let $\varLambda:\,(\tensoralfs{K^2},\itensor)\rightarrow(\spacefs{A},\product)$ be the unique algebra homomorphism such that $\varLambda e_1=\logpro v_1$, $\varLambda e_2=\logpro v_2$ and $\proj_n\varLambda x=\varLambda\proj_n x$ for all $x\in\tensoralfs{K^2}$. Then,
 \begin{equation*}
  \varLambda\exp_{\itensor}(e_i)=\varLambda\sum_{n=0}^\infty\frac{e_i^{\itensor n}}{n!}=\sum_{n=0}^\infty\frac{\varLambda (e_i^{\itensor n})}{n!}=\sum_{n=0}^\infty\frac{(\varLambda e_i)^{\product n}}{n!}=\exppro(\varLambda e_i)=v_i,
 \end{equation*}
 therefore
 \begin{equation*}
  v_1\product v_2=(\varLambda\exp_{\itensor}(e_1))\product(\varLambda\exp_{\itensor}(e_2))=\varLambda(\exp_{\itensor}(e_1)\itensor\exp_{\itensor}(e_2))
 \end{equation*}
 and finally
 \begin{align*}
  v_1\product v_2\in\varLambda\exp_{\itensor}(\spacefs{(\generate{e_1,e_2}{\liebracket_{\itensor}})})&=\exppro(\varLambda\spacefs{(\generate{e_1,e_2}{\liebracket_{\itensor}})})=\exppro(\spacefs{(\varLambda\generate{e_1,e_2}{\liebracket_{\itensor}})})\\
  &=\exppro(\spacefs{(\generate{\logpro v_1,\logpro v_2}{\liebracket_{\product}})})\subseteq\exppro(W).
 \end{align*}
 Since for any $v\in\exppro(W)$, we have $v^{\product -1}=\exppro(-\logpro(v))\in\exppro(W)$, the proof of $\exppro(W)$ being a subgroup is complete.
 
\end{proof}
\begin{rmk}
 The Lie series $\log_{\itensor}(\exp_{\itensor}(a)\itensor\exp_{\itensor}(b))$ (Corollary 3.4 \cite{reutenauer93}) is called the \emphind{Baker-Campbell-Hausdorff series} of $a,b\in\primitive(\tensoralfs{V},\deshuffle)$. See Chapter 3 \cite{reutenauer93} for more on this series.
\end{rmk}
\begin{rmk}(Based on Corollary 3.3 \cite{reutenauer93})
 We now especially know that for every Hopf algebra $(H,\product,\coproduct,\antipode)$ connectedly graded by $(H_i)_i$, the set of group-like elements $\grouplike\spacefs{H}=\exppro\primitive\spacefs{H}$ is a subgroup of $(\groupfs{H},\product)$. Since
 \begin{equation*}
  x\product\antipode x=\mproduct(\id\etensor\antipode)(x\etensor x)=\mproduct(\id\etensor\antipode)\coproduct x=\unitmap\counit(x)=\unitmap(1)=\unit,
 \end{equation*}
 where we made use of the antipode property \eqref{eq:antipodeproperty} and of Remark \ref{rmk:grouplike_primitive_counit}, we have $x^{\product-1}=\antipode x$ for all $x\in\grouplike\spacefs{H}$.
\end{rmk}
\vfill

\section{Connes-Kreimer Hopf algebra of trees and forests}
\subsection{Trees and forests}
In the following definition, we formally introduce the set of \emphind{forests} $\forests_I$ and the subset of \emphind{trees} $\trees_I$ \emph{labeled}\index{labels} or \emph{decorated}\index{decorations} by indices from some set $I$.
\begin{samepage}
\begin{defn}\textnormal{(Based on Section 2.2 \cite{hairerkelly14})}
 For a non-empty set $I$ let $\trees_I\subset\forests_I$ be sets, $\unit\in\forests_I$ an element, ${\treepro}:\,\forests_I\times\forests_I\rightarrow\forests_I$ an associative and commutative binary operation, $\lvert\cdot\rvert:\forests_I\rightarrow\naturals_0$ a function and $\lroot\cdot\rroot_i:\,\forests_I\rightarrow\trees_I,\,i\in I$ injective functions such that the following conditions are fulfilled.
 \begin{enumerate}[(i)]
  \item $\unit\treepro\zeta=\zeta\treepro\unit=\zeta\quad\forallc\zeta\in\forests_I$,
  \item $\trees_I$ is the disjoint union of the sets $\lroot\forests_I\rroot_i,\,i\in I$,
  \item For every $\zeta\in\forests_I\setminus\{\unit\}$, there is an $n\in\naturals$ and $\tau_1,\ldots,\tau_n\in\trees_I$ such that $\zeta=\tau_1\treepro\cdots\treepro\tau_n$ and this factorization is unique up to commutativity,
  \item $\lvert\zeta_1\treepro\zeta_2\rvert=\lvert\zeta_1\rvert+\lvert\zeta_2\rvert\quad\forallc\zeta_1,\zeta_2\in\forests_I$,
  \item $\lvert\lroot\zeta\rroot_i\rvert=\lvert\zeta\rvert+1\quad\forallc\zeta\in\forests_I,i\in I$,
  \item $\lvert\zeta\rvert=0\implies\zeta=\unit$.
 \end{enumerate}
\end{defn}
\end{samepage}
We call $\unit$ the \emphind{empty forest}. If $I$ consists only of one element $j$, we put $\forests:=\forests_{\{j\}}$, $\trees:=\trees_{\{j\}}$ and $\lroot\cdot\rroot:=\lroot\cdot\rroot_j$ and call $\forests$ the set of \emphind{undecorated forests}\index{forests!undecorated@\textit{undecorated}} and $\trees$ the set of \emph{undecorated trees}\index{trees!undecorated@\textit{undecorated}}.
\begin{table}\label{tab:tonewroot} 
 \centering
 \begin{tabular}{| c V{3} c c c |}
  \hline
  $n$                  & $\forests_n$ & $\rightarrow$              & $\trees_{n+1}$                        \\
  \hlineB{3}
  $0$                  & $\Fempty$    & $\mapsto$                  & $\Fnd$                                \\
  \hline
  $1$                  & $\Fnd$       & $\mapsto$                  & $\Fnndd$\vphantom{$\Fnnnddd$}         \\
  \hline
  \multirow{2}{*}{$2$} & $\Fndnd$     & \multirow{2}{*}{$\mapsto$} & $\Fnndndd$\vphantom{$\Fnnnddd$}       \\
                       & $\Fnndd$     &                            & $\Fnnnddd$\vphantom{$\Fnnnndddd$}     \\
  \hline
  \multirow{4}{*}{$3$} & $\Fndndnd$   & \multirow{4}{*}{$\mapsto$} & $\Fnndndndd$\vphantom{$\Fnnnddd$}     \\
                       & $\Fndnndd$   &                            & $\Fnndnnddd$\vphantom{$\Fnnnndddd$}   \\
                       & $\Fnndndd$   &                            & $\Fnnndnddd$\vphantom{$\Fnnnndddd$}   \\
                       & $\Fnnnddd$   &                            & $\Fnnnndddd$\vphantom{$\Fnnnnnddddd$} \\
  \hline
 \end{tabular}
 \caption{The action of $\tonewroot{\cdot}$.}
\end{table}
\subsection{Commutative Hopf algebra}
Let $\forests_{I,n}:=\big\{\zeta\in\forests_I\big|\lvert\zeta\rvert=n\big\}$ and $\trees_{I,n}:=\trees_I\cap\forests_{I,n}$. We extend $\treepro$ bilinearly from $\forests_I\times\forests_I$ to $\forestspace{I}\times\forestspace{I}$ and all $\lroot\cdot\rroot_i$ linearly from $\forests_I$ to $\lingen{\forests_I}$.
\begin{lemma}
 $(\forestspace{I},\treepro)$ is a commutative unitary algebra connectedly graded by $(\lingen{\forests_{I,n}})_n$.
\end{lemma}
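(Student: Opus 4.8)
The plan is to verify the two assertions—that $(\forestspace{I},\treepro)$ is a commutative unital algebra, and that it is connectedly graded by $(\lingen{\forests_{I,n}})_n$—separately, both reducing to the defining conditions (i)--(vi) of the forest definition together with the single observation that, by construction, the set $\forests_I$ is a basis of the free vector space $\forestspace{I}=\lingen{\forests_I}$.

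First I would establish the algebra structure. Since $\treepro$ is already given to be associative and commutative as a binary operation on the \emph{set} $\forests_I$, promoting this to $\forestspace{I}$ is the standard bilinear-extension bookkeeping: both $(x,y,z)\mapsto(x\treepro y)\treepro z$ and $(x,y,z)\mapsto x\treepro(y\treepro z)$ are trilinear maps on $\forestspace{I}$ that agree on every triple of basis elements, hence coincide, and likewise $x\treepro y$ and $y\treepro x$ are bilinear and agree on basis pairs. The empty forest $\unit$ is a basis vector, hence nonzero, and is a two-sided unit by condition (i), so $\unitmap\colon K\to\forestspace{I}$, $k\mapsto k\unit$, is a unit map.

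Next I would produce the grading. Because $\lvert\cdot\rvert$ assigns to each $\zeta\in\forests_I$ a single value $\lvert\zeta\rvert\in\naturals_0$, the basis partitions as the disjoint union $\forests_I=\bigsqcup_{n\in\naturals_0}\forests_{I,n}$, which passes to the direct sum $\forestspace{I}=\bigoplus_{n\in\naturals_0}\lingen{\forests_{I,n}}$ demanded by Definition \ref{defn:grading}. For compatibility with the product, I take basis elements $\zeta_1\in\forests_{I,n}$ and $\zeta_2\in\forests_{I,m}$; condition (iv) gives $\lvert\zeta_1\treepro\zeta_2\rvert=n+m$, so $\zeta_1\treepro\zeta_2\in\forests_{I,n+m}$, and bilinearity upgrades this to $\lingen{\forests_{I,n}}\treepro\lingen{\forests_{I,m}}\subseteq\lingen{\forests_{I,n+m}}$, which is exactly the graded-algebra requirement.

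Finally, connectedness amounts to $\dim\lingen{\forests_{I,0}}=1$. I expect this to be the only step that is not purely formal, since it requires first locating the unit in degree zero: combining conditions (i) and (iv) yields $\lvert\unit\rvert=\lvert\unit\treepro\unit\rvert=2\lvert\unit\rvert$, which forces $\lvert\unit\rvert=0$. Condition (vi) then gives $\forests_{I,0}=\{\unit\}$, so $\lingen{\forests_{I,0}}=\vspan\{\unit\}$ is one-dimensional, and the proof closes by invoking Definition \ref{defn:grading}. The main obstacle is thus merely this short degree computation for $\unit$; the remainder is routine extension of set-level identities to the spanned vector space.
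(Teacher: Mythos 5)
Your proof is correct, and it takes the only natural route: the paper itself states this lemma without proof, treating it as immediate from axioms (i), (iv) and (vi) together with the bilinear extension of $\treepro$ to the free vector space $\lingen{\forests_I}$. Your argument supplies exactly the omitted details, including the one genuinely non-formal step, namely deducing $\lvert\unit\rvert=0$ from $\lvert\unit\rvert=\lvert\unit\treepro\unit\rvert=2\lvert\unit\rvert$ and then $\forests_{I,0}=\{\unit\}$ from axiom (vi) to get connectedness.
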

\begin{defn}(Equations (48) to (50) \cite{conneskreimer98}, Equation (2.3) \cite{hairerkelly14})
 Let $\destar:\,\forestspace{I}\rightarrow\forestspace{I}\etensor\forestspace{I}$\nomenclature[00g D star]{$\destar$}{Connes-Kreimer coproduct of cuts of trees and forests} be the linear map recursively generated by
 \begin{equation}\label{eq:defn_destar}
  \destar\unit:=\unit\etensor\unit,\quad\destar\lroot\zeta\rroot_i:=\lroot\zeta\rroot_i\etensor\unit+(\id\etensor\lroot\cdot\rroot_i)\destar\zeta,\quad\destar(\zeta_1\treepro\zeta_2):=\destar\zeta_1\treeprotwo\destar\zeta_2,
 \end{equation}
 where $\zeta,\zeta_1,\zeta_2\in\forests_I$ and $\treeprotwo$ is again the canonical associative product on $\forestspace{I}\etensor\forestspace{I}$ (\cf \eqref{eq:producttwo}) generated by
 \begin{equation*}
  (\zeta_1\etensor\eta_1)\treeprotwo(\zeta_2\etensor\eta_2):=(\zeta_1\treepro\zeta_2)\etensor(\eta_1\treepro\eta_2).
 \end{equation*}
 Furthermore, let $\counit\in\adual{\forestspace{I}}$ be generated by
 \begin{equation*}
  \counit(\zeta):=\deltasymb{\unit,\zeta},\quad\zeta\in\forests_I.
 \end{equation*}
\begin{example}
 In order to find the value of $\destar\LFnnnddnnddd{1.5}{1}{1}{2}{3}{3}{2}$, we first compute generally those for the first two linear trees
 \begin{align*}
  \destar\LFnd{i}&=\LFnd{i}\etensor\unit+(\id\etensor\lroot\cdot\rroot_i)\destar\unit=\LFnd{i}\etensor\unit+\unit\etensor\LFnd{i},\\
  \destar\LFnndd{1}{j}{i}&=\LFnndd{1}{j}{i}\etensor\unit+(\id\etensor\lroot\cdot\rroot_j)\destar\LFnd{i}=\LFnndd{1}{j}{i}\etensor\unit+\LFnd{i}\etensor\LFnd{j}+\unit\etensor\LFnndd{1}{j}{i},
 \end{align*}
 and then specifically
 \begin{align*}
  \destar\LFnnddnndd{1.5}{1}{2}{3}{3}{2}&=\destar\LFnndd{1}{2}{3}\treeprotwo\destar\LFnndd{1}{3}{2}\\
  &=\LFnnddnndd{1.5}{1}{2}{3}{3}{2}\etensor\unit+\LFndnndd{1.5}{1}{3}{3}{2}\etensor\LFnd{2}+\LFnndd{1}{3}{2}\etensor\LFnndd{1}{2}{3}+\LFndnndd{1.5}{1}{2}{2}{3}\etensor\LFnd{3}+\LFndnd{1.5}{2}{3}\etensor\LFndnd{1.5}{2}{3}+\LFnd{2}\etensor\LFndnndd{1.5}{1}{3}{2}{3}+\LFnndd{1}{2}{3}\etensor\LFnndd{1}{3}{2}\\
  &\hphantom{\hphantom{}=\hphantom{}}+\LFnd{3}\etensor\LFndnndd{1.5}{1}{2}{3}{2}+\unit\etensor\LFnnddnndd{1.5}{1}{2}{3}{3}{2},\\
  \destar \LFnnnddnnddd{1.5}{1}{1}{2}{3}{3}{2}&=\LFnnnddnnddd{1.5}{1}{1}{2}{3}{3}{2}\etensor\unit+(\id\etensor\lroot\cdot\rroot_1)\destar\LFnnddnndd{1.5}{1}{2}{3}{3}{2}\\
  &=\LFnnnddnnddd{1.5}{1}{1}{2}{3}{3}{2}\etensor\unit+\LFnnddnndd{1.5}{1}{2}{3}{3}{2}\etensor\LFnd{1}+\LFndnndd{1.5}{1}{3}{3}{2}\etensor\LFnndd{1}{1}{2}+\LFnndd{1}{3}{2}\etensor\LFnnnddd{1}{1}{2}{3}+\LFndnndd{1.5}{1}{2}{2}{3}\etensor\LFnndd{1}{1}{3}+\LFndnd{1.5}{2}{3}\etensor\LFnndndd{1.5}{1}{1}{2}{3}+\LFnd{2}\etensor\LFnndnnddd{1.5}{1}{1}{3}{2}{3}+\LFnndd{1}{2}{3}\etensor\LFnnnddd{1}{1}{3}{2}\\
  &\hphantom{\hphantom{}=\hphantom{}}+\LFnd{3}\etensor\LFnndnnddd{1.5}{1}{1}{2}{3}{2}+\unit\etensor\LFnnnddnnddd{1.5}{1}{1}{2}{3}{3}{2}.
 \end{align*}
 By identification of the labels $2$ and $3$ we furthermore get
 \begin{equation*}
  \destar\LFnnnddnnddd{1.5}{1}{1}{2}{2}{2}{2}=\LFnnnddnnddd{1.5}{1}{1}{2}{2}{2}{2}\etensor\unit+\LFnnddnndd{1.5}{1}{2}{2}{2}{2}\etensor\LFnd{1}+2\,\LFndnndd{1.5}{1}{2}{2}{2}\etensor\LFnndd{1}{1}{2}+2\,\LFnndd{1}{2}{2}\etensor\LFnnnddd{1}{1}{2}{2}+\LFndnd{1.5}{2}{2}\etensor\LFnndndd{1.5}{1}{1}{2}{2}+2\,\LFnd{2}\etensor\LFnndnnddd{1.5}{1}{1}{2}{2}{2}+\unit\etensor\LFnnnddnnddd{1.5}{1}{1}{2}{2}{2}{2},
 \end{equation*}
 and identifying all the labels we have the undecorated case
 \begin{equation*}
  \destar\Fnnnddnnddd=\Fnnnddnnddd\etensor\unit+\Fnnddnndd\etensor\Fnd+2\,\Fndnndd\etensor\Fnndd+2\,\Fnndd\etensor\Fnnnddd+\Fndnd\etensor\Fnndndd+2\,\Fnd\etensor\Fnndnnddd+\unit\etensor\Fnnnddnnddd.
 \end{equation*}

\end{example}

\end{defn}
\begin{thm}\label{thm:treepro_destar_Hopf}\textnormal{(Based on Equations (52) and (53) \cite{conneskreimer98}, Section 5.1 \cite{foissy02}, Section 2.2 \cite{hairerkelly14})}
 $(\forestspace{I},\treepro,\destar,\antitree)$ is a commutative Hopf algebra connectedly graded by $(\lingen{\forests_{I,n}})_n$ with counit $\counit$, where the linear map $\antitree:\,\forestspace{I}\rightarrow\forestspace{I}$ is recursively generated by
 \begin{equation}\label{eq:defn_antitree}
  \antitree\unit:=\unit,\quad\antitree(\zeta_1\treepro\zeta_2):=\antitree\zeta_1\treepro\antitree\zeta_2,\quad\antitree\lroot\zeta\rroot_i:=-\mtreepro(\antitree\etensor\lroot\cdot\rroot_i)\destar\zeta.
 \end{equation}
\end{thm}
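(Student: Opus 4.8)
The plan is to first upgrade $(\forestspace{I},\treepro)$ to a connected graded bialgebra and then invoke Theorem \ref{thm:antipode_recursion}, afterwards identifying the recursively defined $\antitree$ with the antipode that theorem guarantees. The commutative unital algebra structure and its connected grading $(\lingen{\forests_{I,n}})_n$ are already in hand from the preceding lemma, so for the first half only the coalgebra axioms, the grading of $\destar$, and the bialgebra compatibility remain.

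By the last clause of \eqref{eq:defn_destar}, $\destar$ is by construction an algebra homomorphism $(\forestspace{I},\treepro)\to(\forestspace{I}\etensor\forestspace{I},\treeprotwo)$, which is exactly the first compatibility requirement of Remark \ref{rmk:comp_req}; the second, $\destar\unit=\unit\etensor\unit$, is the first clause of \eqref{eq:defn_destar}; and since $\counit(\zeta)=\deltasymb{\unit,\zeta}$ while $\zeta_1\treepro\zeta_2=\unit$ forces $\zeta_1=\zeta_2=\unit$ (by conditions (iv) and (vi)), $\counit$ is multiplicative, giving the third. To get coassociativity and the counit property I would appeal to Lemma \ref{lemma:coassociative_subset}: as $\destar$ is an algebra homomorphism and, by condition (iii), the trees $\trees_I$ generate $(\forestspace{I},\treepro)$, it suffices to verify both on trees, which I would do by induction on the grading. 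Writing $\destar\zeta=\sum_{(\zeta)}\zeta_1\etensor\zeta_2$, the middle clause of \eqref{eq:defn_destar} gives $\destar\lroot\zeta\rroot_i=\lroot\zeta\rroot_i\etensor\unit+\sum_{(\zeta)}\zeta_1\etensor\lroot\zeta_2\rroot_i$; applying $\destar\etensor\id$ and $\id\etensor\destar$ and comparing, the terms $\lroot\zeta\rroot_i\etensor\unit\etensor\unit$ and $\sum_{(\zeta)}\zeta_1\etensor\lroot\zeta_2\rroot_i\etensor\unit$ match directly, while the remaining two iterated sums coincide after applying $\id\etensor\id\etensor\lroot\cdot\rroot_i$ to the inductive coassociativity of $\destar$ on the lower-grade forest $\zeta$. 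The counit property on $\lroot\zeta\rroot_i$ follows likewise, using $\counit=0$ on trees and the inductive counit property on $\zeta$, and the grading $\destar\lingen{\forests_{I,n}}\subseteq\bigoplus_{m\le n}\lingen{\forests_{I,m}}\etensor\lingen{\forests_{I,n-m}}$ follows by the same induction since $\lroot\cdot\rroot_i$ raises $\lvert\cdot\rvert$ by one (condition (v)). Thus $(\forestspace{I},\treepro,\destar)$ is a connected graded bialgebra, and Theorem \ref{thm:antipode_recursion} makes it a commutative connected graded Hopf algebra with a unique antipode $\antipode$.

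It then remains to show that the $\antitree$ of \eqref{eq:defn_antitree} equals $\antipode$. The key observation is that the recursion for $\antitree$ is tailored so that $\mtreepro(\antitree\etensor\id)\destar$ annihilates trees: using $\destar\lroot\zeta\rroot_i=\lroot\zeta\rroot_i\etensor\unit+(\id\etensor\lroot\cdot\rroot_i)\destar\zeta$ together with $(\antitree\etensor\id)(\id\etensor\lroot\cdot\rroot_i)=\antitree\etensor\lroot\cdot\rroot_i$, one obtains $\mtreepro(\antitree\etensor\id)\destar\lroot\zeta\rroot_i=\antitree\lroot\zeta\rroot_i+\mtreepro(\antitree\etensor\lroot\cdot\rroot_i)\destar\zeta=\antitree\lroot\zeta\rroot_i-\antitree\lroot\zeta\rroot_i=0$, agreeing with $\unitmap\counit$ on trees. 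To promote this from the generators to all of $\forestspace{I}$ I would use that, because $\forestspace{I}$ is commutative and both $\antitree$ and $\id$ are algebra homomorphisms, the convolution $\antitree\convoproduct\id=\mtreepro(\antitree\etensor\id)\destar$ is again an algebra homomorphism $\forestspace{I}\to\forestspace{I}$, as is $\unitmap\counit$; two algebra homomorphisms agreeing on the generating set $\trees_I$ and on $\unit$ are equal, so $\antitree\convoproduct\id=\unitmap\counit$. Finally, since $\antipode$ is a two-sided convolution inverse of $\id$, associativity of $\convoproduct$ gives $\antitree=\antitree\convoproduct(\id\convoproduct\antipode)=(\antitree\convoproduct\id)\convoproduct\antipode=\unitmap\counit\convoproduct\antipode=\antipode$, whence $\antitree$ is the antipode.

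I expect the main obstacle to be the Sweedler-notation bookkeeping in the coassociativity induction, and in particular making the interplay between the induction and Lemma \ref{lemma:coassociative_subset} precise: at each grade one verifies the coalgebra identities on trees using that, via the lemma, they already hold on all lower-grade forests. The antipode identification is the conceptually subtle step, but the commutativity-driven fact that $\antitree\convoproduct\id$ is multiplicative reduces it cleanly to a check on generators, so no second (left-sided) antipode computation is needed.
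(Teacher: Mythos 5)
Your proposal is correct, and its first half (coalgebra axioms via Lemma \ref{lemma:coassociative_subset} and induction along the recursion \eqref{eq:defn_destar}, then Theorem \ref{thm:antipode_recursion} to get existence of a unique antipode) is essentially the paper's argument. Where you genuinely diverge is in identifying $\antitree$ with the antipode $\antipode$. The paper works \emph{from} $\antipode$: it invokes Theorem \ref{thm:antipode_antimorphism} to get that $\antipode$ is an algebra antimorphism, uses commutativity of $\treepro$ to turn that into $\antipode(\zeta_1\treepro\zeta_2)=\antipode\zeta_1\treepro\antipode\zeta_2$, and reads off from the antipode property together with $\counit\lroot\cdot\rroot_i=0$ that $\antipode\lroot\zeta\rroot_i=-\mtreepro(\antipode\etensor\lroot\cdot\rroot_i)\destar\zeta$, i.e.\@ that $\antipode$ satisfies the defining recursion \eqref{eq:defn_antitree}. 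You instead work \emph{from} $\antitree$: you verify directly that $\antitree\convoproduct\id$ vanishes on trees, promote this to all of $\forestspace{I}$ via the observation that on a commutative bialgebra the convolution of two algebra homomorphisms is again an algebra homomorphism (so that agreement with $\unitmap\counit$ on the generators $\trees_I$ and on $\unit$ suffices), and then conclude $\antitree=\antipode$ by the standard uniqueness of convolution inverses. Both routes are valid and of comparable length; yours trades the paper's reliance on the antimorphism theorem for an unstated but easily checked multiplicativity lemma about $\convoproduct$, and has the conceptual advantage of verifying the (left) antipode property for $\antitree$ directly rather than reverse-engineering the recursion from a known antipode. Two minor points you should make explicit in a final write-up: that the recursion \eqref{eq:defn_antitree} does define a linear map at all (well-foundedness by induction on the grading, since $\destar\zeta$ only involves forests of grade at most $\lvert\zeta\rvert$), and that the interleaved induction for coassociativity — trees of grade $n$ via the recursion, then all forests of grade $n$ via Lemma \ref{lemma:coassociative_subset} — is organised exactly as you describe; the paper instead phrases this as a single induction over the three clauses of the recursive definition of $\destar$, which amounts to the same thing.
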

\begin{proof}~
 \begin{enumerate}
  \item\emph{$(\forestspace{I},\destar)$ is a coalgebra.} (Section 5.1 \cite{foissy02}) Obviously, $(\destar\etensor\id)\destar\unit=(\id\etensor\destar)\destar\unit$. If $\zeta_1,\zeta_2\in\forests_I$ are such that $(\destar\etensor\id)\destar\zeta_i=(\id\etensor\destar)\destar\zeta$, Lemma \ref{lemma:coassociative_subset} implies that 
  \begin{equation*}
   (\destar\etensor\id)\destar(\zeta_1\treepro\zeta_2)=(\id\etensor\destar)\destar(\zeta_1\treepro\zeta_2)
  \end{equation*}
  since $\destar$ is an algebra homomorphism by definition (see \eqref{eq:defn_destar}). Finally, if $(\destar\etensor\id)\destar\zeta=(\id\etensor\destar)\destar\zeta$, we also have
  \begin{align*}
   (\destar\etensor\id)\destar\lroot\zeta\rroot_i&=\lroot\zeta\rroot_i\etensor\unit\etensor\unit+(\id\etensor\lroot\cdot\rroot_i\etensor\unitmap)\destar\zeta+(\id\etensor\id\etensor\lroot\cdot\rroot_i)(\destar\etensor\id)\destar\zeta\\
   &=\lroot\zeta\rroot_i\etensor\unit\etensor\unit+(\id\etensor\lroot\cdot\rroot_i\etensor\unitmap)\destar\zeta+(\id\etensor\id\etensor\lroot\cdot\rroot_i)(\id\etensor\destar)\destar\zeta\\
   &=\lroot\zeta\rroot_i\etensor\unit\etensor\unit+(\id\etensor\destar)(\id\etensor\lroot\cdot\rroot_i)\destar\zeta=(\id\etensor\destar)\destar\lroot\zeta\rroot_i.
  \end{align*}
  Hence, we get via induction that $(\destar\etensor\id)\destar\zeta=(\id\etensor\destar)\destar\zeta$ for all $\zeta\in\forests_I$ and by linearity we conclude that $\destar$ is coassociative.
  \item\emph{$\counit$ is a counit.}
  First of all, we have $(\counit\etensor\id)\destar\unit=\counit(\unit)\unit=\unit=(\id\etensor\counit)\destar\unit$.
  If we assume that $(\counit\etensor\id)\destar\zeta=\zeta=(\id\etensor\counit)\destar\zeta$ holds for some $\zeta\in\forestspace{I}$, we get
  \begin{equation*}
   (\counit\etensor\id)\destar\lroot\zeta\rroot_i=\counit(\lroot\zeta\rroot_i)\unit+\lroot\cdot\rroot_i(\counit\etensor\id)\destar\zeta=\lroot\zeta\rroot_i
  \end{equation*}
  and
  \begin{equation*}
   (\id\etensor\counit)\destar\lroot\zeta\rroot_i=\counit(\unit)\lroot\zeta\rroot_i+(\id\etensor\counit\lroot\cdot\rroot_i)\destar\zeta=\lroot\zeta\rroot_i,
  \end{equation*}
  since $\counit\lroot\cdot\rroot_i=0$ by definition. Finally, if we assume that for some $\zeta_1,\zeta_2\in\forestspace{I}$ we have $(\counit\etensor\id)\destar\zeta_j=\zeta_j=(\id\etensor\counit)\destar\zeta_j$, we get
  \begin{equation*}
   (\counit\etensor\id)\destar(\zeta_1\treepro\zeta_2)=\zeta_1\treepro\zeta_2=(\id\etensor\counit)\destar(\zeta_1\treepro\zeta_2)
  \end{equation*}
  by again applying Lemma \ref{lemma:coassociative_subset}, since $\counit$ is an algebra homomorphism. Indeed,
  \begin{equation*}
   \counit(\zeta_1)\counit(\zeta_2)=\deltasymb{\unit,\zeta_1}\deltasymb{\unit,\zeta_2}=\deltasymb{\unit,\zeta_1\treepro\zeta_2}=\counit(\zeta_1\treepro\zeta_2).
  \end{equation*}

  Hence, by induction the equation
  \begin{equation*}
   (\counit\etensor\id)\destar\zeta=\zeta=(\id\etensor\counit)\destar\zeta
  \end{equation*}
  holds for all $\zeta\in\forestspace{I}$. The counit property \eqref{eq:counit_property} then follows via linearity.

  \item\emph{$(\forestspace{I},\treepro,\destar)$ is a bialgebra.} We have $\destar(\zeta_1\treepro\zeta_2)=\destar\zeta_1\treeprotwo\destar\zeta_2$ for all $\zeta_1,\zeta_2\in\forestspace{I}$ and $\destar\unit=\unit\etensor\unit$ by definition of $\destar$. Also, $\counit$ is an algebra homomorphism as we already saw. Thus, $(\forestspace{I},\treepro,\destar)$ is a bialgebra.
  \item\emph{$(\forestspace{I},\treepro,\destar,\antitree)$ is a connected graded Hopf algebra.} The fact that $(\lingen{\forests_{I,n}})_n$ is a grading for the bialgebra $(\forestspace{I},\treepro,\destar)$ is again easily shown by an induction over the recursive definition of $\destar$. It is also connected since $\lingen{\forests_{I,0}}=\lingen{\unit}$. By Theorem \ref{thm:antipode_recursion} we thus have that $(\forestspace{I},\treepro,\destar,\antipode)$ is a connected graded Hopf algebra with some unique antipode $\antipode$.
  Using $\counit\lroot\cdot\rroot_i=0$, the antipode property \eqref{eq:antipodeproperty} and \eqref{eq:defn_destar}, we get
  \begin{equation*}
   \antipode\lroot\zeta\rroot_i=-\unitmap\counit(\lroot\zeta\rroot_i)+\antipode\lroot\zeta\rroot_i=-\mtreepro(\antipode\etensor\id)\destar\lroot\zeta\rroot_i+\antipode\lroot\zeta\rroot_i=-\mtreepro(\antipode\etensor\lroot\cdot\rroot_i)\destar\zeta.
  \end{equation*}
  Since the antipode is an algebra antimorphism by Theorem \ref{thm:antipode_antimorphism} and $\treepro$ is commutative, we also have that
  \begin{equation*}
   \antipode(\zeta_1\treepro\zeta_2)=\antipode\zeta_1\treepro\antipode\zeta_2
  \end{equation*}
  and $\antipode\unit=\unit$.
  Thus, $\antipode$ is given by the recursion \eqref{eq:defn_antitree}, \ie $\antipode=\antitree$.

 \end{enumerate}
\end{proof}
\begin{rmk}
 Due to Theorem \ref{thm:antipode_recursion}, we may instead of the second expression in \eqref{eq:defn_antitree} use
 \begin{equation*}
  \antitree\lroot\zeta\rroot_i:=-\lroot\zeta\rroot_i-\mtreepro(\id\etensor\antitree)\rdestar\lroot\zeta\rroot_i,
 \end{equation*}
 which has the advantage that here, we do not need to calculate the antipode values for forests with more than one tree in order to get the antipode values for trees.
\end{rmk}

\begin{thm}\textnormal{(Based on Equation (51) \cite{conneskreimer98}, Equation (4) \cite{foissy02}, Section II.9.3.\@ \cite{manchon06}, Remark 2.9.\@ \cite{hairerkelly14})}
 $\destar$ admits the representation
 \begin{equation}\label{eq:destar_representation}
  \destar\zeta=\sum_{(C,T)\in\cuts{\zeta}}\cutfactor{\zeta}{C}{T}\,C\etensor T,
 \end{equation}
 where $(\cuts{\zeta})_{\zeta\in\forests_I}$ is a family of finite subsets of $\forests_I\times\forests_I$ recursively defined by
 \begin{align*}
  &\cuts{\unit}:=\{(\unit,\unit)\},\quad\cuts{\lroot\zeta\rroot_i}:=\big\{(C,\lroot T\rroot_i)\big|(C,T)\in\cuts{\zeta}\big\}\cup\{(\lroot\zeta\rroot_i,\unit)\},\\
  &\cuts{\zeta_1\zeta_2}:=\big\{(C_1 C_2,T_1 T_2)\big|(C_1,T_1)\in\cuts{\zeta_1},(C_2,T_2)\in\cuts{\zeta_2}\}
 \end{align*}
 and $(\cutfactor{\zeta}{\cdot}{\cdot})_{\zeta\in\forests_I}$ is a family of functions $(\cuts{\zeta}\rightarrow\naturals)_{\zeta\in\forests_I}$ recursively defined by
 \begin{align*}
  &\cutfactor{\tau}{\tau}{\unit}:=1,\quad\cutfactor{\lroot\zeta\rroot_i}{C}{\lroot T\rroot_i}:=\cutfactor{\zeta}{C}{T},\\
  &\cutfactor{\zeta_1\zeta_2}{C}{T}:=\sum_{\substack{(C_1,T_1)\in\cuts{\zeta_1},\,(C_2,T_2)\in\cuts{\zeta_2}:\\C_1 C_2=C,\,T_1 T_2=T}}\cutfactor{\zeta_1}{C_1}{T_1}\cutfactor{\zeta_2}{C_2}{T_2}.
 \end{align*}
\end{thm}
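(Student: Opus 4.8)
The plan is to prove \eqref{eq:destar_representation} by induction on the number of nodes $\lvert\zeta\rvert$, matching the three defining clauses of $\destar$ in \eqref{eq:defn_destar} against the three clauses of the recursions for $\cuts{\cdot}$ and $\cutfactor{\cdot}{\cdot}{\cdot}$. By conditions (ii) and (iii) of the defining axioms every $\zeta\in\forests_I$ is either the empty forest $\unit$, or a single tree $\lroot\eta\rroot_i$ with $\lvert\eta\rvert<\lvert\zeta\rvert$, or a genuine product $\zeta_1\treepro\zeta_2$ with $\lvert\zeta_1\rvert,\lvert\zeta_2\rvert<\lvert\zeta\rvert$; this trichotomy drives a well-founded induction. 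The base case is immediate: $\cuts{\unit}=\{(\unit,\unit)\}$ carries coefficient one, so the right-hand side of \eqref{eq:destar_representation} equals $\unit\etensor\unit=\destar\unit$.

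For the grafting step, assume \eqref{eq:destar_representation} for $\zeta$. The middle clause of \eqref{eq:defn_destar} and the induction hypothesis give
\begin{equation*}
 \destar\lroot\zeta\rroot_i=\lroot\zeta\rroot_i\etensor\unit+(\id\etensor\lroot\cdot\rroot_i)\destar\zeta=\lroot\zeta\rroot_i\etensor\unit+\sum_{(C,T)\in\cuts{\zeta}}\cutfactor{\zeta}{C}{T}\,C\etensor\lroot T\rroot_i.
\end{equation*}
I would then read off that the first summand is indexed by $(\lroot\zeta\rroot_i,\unit)$ with factor $\cutfactor{\lroot\zeta\rroot_i}{\lroot\zeta\rroot_i}{\unit}=1$, while the remaining sum is indexed by the pairs $(C,\lroot T\rroot_i)$ with factor $\cutfactor{\lroot\zeta\rroot_i}{C}{\lroot T\rroot_i}=\cutfactor{\zeta}{C}{T}$, exactly as prescribed by the recursions for $\cuts{\lroot\zeta\rroot_i}$ and $\cutfactor{\lroot\zeta\rroot_i}{\cdot}{\cdot}$. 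Here the two index families do not overlap, since $\lvert\lroot T\rroot_i\rvert\geq 1$ forces $\lroot T\rroot_i\neq\unit$, and the reindexing $(C,T)\mapsto(C,\lroot T\rroot_i)$ is a bijection onto its image by injectivity of $\lroot\cdot\rroot_i$, so no coefficients are lost or merged.

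The product step is where the real content lies. Assuming the formula for $\zeta_1$ and $\zeta_2$, the last clause of \eqref{eq:defn_destar} together with the definition of $\treeprotwo$ yields the double sum
\begin{equation*}
 \destar(\zeta_1\treepro\zeta_2)=\sum_{(C_1,T_1)\in\cuts{\zeta_1}}\sum_{(C_2,T_2)\in\cuts{\zeta_2}}\cutfactor{\zeta_1}{C_1}{T_1}\cutfactor{\zeta_2}{C_2}{T_2}\,(C_1\treepro C_2)\etensor(T_1\treepro T_2).
\end{equation*}
The obstacle is that the assignment $((C_1,T_1),(C_2,T_2))\mapsto(C_1\treepro C_2,T_1\treepro T_2)$ is generally not injective, so several terms of the double sum contribute to the same tensor $C\etensor T$. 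I would resolve this by grouping the summands according to their common value $(C,T)$: the set of attained values is precisely $\cuts{\zeta_1\zeta_2}$, and the coefficient collected at $(C,T)$ is $\sum\cutfactor{\zeta_1}{C_1}{T_1}\cutfactor{\zeta_2}{C_2}{T_2}$ over all factorizations $C_1\treepro C_2=C$, $T_1\treepro T_2=T$, which is exactly the definition of $\cutfactor{\zeta_1\zeta_2}{C}{T}$. This reproduces \eqref{eq:destar_representation} for $\zeta_1\treepro\zeta_2$.

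One auxiliary point to dispatch is the well-definedness of the recursions for $\cuts{\cdot}$ and $\cutfactor{\cdot}{\cdot}{\cdot}$: a forest with at least two trees admits several decompositions $\zeta_1\treepro\zeta_2$, so one must check that the product clause is independent of the chosen split. This follows from associativity and commutativity of $\treepro$, hence of $\treeprotwo$, which make the multiplicity counts symmetric; alternatively, since $\destar$ is already known to be well defined, the matching just carried out shows a posteriori that every admissible reading of the recursion yields the same $\destar\zeta$, so consistency is automatic. The genuinely delicate part thus remains the multiplicity bookkeeping in the product step.
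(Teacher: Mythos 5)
Your proposal is correct and follows essentially the same structural induction as the paper's own proof, matching the three clauses of \eqref{eq:defn_destar} against the recursions for $\cuts{\cdot}$ and $\cutfactor{\cdot}{\cdot}{\cdot}$ in the same order (empty forest, grafting, product). You are in fact somewhat more careful than the paper on the two delicate points — the regrouping of non-injectively indexed terms in the product step and the independence of the recursion from the chosen factorization $\zeta_1\treepro\zeta_2$ — which the paper passes over silently.
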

\begin{proof}
 First of all, we have
 \begin{equation*}
  \destar\unit=\unit\etensor\unit=\cutfactor{\unit}{\unit}{\unit}\,\unit\etensor\unit=\sum_{(C,T)\in\cuts{\unit}}\cutfactor{\zeta}{C}{T}\,C\etensor T.
 \end{equation*}
 Assuming $\destar\zeta$ admits the representation \eqref{eq:destar_representation} for some $\zeta\in\forests_I$, we get
 \begin{align*}
  \destar\lroot\zeta\rroot_i&=\lroot\zeta\rroot_i\etensor\unit+(\id\etensor\lroot\cdot\rroot_i)\destar\zeta=\cutfactor{\lroot\zeta\rroot_i}{\lroot\zeta\rroot_i}{\unit}\,\lroot\zeta\rroot_i\etensor\unit+\sum_{(C,T)\in\cuts{\zeta}}\cutfactor{\zeta}{C}{T}\,C\etensor\lroot T\rroot_i\\
  &=\cutfactor{\lroot\zeta\rroot_i}{\lroot\zeta\rroot_i}{\unit}\,\lroot\zeta\rroot_i\etensor\unit+\sum_{(C,T)\in\cuts{\zeta}}\cutfactor{\lroot\zeta\rroot_i}{C}{\lroot T\rroot_i}\,C\etensor\lroot T\rroot_i\\
  &=\sum_{(\bar C,\bar T)\in\cuts{\lroot\zeta\rroot_i}}\cutfactor{\lroot\zeta\rroot_i}{\bar C}{\bar T}\,\bar C\etensor\bar T.
 \end{align*}
 Assuming $\destar\zeta_1$ and $\destar\zeta_2$ admit the representation \eqref{eq:destar_representation} for some $\zeta_1,\zeta_2\in\forests_I$, we get
 \begin{align*}
  \destar(\zeta_1\zeta_2)&=\destar\zeta_1\treeprotwo\destar\zeta_2=\sum_{(C_1,T_1)\in\cuts{\zeta_1}}\sum_{(C_2,T_2)\in\cuts{\zeta_2}}\cutfactor{\zeta_1}{C_1}{T_1}\cutfactor{\zeta_2}{C_2}{T_2}\,C_1 C_2\etensor T_1 T_2\\
  &=\sum_{(C_1,T_1)\in\cuts{\zeta_1}}\sum_{(C_2,T_2)\in\cuts{\zeta_2}}\cutfactor{\zeta_1\zeta_2}{C_1 C_2}{T_1 T_2}\,C_1 C_2\etensor T_1 T_2\\
  &=\sum_{(C,T)\in\cuts{\zeta_1\zeta_2}}\cutfactor{\zeta_1\zeta_2}{C}{T}\,C\etensor T.
 \end{align*}
 The claim then follows inductively.\\
\end{proof}

Referring to Section 2.\@ \cite{conneskreimer98}, Section 4.1 \cite{foissy02} and Section II.9.3.\@ \cite{manchon06}, we may interpret the elements of $\cuts{\zeta}$ as the set of all cuts $(C,T)$ of the forest $\zeta$ into an upper part, the \emphind{crown} $C$, and a lower part, the \emphind{trunk} $T$. Such cuts are called \emph{admissible cuts}\index{admissible cut}. The coefficient $\cutfactor{\zeta}{C}{T}$ then gives the number of possibilities to draw the cut $(C,T)$ into a fixed planar drawing of the forest $\zeta$. The special case $(\unit,\zeta)\in\cuts{\zeta}$ is called the \emphind{empty cut} and $(\zeta,\unit)\in\cuts{\zeta}$ is called the \emphind{full cut}.
\begin{thm}
 $\antitree$ admits the representation
 \begin{equation}\label{eq:antitree_representation}
  \antitree\zeta=\sum_{S\in\splits{\zeta}}\splitfactor{\zeta}{S}S,
 \end{equation}
 where $(\splits{\zeta})_{\zeta\in\forests_I}$ is a family of finite subsets of $\forests_I$ recursively defined by
 \begin{equation*}
  \splits{\unit}:=\{\unit\},\quad\splits{\zeta}:=\big\{CS\big|\existsc T\neq\zeta:\,S\in\splits{T},\,(C,T)\in\cuts{\zeta}\big\}
 \end{equation*}
 and $(\splitfactor{\zeta}{\cdot})_{\zeta\in\forests_I}$ is a family of functions $(\splits{\zeta}\rightarrow\mathbb{Z})_{\zeta\in\forests_I}$ recursively defined by
 \begin{equation*}
  \splitfactor{\unit}{\unit}:=1,\quad\splitfactor{\lroot\zeta\rroot_i}{\bar S}:=-\sum_{\substack{(C,T)\in\cuts{\lroot\zeta\rroot_i},\,S\in\splits{T}:\\T\neq\lroot\zeta\rroot_i,\,C S=\bar S}}\splitfactor{T}{S}\cutfactor{\lroot\zeta\rroot_i}{C}{T},
 \end{equation*}
 \begin{equation*}
  \splitfactor{\zeta_1\zeta_2}{S}:=\sum_{\substack{S_1\in\splits{\zeta_1},\,S_2\in\splits{\zeta_2}:\\S_1 S_2=S}}\splitfactor{\zeta_1}{S_1}\splitfactor{\zeta_2}{S_2}.
 \end{equation*}

\end{thm}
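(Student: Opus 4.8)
The plan is to argue by strong induction on the number of nodes $\lvert\zeta\rvert$, following the three-case recursive structure of $\antitree$ exactly as the preceding proof did for $\destar$. Before that I would isolate the one genuinely non-formal point as an auxiliary set inclusion: for all $\alpha,\beta\in\forests_I$, $\{S_\alpha\treepro S_\beta\mid S_\alpha\in\splits{\alpha},\,S_\beta\in\splits{\beta}\}\subseteq\splits{\alpha\treepro\beta}$. I would prove this by induction on $\lvert\alpha\rvert+\lvert\beta\rvert$: the cases $\alpha=\unit$ or $\beta=\unit$ are immediate from $\unit\treepro\beta=\beta$ and $\splits{\unit}=\{\unit\}$, while for $\alpha,\beta\neq\unit$ membership in $\splits{\alpha}$, $\splits{\beta}$ furnishes $S_\alpha=C_\alpha\treepro S'_\alpha$ and $S_\beta=C_\beta\treepro S'_\beta$ arising from cuts $(C_\alpha,T_\alpha)\in\cuts{\alpha}$, $(C_\beta,T_\beta)\in\cuts{\beta}$ with $T_\alpha\neq\alpha$, $T_\beta\neq\beta$ and $S'_\alpha\in\splits{T_\alpha}$, $S'_\beta\in\splits{T_\beta}$. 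Since $(C_\alpha\treepro C_\beta,\,T_\alpha\treepro T_\beta)\in\cuts{\alpha\treepro\beta}$ with $T_\alpha\treepro T_\beta\neq\alpha\treepro\beta$ (because $\lvert T_\alpha\rvert<\lvert\alpha\rvert$), and $S'_\alpha\treepro S'_\beta\in\splits{T_\alpha\treepro T_\beta}$ by the induction hypothesis, the recursive definition of $\splits{\alpha\treepro\beta}$ together with commutativity of $\treepro$ places $S_\alpha\treepro S_\beta=(C_\alpha\treepro C_\beta)\treepro(S'_\alpha\treepro S'_\beta)$ into $\splits{\alpha\treepro\beta}$.

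With this in hand, the base case $\zeta=\unit$ is trivial since $\antitree\unit=\unit$ and $\splits{\unit}=\{\unit\}$, $\splitfactor{\unit}{\unit}=1$. For the product case $\zeta=\zeta_1\treepro\zeta_2$ with $\zeta_1,\zeta_2\neq\unit$ strictly smaller, I would use multiplicativity of $\antitree$ and the induction hypothesis to expand
\begin{equation*}
 \antitree(\zeta_1\treepro\zeta_2)=\antitree\zeta_1\treepro\antitree\zeta_2=\sum_{S_1\in\splits{\zeta_1}}\sum_{S_2\in\splits{\zeta_2}}\splitfactor{\zeta_1}{S_1}\splitfactor{\zeta_2}{S_2}\,S_1\treepro S_2.
\end{equation*}
Collecting terms by the value $S=S_1\treepro S_2$ converts the scalar coefficient into $\splitfactor{\zeta_1\zeta_2}{S}$ by its defining product recursion; the auxiliary inclusion guarantees each such $S$ lies in $\splits{\zeta_1\zeta_2}$, and any element of $\splits{\zeta_1\zeta_2}$ not of the form $S_1\treepro S_2$ with $S_i\in\splits{\zeta_i}$ receives coefficient zero as an empty sum, so the total equals $\sum_{S\in\splits{\zeta_1\zeta_2}}\splitfactor{\zeta_1\zeta_2}{S}S$.

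The tree case $\zeta=\lroot\eta\rroot_i$ is the heart of the argument. Here I would not use the defining recursion $\antitree\lroot\eta\rroot_i=-\mtreepro(\antitree\etensor\lroot\cdot\rroot_i)\destar\eta$ (which applies $\antitree$ to crowns of $\eta$), but instead the equivalent right-hand recursion recorded in the Remark following Theorem~\ref{thm:treepro_destar_Hopf}, namely $\antitree\lroot\eta\rroot_i=-\lroot\eta\rroot_i-\mtreepro(\id\etensor\antitree)\rdestar\lroot\eta\rroot_i$, since it applies $\antitree$ to trunks and so mirrors the definition of $\splitfactor{\lroot\eta\rroot_i}{\cdot}$. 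Substituting $\rdestar\lroot\eta\rroot_i=\destar\lroot\eta\rroot_i-\unit\etensor\lroot\eta\rroot_i-\lroot\eta\rroot_i\etensor\unit$ together with the cut representation of $\destar\lroot\eta\rroot_i$, the reduced coproduct deletes exactly the empty and full cuts while the correction $-\lroot\eta\rroot_i$ reinstates the full-cut term (using $\antitree\unit=\unit$), leaving
\begin{equation*}
 \antitree\lroot\eta\rroot_i=-\sum_{\substack{(C,T)\in\cuts{\lroot\eta\rroot_i}\\ T\neq\lroot\eta\rroot_i}}\cutfactor{\lroot\eta\rroot_i}{C}{T}\,C\treepro\antitree T,
\end{equation*}
where the surviving condition $T\neq\lroot\eta\rroot_i$ excludes only the empty cut. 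Each trunk satisfies $\lvert T\rvert<\lvert\lroot\eta\rroot_i\rvert$, so the induction hypothesis yields $\antitree T=\sum_{S\in\splits{T}}\splitfactor{T}{S}S$; substituting and regrouping by $\bar S=C\treepro S$ reproduces the recursive definitions of $\splits{\lroot\eta\rroot_i}$ and of $\splitfactor{\lroot\eta\rroot_i}{\bar S}$ verbatim, which closes the induction.

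The main obstacle I anticipate is the bookkeeping around the empty and full cuts in the tree case: one must verify that in $\destar\lroot\eta\rroot_i$ these two cuts are the only terms producing $\unit\etensor\lroot\eta\rroot_i$ and $\lroot\eta\rroot_i\etensor\unit$, so that the reduced coproduct and the $-\lroot\eta\rroot_i$ correction combine into the single condition $T\neq\lroot\eta\rroot_i$; this rests on $\lroot\eta\rroot_i$ being a single tree. The auxiliary product inclusion is the only other non-formal ingredient, and everything else is a direct transcription of the recursions, entirely parallel to the representation proof for $\destar$.
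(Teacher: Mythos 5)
Your proof is correct and follows essentially the same route as the paper: a strong induction mirroring the three-part recursion, with the tree case handled via the right-hand antipode recursion $\antitree\lroot\zeta\rroot_i=-\lroot\zeta\rroot_i-\mtreepro(\id\etensor\antitree)\rdestar\lroot\zeta\rroot_i$ from the Remark, exactly as the paper does. The only (harmless) deviation is in the auxiliary lemma: the paper establishes the full equality $\splits{\zeta_1\zeta_2}=\splits{\zeta_1}\treepro\splits{\zeta_2}$, whereas you prove only the inclusion $\splits{\zeta_1}\treepro\splits{\zeta_2}\subseteq\splits{\zeta_1\zeta_2}$ and observe that any surplus elements of $\splits{\zeta_1\zeta_2}$ receive coefficient zero as an empty sum --- a slight economy that avoids the paper's delicate case analysis for the reverse inclusion.
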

\begin{proof}~
 \begin{enumerate}
 \item\emph{$\splits{\zeta_1\zeta_2}=\splits{\zeta_1}\treepro\splits{\zeta_2}$ for all $\zeta_1,\zeta_2\in\forests_I$.}
 We first have
 \begin{equation*}
  \splits{\unit}\treepro\splits{\zeta}=\{\unit\}\treepro\splits{\zeta}=\splits{\zeta}=\splits{\unit\zeta}
 \end{equation*}
 for all $\zeta\in\forests_I$, in particular for all trees $\zeta$. 
 
 \noindent Assuming the claim holds for all $\zeta_1,\zeta_2\in\forests_I$ with $\lvert\zeta_1\rvert+\lvert\zeta_2\rvert\leq n$, we get for all $\eta_1,\eta_2\in\forests_I$ with $\lvert\eta_1\rvert+\lvert\eta_2\rvert\leq n+1$ that
 \begin{align*}
  \splits{\eta_1\eta_2}&=\big\{CS\big|\existsc T\neq\eta_1\eta_2:\,S\in\splits{T},\,(C,T)\in\cuts{\eta_1\eta_2}\big\}\\
  &=\big\{C_1 C_2 S\big|\existsc T_1 T_2\neq\eta_1\eta_2:\,S\in\splits{T_1 T_2},\,(C_1,T_1)\in\cuts{\eta_1},\\
  &\hphantom{\hphantom{}=\big\{}(C_2,T_2)\in\cuts{\eta_2}\big\}\\
  &=\big\{C_1 C_2 S_1 S_2\big|\existsc T_1 T_2\neq\eta_1\eta_2:\,S_1\in\splits{T_1},\,S_2\in\splits{T_2},\\
  &\hphantom{\hphantom{}=\big\{}(C_1,T_1)\in\cuts{\eta_1},\,(C_2,T_2)\in\cuts{\eta_2}\big\}\\
  &=\big\{C_1 C_2 S_1 S_2\big|\existsc T_1\neq\eta_1,\,T_2\neq\eta_2:\,S_1\in\splits{T_1},\,S_2\in\splits{T_2},\\
  &\hphantom{\hphantom{}=\big\{}(C_1,T_1)\in\cuts{\eta_1},\,(C_2,T_2)\in\cuts{\eta_2}\big\}\\
  &=\splits{\eta_1}\treepro\splits{\eta_2},
 \end{align*}
 where in the third equality we used the inductive assumption with the fact that for $(C,T)\in\cuts{\zeta}$, we have $\lvert T\rvert\geq\lvert\zeta\rvert$ if and only if $T=\zeta$. In order to see the $\subseteq$ part of the fourth equality, the case of either $T_1=\eta_1$ or $T_2=\eta_2$ needs further explanation. Without loss of generality, assume $T_1=\eta_1$, then we have $C_1=\unit$ and since $S_1\in\splits{T_1}=\splits{\eta_1}$, there is $(C_1',T_1')\in\cuts{\eta_1}$ and $S_1'\in\splits{T_1'}$ such that $T_1'\neq\eta_1$ and $C_1'S_1'=S_1=C_1 S_1$. Hence, this case is indeed included in what comes after the fourth equality.
 
 \noindent The claim then follows by induction over $n$.
 \item\emph{$\antitree$ admits the given representation.}
 First of all,
 \begin{equation*}
  \antitree\unit=\unit=\splitfactor{\unit}{\unit}\unit=\sum_{\eta\in\splits{\unit}}\splitfactor{\zeta}{\eta}\eta.
 \end{equation*}
 Assuming $\antitree\zeta$ admits the representation \eqref{eq:antitree_representation} for some $\zeta\in\forests_I$, we get
 \begin{align*}
  \antitree\lroot\zeta\rroot_i&=-\lroot\zeta\rroot_i-\mtreepro(\id\etensor\antitree)\rdestar\lroot\zeta\rroot_i=-\sum_{\substack{(C,T)\in\cuts{\lroot\zeta\rroot_i}:\\T\neq\lroot\zeta\rroot_i}}\cutfactor{\lroot\zeta\rroot_i}{C}{T}C\treepro\antitree T\\
  &=-\sum_{\substack{(C,T)\in\cuts{\lroot\zeta\rroot_i}:\\T\neq\lroot\zeta\rroot_i}}\,\sum_{S\in\splits{T}}\splitfactor{T}{S}\cutfactor{\lroot\zeta\rroot_i}{C}{T}CS\\
  &=-\sum_{\bar S\in\splits{\lroot\zeta\rroot_i}}\,\sum_{\substack{(C,T)\in\cuts{\lroot\zeta\rroot_i},\,S\in\splits{T}:\\T\neq\lroot\zeta\rroot_i,\,C S=\bar S}}\splitfactor{T}{S}\cutfactor{\lroot\zeta\rroot_i}{C}{T}CS\\
  &=\sum_{\bar S\in\splits{\lroot\zeta\rroot_i}}\splitfactor{\lroot\zeta\rroot_i}{\bar S}\bar S.
\end{align*}
 Assuming $\antitree\zeta_1$ and $\antitree\zeta_2$ admit the representation \eqref{eq:antitree_representation} for some $\zeta_1,\zeta_2\in\forests_I$, we get
 \begin{align*}
  \antitree(\zeta_1\zeta_2)&=\antitree\zeta_1\treeprotwo\antitree\zeta_2=\sum_{S_1\in\splits{\zeta_1}}\sum_{S_2\in\splits{\zeta_2}}\splitfactor{\zeta_1}{S_1}\splitfactor{\zeta_2}{S_2}S_1 S_2\\
  &=\sum_{S\in\splits{\zeta_1\zeta_2}}\,\sum_{\substack{S_1\in\splits{\zeta_1},\,S_2\in\splits{\zeta_2}:\\S_1 S_2=S}}\splitfactor{\zeta_1}{S_1}\splitfactor{\zeta_2}{S_2}S_1 S_2=\\
  &=\sum_{S\in\splits{\zeta_1\zeta_2}}\,\sum_{\substack{S_1\in\splits{\zeta_1},\,S_2\in\splits{\zeta_2}:\\S_1 S_2=S}}\splitfactor{\zeta_1}{S_1}\splitfactor{\zeta_2}{S_2}S=\sum_{S\in\splits{\zeta_1\zeta_2}}\splitfactor{\zeta_1\zeta_2}{S}S.
 \end{align*}
 \end{enumerate}
\end{proof}

For a tree $\tau$, each forest $\eta$, except $\tau$ itself, in the set $\splits{\tau}$ is achieved by multiplying all the forests resulting from a finite sequence of consecutive admissible cuts of $\tau$. The coefficient $\splitfactor{\tau}{\eta}$ then gives the number of possibilities to draw such sequences of odd length into a fixed planar drawing of the tree $\tau$, minus the number of possibilities with even length, where the order of the cuts in the sequence doesn't count, each cut may only appear once in the sequence and empty as well as full cuts are excluded. For the special case $\tau\in\splits{\tau}$, we have $\splitfactor{\tau}{\tau}=-1$.
\subsection{Cocommutative dual Hopf algebra}
\begin{defn}
 Let $\star:\,\forestspace{I}\times\forestspace{I}$\nomenclature[ZZs star]{$\star$}{Product on trees and forests dual to the coproduct $\destar$} be the bilinear map generated by
 \begin{equation*}
  C\star T:=\sum_{\substack{\zeta\in\forests_I:\\(C,T)\in\cuts{\zeta}}}\cutfactor{\zeta}{C}{T}\zeta,\quad C,T\in\forests_I,
 \end{equation*}
 furthermore $\detreepro:\,\forestspace{I}\rightarrow\forestspace{I}\etensor\forestspace{I}$ the linear map generated by
 \begin{equation*}
  \detreepro\zeta:=\sum_{\substack{\zeta_1,\zeta_2\in\forests_I:\\\zeta_1\zeta_2=\zeta}}\zeta_1\etensor\zeta_2,\quad\zeta\in\forests_I
 \end{equation*}
 and $\antistar:\,\forestspace{I}\rightarrow\forestspace{I}$ the linear map generated by
 \begin{equation*}
  \antistar\eta:=\sum_{\substack{\zeta\in\forests_I:\\\eta\in\splits{\zeta}}}\splitfactor{\zeta}{\eta}\zeta,\quad\zeta\in\forests_I
 \end{equation*}
 Finally, let $\dualbracket:\,\forestspace{I}\times\forestspace{I}\rightarrow\reals$ be bilinearly generated by
 \begin{equation*}
  \left\langle\zeta,\eta\right\rangle:=\deltasymb{\zeta,\eta},\quad\zeta,\eta\in\forests_I.
 \end{equation*}
\end{defn}
\noindent As noted in Section 2.2 \cite{hairerkelly14}, the product $\star$ is often called \emphind{Grossman-Larson product}, where the name refers to the paper \cite{grossmanlarson89}.
\begin{thm}\textnormal{(Based on Section 2.2 \cite{hairerkelly14})}
 $(\forestspace{I},\star,\detreepro,\antistar)$ is the cocommutative dual Hopf algebra of $(\forestspace{I},\treepro,\destar,\antitree)$ under the duality pairing $\dualbracket$. It is connectedly graded by $(\lingen{\forests_{I,n}})_n$, too.
\end{thm}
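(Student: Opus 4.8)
The plan is to identify $(\forestspace{I},\star,\detreepro,\antistar)$ as the dual Hopf algebra of $(\forestspace{I},\treepro,\destar,\antitree)$ and to conclude via Theorem~\ref{thm:dual_hopf_bi_co_algebras}. Since Theorem~\ref{thm:treepro_destar_Hopf} already provides that $(\forestspace{I},\treepro,\destar,\antitree)$ is a Hopf algebra, everything reduces to checking that $\dualbracket$ is a duality pairing and that $\star$, $\detreepro$ and $\antistar$ are the dual operators of $\destar$, $\treepro$ and $\antitree$ respectively, together with the obvious dualities of unit maps and counits. Once these transpose relations hold, associativity of $\star$, coassociativity of $\detreepro$, all bialgebra compatibility requirements and the antipode property are inherited automatically from the known Hopf algebra, so none of those axioms has to be verified by hand.

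First I would confirm that $\dualbracket$ is a duality pairing in the sense of Definition~\ref{defn:dualbracket}: as $\forests_I$ is a basis of $\forestspace{I}$ and $\langle\zeta,\eta\rangle=\deltasymb{\zeta,\eta}$, any $v=\sum_\zeta c_\zeta\zeta$ satisfies $\langle v,\eta\rangle=c_\eta$, so $\langle v,\eta\rangle=0$ for all $\eta\in\forests_I$ forces $v=0$, and the second nondegeneracy condition follows symmetrically. On $\forestspace{I}\etensor\forestspace{I}$ I use the induced pairing of Theorem~\ref{thm:tensordual} (legitimate since $\reals$ has characteristic zero), under which $\langle\zeta_1\etensor\zeta_2,\eta_1\etensor\eta_2\rangle=\deltasymb{\zeta_1,\eta_1}\deltasymb{\zeta_2,\eta_2}$. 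The product--coproduct dualities then become coefficient matching. From the representation \eqref{eq:destar_representation}, for all $C,T,\zeta\in\forests_I$ both $\langle C\star T,\zeta\rangle$ and $\langle C\etensor T,\destar\zeta\rangle$ equal $\cutfactor{\zeta}{C}{T}$ when $(C,T)\in\cuts{\zeta}$ and vanish otherwise, so $\star$ is the dual operator of $\destar$. Directly from its definition $\langle\detreepro\zeta,\eta_1\etensor\eta_2\rangle=\deltasymb{\eta_1\treepro\eta_2,\zeta}=\langle\zeta,\eta_1\treepro\eta_2\rangle$, since the summand $\eta_1\etensor\eta_2$ occurs in $\detreepro\zeta$ exactly when $\eta_1\treepro\eta_2=\zeta$; thus $\detreepro$ is the dual operator of $\treepro$. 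Finally $\unit$ is the unit of $\star$ (the empty and full cuts give $\unit\star T=T$ and $C\star\unit=C$) and $\counit$ is the counit of $\detreepro$, and $\langle\unit,\zeta\rangle=\deltasymb{\unit,\zeta}=\counit(\zeta)$ shows $\unitmap$ and $\counit$ to be mutually dual on both sides.

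For the antipode I would argue identically with the representation \eqref{eq:antitree_representation}: for all $\eta,\zeta\in\forests_I$ both $\langle\antistar\eta,\zeta\rangle$ and $\langle\eta,\antitree\zeta\rangle$ equal $\splitfactor{\zeta}{\eta}$ when $\eta\in\splits{\zeta}$ and vanish otherwise, so $\antistar$ is the dual operator of $\antitree$. With all transpose relations in hand, Theorem~\ref{thm:dual_hopf_bi_co_algebras} (parts 1--4) yields that $(\forestspace{I},\star,\detreepro,\antistar)$ is a Hopf algebra dual to $(\forestspace{I},\treepro,\destar,\antitree)$. Cocommutativity of $\detreepro$ is dual to commutativity of $\treepro$; concretely $\flip\detreepro\zeta=\sum_{\zeta_1\treepro\zeta_2=\zeta}\zeta_2\etensor\zeta_1=\detreepro\zeta$ because $\zeta_1\treepro\zeta_2=\zeta_2\treepro\zeta_1$. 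The grading is read off from the definitions: $\lvert\zeta\rvert=\lvert C\rvert+\lvert T\rvert$ for every admissible cut gives $\lingen{\forests_{I,n}}\star\lingen{\forests_{I,m}}\subseteq\lingen{\forests_{I,n+m}}$, the factorization sum gives $\detreepro\lingen{\forests_{I,n}}\subseteq\bigoplus_{m\leq n}\lingen{\forests_{I,m}}\etensor\lingen{\forests_{I,n-m}}$, and an induction over $\splits{\cdot}$ showing $\lvert\eta\rvert=\lvert\zeta\rvert$ whenever $\eta\in\splits{\zeta}$ gives $\antistar\lingen{\forests_{I,n}}\subseteq\lingen{\forests_{I,n}}$; connectedness is clear as $\lingen{\forests_{I,0}}=\lingen{\unit}$.

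The only genuine bookkeeping is the coefficient matching in the three duality verifications, and the point that makes these painless is that the representation formulas \eqref{eq:destar_representation} and \eqref{eq:antitree_representation} were constructed precisely so that the structure constants $\cutfactor{\zeta}{C}{T}$ and $\splitfactor{\zeta}{\eta}$ serve as the transpose of those of $\destar$ and $\antitree$; this is what makes $\star$, $\detreepro$ and $\antistar$ literally the transposed operators. I do not expect any step to pose a real obstacle once these representations are invoked.
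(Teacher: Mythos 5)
Your proposal is correct and follows essentially the same route as the paper: verify via the representation formulas \eqref{eq:destar_representation} and \eqref{eq:antitree_representation} that $\star$, $\detreepro$ and $\antistar$ are the dual operators of $\destar$, $\treepro$ and $\antitree$, invoke Theorem~\ref{thm:dual_hopf_bi_co_algebras}, and read the grading off the identities $\lvert C\rvert+\lvert T\rvert=\lvert\zeta\rvert$, $\lvert\zeta_1\rvert+\lvert\zeta_2\rvert=\lvert\zeta_1\zeta_2\rvert$ and $\lvert\eta\rvert=\lvert\zeta\rvert$ for $\eta\in\splits{\zeta}$. The only differences are cosmetic: you additionally spell out that $\dualbracket$ is nondegenerate and give an explicit cocommutativity argument, both of which the paper treats as obvious.
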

\begin{proof}~
 \begin{enumerate}
  \item\emph{$\mstar$ is the dual operator of $\destar$.} For all $C,T,\eta\in\forests_I$, we have
  \begin{align*}
   \langle\mstar(C\etensor T),\eta\rangle&=\sum_{\substack{\zeta\in\forests_I:\\(C,T)\in\cuts{\zeta}}}\cutfactor{\zeta}{C}{T}\deltasymb{\zeta,\eta}=\sum_{(\bar C,\bar T)\in\cuts{\eta}}\cutfactor{\eta}{C}{T}\deltasymb{C,\bar C}\deltasymb{T,\bar T}\\
   &=\sum_{(\bar C,\bar T)\in\cuts{\eta}}\cutfactor{\eta}{\bar C}{\bar T}\deltasymb{C,\bar C}\deltasymb{T,\bar T}=\sum_{(\bar C,\bar T)\in\cuts{\eta}}\cutfactor{\eta}{\bar C}{\bar T}\langle C\etensor T,\bar C\etensor\bar T\rangle\\
   &=\langle C\etensor T,\destar\eta\rangle.
  \end{align*}
  \item\emph{$\detreepro$ is the dual operator of $\mtreepro$.} For all $\zeta,\eta_1,\eta_2\in\forests_I$, we have
  \begin{equation*}
   \langle\detreepro\zeta,\eta_1\etensor\eta_2\rangle=\sum_{\substack{\zeta_1,\zeta_2\in\forests_I:\\\zeta_1\zeta_2=\zeta}}\deltasymb{\zeta_1,\eta_1}\deltasymb{\zeta_2,\eta_2}=\deltasymb{\zeta,\eta_1\eta_2}=\langle\zeta,\eta_1\treepro\eta_2\rangle.
  \end{equation*}
  \item\emph{$\antistar$ is the dual operator of $\antitree$.} For all $\eta,\zeta\in\forests_I$, we have
  \begin{equation*}
   \langle\antistar\eta,\zeta\rangle=\sum_{\substack{\zeta'\in\forests_I:\\\eta\in\splits{\zeta'}}}\splitfactor{\zeta'}{\eta}\deltasymb{\zeta',\zeta}=\sum_{\eta'\in\splits{\zeta}}\splitfactor{\zeta}{\eta}\deltasymb{\eta',\eta}=\sum_{\eta'\in\splits{\zeta}}\splitfactor{\zeta}{\eta'}\deltasymb{\eta',\eta}=\langle\eta,\antitree\zeta\rangle.
  \end{equation*}
  \item\emph{$(\forestspace{I},\star,\detreepro,\antistar)$ is the cocommutative dual Hopf algebra of $(\forestspace{I},\treepro,\destar,\antitree)$.} We obviously have that $\unitmap$ is the dual operator of $\counit$ and vice versa. Since $(\forestspace{I},\treepro,\destar,\antitree)$ is a commutative Hopf algebra with unit map $\unitmap$ and counit $\counit$ according to Theorem \ref{thm:treepro_destar_Hopf}, we get from the first three steps using Theorem \ref{thm:dual_hopf_bi_co_algebras} that $(\forestspace{I},\star,\detreepro,\antistar)$ is its cocommutative dual Hopf algebra with unit map $\unitmap$ and counit $\counit$.
  \item\emph{$(\forestspace{I},\star,\detreepro,\antistar)$ is graded.} $\star$ is graded by $(\lingen{\forests_{I,n}})_n$ since $\lvert C\rvert+\lvert T\rvert=\lvert\zeta\rvert$ for all $\zeta\in\forests_I$ and $(C,T)\in\cuts{\zeta}$. $\detreepro$ is graded since $\lvert\zeta_1\rvert+\lvert\zeta_2\rvert=\lvert\zeta_1\zeta_2\rvert$ for all $\zeta_1,\zeta_2\in\forests_I$. $\antistar$ is graded since $\lvert S\rvert=\lvert\eta\rvert$ for all $\eta\in\forests_I$ and $S\in\splits{\eta}$.

 \end{enumerate}

\end{proof}

Obviously, $\primitive(\forestspace{I},\detreepro)=\lingen{\trees_I}$. Hence, $\grouplike(\spacefs{\forestspace{I}},\detreepro)=\exp_{\star}\spacefs{\lingen{\trees_I}}$. As noted in Section 2.3 \cite{hairerkelly14}, this group is also called the \emphind{Butcher group}.
\subsection{Homomorphisms between words and forests}
Based on the terminologies used in \cite{hairerkelly14} and in \cite{reutenauer93}, we call
\begin{equation*}
 \words_I:=\{\unit\}\cup\{e_{i_1\cdots i_n}|n\in\naturals,i_j\in I\},\quad e_a\neq e_b\text{ if }a\neq b,
\end{equation*}
\nomenclature[W I]{$\words_I$}{Set of words over an alphabet $I$}the set of \emphind{words} over the alphabet $I$, where $\unit$ is called the \emphind{empty word}. Define $\lvert\cdot\rvert:\words_I\rightarrow\naturals_0$ via $\lvert e_{i_1\cdots i_n}\rvert:=n$ and $\lvert\unit\rvert:=0$. The number $\lvert w\rvert$ is called the \emph{length}\index{length of a word} of the word $w$. Put $\words_I^n:=\{w\in\words_I:\lvert w\rvert\leq n\}$. We may then identify $\tensoral{\lingen{I}}$ with $\lingen{\words_I}$ and $\tensoraltc{n}{\lingen{I}}$ with $\lingen{\words_I^n}$ via $e_{i_1\cdots i_m}:=e_{i_1}\itensor\cdots\itensor e_{i_m}$. The duality pairing $\dualbracket:\,\words_I\times\words_I\rightarrow\reals$ generated by
\begin{equation*}
 \langle w_1,w_2\rangle:=\deltasymb{w_1,w_2},\quad w_1,w_2\in\words_I
\end{equation*}
is then also an inner product.
\begin{thm}\label{thm:phi_Hopf_morphism}\textnormal{(Section 4.1 \cite{hairerkelly14})}
 The linear operator 
 \begin{equation*}
  \phi:\,(\forestspace{I},\treepro,\destar,\antitree)\rightarrow(\lingen{\words_I},\shuffle,\deconc,\antishuffle)
 \end{equation*}
 recursively generated by
 \begin{equation}\label{eq:defn_phi_Hopf_morphism}
  \phi(\unit):=\unit,\quad\phi(\lroot\zeta\rroot_i):=\phi(\zeta)\itensor e_i,\quad\phi(\zeta_1\zeta_2):=\phi(\zeta_1)\shuffle\phi(\zeta_2)
 \end{equation}
 is a Hopf algebra homomorphism.
 The linear operator $\hat\phi:\,(\lingen{\words_I},\deconc)\rightarrow(\forestspace{I},\destar)$ recursively generated by
 \begin{equation*}
  \hat\phi(\unit):=\unit,\quad\hat\phi(w\itensor e_i):=\lroot\hat\phi(w)\rroot_i
 \end{equation*}
 is a coalgebra monomorphism. We have $\phi\hat\phi=\id$.
\end{thm}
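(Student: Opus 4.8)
The plan is to prove the three assertions in sequence, the bulk of the work being that $\phi$ respects the two coproducts. For $\phi$ itself I would first note it is well defined: by the unique factorization of every nonempty forest into trees (property (iii)) and of every tree as some $\lroot\zeta\rroot_i$ (property (ii)), the three clauses in \eqref{eq:defn_phi_Hopf_morphism} determine $\phi$ by recursion on the grade $\lvert\cdot\rvert$, and the multiplicative clause is consistent with the commutativity of $\treepro$ because $\shuffle$ is commutative and associative. To show $\phi$ is a Hopf algebra homomorphism it suffices, by Theorem \ref{thm:hopf_morphism}, to show it is a bialgebra homomorphism, since both source and target are Hopf algebras (Theorem \ref{thm:treepro_destar_Hopf} and the shuffle Hopf algebra). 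Thus I must check four things: $\phi$ is an algebra homomorphism (immediate from the third clause), $\phi\unit=\unit$ (immediate), $\counit\phi=\counit$ (since every clause preserves the grade, $\phi$ maps forests of grade $n$ into words of length $n$, so the grade-zero projections agree), and that $\phi$ is a coalgebra homomorphism, i.e. $(\phi\etensor\phi)\destar=\deconc\phi$.

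For the coalgebra identity I would induct on the grade. The key computation is the peeling identity for deconcatenation, $\deconc(w\itensor e_i)=(\id\etensor c_i)\deconc w+(w\itensor e_i)\etensor\unit$, where $c_i$ denotes the right-concatenation map $v\mapsto v\itensor e_i$; this follows directly from \eqref{eq:defn_deconc}. The product step is cheap: the set of forests satisfying the identity is closed under $\treepro$, because $\phi\etensor\phi$ intertwines $\treeprotwo$ with $\shuffletwo$ (a consequence of $\phi$ being a $\treepro$-to-$\shuffle$ homomorphism) and because $\deconc$ is itself an algebra homomorphism from $(\lingen{\words_I},\shuffle)$ to its tensor square, which is exactly the first compatibility requirement of the shuffle bialgebra. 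Hence, together with linearity, it is enough to verify the identity on trees. For a tree $\lroot\zeta\rroot_i$ I would expand $\destar\lroot\zeta\rroot_i$ via \eqref{eq:defn_destar}, apply $\phi\etensor\phi$, use $\phi\lroot\cdot\rroot_i=c_i\phi$ together with the inductive hypothesis $(\phi\etensor\phi)\destar\zeta=\deconc\phi\zeta$, and finally the peeling identity applied to $w=\phi\zeta$; the two expressions then match, the boundary term $\lroot\zeta\rroot_i\etensor\unit$ mapping precisely onto $(\phi\zeta\itensor e_i)\etensor\unit$.

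For $\hat\phi$ I would run the completely analogous induction on word length to establish $(\hat\phi\etensor\hat\phi)\deconc=\destar\hat\phi$: stripping the last letter of $w=u\itensor e_i$, expanding $\destar\hat\phi(u\itensor e_i)=\destar\lroot\hat\phi(u)\rroot_i$ by \eqref{eq:defn_destar}, and using $\lroot\cdot\rroot_i\hat\phi=\hat\phi c_i$ with the same peeling identity (now rearranged to solve for $(\id\etensor c_i)\deconc u$) together with the inductive hypothesis gives the claim, the boundary terms again cancelling. This shows $\hat\phi$ is a coalgebra homomorphism. Its injectivity, hence that it is a monomorphism, follows from $\phi\hat\phi=\id$, which exhibits a left inverse. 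Finally $\phi\hat\phi=\id$ is a short induction on word length: $\phi\hat\phi\unit=\unit$, and $\phi\hat\phi(u\itensor e_i)=\phi\lroot\hat\phi(u)\rroot_i=(\phi\hat\phi(u))\itensor e_i=u\itensor e_i$ by the inductive hypothesis.

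I expect the main obstacle to be the tree step of the coalgebra identity for $\phi$: one must track carefully the single ``full trunk'' boundary term $\lroot\zeta\rroot_i\etensor\unit$ produced by $\destar$ and confirm that it is exactly the extra summand $(w\itensor e_i)\etensor\unit$ in the peeling identity, so that all terms reconcile. The reduction of the multiplicative case to the bialgebra compatibility of $\deconc$, and the reuse of the single peeling identity across all three parts, are what keep the argument from degenerating into a lengthy combinatorial verification over admissible cuts.
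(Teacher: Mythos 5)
Your proposal is correct and follows essentially the same route as the paper's proof: the same peeling identity $\deconc(w\itensor e_i)=(w\itensor e_i)\etensor\unit+(\id\etensor L_i)\deconc w$, the same reduction via Theorem \ref{thm:hopf_morphism} to checking the bialgebra homomorphism properties, the same induction over $\unit$, trees $\lroot\zeta\rroot_i$ and products for the coalgebra identity (using that $\deconc$ is an algebra homomorphism for the product step), the analogous induction for $\hat\phi$, and injectivity of $\hat\phi$ from the left inverse $\phi\hat\phi=\id$. The only cosmetic difference is that you verify counit compatibility via grade preservation, whereas the paper evaluates $\counit\phi\zeta$ directly.
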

\begin{proof}
 Putting $L_i x:=x\itensor e_i$ for all $x\in\lingen{\words_I}$, we get directly from the definition \eqref{eq:defn_deconc} that
 \begin{equation*}
  \deconc L_i w=\deconc(w\itensor e_i)=(w\itensor e_i)\etensor\unit+\sum_{(w)}^{\itensor}w_1\etensor(w_2\itensor e_1)=L_i w\etensor\unit+(\id\etensor L_i)\deconc w.
 \end{equation*}
 Since $\phi\hat\phi\unit=\unit$ and $\phi\hat\phi(w\itensor e_i)=\phi(\lroot\hat\phi(w)\rroot_i)=\phi\hat\phi(w)\itensor e_i$, we get $\phi\hat\phi=\id$ inductively. In particular, this shows that $\hat\phi$ is injective.
 \begin{enumerate}
  \item\emph{$\phi$ is a Hopf algebra homomorphism.} $\phi$ is obviously an algebra homomorphism by the third equation in \eqref{eq:defn_phi_Hopf_morphism}. Also, we have $\phi\unitmap=\unitmap$ by definition and $\counit\phi\zeta=\deltasymb{\phi\zeta,\unit}=\deltasymb{\zeta,\unit}=\counit\zeta$ for all $\zeta\in\forests_I$. By Theorem \ref{thm:hopf_morphism}, it only remains to show that $\phi$ is a coalgebra homomorphism.
  For this, we first have
  \begin{equation*}
   \deconc\phi(\unit)=\deconc\unit=\unit\etensor\unit=(\phi\etensor\phi)(\unit\etensor\unit)=(\phi\etensor\phi)\destar\unit.
  \end{equation*}
  Assuming we have $\deconc\phi(\zeta)=(\phi\etensor\phi)\destar\zeta$ for some $\zeta\in\forests_I$, we get
  \begin{align*}
   (\phi\etensor\phi)\destar\lroot\zeta\rroot_i&=(\phi\etensor\phi)(\lroot\zeta\rroot_i\etensor\unit)+(\phi\etensor\phi)(\id\etensor\lroot\cdot\rroot_i)\destar\zeta\\
   &=L_i\phi(\zeta)\etensor\unit+(\id\etensor L_i)(\phi\etensor\phi)\destar\zeta=L_i\phi(\zeta)\etensor\unit+(\id\etensor L_i)\deconc\phi(\zeta)\\
   &=\deconc L_i\phi(\zeta)=\deconc\phi(\lroot\zeta\rroot_i).
  \end{align*}
  Assuming we have $\deconc\phi(\zeta_1)=(\phi\etensor\phi)\destar\zeta_1$ and $\deconc\phi(\zeta_2)=(\phi\etensor\phi)\destar\zeta_2$ for some $\zeta_1,\zeta_2\in\forests_I$, we get
  \begin{align*}
   \deconc\phi(\zeta_1\zeta_2)&=\deconc\phi(\zeta_1)\shuffletwo\deconc\phi(\zeta_2)=(\phi\etensor\phi)\destar\zeta_1\shuffletwo(\phi\etensor\phi)\destar\zeta_2\\&=(\phi\etensor\phi)(\destar\zeta_1\treeprotwo\destar\zeta_2)=(\phi\etensor\phi)\destar(\zeta_1\zeta_2).
  \end{align*}
  \item\emph{$\hat\phi$ is a coalgebra homomorphism.} As always, $\destar\hat\phi(\unit)=(\hat\phi\etensor\hat\phi)\deconc\unit$ follows immediately. Assuming we have $\destar\hat\phi(w)=(\hat\phi\etensor\hat\phi)\deconc w$ for some $w\in\words_I$, we get
  \begin{align*}
   \destar\hat\phi(w\itensor e_i)&=\destar\lroot\hat\phi(w)\rroot_i=\lroot\hat\phi(w)\rroot_i\etensor\unit+(\id\etensor\lroot\cdot\rroot_i)\destar\hat\phi(w)\\
   &=\hat\phi(w\itensor e_i)\etensor\unit+(\id\etensor\lroot\cdot\rroot_i)(\hat\phi\etensor\hat\phi)\deconc w\\
   &=(\hat\phi\etensor\hat\phi)(L_i w\etensor\unit)+(\hat\phi\etensor\hat\phi)(\id\etensor L_i)\deconc w=(\hat\phi\etensor\hat\phi)\deconc L_i w.
  \end{align*}
 \end{enumerate}
\end{proof}

\begin{thm}\textnormal{(Lemma 4.9.\@ \cite{hairerkelly14} and its proof)}\newline
 The map $\psi:\,(\forestspace{I},\treepro,\destar,\antitree)\rightarrow(\tensoral{\lingen{\trees_I}},\shuffle,\deconc,\antishuffle)$ recursively linearly generated by
 \begin{equation}\label{eq:defn_monomorph_forests_tensor}
  \psi(\unit):=\unit,\quad\psi(\lroot\zeta\rroot_i):=\mitensor(\psi\etensor\lroot\cdot\rroot_i)\destar\zeta,\quad\psi(\zeta_1\zeta_2):=\psi(\zeta_1)\shuffle\psi(\zeta_2)
 \end{equation}
 is a Hopf algebra monomorphism.
\end{thm}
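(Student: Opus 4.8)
The plan is to mirror the proof of Theorem~\ref{thm:phi_Hopf_morphism}, establishing that $\psi$ is a bialgebra homomorphism and then invoking Theorem~\ref{thm:hopf_morphism} for the antipode, before separately proving injectivity. First I would check that the recursion \eqref{eq:defn_monomorph_forests_tensor} is well-defined: a tree is uniquely of the form $\lroot\zeta\rroot_i$, a forest factors uniquely into trees up to the commutativity of $\treepro$ (matched by that of $\shuffle$), and all recursive calls drop the grade $\lvert\cdot\rvert$, so the induction on grade closes. The third defining relation makes $\psi$ an algebra homomorphism by construction. Since $\lvert\lroot\zeta\rroot_i\rvert=\lvert\zeta\rvert+1$ and $\psi(\lroot\zeta\rroot_i)=\sum_{(\zeta)}\psi(\zeta_1)\itensor\lroot\zeta_2\rroot_i$ raises the total tree-degree by exactly one, a short induction shows $\psi$ is graded; together with $\psi(\unit)=\unit$ this yields $\psi\unitmap=\unitmap$ and $\counit\psi=\counit$.

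The heart is the coproduct compatibility $\deconc\psi=(\psi\etensor\psi)\destar$, proved by induction on the grade exactly as in Theorem~\ref{thm:phi_Hopf_morphism}. The cases of $\unit$ and of a product $\zeta_1\zeta_2$ are routine, using that $\deconc$ is an algebra homomorphism for $\shuffle$, that $\psi\etensor\psi$ intertwines $\treeprotwo$ with $\shuffletwo$, and that $\destar(\zeta_1\zeta_2)=\destar\zeta_1\treeprotwo\destar\zeta_2$. For the tree case, writing $M:=\mitensor(\psi\etensor\lroot\cdot\rroot_i)$ so that $\psi(\lroot\zeta\rroot_i)=M\destar\zeta$, the key computational input is the deconcatenation identity $\deconc(w\itensor\tau)=(w\itensor\tau)\etensor\unit+(\id\etensor R_\tau)\deconc w$, where $R_\tau x:=x\itensor\tau$ appends the single letter $\tau$ — the tree-letter analogue of the identity $\deconc L_i w=L_i w\etensor\unit+(\id\etensor L_i)\deconc w$ from the previous proof. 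Applying this to each summand of $\psi(\lroot\zeta\rroot_i)$ together with the inductive hypothesis turns $\deconc\psi(\lroot\zeta\rroot_i)$ into $\psi(\lroot\zeta\rroot_i)\etensor\unit+(\psi\etensor M)(\destar\etensor\id)\destar\zeta$, while $(\psi\etensor\psi)\destar\lroot\zeta\rroot_i$ equals $\psi(\lroot\zeta\rroot_i)\etensor\unit+(\psi\etensor M)(\id\etensor\destar)\destar\zeta$; the two agree by coassociativity of $\destar$. This shows $\psi$ is a bialgebra homomorphism, and Theorem~\ref{thm:hopf_morphism} then gives $\psi\antitree=\antishuffle\psi$ for free.

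It remains to prove injectivity, which I expect to be the main obstacle, since unlike $\hat\phi$ in Theorem~\ref{thm:phi_Hopf_morphism} there is no evident one-sided inverse. I would argue by a leading-term estimate with respect to the word-length filtration of $\tensoral{\lingen{\trees_I}}$. A grade induction using $\psi(\lroot\zeta\rroot_i)=\lroot\zeta\rroot_i+(\text{words of length}\geq 2)$ and the behaviour of $\shuffle$ under length shows that for a forest $\zeta=\tau_1\treepro\cdots\treepro\tau_k$ with exactly $k$ tree factors, $\psi(\zeta)$ has no word-components of length $<k$ and its length-$k$ component is the symmetrization $\tau_1\shuffle\cdots\shuffle\tau_k=\sum_{\sigma\in\symgroup_k}\tau_{\sigma(1)}\itensor\cdots\itensor\tau_{\sigma(k)}$. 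Now suppose $\psi(x)=0$ with $x\neq 0$; as $\psi$ is graded we may take $x=\sum_\zeta c_\zeta\zeta$ homogeneous, and let $k_0$ be the minimal number of tree factors among the $\zeta$ with $c_\zeta\neq0$. The length-$k_0$ component of $\psi(x)$ is then $\sum_{\#\text{trees}(\zeta)=k_0}c_\zeta\,(\tau_1^\zeta\shuffle\cdots\shuffle\tau_{k_0}^\zeta)$; distinct forests are distinct multisets of trees, so their symmetrizations are supported on disjoint sets of words, and over a field of characteristic zero each carries a nonzero coefficient $\prod_j m_j!$ on each word it supports. Hence this component is nonzero, contradicting $\psi(x)=0$, and $\psi$ is injective, completing the proof that it is a Hopf algebra monomorphism.
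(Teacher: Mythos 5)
Your argument for the homomorphism property is essentially the paper's: induction over the recursive definition of $\psi$, with the product and unit cases handled by the algebra-homomorphism property of $\deconc$ with respect to $\shuffle$, and the tree case handled by the identity $\deconc(w\itensor\tau)=(w\itensor\tau)\etensor\unit+(\id\etensor R_\tau)\deconc w$ (the paper writes this same identity as $\mitensortwo(\deconc\etensor\unitmap\etensor\id)$ plus the full-cut term) followed by coassociativity of $\destar$; the appeal to Theorem \ref{thm:hopf_morphism} for the antipode is likewise implicit in the paper. Where you genuinely go beyond the paper is injectivity: the paper simply defers to the proof of Lemma 4.9 in \cite{hairerkelly14}, whereas you give a complete triangularity argument with respect to the word-length filtration. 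That argument is correct: by induction on the grade, $\psi(\tau)=\tau+(\text{length}\geq 2)$ for every tree $\tau$ (the length-one term coming from the empty cut), so for a forest with $k$ tree factors the minimal-length component of $\psi(\zeta)$ is the length-$k$ symmetrization $\tau_1\shuffle\cdots\shuffle\tau_k$; distinct forests are distinct multisets of trees, hence their symmetrizations have disjoint word supports, and the coefficients $\prod_j m_j!$ are nonzero in characteristic zero, so no nontrivial linear combination can kill the minimal-length layer. This is in substance the same mechanism as in \cite{hairerkelly14}, but spelling it out makes the theorem self-contained, which the paper's version is not. The only cosmetic caveat is your appeal to ``gradedness'' for $\counit\psi=\counit$: you should say which grading you mean (the total tree-degree $\lvert\cdot\rvert$, not word length), or simply observe that $\psi(\zeta)$ for $\zeta\neq\unit$ has no length-zero component by your own leading-term analysis.
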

\begin{proof}~
 \begin{enumerate}
  \item \emph{$\psi$ is a Hopf algebra homomorphism.} $\psi$ is obviously an algebra homomorphism by the third equation in \eqref{eq:defn_monomorph_forests_tensor}.
  We first have
  \begin{equation*}
   \deconc\psi(\unit)=\deconc\unit=\unit\etensor\unit=(\psi\etensor\psi)(\unit\etensor\unit)=(\psi\etensor\psi)\destar\unit.
  \end{equation*}
  Assuming we have $\deconc\psi(\zeta)=(\psi\etensor\psi)\destar\zeta$ for some $\zeta\in\forests_I$, we get
  \begin{align*}
   \deconc\psi(\lroot\zeta\rroot_i)=\deconc\mitensor(\psi\etensor\lroot\cdot\rroot_i)\destar\zeta&=\mitensortwo(\deconc\etensor\unitmap\etensor\id)(\psi\etensor\lroot\cdot\rroot_i)\destar\zeta+\psi(\lroot\zeta\rroot_i)\etensor\unit\\
   &=\mitensortwo(\psi\etensor\psi\etensor\unitmap\etensor\lroot\cdot\rroot_i)(\destar\etensor\id)\destar\zeta+\psi(\lroot\zeta\rroot_i)\etensor\unit\\
   &=\mitensortwo(\psi\etensor\psi\etensor\unitmap\etensor\lroot\cdot\rroot_i)(\id\etensor\destar)\destar\zeta+\psi(\lroot\zeta\rroot_i)\etensor\unit\\
   &=(\psi\etensor\mitensor)(\id\etensor\psi\etensor\lroot\cdot\rroot_i)(\id\etensor\destar)\destar\zeta+\psi(\lroot\zeta\rroot_i)\etensor\unit\\
   &=(\psi\etensor\psi)(\id\etensor\lroot\cdot\rroot_i)\destar\zeta+\psi(\lroot\zeta\rroot_i)\etensor\unit\\
   &=(\psi\etensor\psi)\destar\lroot\zeta\rroot_i.
  \end{align*}
  Assuming we have $\deconc\psi(\zeta_1)=(\psi\etensor\psi)\destar\zeta_1$ and $\deconc\psi(\zeta_2)=(\psi\etensor\psi)\destar\zeta_2$ for some $\zeta_1,\zeta_2\in\forests_I$, we get
  \begin{align*}
   \deconc\psi(\zeta_1\treepro\zeta_2)&=\deconc\psi(\zeta_1)\shuffletwo\deconc\psi(\zeta_2)=(\psi\etensor\psi)\destar\zeta_1\shuffletwo(\psi\etensor\psi)\destar\zeta_2\\&=(\psi\etensor\psi)(\destar\zeta_1\treeprotwo\destar\zeta_2)=(\psi\etensor\psi)\destar(\zeta_1\treepro\zeta_2).
  \end{align*}
  \item \emph{$\psi$ is injective.} See the proof of Lemma 4.9.\@ \cite{hairerkelly14}.
 \end{enumerate}

\end{proof}

\chapter{Rough paths}\label{chapter:rough_paths}
\section{Geometric rough paths}
We shortly write $\words_d:=\words_{\{1,\ldots,d\}}$ for all $d\in\naturals$.
Define
\begin{align*} 
 \geogroupfs{d}:=\grouplike(\tensoralfs{\reals^d},\deshuffle)=\{g\in\tensoralfs{\reals^d}|\deshuffle g=g\etensor g\}=\exp_{\itensor}(\geoliefs{d}),
\end{align*}
where
\begin{equation*}
 \geoliefs{d}:=\primitive(\tensoralfs{\reals^d},\deshuffle)=\{l\in\tensoralfs{\reals^d}|\deshuffle l=\unit\etensor l+l\etensor\unit\}=\spacefs{\big(\generate{\reals^d}{\liebracket_{\itensor}}\big)},
\end{equation*}
as well as
\begin{align*} 
 \geogroup{n}{d}:=\grouplike^{n}(\tensoral{\reals^d},\deshuffle)&=\{g\in\tensoraltc{n}{\reals^d}|\deshuffle g=\quotientmap^n (g\etensor g)\}=\quotientmap^n\geogroupfs{d}=\exp_{\itensor_n}(\geolie{n}{d}),
\end{align*}
where
\begin{equation*}
 \geolie{n}{d}:=\primitive(\tensoraltc{n}{\reals^d},\deshuffle)=\{l\in\tensoraltc{n}{\reals^d}|\deshuffle l=\unit\etensor l+l\etensor\unit\}=\quotientmap^n\generate{\reals^d}{\liebracket_{\itensor}}.
\end{equation*}
Define $\lVert\cdot\rVert_\geogroup{n}{d}:\,\geogroup{n}{d}\rightarrow[0,\infty)$ by (Section~4 \cite{hairerkelly14})
\begin{equation*}
 \lVert g\rVert_\geogroup{n}{d}:=\sum_{m=1}^n\lVert\proj_m\log_{\itensor_n}(g)\rVert^{1/m},
\end{equation*}
where $\lVert l\rVert:=\sqrt{\langle l,l\rangle}$. Put $\lVert\cdot\rVert_n:=\lVert\proj_n \cdot\rVert_n$.
\begin{defn}\label{defn:geometric_rX}(Equation (4.3) and Definition 4.1.\@ \cite{hairerkelly14})
 Let $\gamma\in(0,1)$. A $d$-dimensional $\gamma$-Hölder \emph{weakly geometric rough path}\index{rough path!geometric@\textit{geometric}}\index{geometric rough path} is a map $\rX:\,\timeT\rightarrow\geogroup{\level}{d}$ with $\rX_0:=\unit$ such that
 \begin{equation*}
  \sup_{s<t}\frac{\lVert\rX_{st}\rVert_\geogroup{\level}{d}}{\lvert t-s\rvert^{\gamma}}<\infty,
 \end{equation*}
 where $\rX_{st}:=\rX_s^{\itensor_{\level}{-1}}\itensor_\level\rX_t=\antitensor\rX_s\itensor_\level\rX_t$ and $\level$ is the integer part of $\tfrac{1}{\gamma}$. The set of all such maps $\rX:\,\timeT\rightarrow\geogroup{\level}{d}$ is denoted by $\geopaths^\gamma([0,T],\reals^d)$.
\end{defn}
\begin{rmk}
 The word 'weakly' comes from the fact that there is a notion of geometric rough paths which is a bit stronger. For details on this distinction, see \cite{frizvictoir06}.
\end{rmk}

We present three more definitions which will turn out to be equivalent.
\begin{subdefn}\label{defn:geometric_fullrX}(Based on Equation (4.3) and Definition 4.1.\@ \cite{hairerkelly14})
 Let $\gamma\in(0,1)$. A $d$-dimensional $\gamma$-Hölder \emph{weakly geometric rough path}\index{rough path!geometric@\textit{geometric}}\index{geometric rough path} is a map $\fullrX:\,\timeT\rightarrow\geogroupfs{d}$ with $\fullrX_0:=\unit$ such that
 \begin{equation*}
  \sup_{s<t}\frac{\lVert\quotientmap^n\fullrX_{st}\rVert_\geogroup{n}{d}}{\lvert t-s\rvert^{\gamma}}<\infty\quad\forallc n\in\naturals,
 \end{equation*}
 where $\fullrX_{st}:=\fullrX_s^{\itensor{-1}}\itensor\fullrX_t=\antitensor\fullrX_s\itensor\fullrX_t$. The set of all such maps $\fullrX:\,\timeT\rightarrow\geogroupfs{d}$ is denoted by $\geopathsfull^\gamma([0,T],\reals^d)$.
\end{subdefn}
\begin{subdefn}\label{defn:geometric_dualrX}(Definition 1.2.\@ \cite{hairerkelly14})
 Let $\gamma\in(0,1)$. A $d$-dimensional $\gamma$-Hölder \emph{weakly geometric rough path} is a map $\dualrX:\,\timeT^2\rightarrow\adual{\lingen{\words_d^\level}}$ such that
 \begin{enumerate}
  \item $\dualrX_{st}(\unit)=1$ and $\dualrX_{st}(w_1\shuffle w_2)=\dualrX_{st}(w_1)\dualrX_{st}(w_2)$ for all words $w_1,w_2\in\words_d^\level$ such that $\lvert w_1\rvert+\lvert w_2\rvert\leq\level$,
  \item $\dualrX_{tt}=\counit_\level$ and $\dualrX_{st}(w)=(\dualrX_{su}\etensor\dualrX_{ut})\deconc w=\sum_{(w)}^{\itensor}\dualrX_{su}(w^1)\dualrX_{ut}(w^2)$ for all words $w\in\words_d^\level$,
  \item $\sup_{s\neq t}\frac{\lvert\dualrX_{st}(w)\rvert}{\lvert t-s\rvert^{\gamma\lvert w\rvert}}<\infty$ for all words $w\in\words_d^\level$,
 \end{enumerate}
 where $\level$ is the integer part of $\tfrac{1}{\gamma}$. Put $\dualrX_t:=\dualrX_{0t}$. The set of all such maps $\dualrX:\,\timeT^2\rightarrow\adual{\lingen{\words_d^\level}}$ is denoted by $\geopathsdual^\gamma([0,T],\reals^d)$.
\end{subdefn}
\begin{subdefn}\label{defn:geometric_dualfullrX}(Based on Definition 1.2.\@ \cite{hairerkelly14})
 Let $\gamma\in(0,1)$. A $d$-dimensional $\gamma$-Hölder \emph{weakly geometric rough path} is a map $\dualfullrX:\,\timeT^2\rightarrow\adual{\lingen{\words_d}}$ such that
 \begin{enumerate}
  \item $\dualfullrX_{st}(\unit)=1$ and $\dualfullrX_{st}(w_1\shuffle w_2)=\dualfullrX_{st}(w_1)\dualfullrX_{st}(w_2)$ for all words $w_1,w_2\in\words_d$,
  \item $\dualfullrX_{tt}=\counit$ and $\dualfullrX_{st}(w)=(\dualfullrX_{su}\etensor\dualfullrX_{ut})\deconc w=\sum_{(w)}^{\itensor}\dualfullrX_{su}(w^1)\dualfullrX_{ut}(w^2)$ for all words $w\in\words_d$,
  \item $\sup_{s\neq t}\frac{\lvert\dualfullrX_{st}(w)\rvert}{\lvert t-s\rvert^{\gamma\lvert w\rvert}}<\infty$ for all words $w\in\words_d$.
 \end{enumerate}
 Put $\dualfullrX_t:=\dualfullrX_{0t}$. The set of all such maps $\dualfullrX:\,\timeT^2\rightarrow\adual{\lingen{\words_d}}$ is denoted by $\geopathsdualfull^\gamma([0,T],\reals^d)$.
\end{subdefn}
\begin{thm}\label{thm:lyons_extension}\textnormal{\textbf{Lyons' Extension Theorem} (Theorem 3.7 \cite{lyonscaruanalevy07})}
 Let $\gamma\in(0,1)$ and $\level$ be the integer part of $\gamma^{-1}$. Let $X:\,[0,T]^2\rightarrow\tensoraltc{\level}{\reals^d}$ be such that $\proj_0 X_{st}=1$ for all $s,t\in[0,T]$, such that
 \begin{equation*}
  X_{su}\itensor_\level X_{ut}=X_{st}\quad\forallc s,u,t\in[0,T]
 \end{equation*}
 and such that
 \begin{equation}\label{eq:lyons_extension_analytic_condition}
  \sup_{s<t}\frac{\lVert X_{su}\rVert_m}{\lvert u-s\rvert^{m\gamma}}<\infty\quad\forallc m\in\{0,\ldots,\level\}.
 \end{equation}
 Then, there is a unique extension $\bar{X}:\,[0,T]^2\rightarrow\tensoralfs{\reals^d}$ such that $\quotientmap^\level\bar{X}_{st}=X_{st}$ for all $s,t\in[0,T]$, such that
 \begin{equation}\label{eq:lyons_extension_analytic_result}
  \bar{X}_{su}\itensor \bar{X}_{ut}=\bar{X}_{st}\quad\forallc s,u,t\in[0,T]
 \end{equation}
 and such that
 \begin{equation*}
  \sup_{s<t}\frac{\lVert\bar{X}_{su}\rVert_m}{\lvert u-s\rvert^{m\gamma}}<\infty\quad\forallc m\in\naturals_0.
 \end{equation*}
\end{thm}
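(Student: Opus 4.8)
The plan is to build $\bar X$ one tensor level at a time, by induction on the level $n\geq\level$, extending the given data at levels $0,\dots,\level$ to all higher levels while preserving the multiplicative relation \eqref{eq:lyons_extension_analytic_result} and the Hölder bounds. Levels $0,\dots,\level$ are prescribed by $X$ and are never altered, so $\quotientmap^\level\bar X_{st}=X_{st}$ will hold automatically. The decisive arithmetic fact, which I would isolate at the outset, is that since $\level=\lfloor\gamma^{-1}\rfloor$ one has $(\level+1)\gamma>1$; hence \emph{every} new level $n\geq\level+1$ carries a Hölder exponent $n\gamma>1$, and this strict inequality is exactly what drives the convergence below.

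First I would settle uniqueness at each level, since it also dictates the construction. Suppose $\proj_m\bar X$ for $m<n$ are already fixed and multiplicative modulo level $n$. Writing \eqref{eq:lyons_extension_analytic_result} at level $n$ and using that the lower levels already multiply correctly, the level-$n$ part must satisfy
\begin{equation*}
 \proj_n\bar X_{st}-\proj_n\bar X_{su}-\proj_n\bar X_{ut}=M_{sut},\qquad M_{sut}:=\sum_{0<i<n}\proj_i\bar X_{su}\itensor\proj_{n-i}\bar X_{ut},
\end{equation*}
where $M_{sut}$ depends only on the already-known lower levels and lives in level $n$. If two candidates both solve this and both obey $\lVert\cdot\rVert_n\lesssim\lvert t-s\rvert^{n\gamma}$, their difference $f_{st}$ is additive, $f_{st}=f_{su}+f_{ut}$, and of size $O(\lvert t-s\rvert^{n\gamma})$ with $n\gamma>1$; summing $f$ over any partition of $[s,t]$ and letting the mesh tend to zero forces $f\equiv0$. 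This gives uniqueness level by level, hence uniqueness of $\bar X$.

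For existence I would define, for a partition $D=\{s=u_0<\dots<u_r=t\}$,
\begin{equation*}
 \bar X^D_{st}:=\bar X_{u_0u_1}\itensor\cdots\itensor\bar X_{u_{r-1}u_r}
\end{equation*}
computed in the algebra truncated at level $n$, with the not-yet-defined level-$n$ part of each factor set to $0$. Because the lower levels are already multiplicative, the components of $\bar X^D_{st}$ below level $n$ are independent of $D$; only $\proj_n\bar X^D_{st}$ varies. The heart of the matter is the refinement estimate: deleting an interior point $u_i$ changes $\proj_n\bar X^D_{st}$ by exactly $M_{u_{i-1}u_iu_{i+1}}$, with $\lVert M_{u_{i-1}u_iu_{i+1}}\rVert_n\lesssim\lvert u_{i+1}-u_{i-1}\rvert^{n\gamma}$ by the induction hypothesis. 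Choosing by pigeonhole an interior point with $\lvert u_{i+1}-u_{i-1}\rvert\leq\tfrac{2}{r-1}\lvert t-s\rvert$ and removing points one at a time, the accumulated change is bounded by $\lvert t-s\rvert^{n\gamma}\sum_{r\geq2}C\,r^{-n\gamma}$, a convergent sum precisely because $n\gamma>1$. Comparing any two partitions through a common refinement shows $\proj_n\bar X^D_{st}$ is Cauchy as $\lvert D\rvert\to0$; its limit defines $\proj_n\bar X_{st}$ and inherits $\lVert\proj_n\bar X_{st}\rVert_n\lesssim\lvert t-s\rvert^{n\gamma}$ (since $\proj_n\bar X^{\{s,t\}}_{st}=0$).

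It remains to check multiplicativity at level $n$, which is clean: since $\bar X^{D_1}_{su}\itensor\bar X^{D_2}_{ut}=\bar X^{D_1\cup D_2}_{st}$ exactly for partitions $D_1$ of $[s,u]$ and $D_2$ of $[u,t]$, passing to the limit on both sides yields $\bar X_{su}\itensor\bar X_{ut}=\bar X_{st}$ at level $n$ too. Running the induction over all $n\geq\level+1$ produces $\bar X:[0,T]^2\to\tensoralfs{\reals^d}$ with all three required properties, and the uniqueness argument applied at each level shows it is the only such extension. The hard part will be the existence step, and specifically two intertwined points: the algebraic identity that a single refinement costs exactly $M_{u_{i-1}u_iu_{i+1}}$ (resting on the lower levels being already multiplicative, so the merge discrepancy is concentrated at level $n$ with the flanking factors contributing only their unit parts), and the convergence of the Riemann-type sums, which would fail without the strict inequality $n\gamma>1$ guaranteed by $n\geq\level+1=\lfloor\gamma^{-1}\rfloor+1$.
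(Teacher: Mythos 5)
The paper does not prove this theorem: it is imported verbatim as Theorem 3.7 of the cited reference \cite{lyonscaruanalevy07}, so there is no in-paper proof to compare against. Your argument is a correct reconstruction of the standard proof, and the structure is sound: the level-by-level induction starting at $n=\level+1$, the observation that $(\level+1)\gamma>1$, the uniqueness via additive functions of $O(\lvert t-s\rvert^{n\gamma})$ vanishing under partition refinement, the identification of the refinement defect with the single cross term $M_{u_{i-1}u_iu_{i+1}}$ (correctly justified by the fact that all other factors can only contribute their scalar parts once the lower levels are multiplicative), and Young's pigeonhole summation $\sum_r r^{-n\gamma}<\infty$. Two remarks on the comparison with the source. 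First, your route is the Young/sewing-lemma style argument (as in the modern expositions) rather than Lyons' original induction via the neoclassical inequality; it proves exactly the statement as formulated here, namely finiteness of each Hölder constant, but does not recover the factorial decay $\lVert\bar{X}_{st}\rVert_m\lesssim \lvert t-s\rvert^{m\gamma}/ (m/\level)!$ that the original proof yields and that is needed elsewhere (e.g.\ to sum the series defining $\bar X_{st}$ as a genuine group element); for the theorem as stated in this thesis that is immaterial. Second, the Cauchy step deserves one more sentence than you give it: for nested partitions $D\subseteq D'$ the defect localizes to the subintervals of $D$ and is bounded by $C\sum_i\lvert u_{i+1}-u_i\rvert^{n\gamma}\leq C\lvert D\rvert^{n\gamma-1}\lvert t-s\rvert$, which tends to zero with the mesh; combined with passage through a common refinement this closes the limit argument. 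With that spelled out, the proof is complete.
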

\begin{thm}\label{thm:lyons_extension_geogroup}\textnormal{(Corollary 3.9 \cite{cassdriverlimlitterer15})}
 In the context of Theorem \ref{thm:lyons_extension}, if $X_{st}\in\geogroup{\level}{d}$ for all $s,t\in[0,T]^2$, then $\bar{X}_{st}\in\geogroupfs{d}$ for all $s,t\in[0,T]^2$.
\end{thm}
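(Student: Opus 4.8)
The plan is to quantify the failure of $\bar{X}_{st}$ to be group-like by the ``defect''
\[
 E_{st}:=\deshuffle\bar{X}_{st}-\bar{X}_{st}\etensor\bar{X}_{st}\in\spacefs{[\tensoral{\reals^d}\etensor\tensoral{\reals^d}]},
\]
and to show $E_{st}=0$ for all $s,t$ by induction on the homogeneous degree $n$ of $\projtwo_n E_{st}$. Since $\proj_0\bar{X}_{st}=\unit$ (because $\proj_0 X_{st}=1$ and $\quotientmap^\level\bar{X}_{st}=X_{st}$), we have $\bar{X}_{st}\neq 0$, so proving $E_{st}=0$ is precisely the assertion $\bar{X}_{st}\in\grouplike(\tensoralfs{\reals^d},\deshuffle)=\geogroupfs{d}$. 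For the base cases $0\le n\le\level$ the vanishing $\projtwo_n E_{st}=0$ is immediate for all $s,t$: the homogeneous components of $X_{st}=\quotientmap^\level\bar{X}_{st}$ agree with those of $\bar{X}_{st}$ up to degree $\level$, and the hypothesis $X_{st}\in\geogroup{\level}{d}$ says exactly that $\deshuffle X_{st}=\quotientmaptwo^\level(X_{st}\etensor X_{st})$, i.e.\ the degree-$\le\level$ parts of $\deshuffle\bar{X}_{st}$ and of $\bar{X}_{st}\etensor\bar{X}_{st}$ coincide.

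Next I would exploit the multiplicativity $\bar{X}_{st}=\bar{X}_{su}\itensor\bar{X}_{ut}$ from \eqref{eq:lyons_extension_analytic_result}. Because $\deshuffle$ is an algebra homomorphism into $(\tensoralfs{\reals^d}\etensor\tensoralfs{\reals^d},\itensortwo)$ and $(a\etensor b)\itensortwo(c\etensor d)=(a\itensor c)\etensor(b\itensor d)$, both $\deshuffle\bar{X}_{st}$ and $\bar{X}_{st}\etensor\bar{X}_{st}$ factor through $\itensortwo$, and a one-line telescoping yields
\[
 E_{st}=E_{su}\itensortwo\deshuffle\bar{X}_{ut}+(\bar{X}_{su}\etensor\bar{X}_{su})\itensortwo E_{ut}.
\]
Now fix $n>\level$ and assume $\projtwo_m E_{s't'}=0$ for all $m<n$ and all $s',t'$. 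Since $\itensortwo$ is graded for the induced grading \eqref{eq:induced_grading}, applying $\projtwo_n$ via $\projtwo_n(P\itensortwo Q)=\sum_{a+b=n}\projtwo_a P\itensortwo\projtwo_b Q$ and using the inductive vanishing leaves only the two ``top'' terms, in which the degree-$0$ factors $\projtwo_0\deshuffle\bar{X}_{ut}=\deshuffle\proj_0\bar{X}_{ut}=\unit\etensor\unit$ and $\projtwo_0(\bar{X}_{su}\etensor\bar{X}_{su})=\unit\etensor\unit$ act as the unit of $\itensortwo$. Hence
\[
 \projtwo_n E_{st}=\projtwo_n E_{su}+\projtwo_n E_{ut}\qquad\forallc s,u,t,
\]
so $F:=\projtwo_n E$ is an \emph{additive} cocycle, and setting $G_t:=F_{0t}$ gives $F_{st}=G_t-G_s$.

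The last step --- and really the only analytic one --- is to annihilate this additive cocycle using the Hölder bounds from Lyons' theorem. From $\lVert\bar{X}_{st}\rVert_m\lesssim\lvert t-s\rvert^{m\gamma}$ for every $m$ one reads off, term by term in $\deshuffle\proj_n\bar{X}_{st}$ and in $\sum_{k=0}^{n}\proj_k\bar{X}_{st}\etensor\proj_{n-k}\bar{X}_{st}$, the estimate $\lVert\projtwo_n E_{st}\rVert\lesssim\lvert t-s\rvert^{n\gamma}$. Because $\level=\lfloor\gamma^{-1}\rfloor$ forces $(\level+1)\gamma>1$, for $n>\level$ we have $n\gamma>1$. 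Iterating the additive relation over a uniform partition $s=t_0<\dots<t_N=t$ gives $\projtwo_n E_{st}=\sum_i\projtwo_n E_{t_i t_{i+1}}$, whence $\lVert\projtwo_n E_{st}\rVert\lesssim N\,(\lvert t-s\rvert/N)^{n\gamma}=\lvert t-s\rvert^{n\gamma}N^{1-n\gamma}\to 0$ as $N\to\infty$. Thus $\projtwo_n E_{st}=0$, which closes the induction and the proof. I expect the main obstacle to be not the algebra but the conceptual move of recognizing that the group-like defect reduces, degree by degree, to an additive increment that the super-unit exponent $n\gamma>1$ kills; the rest is bookkeeping with the graded projections of Subsection \ref{subsection:formal_series}.
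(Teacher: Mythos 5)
Your argument is correct and complete; note, however, that the paper does not actually prove this statement --- it is quoted from Corollary 3.9 of \cite{cassdriverlimlitterer15}, where it is established in the more general Banach-space setting --- so there is no in-paper proof to compare against, and what you have written supplies a self-contained proof for the $\reals^d$ case. Every step checks out: the base case $n\le\level$ is exactly the hypothesis $X_{st}\in\geogroup{\level}{d}$ read off degree by degree, using $\projtwo_n\deshuffle=\deshuffle\proj_n$; the identity $E_{st}=E_{su}\itensortwo\deshuffle\bar{X}_{ut}+(\bar{X}_{su}\etensor\bar{X}_{su})\itensortwo E_{ut}$ follows from \eqref{eq:lyons_extension_analytic_result} together with the fact that $\deshuffle$ is an algebra homomorphism from $(\tensoralfs{\reals^d},\itensor)$ to the $\itensortwo$-algebra; applying $\projtwo_n$ and the inductive hypothesis collapses the graded expansion to $\projtwo_n E_{st}=\projtwo_n E_{su}+\projtwo_n E_{ut}$ because the degree-zero factors equal $\unit\etensor\unit$, the unit for $\itensortwo$; and the estimate $\lVert\projtwo_n E_{st}\rVert\lesssim\lvert t-s\rvert^{n\gamma}$ --- uniform in $s,t$ because the suprema in the conclusion of Theorem \ref{thm:lyons_extension} are finite, which is the one point worth stating explicitly when you sum over the partition --- annihilates the additive two-parameter functional since $n\gamma\geq(\level+1)\gamma>1$. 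This is the same mechanism as the uniqueness part of Lyons' extension theorem (an additive increment with H\"older exponent exceeding one must vanish); what your route buys is a short elementary proof avoiding any citation, at the price of not covering the infinite-dimensional case for which the paper invokes \cite{cassdriverlimlitterer15}.
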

Note that for our setting of paths in $\reals^d$, this result is actually known for several years already, but in \cite{cassdriverlimlitterer15}, it is shown for the more general case of paths in a real Banach space. 

\begin{lemma}\label{lemma:geo_norms}\textnormal{(Based on Proposition 4 \cite{lyonsvictoir07})}~
 \begin{enumerate}
  \item For all $k\in\naturals$, there is a constant $C_k>0$ such that
  \begin{equation*}
   \lvert\langle w,g\rangle\rvert\leq C_{\lvert w\rvert}\lVert g\rVert_{\geogroup{n}{d}}^{\lvert w\rvert}\quad\forallc n\in\naturals,\,g\in\geogroup{n}{d},\,w\in\words_d^n\setminus\{\unit\}.
  \end{equation*}
  \item For all $n\in\naturals$, there is a constant $C_n'>0$ such that
  \begin{equation*}
   \lVert g\rVert_{\geogroup{n}{d}}\leq C_n'\sup_{w\in\words_d^n\setminus\{\unit\}}\lvert\langle w,g\rangle\rvert^{1/\lvert w\rvert}\quad\forallc g\in\geogroup{n}{d}.
  \end{equation*}
 \end{enumerate}
\end{lemma}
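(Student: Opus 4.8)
The plan is to use the bijection $g=\exp_{\itensor_n}(l)$ with $l:=\log_{\itensor_n}(g)\in\geolie{n}{d}$, together with two elementary facts: (a) in the truncated algebra $\tensoraltc{n}{\reals^d}$ both $\exp_{\itensor_n}$ and $\log_{\itensor_n}$ are \emph{finite} sums, so every manipulation is purely algebraic with no convergence issues; and (b) the inner product $\dualbracket$ has the words as an orthonormal basis and is multiplicative across $\itensor$, i.e.\ $\lVert a\itensor b\rVert=\lVert a\rVert\,\lVert b\rVert$ for homogeneous $a,b$. Writing $l_m:=\proj_m l$, the definition $\lVert g\rVert_{\geogroup{n}{d}}=\sum_{m=1}^n\lVert l_m\rVert^{1/m}$ immediately gives $\lVert l_m\rVert^{1/m}\leq\lVert g\rVert_{\geogroup{n}{d}}$, hence $\lVert l_m\rVert\leq\lVert g\rVert_{\geogroup{n}{d}}^{\,m}$ for each $m$.

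For the first inequality, fix $w$ with $|w|=k\leq n$ and expand the degree-$k$ part of $g=\exp_{\itensor_n}(l)$,
\begin{equation*}
 \proj_k g=\sum_{j=1}^{k}\frac{1}{j!}\sum_{\substack{m_1,\ldots,m_j\geq 1\\ m_1+\cdots+m_j=k}}l_{m_1}\itensor\cdots\itensor l_{m_j}.
\end{equation*}
Since $\proj_k g$ does not depend on $n$ for $n\geq k$, the resulting bound will be uniform in $n$. Pairing with $w$, Cauchy--Schwarz (with $\lVert w\rVert=1$) together with multiplicativity yields, for each summand, $|\langle w,l_{m_1}\itensor\cdots\itensor l_{m_j}\rangle|\leq\prod_i\lVert l_{m_i}\rVert\leq\lVert g\rVert_{\geogroup{n}{d}}^{\,m_1+\cdots+m_j}=\lVert g\rVert_{\geogroup{n}{d}}^{\,k}$. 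As the number of summands depends only on $k$, summing gives $|\langle w,g\rangle|\leq C_k\,\lVert g\rVert_{\geogroup{n}{d}}^{\,k}$ with $C_k:=\sum_{j=1}^{k}\tfrac{1}{j!}\binom{k-1}{j-1}$, which is the claim for $k=|w|$.

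For the second inequality, put $N:=\sup_{w\in\words_d^n\setminus\{\unit\}}|\langle w,g\rangle|^{1/|w|}$, so that $|\langle w,g\rangle|\leq N^{|w|}$ for all such $w$. The decisive point is that the alphabet is finite: there are exactly $d^m$ words of length $m$, and since they are orthonormal,
\begin{equation*}
 \lVert\proj_m g\rVert^2=\sum_{|w|=m}|\langle w,g\rangle|^2\leq d^m N^{2m},
\end{equation*}
i.e.\ $\lVert\proj_m g\rVert\leq d^{m/2}N^m$. Expanding $l_m=\proj_m\log_{\itensor_n}(g)$ as the analogous finite sum of tensor products of the levels $\proj_{m_i}g$ and again invoking multiplicativity and term-counting gives $\lVert l_m\rVert\leq a_m N^m$ for a constant $a_m$ depending only on $m$ and $d$. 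Thus $\lVert l_m\rVert^{1/m}\leq a_m^{1/m}N$, and summing over $m=1,\ldots,n$ produces $\lVert g\rVert_{\geogroup{n}{d}}\leq C_n'N$ with $C_n':=\sum_{m=1}^n a_m^{1/m}$.

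I expect no genuinely deep obstacle: the whole argument is bookkeeping over finite sums controlled by Cauchy--Schwarz and the multiplicativity of the tensor inner product. The step needing the most care is the second direction, where one passes from the per-word bound $|\langle w,g\rangle|\leq N^{|w|}$ to a bound on the full Euclidean norm of each homogeneous level $\proj_m g$; this is exactly where the finiteness $\#\{w:|w|=m\}=d^m$ enters and keeps the constants finite (and explains why $C_n'$, unlike $C_k$, must depend on $n$ and $d$). One should also record the degenerate case $N=0$, which forces $g=\unit$ and makes both inequalities trivial.
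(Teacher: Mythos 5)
Your proof is correct and follows essentially the same route as the paper: both directions expand $\exp_{\itensor_n}$ and $\log_{\itensor_n}$ as finite sums in the truncated algebra, bound each graded piece via $\lVert\proj_m\log_{\itensor_n}(g)\rVert^{1/m}\leq\lVert g\rVert_{\geogroup{n}{d}}$ (resp.\ the per-word bound $N$), and absorb the term count into a constant. Your use of Cauchy--Schwarz and multiplicativity of the tensor norm merely streamlines the paper's word-by-word expansion; the substance is the same.
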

\begin{proof}(Based on the proof of Proposition 4 \cite{lyonsvictoir07})
 \begin{enumerate}
  \item Let $n\in\naturals_0$ and $x\in\mathrm{L}_d^n$ be arbitrary. For every $m,k\in\naturals$ with $m\leq n$, we have
  \begin{align*}
   \lVert x^{\itensor_n m}\rVert_k&=\bigg\lVert\Big(\sum_{v\in\words_d^n\setminus\{\unit\}}v\langle v,x\rangle\Big)^m\bigg\rVert_k\\
   &=\bigg\lVert\sum_{\substack{v_1,\ldots,v_m\in\words_d^k\setminus\{\unit\}:\\\lvert v_1\rvert+\ldots+\lvert v_m\rvert=k}}\langle v_1,x\rangle\cdots\langle v_m,x\rangle\,v_1\itensor_n\cdots\itensor_n v_m\bigg\rVert_k\\
   &\leq\sum_{\substack{v_1,\ldots,v_m\in\words_d^k\setminus\{\unit\}:\\\lvert v_1\rvert+\cdots+\lvert v_m\rvert=k}}\,\prod_{i=1}^m\lvert v_i,x\rvert\leq N_{m,k}\sup_{\substack{v_1,\ldots,v_m\in\words_d^k\setminus\{\unit\}:\\\lvert v_1\rvert+\cdots+\lvert v_m\rvert=k}}\,\prod_{i=1}^m\lvert v_i,x\rvert\\
   &\leq N_{m,k}\sup_{\substack{l_1,\ldots,l_m\in\naturals:\\l_1+\cdots+l_m=k}}\,\prod_{i=1}^m\lVert x\rVert_{l_i}\\
   &\leq N_{m,k}\sup_{\substack{l_1,\ldots,l_m\in\naturals:\\l_1+\cdots+l_m=k}}\,\prod_{i=1}^m\lVert\exp_{\itensor_n}(x)\rVert_{\geogroup{n}{d}}^{l_i}=N_{m,k}\lVert\exp_{\itensor_n}(x)\rVert_{\geogroup{n}{d}}^k,
  \end{align*}
  where $N_{m,k}:=\big\lvert\big\{v_1,\ldots,v_m\in\words_d^k\setminus\{\unit\}\big|\lvert v_1\rvert+\cdots+\lvert v_m\rvert=k\big\}\big\rvert<\infty$. Thus, for any $w\in\words_d^n$, we get
  \begin{align*}
   \big\lvert\langle w,\exp_{\itensor_n}(x)\rangle\big\rvert&=\left\lvert\sum_{m=0}^n\frac{1}{m!}\langle w,x^{\itensor_n m}\rangle\right\rvert=\left\lvert\sum_{m=0}^{\lvert w\rvert}\frac{1}{m!}\langle w,\proj_{\lvert w\rvert}x^{\itensor_n m}\rangle\right\rvert\\
   &\leq\sum_{m=0}^{\lvert w\rvert}\frac{1}{m!}\left\lVert x^{\itensor_n m}\right\rVert_{\lvert w\rvert}\leq\underbrace{\sum_{m=0}^{\lvert w\rvert}\frac{N_{m,\lvert w\rvert}}{m!}}_{:=C_{\lvert w\rvert}}\,\lVert\exp_{\itensor_n}(x)\rVert_{\geogroup{n}{d}}^{\lvert w\rvert}.
  \end{align*}
  \item Let $n\in\naturals_0$ and $\unit+u\in\geogroup{n}{d}$ be arbitrary. For every $m,k\in\naturals$ with $m\leq n$, we have, with $N_{m,k}$ and similar first steps as above,
  \begin{multline*}
   \lVert u^m\lVert_k^{1/k}\leq N_{m,k}^{1/k}\sup_{\substack{v_1,\ldots,v_m\in\words_d^k\setminus\{\unit\}:\\\lvert v_1\rvert+\cdots+\lvert v_m\rvert=k}}\,\prod_{i=1}^m\lvert v_i,u\rvert^{1/k}\\
   \leq N_{m,k}^{1/k}\sup_{\substack{v_1,\ldots,v_m\in\words_d^k\setminus\{\unit\}:\\\lvert v_1\rvert+\cdots+\lvert v_m\rvert=k}}\,\prod_{i=1}^m\big(\sup_{v\in\words_d^k\setminus\{\unit\}}\lvert\langle v,u\rangle\rvert^{1/\lvert v\rvert}\big)^{\lvert v_i\rvert/k}=N_{m,k}^{1/k}\sup_{v\in\words_d^k\setminus\{\unit\}}\lvert\langle v,u\rangle\rvert^{1/\lvert v\rvert}.
  \end{multline*}
  Thus,
  \begin{align*}
   \lVert\unit+u\rVert_{\geogroup{n}{d}}&=\sum_{k=1}^n\lVert\log_{\itensor_n}(\unit+u)\rVert_k^{1/k}=\sum_{k=1}^n\bigg\lVert\sum_{m=1}^k\frac{(-1)^{m-1}u^{\itensor_n m}}{m}\bigg\rVert_k^{1/k}\\
   &\leq\sum_{k=1}^n\Big(\sum_{m=1}^k\tfrac{1}{m}\lVert u^{\itensor_n m}\rVert_k\Big)^{1/k}\stackrel{(\diamondsuit)}{\leq}\sum_{k=1}^n\sum_{m=1}^k\tfrac{1}{\sqrt[k]{m}}\lVert u^{\itensor_n m}\rVert_k^{1/k}\\
   &\leq\underbrace{\sum_{k=1}^n\sum_{m=1}^k\sqrt[k]{\tfrac{N_{m,k}}{m}}}_{:=C'_n}\sup_{v\in\words_d^k\setminus\{\unit\}}\lvert\langle v,u\rangle\rvert^{1/\lvert v\rvert}=C'_n\sup_{v\in\words_d^k\setminus\{\unit\}}\lvert\langle v,\unit+u\rangle\rvert^{1/\lvert v\rvert},
  \end{align*}
  where at $(\diamondsuit)$, we iteratively used the simple fact that $(a+b)^{1/k}\leq a^{1/k}+b^{1/k}$ for all $a,b>0$, $k\in\naturals$.

 \end{enumerate}
\end{proof}

The following theorem shows that the four definitions are indeed equivalent.
\begin{thm}\label{thm:geo_equivalence}
 Let $\gamma\in(0,1)$ and $\level$ denote the integer part of $\tfrac{1}{\gamma}$. The maps
 \begin{align*}
  &I_1^\gamma:\,\geopathsfull^\gamma(\timeT,\reals^d)\rightarrow\geopaths^\gamma(\timeT,\reals^d),\,\fullrX\mapsto\rX,\,\rX_t:=\quotientmap^\level\fullrX_t,\\
  &I_2^\gamma:\,\geopaths^\gamma(\timeT,\reals^d)\rightarrow\geopathsdual^\gamma(\timeT,\reals^d),\,\rX\mapsto\dualrX,\,\dualrX_{st}(w):=\langle w,\rX_{st}\rangle,\\
  &I_3^\gamma:\,\geopathsfull^\gamma(\timeT,\reals^d)\rightarrow\geopathsdualfull^\gamma(\timeT,\reals^d),\,\fullrX\mapsto\dualfullrX,\,\dualfullrX_{st}(w):=\langle w,\fullrX_{st}\rangle
 \end{align*}
 are well-defined and bijective.
\end{thm}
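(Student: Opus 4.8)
The plan is to treat the two dualization maps $I_2^\gamma$ and $I_3^\gamma$ together, since $I_3^\gamma$ is simply the untruncated analogue of $I_2^\gamma$, and to reserve the real work for the truncation map $I_1^\gamma$, where Lyons' Extension Theorem (Theorem~\ref{thm:lyons_extension}) is the essential tool. Throughout I will use that the duality pairing on $\lingen{\words_d}$ is non-degenerate (indeed an inner product), that $\shuffle$ is dual to $\deshuffle$ and $\deconc$ dual to $\itensor$ (Lemmas~\ref{lemma:shuffle} and~\ref{lemma:deconcatenation}), and that for group-like $g$ one has $\antitensor g = g^{\itensor-1}$ together with $\proj_0 g = \langle \unit, g\rangle = 1$.

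For $I_2^\gamma$ I would first check well-definedness of $\dualrX_{st} := \langle \cdot, \rX_{st}\rangle$. Condition~1 of Subdefinition~\ref{defn:geometric_dualrX} is exactly the statement that $\langle \cdot, \rX_{st}\rangle$ is a character of the truncated shuffle algebra, which holds because $\rX_{st} \in \geogroup{\level}{d}$ is group-like (the truncated counterpart of Theorem~\ref{thm:grouplike_character}, via the duality of $\shuffle$ and $\deshuffle$); the length restriction $\lvert w_1\rvert + \lvert w_2\rvert \leq \level$ is precisely what keeps the identity inside the truncation. Condition~2 (Chen's relation) is the dual, via $\deconc \isdual \itensor$, of the multiplicativity $\rX_{st} = \rX_{su} \itensor_\level \rX_{ut}$, which in turn follows from $\rX_u \itensor_\level \antitensor \rX_u = \unit$, while $\dualrX_{tt} = \counit_\level$ because $\rX_{tt} = \unit$. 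Condition~3 follows from Lemma~\ref{lemma:geo_norms}(1), which bounds $\lvert\langle w, \rX_{st}\rangle\rvert$ by a constant times $\lVert\rX_{st}\rVert_{\geogroup{\level}{d}}^{\lvert w\rvert}$, hence by $\lvert t-s\rvert^{\gamma\lvert w\rvert}$. Injectivity is immediate from non-degeneracy of the pairing, which recovers $\rX_t = \rX_{0t}$. For surjectivity I would, given $\dualrX$, define $\rX_t$ by $\langle \cdot, \rX_t\rangle := \dualrX_{0t}$; Condition~1 makes this element group-like again, Condition~3 with Lemma~\ref{lemma:geo_norms}(2) yields the Hölder bound, and Chen's relation guarantees that the reconstructed $\langle \cdot, \rX_{st}\rangle$ agrees with $\dualrX_{st}$. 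The map $I_3^\gamma$ is handled by the identical argument, replacing the truncated statements by their formal-series counterparts (using part~2 of Theorem~\ref{thm:grouplike_character}) and dropping the length restrictions.

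The map $I_1^\gamma$ is the heart of the theorem. Well-definedness is quick: $\quotientmap^\level$ sends group-like formal series to truncated group-like elements (Remark~\ref{rmk:grouplike_spacefs_grouplike_n}), it commutes with the group operations so that $\rX_{st} = \quotientmap^\level \fullrX_{st}$, and the required Hölder estimate is exactly the $n = \level$ instance of the condition defining $\geopathsfull^\gamma$. For surjectivity I would set $X_{st} := \rX_{st} \in \tensoraltc{\level}{\reals^d}$ and verify the hypotheses of Theorem~\ref{thm:lyons_extension}: $\proj_0 X_{st} = 1$, the multiplicativity $X_{su} \itensor_\level X_{ut} = X_{st}$, and the level-wise estimates~\eqref{eq:lyons_extension_analytic_condition}, the last of which I obtain from the single homogeneous bound on $\lVert\rX_{st}\rVert_{\geogroup{\level}{d}}$ via Lemma~\ref{lemma:geo_norms}(1). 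Lyons' theorem then produces a unique multiplicative extension $\bar{X}$ with all-level estimates, Theorem~\ref{thm:lyons_extension_geogroup} ensures $\bar{X}_{st} \in \geogroupfs{d}$, and I set $\fullrX_t := \bar{X}_{0t}$; converting the level-wise bounds back into the homogeneous bounds $\lVert\quotientmap^n \fullrX_{st}\rVert_{\geogroup{n}{d}} \lesssim \lvert t-s\rvert^\gamma$ via Lemma~\ref{lemma:geo_norms}(2) shows $\fullrX \in \geopathsfull^\gamma$, while $\quotientmap^\level \fullrX_t = \rX_t$ by construction. Injectivity then follows from the uniqueness clause of Theorem~\ref{thm:lyons_extension}: two full paths with the same truncation share the same $X_{st}$ and both satisfy the extension hypotheses (again after translating their norms via Lemma~\ref{lemma:geo_norms}), so they coincide.

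The main obstacle will be the constant bookkeeping between the two genuinely different analytic conditions: the definitions of $\geopaths^\gamma$ and $\geopathsfull^\gamma$ use the homogeneous Hölder norm $\lVert\cdot\rVert_{\geogroup{n}{d}}$, whereas Theorem~\ref{thm:lyons_extension} is phrased in the level-wise norms $\lVert\cdot\rVert_m$ with the scaling $\lvert u-s\rvert^{m\gamma}$. Both surjectivity arguments and the uniqueness step for injectivity hinge on passing cleanly between these two, which is exactly the content of the two inequalities in Lemma~\ref{lemma:geo_norms}; verifying that the exponents $1/\lvert w\rvert$ and $\lvert w\rvert$ there really reproduce the homogeneity $\lvert t-s\rvert^\gamma$ versus $\lvert t-s\rvert^{\gamma\lvert w\rvert}$ is where care is needed. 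Everything else — group-likeness being preserved under truncation, dualization, and extension — is supplied directly by the cited results.
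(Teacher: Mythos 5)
Your proposal is correct and follows essentially the same route as the paper's proof: well-definedness of the dualization maps via the dualities $\shuffle\isdual\deshuffle$ and $\itensor\isdual\deconc$ together with Lemma \ref{lemma:geo_norms}, surjectivity by reconstructing the group-valued path from its coefficients using non-degeneracy of the pairing, and bijectivity of $I_1^\gamma$ from Theorems \ref{thm:lyons_extension} and \ref{thm:lyons_extension_geogroup} with Lemma \ref{lemma:geo_norms} translating between the homogeneous and level-wise analytic conditions. Your explicit identification of where the norm bookkeeping enters matches exactly the role the paper assigns to that lemma.
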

\begin{proof}~
 \begin{enumerate}
  \item\emph{$I_1^\gamma:\,\geopathsfull^\gamma(\timeT,\reals^d)\rightarrow\geopaths^\gamma(\timeT,\reals^d)$ is well-defined.} For each $\fullrX\in\geopathsfull^\gamma(\timeT,\reals^d)$, we have that $\rX:=\quotientmap^\level\fullrX\in\geopaths^\gamma(\timeT,\reals^d)$ since $\quotientmap^\level\geogroupfs{d}=\geogroup{\level}{d}$ (see Corollary \ref{cor:primitive_n_grouplike_n}) and
  \begin{equation*}
   \quotientmap^\level(\fullrX_{st})=\quotientmap^\level(\antitensor\fullrX_s\itensor\fullrX_t)= \quotientmap^\level\antitensor\fullrX_s\itensor_\level\quotientmap^\level\fullrX_t= \antitensor\quotientmap^\level\fullrX_s\itensor_\level\quotientmap^\level\fullrX_t=\antitensor\rX_s\itensor_\level\rX_t,
  \end{equation*}
  where the latter together with the analytic condition of Definition \ref{defn:geometric_fullrX} implies the analytic condition of Definition \ref{defn:geometric_rX}.
  \item\emph{$I_1^\gamma$ is bijective.} Direct consequence of Theorems \ref{thm:lyons_extension} and \ref{thm:lyons_extension_geogroup}. This is due to the fact that the analytic condition of Definition \ref{defn:geometric_rX} implies \eqref{eq:lyons_extension_analytic_condition} and that \eqref{eq:lyons_extension_analytic_result} implies the analytic condition of Definition \ref{defn:geometric_fullrX}. Indeed, by Lemma \ref{lemma:geo_norms}, we have
  \begin{equation*}
   \lVert\rX_{st}\rVert_m=\sqrt{\sum_{w\in\words_{d,m}}\langle w,\rX_{st}\rangle^2}\leq\sqrt{\lvert\words_{d,m}\rvert}\,C_m\lVert\rX_{st}\rVert_{\geogroup{n}{d}}^m
  \end{equation*}
  and
  \begin{equation*}
   \lVert\fullrX_{st}\rVert_{\geogroup{n}{d}}^m\leq (C'_m)^m\sup_{w\in\words_d^n\setminus\{\unit\}}\lvert\langle w,\fullrX_{st}\rangle\rvert\leq(C'_m)^m\lVert \fullrX_{st}\rVert_m.
  \end{equation*}
  \item\emph{$I_2^\gamma:\,\geopaths^\gamma(\timeT,\reals^d)\rightarrow\geopathsdual^\gamma(\timeT,\reals^d)$ is well-defined.} Let $\rX\in\geopaths^\gamma(\timeT,\reals^d)$ be arbitrary. Then, $\dualrX_{st}:=\langle\cdot,\rX_{st}\rangle\in\adual{\lingen{\words_d^\level}}$ for all $s,t\in[0,T]$. We have $\dualrX_{st}(\unit)=\langle\unit,\rX_{st}\rangle=1$ since $\rX_{st}\in\geogroup{\level}{d}\subseteq\groupfs{\lingen{\words_d^\level}}$. For all $w_1, w_2\in\words_d^\level$ with $\lvert w_1\rvert+\lvert w_2\rvert\leq\level$ we get, using the duality of $\shuffle$ and $\deshuffle$,
  \begin{align*}
   \dualrX_{st}(w_1\shuffle w_2)&=\langle w_1\shuffle w_2,\rX_{st}\rangle=\langle w_1\etensor w_2,\deshuffle\rX_{st}\rangle\diamondequal\langle w_1\etensor w_2,\quotientmaptwo^\level(\rX_{st}\etensor\rX_{st})\rangle\\
   &=\langle w_1\etensor w_2,\rX_{st}\etensor\rX_{st}\rangle=\langle w_1,\rX_{st}\rangle\langle w_2,\rX_{st}\rangle=\dualrX_{st}(w_1)\dualrX_{st}(w_2),
  \end{align*}
  where at $(\diamondsuit)$ we used the fact that $\langle w_1\etensor w_2,w_3\etensor w_4\rangle=0$ for all $w_3, w_4\in\words_d$ such that $\lvert w_3\rvert+\lvert w_4\rvert>\level$. Furthermore, using the duality of $\itensor$ and $\deconc$, we have for all $w\in\words_d^\level$ that
  \begin{align*}
   \dualrX_{st}(w)&=\langle w,\rX_{st}\rangle=\langle w,\rX_{su}\itensor_\level\rX_{ut}\rangle\diamondequal\langle w,\quotientmap^\level(\rX_{su}\itensor\rX_{ut})\rangle=\langle w,\rX_{su}\itensor\rX_{ut}\rangle\\
   &=\langle\deconc w,\rX_{su}\etensor\rX_{ut}\rangle=(\dualrX_{su}\etensor\dualrX_{ut})\deconc w,
  \end{align*}
  where at $(\diamondsuit)$ we used the fact that $\langle w,v\rangle=0$ for all $v\in\words_d$ such that $\lvert v\rvert>\level$. Finally, the fact that the analytic condition of Definition \ref{defn:geometric_dualrX} follows from that of Definition \ref{defn:geometric_rX} is a direct consequence of Lemma \ref{lemma:geo_norms}. Thus, $\dualrX\in\geopathsdual^\gamma([0,T],\reals^d)$.
  \item\emph{$I_2^\gamma$ is bijective.} First of all, if $\fullrX^1,\fullrX^2\in\geopathsfull^\gamma(\timeT,\reals^d)$ with $\fullrX^1\neq\fullrX^2$, then there is $t\in[0,T]$ such that $\fullrX^1_t\neq\fullrX^2_t$ and thus, there is $w\in\words_d$ such that $\langle w,\fullrX_{0t}^1\rangle\neq\langle w,\fullrX^2_{0t}\rangle$. Thus, $I_2^\gamma$ is injective.
  
  \noindent Let now $\dualrX\in\geopaths^\gamma([0,T],\reals^d)$ be arbitrary. Put $\rX_{st}:=\sum_{w\in\words_d^\level}\dualrX(w)w$. Then, we obviously have $\dualrX_{st}=\langle\,\cdot\,,\rX_{st}\rangle$ for all $s,t\in[0,T]$. For all $w_1,w_2\in\words_d^\level$ such that $\lvert w_1\rvert+\lvert w_2\rvert\leq\level$, we get, again by the duality of $\shuffle$ and $\deshuffle$,
  \begin{align*}
   \langle w_1\etensor w_2,\deshuffle\rX_{st}\rangle&=\langle w_1\shuffle w_2,\rX_{st}\rangle=\dualrX_{st}(w_1\shuffle w_2)=\dualrX_{st}(w_1)\dualrX_{st}(w_2)\\
   &=\langle w_1,\rX_{st}\rangle\langle w_2,\rX_{st}\rangle=\langle w_1\etensor w_2,\rX_{st}\etensor\rX_{st}\rangle\\
   &=\langle w_1\etensor w_2,\quotientmaptwo^\level(\rX_{st}\etensor\rX_{st})\rangle.
  \end{align*}
  Since $\dualbracket$ on $\lingen{\words_d}\etensor\lingen{\words_d}$ restricted to $[\lingen{\words_d}\etensor\lingen{\words_d}]_\level$ is again an inner product, we conclude $\deshuffle\rX_{st}=\quotientmaptwo^\level(\rX_{st}\etensor\rX_{st}$ and together with $\langle\unit,\rX_{st}\rangle=\dualrX_{st}(\unit)=1$ therefore $\rX_{st}\in\geogroup{\level}{d}$ for all $s,t\in[0,T]$. Furthermore, for all $w\in\words_d^\level$ and all $s,u,t\in[0,T]$, we have
  \begin{align*}
   \langle w,\rX_{st}\rangle&=\dualrX_{st}(w)=(\dualrX_{su}\etensor\dualrX_{ut})\deconc w=\langle\deconc w,\rX_{su}\etensor\rX_{ut}\rangle=\langle w,\rX_{su}\itensor\rX_{ut}\rangle\\
   &=\langle w,\quotientmap^\level(\rX_{su}\itensor\rX_{ut})\rangle=\langle w,\rX_{su}\itensor_\level\rX_{ut}\rangle.
  \end{align*}
  Since $\dualbracket$ on $\lingen{\words_d}$ restricted to $\lingen{\words_d^\level}$ is again an inner product, we conclude $\rX_{st}=\rX_{su}\itensor_\level\rX_{ut}$. Thus, putting $\rX_t:=\rX_{0t}$ for all $t\in[0,T]$ and using $\dualrX_{00}=\counit_\level$, we get
  \begin{equation*}
   \unit=\rX_{00}=\rX_{0t}\itensor_\level\rX_{t0}=\rX_t\itensor_\level\rX_{t0}
  \end{equation*}
  and therefore $\rX_{st}=\rX_{s0}\itensor_\level\rX_{0t}=\rX_s^{\itensor_\level -1}\itensor_\level\rX_t$. Hence, using Lemma \ref{lemma:geo_norms} and the analytic condition of Definition \ref{defn:geometric_dualrX}, we get the analytic condition of Definition \ref{defn:geometric_rX} and conclude that indeed $\rX\in\geopaths^\gamma(\timeT,\reals^d)$. This means we have shown that $I_2^\gamma$ is also surjective.
  \item\emph{$I_l^\gamma:\,\geopathsfull^\gamma(\timeT,\reals^d)\rightarrow\geopathsdualfull^\gamma(\timeT,\reals^d)$ is well-defined.} Let $\rX\in\geopathsfull^\gamma(\timeT,\reals^d)$ be arbitrary. Then, $\dualfullrX_{st}:=\langle\cdot,\fullrX_{st}\rangle\in\adual{\lingen{\words_d}}$ for all $s,t\in[0,T]$. We have $\dualfullrX_{st}(\unit)=\langle\unit,\fullrX_{st}\rangle=1$ since $\fullrX_{st}\in\geogroupfs{d}\subseteq\groupfs{\lingen{\words_d}}$. For all $m\in\naturals$ and $w_1, w_2\in\words_d^m$ with $\lvert w_1\rvert+\lvert w_2\rvert\leq m$ we get, using the duality of $\shuffle$ and $\deshuffle$,
  \begin{align*}
   \dualfullrX_{st}(w_1\shuffle w_2)&=\langle w_1\shuffle w_2,\fullrX_{st}\rangle=\langle w_1\shuffle w_2,\quotientmap^m\fullrX_{st}\rangle=\langle w_1\etensor w_2,\deshuffle\quotientmap^m\fullrX_{st}\rangle\\
   &=\langle w_1\etensor w_2,\quotientmaptwo^m\deshuffle\fullrX_{st}\rangle=\langle w_1\etensor w_2,\quotientmaptwo^m(\fullrX_{st}\etensor\fullrX_{st})\rangle\\
   &=\langle w_1\etensor w_2,\fullrX_{st}\etensor\fullrX_{st}\rangle=\langle w_1,\fullrX_{st}\rangle\langle w_2,\fullrX_{st}\rangle=\dualfullrX_{st}(w_1)\dualfullrX_{st}(w_2).
  \end{align*}
Furthermore, using the duality of $\itensor$ and $\deconc$, we have for all $m\in\naturals$ and $w\in\words_d^m$ that
  \begin{align*}
   \dualfullrX_{st}(w)&=\langle w,\fullrX_{st}\rangle=\langle w,\fullrX_{su}\itensor\fullrX_{ut}\rangle=\langle w,\quotientmap^m(\fullrX_{su}\itensor\fullrX_{ut})\rangle=\langle w,\quotientmap^m\fullrX_{su}\itensor_m\quotientmap^m\fullrX_{ut}\rangle\\
   &=\langle w,\quotientmap^m(\quotientmap^m\fullrX_{su}\itensor\quotientmap^m\fullrX_{ut})\rangle=\langle w,\quotientmap^m\fullrX_{su}\itensor\quotientmap^m\fullrX_{ut}\rangle\\
   &=\langle\deconc w,\quotientmap^m\fullrX_{su}\etensor\quotientmap^m\fullrX_{ut}\rangle=\langle\deconc w,\fullrX_{su}\etensor\fullrX_{ut}\rangle=(\dualfullrX_{su}\etensor\dualfullrX_{ut})\deconc w.
  \end{align*}
Finally, the fact that the analytic condition of Definition \ref{defn:geometric_dualfullrX} follows from that of Definition \ref{defn:geometric_fullrX} is a direct consequence of Lemma \ref{lemma:geo_norms}. Thus, $\dualrX\in\geopathsdualfull^\gamma([0,T],\reals^d)$.
  \item\emph{$I_2^\gamma$ is bijective.} First of all, if $\rX^1,\rX^2\in\geopaths^\gamma(\timeT,\reals^d)$ with $\rX^1\neq\rX^2$, then there is $t\in[0,T]$ such that $\rX^1_t\neq\rX^2_t$ and thus, there is $w\in\words_d^\level$ such that $\langle w,\rX_{0t}^1\rangle\neq\langle w,\rX^2_{0t}\rangle$. Thus, $I_2^\gamma$ is injective.
  
  \noindent Let now $\dualfullrX\in\geopathsdualfull^\gamma([0,T],\reals^d)$ be arbitrary. Put $\fullrX_{st}:=\sum_{w\in\words_d}\dualfullrX(w)w$. Then, we obviously have $\dualfullrX_{st}=\langle\,\cdot\,,\fullrX_{st}\rangle$ for all $s,t\in[0,T]$. For all $m\in\naturals$ and $w_1,w_2\in\words_d^m$ such that $\lvert w_1\rvert+\lvert w_2\rvert\leq m$, we get, again by the duality of $\shuffle$ and $\deshuffle$,
  \begin{align*}
   \langle w_1\etensor w_2,\deshuffle\fullrX_{st}\rangle&=\langle w_1\etensor w_2,\quotientmaptwo^m\deshuffle\fullrX_{st}\rangle=\langle w_1\etensor w_2,\deshuffle\quotientmap^m\fullrX_{st}\rangle=\langle w_1\shuffle w_2,\quotientmap^m\fullrX_{st}\rangle\\
   &=\langle w_1\shuffle w_2,\fullrX_{st}\rangle=\dualfullrX_{st}(w_1\shuffle w_2)=\dualfullrX_{st}(w_1)\dualfullrX_{st}(w_2)\\
   &=\langle w_1,\fullrX_{st}\rangle\langle w_2,\fullrX_{st}\rangle=\langle w_1\etensor w_2,\fullrX_{st}\etensor\fullrX_{st}\rangle
  \end{align*}
  Since $(\lingen{\words_d}\etensor\lingen{\words_d},\spacefs{\lingen{\words_d}\etensor\lingen{\words_d}})$ is a pair of dual vector spaces under $\dualbracket$, we conclude $\deshuffle\fullrX_{st}=\fullrX_{st}\etensor\fullrX_{st}$ and together with $\langle\unit,\fullrX_{st}\rangle=\dualfullrX_{st}(\unit)=1$ therefore $\fullrX_{st}\in\geogroupfs{d}$ for all $s,t\in[0,T]$. Furthermore, for all $m\in\naturals$, $w\in\words_d^m$ and all $s,u,t\in[0,T]$, we have
  \begin{align*}
   \langle w,\fullrX_{st}\rangle&=\dualfullrX_{st}(w)=(\dualfullrX_{su}\etensor\dualfullrX_{ut})\deconc w=\langle\deconc w,\fullrX_{su}\etensor\fullrX_{ut}\rangle=\langle\deconc w,\quotientmap^m\fullrX_{su}\etensor\quotientmap^m\fullrX_{ut}\rangle\\
   &=\langle w,\quotientmap^m\fullrX_{su}\itensor\quotientmap^m\fullrX_{ut}\rangle=\langle w,\quotientmap^m(\quotientmap^m\fullrX_{su}\itensor\quotientmap^m\fullrX_{ut})\rangle=\langle w,\quotientmap^m\fullrX_{su}\itensor_m\quotientmap^m\fullrX_{ut}\rangle\\
   &=\langle w,\quotientmap^m(\fullrX_{su}\itensor\fullrX_{ut})\rangle=\langle w,\fullrX_{su}\itensor_\level\fullrX_{ut}\rangle.
  \end{align*}
  Since $(\lingen{\words_d},\spacefs{\lingen{\words_d}})$ is a pair of dual vector spaces under $\dualbracket$, we conclude $\fullrX_{st}=\fullrX_{su}\itensor\fullrX_{ut}$. Thus, putting $\fullrX_t:=\rX_{0t}$ for all $t\in[0,T]$ and using $\dualfullrX_{00}=\counit$, we get
  \begin{equation*}
   \unit=\fullrX_{00}=\fullrX_{0t}\itensor\fullrX_{t0}=\fullrX_t\itensor\fullrX_{t0}
  \end{equation*}
  and therefore $\fullrX_{st}=\fullrX_{s0}\itensor\fullrX_{0t}=\fullrX_s^{\itensor -1}\itensor\fullrX_t$. Hence, using Lemma \ref{lemma:geo_norms} and the analytic condition of Definition \ref{defn:geometric_dualfullrX}, we get the analytic condition of Definition \ref{defn:geometric_fullrX} and conclude that indeed $\fullrX\in\geopathsfull^\gamma(\timeT,\reals^d)$. This means we have shown that $I_2^\gamma$ is also surjective.
 \end{enumerate}

\end{proof}

\begin{thm}\label{thm:geom_roughpaths_reg_str}\textnormal{(Section 4.4 \cite{hairer14})}\newline
 Consider maps $\Pi:\,\timeT\rightarrow\linear\big(\lingen{\words_d},\mathrm{C}(\timeT,\reals^d)\big)$ and $\Gamma:\,\timeT^2\rightarrow\linear(\lingen{\words_d},\lingen{\words_d})$. They satisfy the conditions
 \begin{enumerate}
  \item $(\Pi_s \unit)(t)=1$ and $\Gamma_{st}(x_1\shuffle x_2)=\Gamma_{st}x_1\shuffle\Gamma_{st}x_2$ for all $s,t\in\timeT,\,x_1,x_2\in\lingen{\words_d}$,
  \item $\Pi_s=\Pi_u\Gamma_{us}$ and $\Gamma_{su}\Gamma_{ut}=\Gamma_{st}$ and $\deconc\Gamma_{st}=(\Gamma_{st}\etensor\id)\deconc$ for all $s,u,t\in\timeT$,
  \item $\lvert(\Pi_s w)(t)\rvert\lesssim\lvert t-s\rvert^{\gamma\lvert w\rvert}$ and $\lVert\Gamma_{st}w\rVert_m\lesssim\lvert t-s\rvert^{\gamma (\lvert w\rvert-m)}$ for all words $w$ and $m<\lvert w\rvert$, uniformly over all $s,t\in[0,T]$
 \end{enumerate}
 if and only if they are given by
 \begin{equation*}
  (\Pi_s x)(t):=\dualfullrX_{st}(x),\quad\Gamma_{st}x:=(\dualfullrX_{ts}\etensor\id)\deconc x=\sum_{(x)}^{\itensor}\dualfullrX_{ts}(x_1)x_2
 \end{equation*}
 for some geometric rough path $\dualfullrX\in\geopathsdualfull^\gamma([0,T],\reals^d)$.
 In this case, we furthermore have $\Gamma_{st}w-w\in\lingen{\words_d^{n-1}}$ for all words $w$ with $\lvert w\rvert=n$ and $\Pi_t(x_1\shuffle x_2)=\Pi_t x_1\Pi_t x_2$ for all $t\in\timeT$ and $x_1, x_2\in\lingen{\words_d}$.
\end{thm}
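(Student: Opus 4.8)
The plan is to prove the two implications separately and to extract the two supplementary assertions from the representation obtained along the way; throughout I work in the shuffle Hopf algebra $(\lingen{\words_d},\shuffle,\deconc,\counit)$ and write $\deconc x=\sum_{(x)}^{\itensor}x_1\etensor x_2$. For the easy implication, that the stated formulas force the three conditions, I would substitute $(\Pi_s x)(t):=\dualfullrX_{st}(x)$ and $\Gamma_{st}x:=(\dualfullrX_{ts}\etensor\id)\deconc x$ and verify each item using the properties of $\dualfullrX\in\geopathsdualfull^\gamma$ from Definition \ref{defn:geometric_dualfullrX}. Condition 1 is $\dualfullrX_{st}(\unit)=1$ together with the fact that, since $\dualfullrX_{ts}$ is a shuffle character and $\deconc$ is an algebra morphism into $(\lingen{\words_d}\etensor\lingen{\words_d},\shuffletwo)$, the twist $\Gamma_{st}$ is again a shuffle homomorphism. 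In condition 2, the relation $\Pi_s=\Pi_u\Gamma_{us}$ is Chen's relation, $\Gamma_{su}\Gamma_{ut}=\Gamma_{st}$ follows from coassociativity of $\deconc$ and Chen's relation, and $\deconc\Gamma_{st}=(\Gamma_{st}\etensor\id)\deconc$ is coassociativity of $\deconc$ in Sweedler form. Condition 3 is immediate from property 3 of the definition, since the grade-$m$ part of $\Gamma_{st}w$ only involves coefficients $\dualfullrX_{ts}(w_1)$ with $\lvert w_1\rvert=\lvert w\rvert-m$.

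For the converse I would \emph{reconstruct} $\dualfullrX$ and the formula for $\Gamma$ from the abstract hypotheses by setting $\dualfullrX_{st}(x):=(\Pi_s x)(t)$. Evaluating condition 3 on the diagonal $s=t$, where $\lvert t-s\rvert^{\gamma\lvert w\rvert}=0$ for $\lvert w\rvert\geq 1$, and using $(\Pi_t\unit)(t)=1$, one obtains the crucial diagonal identity $(\Pi_t z)(t)=\counit(z)$ for all $z$, hence $\dualfullrX_{tt}=\counit$. The formula for $\Gamma$ is then obtained structurally from the comodule identity $\deconc\Gamma_{st}=(\Gamma_{st}\etensor\id)\deconc$: applying $(\counit\etensor\id)$ to both sides and using the counit property $(\counit\etensor\id)\deconc=\id$ gives $\Gamma_{st}=(\ell_{st}\etensor\id)\deconc$ with $\ell_{st}:=\counit\Gamma_{st}$. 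Inserting $\Pi_t=\Pi_s\Gamma_{st}$ from condition 2 and the diagonal identity then pins down $\ell_{st}(y)=(\Pi_s\Gamma_{st}y)(s)=\counit(\Gamma_{st}y)=(\Pi_t y)(s)=\dualfullrX_{ts}(y)$, which is exactly the claimed formula $\Gamma_{st}x=(\dualfullrX_{ts}\etensor\id)\deconc x$.

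Once the formula for $\Gamma$ is available, the three rough-path properties follow. Chen's relation $\dualfullrX_{st}=(\dualfullrX_{su}\etensor\dualfullrX_{ut})\deconc$ comes from substituting $\Pi_s=\Pi_u\Gamma_{us}$ and the formula for $\Gamma_{us}$; multiplicativity $\dualfullrX_{st}(w_1\shuffle w_2)=\dualfullrX_{st}(w_1)\dualfullrX_{st}(w_2)$ comes from $\Pi_s=\Pi_t\Gamma_{ts}$, the shuffle-homomorphism property of $\Gamma_{ts}$ (condition 1), the diagonal identity, and the multiplicativity of the counit (Remark \ref{rmk:comp_req}); and property 3 is condition 3 verbatim, so $\dualfullrX\in\geopathsdualfull^\gamma$. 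This same computation proves the supplementary claim $\Pi_t(x_1\shuffle x_2)=\Pi_t x_1\,\Pi_t x_2$, while the remaining claim $\Gamma_{st}w-w\in\lingen{\words_d^{n-1}}$ for $\lvert w\rvert=n$ is read off from the deconcatenation sum, whose only grade-$n$ contribution is the term $\dualfullrX_{ts}(\unit)\,w=w$.

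The main obstacle is the reconstruction step in the converse direction: recognizing that the cointeraction identity $\deconc\Gamma_{st}=(\Gamma_{st}\etensor\id)\deconc$ forces $\Gamma_{st}$ to be convolution with the single functional $\ell_{st}=\counit\Gamma_{st}$, and then identifying $\ell_{st}$ with the diagonal value $(\Pi_t\,\cdot\,)(s)$ via the vanishing of $(\Pi_t z)(t)$ off the unit. Once this structural identification is made, everything else is routine bookkeeping with Sweedler's notation, but it is precisely this step that converts the abstract hypotheses into concrete rough-path data.
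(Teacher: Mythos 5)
Your proposal is correct and follows essentially the same route as the paper: both directions hinge on the diagonal identity $(\Pi_t\,\cdot\,)(t)=\counit(\cdot)$ extracted from condition 3, and on applying $(\counit\etensor\id)$ to the cointeraction identity $\deconc\Gamma_{st}=(\Gamma_{st}\etensor\id)\deconc$ to force $\Gamma_{st}=(\counit\Gamma_{st}\etensor\id)\deconc$, exactly as in the paper (which merely starts from $\dualfullrX_{st}:=\counit\Gamma_{ts}$ rather than from $(\Pi_s\,\cdot\,)(t)$). The only detail you gloss over is the verification, in the direction starting from a rough path, that $t\mapsto\dualfullrX_{st}(x)$ is actually continuous so that $\Pi_s x$ lands in $\mathrm{C}(\timeT,\reals^d)$; the paper checks this via Chen's relation, $\dualfullrX_{uu}=\counit$ and condition 3.
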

\begin{proof}~
 \begin{enumerate}[a)]
  \item ${\implies}$: Let $\dualfullrX_{st}:=\counit\Gamma_{ts}$, where $\counit$ is the counit of $(\lingen{\words_d},\deconc)$. We have
  \begin{equation*}
   (\Pi_t\,\cdot\,)(t)=\counit(\cdot)
  \end{equation*}
   for all $t\in\timeT$ since $(\Pi_t \unit)(t)=1$ and $(\Pi_t w)(t)=0$ for all words $w$ such that $\lvert w\rvert>0$. The latter is due to 3.\@ and the continuity of $\Pi_t w$. Hence, using 2.\@ we get
  \begin{equation*}
   \dualfullrX_{st}(x)=\counit\Gamma_{ts}x=(\Pi_t\Gamma_{ts}x)(t)=(\Pi_s x)(t).
  \end{equation*}
  Due to the counit property \eqref{eq:counit_property} and 2.\@ it holds that
  \begin{equation*}
   \Gamma_{st}=(\counit\etensor\id)\deconc\Gamma_{st}=(\counit\etensor\id)(\Gamma_{st}\etensor\id)\deconc=(\counit\Gamma_{st}\etensor\id)\deconc=(\dualfullrX_{ts}\etensor\id)\deconc.
  \end{equation*}
  1.\@ and the fact that the counit is an algebra homomorphism imply
  \begin{equation*}
   \dualfullrX_{st}(w_1\shuffle w_2)=\counit\Gamma_{ts}(w_1\shuffle w_2)=\counit(\Gamma_{ts}w_1\shuffle\Gamma_{ts}w_2)=\counit\Gamma_{ts}w_1\shuffle\counit\Gamma_{ts}w_2=\dualfullrX_{st}(w_1)\shuffle\dualfullrX_{st}(w_2),
  \end{equation*}
  while from 2.\@ we conclude
  \begin{equation*}
   \dualfullrX_{st}=\counit\Gamma_{ts}=\counit\Gamma_{tu}\Gamma_{us}=\dualfullrX_{tu}\Gamma_{us}=\dualfullrX_{ut}(\dualfullrX_{su}\etensor\id)\deconc=(\dualfullrX_{su}\etensor\dualfullrX_{ut})\deconc.
  \end{equation*}
  Finally, using the fact that $\lVert\cdot\rVert_0=\lvert\counit(\cdot)\rvert$, 3.\@ leads to
  \begin{equation*}
   \lvert\dualfullrX_{st}(w)\rvert=\lvert\counit\Gamma_{ts}w\rvert=\lVert\Gamma_{ts}w\rVert_0\lesssim\lvert t-s\rvert^{\gamma\lvert w\rvert}.
  \end{equation*}
  Hence, $\dualfullrX\in\geopathsdualfull^\gamma([0,T],\reals^d)$ is a geometric rough path according to Definition \ref{defn:geometric_dualfullrX}.
  \item ${\impliedby}$: First of all, continuity of $\Pi_s x$ for all $s\in[0,T]$ and all $x\in\words_d$ follows from the fact that for all $u\in[0,T]$, we have
  \begin{align*}
   \lim_{t\rightarrow u}(\Pi_s x)(t)&=\lim_{t\rightarrow u}\dualfullrX_{st}(x)=\sum_{(x)}^{\itensor}\dualfullrX_{su}(x_1)\lim_{t\rightarrow u}\dualfullrX_{ut}(x_2)=\sum_{(x)}^{\itensor}\dualfullrX_{su}(x_1)\counit(x_2)\\
   &=\dualfullrX_{su}(x)=(\Pi_s x)(u),
  \end{align*}
  where we used Definition \ref{defn:geometric_dualfullrX} 2.\@ and 3.\@ as well as the counit property \eqref{eq:counit_property}. Due to Definition \ref{defn:geometric_dualfullrX} 1.\@ we have
  \begin{equation*}
   (\Pi_t \unit)(t)=\dualfullrX_{tt}(\unit)=1.
  \end{equation*}
  Again using Definition \ref{defn:geometric_dualfullrX} 1. and the fact that $\deconc$ is an algebra homomorphism, we compute
  \begin{align*}
   \Gamma_{st}(x\shuffle y)&=(\dualfullrX_{ts}\etensor\id)\deconc(x\shuffle y)=\sum_{(x)}^{\itensor}\sum_{(y)}^{\itensor}\dualfullrX_{ts}(x_1\shuffle y_1)\,(x_2\shuffle y_2)\\
   &=\sum_{(x)}^{\itensor}\sum_{(y)}^{\itensor}\dualfullrX_{ts}(x_1)\,\dualfullrX_{ts}(y_1)\,(x_2\shuffle y_2)=\Big(\sum_{(x)}^{\itensor}\dualfullrX_{ts}(x_1)x_2\Big)\shuffle\Big(\sum_{(y)}^{\itensor}\dualfullrX_{ts}(y_1)y_2\Big)\\
   &=\Gamma_{st}x\shuffle\Gamma_{st}y
  \end{align*}
  Likewise,
  \begin{equation*}
   (\Pi_s(x_1\shuffle x_2))(t)=\dualfullrX_{st}(x_1\shuffle x_2)=\dualfullrX_{st}(x_1)\dualfullrX_{st}(x_2)=(\Pi_s x_1)(t)(\Pi_s x_2)(t)=(\Pi_s x_1\Pi_s x_2)(t).
  \end{equation*}
  By Definition \ref{defn:geometric_dualfullrX} 2.\@ we conclude
  \begin{equation*}
   (\Pi_u\Gamma_{us}x)(t)=\dualfullrX_{ut}(\Gamma_{us}x)=\dualfullrX_{ut}(\dualfullrX_{su}\etensor\id)\deconc x=(\dualfullrX_{su}\etensor\dualfullrX_{ut})\deconc x=\dualfullrX_{st}(x)=(\Pi_s x)(t),
  \end{equation*}
  as well as, using coassociativity,
  \begin{align*}
   \Gamma_{su}\Gamma_{ut}&=(\dualfullrX_{us}\etensor\id)\deconc(\dualfullrX_{tu}\etensor\id)\deconc=(\dualfullrX_{us}\etensor\id)(\dualfullrX_{tu}\etensor\deconc)\deconc\\
   &=(\dualfullrX_{tu}\etensor\dualfullrX_{us}\etensor\id)(\id\etensor\deconc)\deconc=(\dualfullrX_{tu}\etensor\dualfullrX_{us}\etensor\id)(\deconc\etensor\id)\deconc\\
   &=(\dualfullrX_{ts}\etensor\id)\deconc=\Gamma_{st}
  \end{align*}
  and similarly
  \begin{align*}
   \deconc\Gamma_{st}&=\deconc(\dualfullrX_{ts}\etensor\id)\deconc=(\dualfullrX_{ts}\etensor\id\etensor\id)(\id\etensor\deconc)\deconc=(\dualfullrX_{ts}\etensor\id\etensor\id)(\deconc\etensor\id)\deconc\\
   &=(\Gamma_{st}\etensor\id)\deconc.
  \end{align*}
  Since for every word $w$, there are words $w_1^i,w_2^i,\lvert w_1^i\rvert=\lvert w_2^i\rvert=i$ such that $\deconc w=\sum_{i=0}^{\lvert w\rvert}w_1^{\lvert w\rvert-i} \etensor w_2^i$, we have
  \begin{align}\label{eq:translation_bounds_geo}
   \lVert\Gamma_{st}w\rVert_m\leq\sum_{i=0}^{\lvert w\rvert}\big\lvert\dualfullrX_{ts}\big(w_1^{\lvert w\rvert-i}\big)\big\rvert\lVert w_2^i\rVert_m=\big\lvert\dualfullrX_{ts}\big(w_1^{\lvert w\rvert-m}\big)\big\rvert\lesssim\lvert t-s\rvert^{\gamma (\lvert w\rvert-m)}
  \end{align}
  due to Definition \ref{defn:geometric_dualfullrX} 3., which also directly implies
  \begin{equation*}
   \lvert(\Pi_s(w))(t)\rvert=\lvert\dualfullrX_{st}(w)\rvert\lesssim\lvert t-s\rvert^{\gamma\lvert w\rvert}
  \end{equation*}
  for all words w.
  Finally,
  \begin{align*}
   \Gamma_{st}w&=\sum_{(w)}^{\itensor}\dualfullrX_{ts}(w_1)w_2=\dualfullrX_{ts}(\unit)w+\dualfullrX_{ts}(w)\unit+\sum_{(w)}^{\itensor}\dualfullrX_{ts}(w')w''\\
   &=w+\dualfullrX_{ts}(w)\unit+\sum_{(w)}^{\itensor}\dualfullrX_{ts}(w')w'',
  \end{align*}
  thus $\Gamma_{st}w-w\in\lingen{\words_d^{n-1}}$ for all words $w$ with $\lvert w\rvert=n$.

 \end{enumerate}

\end{proof}

\section{Branched rough paths}
Put $\forests_d:=\forests_{\{1,\ldots,d\}}$
and
\begin{align*} 
 \forestgroupfs{d}:=\grouplike(\spacefs{\lingen{\forests_d}},\detreepro)&=\{g\in\spacefs{\lingen{\forests_d}}|\detreepro g=g\etensor g\}=\exp_\star(\spacefs{\lingen{\trees_d}}),
\end{align*}
where
\begin{equation*}
 \spacefs{\lingen{\trees_d}}=\primitive(\spacefs{\lingen{\forests_d}},\detreepro)=\{l\in\spacefs{\lingen{\forests_d}}|\detreepro l=\unit\etensor l+l\etensor\unit\},
\end{equation*}
as well as
\begin{align*} 
 \forestgroup{n}{d}:=\grouplike^{n}(\lingen{\forests_d^n},\detreepro)&=\{g\in\lingen{\forests_d^n}|\detreepro g=\quotientmap^n (g\etensor g)\}=\quotientmap^n\forestgroupfs{d}=\exp_{\star_n}(\lingen{\trees_d^n}),
\end{align*}
where
\begin{equation*}
 \lingen{\trees_d^n}=\primitive(\lingen{\forests_d^n},\detreepro)=\{l\in\lingen{\forests_d^n}|\detreepro l=\unit\etensor l+l\etensor\unit\}.
\end{equation*}
Define $\lVert\cdot\rVert_\forestgroup{n}{d}:\,\forestgroup{n}{d}\rightarrow[0,\infty)$ by (Remark~2.15.\@ \cite{hairerkelly14})
\begin{equation*}
 \bigg\lVert\exp_{\star_n}\Big(\sum_{\tau\in\trees_d^n}b_\tau\tau\Big)\bigg\rVert_\forestgroup{n}{d}:=\sum_{\tau\in\trees_I^n}\lvert b_\tau\rvert^{1/{\lvert\tau\rvert}}.
\end{equation*}
Furthermore, put $\lVert x\rVert:=\sqrt{\langle x,x\rangle}$ for all $x\in\forestspace{d}$ and $\lVert x\rVert_n:=\lVert\proj_n x\rVert$ for all $x\in\spacefs{\forestspace{d}}$.
\begin{defn}\label{defn:branched_rX}(Definition 2.13., Remark 2.15.\@ \cite{hairerkelly14})
 Let $\gamma\in(0,1)$. A $d$-dimensional $\gamma$-Hölder \emph{branched rough path}\index{rough path!branched@\textit{branched}}\index{branched rough path} is a map $\rX:\,\timeT\rightarrow\forestgroup{\level}{d}$ with $\rX_0:=\unit$ such that
 \begin{equation*}
  \sup_{s<t}\frac{\lVert\rX_{st}\rVert_\forestgroup{\level}{d}}{\lvert t-s\rvert^{\gamma}}<\infty,
 \end{equation*}
 where $\rX_{st}:=\rX_s^{\star_{\level}{-1}}\star_\level\rX_t=\antistar\rX_s\star_\level\rX_t$ and $\level$ is the integer part of $\tfrac{1}{\gamma}$. The set of all such maps $\rX:\,\timeT\rightarrow\forestgroup{\level}{d}$ is denoted by $\branchedpaths^\gamma([0,T],\reals^d)$.
\end{defn}

Again, we present three more equivalent definitions.
\begin{subdefn}\label{defn:branched_fullrX}(Based on Definition 2.13., Remark 2.15.\@ \cite{hairerkelly14})
 Let $\gamma\in(0,1)$. A $d$-dimensional $\gamma$-Hölder \emph{branched rough path}\index{rough path!branched@\textit{branched}}\index{branched rough path} is a map $\fullrX:\,\timeT\rightarrow\forestgroupfs{d}$ with $\fullrX_0:=\unit$ such that
 \begin{equation*}
  \sup_{s<t}\frac{\lVert\quotientmap^n\fullrX_{st}\rVert_\forestgroup{n}{d}}{\lvert t-s\rvert^{\gamma}}<\infty\quad\forallc n\in\naturals,
 \end{equation*}
 where $\fullrX_{st}:=\fullrX_s^{\star{-1}}\star\fullrX_t=\antistar\fullrX_s\star\fullrX_t$. The set of all such maps $\fullrX:\,\timeT\rightarrow\forestgroupfs{d}$ is denoted by $\branchedpathsfull^\gamma([0,T],\reals^d)$.
\end{subdefn}
\begin{subdefn}\label{defn:branched_dualrX}(Definition 1.6.\@ \cite{hairerkelly14})
 Let $\gamma\in(0,1)$. A $d$-dimensional $\gamma$-Hölder \emph{branched rough path} is a map $\dualrX:\,\timeT^2\rightarrow\adual{\lingen{\forests_d^\level}}$ such that
 \begin{enumerate}
  \item $\dualrX_{st}(\unit)=1$ and $\dualrX_{st}(\zeta_1\zeta_2)=\dualrX_{st}(\zeta_1)\dualrX_{st}(\zeta_2)$ for all forests $\zeta_1,\zeta_2\in\forests_d^\level$ such that $\lvert\zeta_1\rvert+\lvert\zeta_2\rvert\leq\level$,
  \item $\dualrX_{tt}=\counit_\level$ and $\dualrX_{st}(\zeta)=(\dualrX_{su}\etensor\dualrX_{ut})\destar\zeta=\sum_{(\zeta)}^{\star}\dualrX_{su}(\zeta^1)\dualrX_{ut}(\zeta^2)$ for all forests $\zeta\in\forests_d^\level$,
  \item $\sup_{s\neq t}\frac{\lvert\dualrX_{st}(\zeta)\rvert}{\lvert t-s\rvert^{\gamma\lvert\zeta\rvert}}<\infty$ for all forests $\zeta\in\forests_d^\level$,
 \end{enumerate}
 where $\level$ is the integer part of $\tfrac{1}{\gamma}$. Put $\dualrX_t:=\dualrX_{0t}$. The set of all such maps $\dualrX:\,\timeT^2\rightarrow\adual{\lingen{\forests_d^\level}}$ is denoted by $\branchedpathsdual^\gamma([0,T],\reals^d)$.
\end{subdefn}
\begin{subdefn}\label{defn:branched_dualfullrX}(Based on Definition 1.6.\@ \cite{hairerkelly14})
 Let $\gamma\in(0,1)$. A $d$-dimensional $\gamma$-Hölder \emph{branched rough path} is a map $\dualfullrX:\,\timeT^2\rightarrow\adual{\lingen{\forests_d}}$ such that
 \begin{enumerate}
  \item $\dualfullrX_{st}(\unit)=1$ and $\dualfullrX_{st}(\zeta_1\zeta_2)=\dualfullrX_{st}(\zeta_1)\dualfullrX_{st}(\zeta_2)$ for all forests $\zeta_1,\zeta_2\in\forests_d$,
  \item $\dualfullrX_{tt}=\counit$ and $\dualfullrX_{st}(\zeta)=(\dualfullrX_{su}\etensor\dualfullrX_{ut})\destar\zeta=\sum_{(\zeta)}^{\star}\dualfullrX_{su}(\zeta^1)\dualfullrX_{ut}(\zeta^2)$ for all forests $\zeta\in\forests_d$,
  \item $\sup_{s\neq t}\frac{\lvert\dualfullrX_{st}(\zeta)\rvert}{\lvert t-s\rvert^{\gamma\lvert\zeta\rvert}}<\infty$ for all forests $\zeta\in\forests_d$.
 \end{enumerate}
 Put $\dualfullrX_t:=\dualfullrX_{0t}$. The set of all such maps $\dualfullrX:\,\timeT^2\rightarrow\adual{\lingen{\forests_d}}$ is denoted by $\branchedpathsdualfull^\gamma([0,T],\reals^d)$.
\end{subdefn}
\begin{lemma}\label{lemma:branched_norms}\textnormal{(Based on Remark 2.15. \cite{hairerkelly14})}
 \begin{enumerate}
  \item For all $k\in\naturals$, there is a constant $C_k>0$ such that
  \begin{equation*}
   \lvert\langle\zeta,g\rangle\rvert\leq C_{\lvert w\rvert}\lVert g\rVert_{\forestgroup{n}{d}}^{\lvert\zeta\rvert}\quad\forallc n\in\naturals,\,g\in\forestgroup{n}{d},\,\zeta\in\forests_d^n\setminus\{\unit\}.
  \end{equation*}
  \item For all $n\in\naturals$, there is a constant $C_n'>0$ such that
  \begin{equation*}
   \lVert g\rVert_{\forestgroup{n}{d}}\leq C_n'\sup_{\zeta\in\forests_d^n\setminus\{\unit\}}\lvert\langle\zeta,g\rangle\rvert^{1/\lvert\zeta\rvert}\quad\forallc g\in\forestgroup{n}{d}.
  \end{equation*}
 \end{enumerate}
\end{lemma}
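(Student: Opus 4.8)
The plan is to follow the template of Lemma~\ref{lemma:geo_norms} almost verbatim, replacing the concatenation tensor algebra by the Grossman--Larson Hopf algebra $(\forestspace{d},\star,\detreepro,\antistar)$ and the shuffle pairing by the forest pairing $\langle\zeta,\eta\rangle=\deltasymb{\zeta,\eta}$, which is an inner product making the basis $\forests_d$ orthonormal. Throughout I write $g=\exp_{\star_n}(x)$ with $x=\sum_{\tau\in\trees_d^n}b_\tau\tau\in\lingen{\trees_d^n}$, so that $b_\tau=\langle\tau,\log_{\star_n}(g)\rangle$ and $\lVert g\rVert_{\forestgroup{n}{d}}=\sum_\tau\lvert b_\tau\rvert^{1/\lvert\tau\rvert}$ by definition. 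Since $\lVert\zeta\rVert=1$ for every basis forest, Cauchy--Schwarz gives $\lvert\langle\zeta,y\rangle\rvert\le\lVert y\rVert_{\lvert\zeta\rvert}$ for all $y$, which is how every pairing will be converted into a graded Euclidean norm.

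For part~1, fix $\zeta\in\forests_d^n\setminus\{\unit\}$ with $k:=\lvert\zeta\rvert$ and expand $\langle\zeta,g\rangle=\sum_{m=0}^{k}\tfrac{1}{m!}\langle\zeta,\proj_k x^{\star_n m}\rangle$, the sum terminating at $m=k$ because $x$ is supported in degrees $\ge 1$. Writing $x=\sum_\tau b_\tau\tau$ and multiplying out, $\proj_k x^{\star_n m}$ is a combination of the products $\tau_1\star_n\cdots\star_n\tau_m$ over tuples of trees with $\sum_i\lvert\tau_i\rvert=k$, with scalar coefficient $b_{\tau_1}\cdots b_{\tau_m}$. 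I then estimate
\begin{equation*}
 \lVert x^{\star_n m}\rVert_k\le\sum_{\substack{\tau_1,\dots,\tau_m\in\trees_d^k\setminus\{\unit\}\\\lvert\tau_1\rvert+\cdots+\lvert\tau_m\rvert=k}}\lvert b_{\tau_1}\rvert\cdots\lvert b_{\tau_m}\rvert\,\lVert\tau_1\star\cdots\star\tau_m\rVert_k,
\end{equation*}
using that $\star_n$ and $\star$ agree in degree $k\le n$. Each factor satisfies $\lvert b_\tau\rvert=(\lvert b_\tau\rvert^{1/\lvert\tau\rvert})^{\lvert\tau\rvert}\le\lVert g\rVert_{\forestgroup{n}{d}}^{\lvert\tau\rvert}$, so the product of coefficients is bounded by $\lVert g\rVert_{\forestgroup{n}{d}}^{k}$; bounding the finitely many tree-products $\lVert\tau_1\star\cdots\star\tau_m\rVert_k$ by a constant $M_{m,k}$ and the number of admissible tuples by $N_{m,k}$ yields $\lvert\langle\zeta,g\rangle\rvert\le C_k\lVert g\rVert_{\forestgroup{n}{d}}^{\lvert\zeta\rvert}$ with $C_k:=\sum_{m}N_{m,k}M_{m,k}/m!$, which depends only on $k$ and $d$.

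Part~2 is the reverse direction and runs parallel to the second half of Lemma~\ref{lemma:geo_norms}. Writing $g=\unit+u$ with $\proj_0 u=0$, I group the defining sum by degree, $\lVert g\rVert_{\forestgroup{n}{d}}=\sum_{k=1}^n\sum_{\lvert\tau\rvert=k}\lvert\langle\tau,\log_{\star_n}(g)\rangle\rvert^{1/k}$, and expand $\langle\tau,\log_{\star_n}(g)\rangle=\sum_{m=1}^{k}\tfrac{(-1)^{m-1}}{m}\langle\tau,\proj_k u^{\star_n m}\rangle$. Noting $\langle\zeta,u\rangle=\langle\zeta,g\rangle$ for $\zeta\ne\unit$, the same expansion $u=\sum_{\eta}\langle\eta,u\rangle\eta$ together with $\prod_i\lvert\langle\eta_i,u\rangle\rvert\le(\sup_\zeta\lvert\langle\zeta,g\rangle\rvert^{1/\lvert\zeta\rvert})^{k}$ gives $\lVert u^{\star_n m}\rVert_k\le N_{m,k}M_{m,k}(\sup_\zeta\lvert\langle\zeta,g\rangle\rvert^{1/\lvert\zeta\rvert})^{k}$. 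Taking $k$-th roots, summing over the finitely many trees $\tau$ of each degree $k\le n$, and using $(a+b)^{1/k}\le a^{1/k}+b^{1/k}$ exactly as at $(\diamondsuit)$ in Lemma~\ref{lemma:geo_norms} produces a constant $C_n'$ (now genuinely depending on $n$, through the range of degrees and the cardinalities $\lvert\trees_{d,k}\rvert$) with the claimed bound.

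The one genuinely new point compared to the geometric case is the appearance of nontrivial structure constants: in the tensor algebra the product of basis words is again a single basis word, whereas $\tau_1\star\cdots\star\tau_m$ is a nonnegative-integer combination of forests weighted by the admissible-cut coefficients $\cutfactor{\cdot}{\cdot}{\cdot}$. The main step is therefore to justify the uniform bound $\lVert\tau_1\star\cdots\star\tau_m\rVert_k\le M_{m,k}$: this follows because $\star$ is graded, so only finitely many forests of degree $k$ (for the fixed alphabet $\{1,\dots,d\}$) can occur and each cut coefficient is a finite integer, and because truncating at any level $n\ge k$ leaves the degree-$k$ component of the iterated product unchanged, so $M_{m,k}$ is independent of $n$ --- which is exactly what makes $C_k$ in part~1 uniform in $n$.
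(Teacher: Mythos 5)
Your proposal is correct and follows essentially the same route as the paper's own proof: both adapt Lemma \ref{lemma:geo_norms} by expanding $\exp_{\star_n}$ and $\log_{\star_n}$ degree by degree and absorbing the Grossman--Larson structure constants into a uniform bound on $\lVert\tau_1\star_n\cdots\star_n\tau_m\rVert_k$ (your $M_{m,k}$ is the paper's $D_{m,k}$), together with a count $N_{m,k}$ of admissible tuples. Your explicit justification that these constants are independent of the truncation level $n$ is the right observation and is implicit in the paper's argument.
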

\begin{proof}(Based on the proof of Proposition 4 \cite{lyonsvictoir07})
 \begin{enumerate}
  \item Let $n\in\naturals_0$ and $x=\sum_{\tau\in\trees_d^n}b_\tau\tau\in\lingen{\trees_d^n}$ be arbitrary. For every $m,k\in\naturals$ with $m\leq n$, we have
  \begin{align*}
   \lVert x^{\star_n m}\rVert_k&=\bigg\lVert\Big(\sum_{\tau\in\trees_d^n\setminus\{\unit\}}b_\tau\tau\Big)^m\bigg\rVert_k\\
   &=\bigg\lVert\sum_{\substack{\tau_1,\ldots,\tau_m\in\trees_d^k\setminus\{\unit\}:\\\lvert\tau_1\rvert+\ldots+\lvert\tau_m\rvert=k}}b_{\tau_1}\cdots b_{\tau_m}\,\tau_1\star_n\cdots\star_n\tau_m\bigg\rVert_k\\
   &\leq\underbrace{\sup_{\substack{\tau_1,\ldots,\tau_m\in\trees_d^k\setminus\{\unit\}:\\\lvert\tau_1\rvert+\cdots+\lvert\tau_m\rvert=k}}\lVert\tau_1\star_n\cdots\star_n\tau_m\rVert_k}_{:=D_{m,k}}\sum_{\substack{\tau_1,\ldots,\tau_m\in\trees_d^k\setminus\{\unit\}:\\\lvert\tau_1\rvert+\cdots+\lvert\tau_m\rvert=k}}\,\prod_{i=1}^m\lvert b_{\tau_i}\rvert\\
   &\leq D_{m,k} N_{m,k}\sup_{\substack{\tau_1,\ldots,\tau_m\in\trees_d^k\setminus\{\unit\}:\\\lvert\tau_1\rvert+\cdots+\lvert\tau_m\rvert=k}}\,\prod_{i=1}^m\lvert b_{\tau_i}\rvert\\
   &\leq D_{m,k} N_{m,k}\sup_{\substack{l_1,\ldots,l_m\in\naturals:\\l_1+\cdots+l_m=k}}\,\prod_{i=1}^m\lVert\exp_{\star_n}(x)\rVert_{\forestgroup{n}{d}}^{l_i}=D_{m,k} N_{m,k}\lVert\exp_{\star_n}(x)\rVert_{\forestgroup{n}{d}}^k,
  \end{align*}
  where $N_{m,k}:=\big\lvert\big\{\tau_1,\ldots,\tau_m\in\trees_d^k\setminus\{\unit\}\big|\lvert\tau_1\rvert+\cdots+\lvert\tau_m\rvert=k\big\}\big\rvert<\infty$. Thus, for any $\zeta\in\forests_d^n$, we get
  \begin{align*}
   \big\lvert\langle\zeta,\exp_{\star_n}(x)\rangle\big\rvert&=\left\lvert\sum_{m=0}^n\frac{1}{m!}\langle\zeta,x^{\star_n m}\rangle\right\rvert=\left\lvert\sum_{m=0}^{\lvert\zeta\rvert}\frac{1}{m!}\langle\zeta,\proj_{\lvert\zeta\rvert}x^{\star_n m}\rangle\right\rvert\\
   &\leq\sum_{m=0}^{\lvert\zeta\rvert}\frac{1}{m!}\left\lVert x^{\star_n m}\right\rVert_{\lvert\zeta\rvert}\leq\underbrace{\sum_{m=0}^{\lvert\zeta\rvert}\frac{D_{m,\lvert\zeta\rvert}N_{m,\lvert\zeta\rvert}}{m!}}_{:=C_{\lvert \zeta\rvert}}\,\lVert\exp_{\star_n}(x)\rVert_{\forestgroup{n}{d}}^{\lvert\zeta\rvert}.
  \end{align*}
  \item Let $n\in\naturals_0$ and $u\in\liefs{\lingen{\words_d^n}}$ be arbitrary. For every $m,k\in\naturals$ with $m\leq n$, we have, with 
  \begin{equation*}
   \tilde{D}_{m,k}:=\sup_{\substack{\zeta_1,\ldots,\zeta_m\in\forests_d^k\setminus\{\unit\}:\\\lvert\zeta_1\rvert+\cdots+\lvert\zeta_m\rvert=k}}\lVert\zeta_1\star_n\cdots\star_n\zeta_m\rVert_k,
  \end{equation*}
  $\tilde{N}_{m,k}:=\big\lvert\big\{\zeta_1,\ldots,\zeta_m\in\forests_d^k\setminus\{\unit\}\big|\lvert\zeta_1\rvert+\cdots+\lvert\zeta_m\rvert=k\big\}\big\rvert<\infty$ and similar first steps as above,
  \begin{multline*}
   \lVert u^{\star_n m}\lVert_k^{1/k}\leq\underbrace{\tilde{D}_{m,k}^{1/k}\tilde{N}_{m,k}^{1/k}}_{:=R_{m,k}}\sup_{\substack{\zeta_1,\ldots,\zeta_m\in\forests_d^k\setminus\{\unit\}:\\\lvert\zeta_1\rvert+\cdots+\lvert\zeta_m\rvert=k}}\,\prod_{i=1}^m\lvert\zeta_i,u\rvert^{1/k}\\
   \leq R_{m,k}^{1/k}\sup_{\substack{\zeta_1,\ldots,\zeta_m\in\forests_d^k\setminus\{\unit\}:\\\lvert\zeta_1\rvert+\cdots+\lvert \zeta_m\rvert=k}}\,\prod_{i=1}^m\big(\sup_{\zeta\in\forests_d^k\setminus\{\unit\}}\lvert\langle\zeta,u\rangle\rvert^{1/\lvert\zeta\rvert}\big)^{\lvert \zeta_i\rvert/k}=R_{m,k}^{1/k}\sup_{\zeta\in\forests_d^k\setminus\{\unit\}}\lvert\langle\zeta,u\rangle\rvert^{1/\lvert\zeta\rvert}.
  \end{multline*}
  Thus,
  \begin{align*}
   \lVert\unit+u\rVert_{\forestgroup{n}{d}}&=\sum_{k=1}^n\lVert\log_{\star_n}(\unit+u)\rVert_k^{1/k}=\sum_{k=1}^n\bigg\lVert\sum_{m=1}^k\frac{(-1)^{m-1}u^{\star_n m}}{m}\bigg\rVert_k^{1/k}\\
   &\leq\sum_{k=1}^n\Big(\sum_{m=1}^k\tfrac{1}{m}\lVert u^{\star_n m}\rVert_k\Big)^{1/k}\stackrel{(\diamondsuit)}{\leq}\sum_{k=1}^n\sum_{m=1}^k\tfrac{1}{\sqrt[k]{m}}\lVert u^{\star_n m}\rVert_k^{1/k}\\
   &\leq\underbrace{\sum_{k=1}^n\sum_{m=1}^k\sqrt[k]{\tfrac{R_{m,k}}{m}}}_{:=C'_n}\sup_{\zeta\in\forests_d^k\setminus\{\unit\}}\lvert\langle\zeta,u\rangle\rvert^{1/\lvert \zeta\rvert}=C'_n\sup_{\zeta\in\forests_d^k\setminus\{\unit\}}\lvert\langle\zeta,\unit+u\rangle\rvert^{1/\lvert\zeta\rvert},
  \end{align*}
  where at $(\diamondsuit)$, we iteratively used the simple fact that $(a+b)^{1/k}\leq a^{1/k}+b^{1/k}$ for all $a,b>0$, $k\in\naturals$.
 \end{enumerate}
\end{proof}
For each $\gamma\in(0,1)$, let $q_\gamma:\,\forestspace{d}\rightarrow\reals$ be the linear map recursively generated by $q_\gamma(\zeta):=1$ for all $\zeta\in\forests_d$ such that $\lvert\zeta\rvert\leq 1/\gamma$
and
\begin{equation*}
 q_\gamma(\zeta):=\frac{1}{2^{\gamma\lvert\zeta\rvert}-2}\sum_{(\zeta)}^{\star}q_\gamma(\zeta')q_\gamma(\zeta''),\quad q_\gamma(\zeta_1\zeta_2):=q_\gamma(\zeta_1)q_\gamma(\zeta_2)
\end{equation*}
for all $\zeta,\zeta_1,\zeta_2\in\forests_d$ such that $\lvert\zeta\rvert>1/\gamma$ and $\lvert\zeta_1\zeta_2\rvert>1/\gamma$ (Equation (34) \cite{gubinelli10}).

\begin{thm}\label{thm:gubinelli_extension}\textnormal{\textbf{Gubinelli's Extension Theorem} (Theorem 7.3.\@ \cite{gubinelli10})}
 For $\gamma\in(0,1)$, let $\dualrX\in\branchedpathsdual^\gamma([0,T],\reals^d)$ and $B\in[0,1], A>0$ such that
 \begin{equation*}
  \sup_{s<t}\frac{\lvert\dualrX_{st}(\tau)\rvert}{\lvert t-s\rvert^{\gamma\lvert\tau\rvert}}\leq B A^{\lvert\tau\rvert}q_\gamma(\tau)\quad\forallc\tau\in\trees_d^\level.
 \end{equation*}
 Then, there is an extension $\dualfullrX\in\branchedpathsdualfull^\gamma([0,T],\reals^d)$ such that $\dualfullrX_{st}{\restriction_{\lingen{\forests_d^\level}}}=\dualrX_{st}$ for all $s,t\in[0,T]$, where $\level$ is the integer part of $\gamma$, and 
 \begin{equation*}
  \sup_{s<t}\frac{\lvert\dualfullrX_{st}(\tau)\rvert}{\lvert t-s\rvert^{\gamma\lvert\tau\rvert}}\leq B A^{\lvert\tau\rvert}q_\gamma(\tau)\quad\forallc\tau\in\trees_d.
 \end{equation*}
\end{thm}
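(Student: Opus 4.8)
The plan is to build $\dualfullrX$ one grade at a time, inducting on the degree $n=\lvert\tau\rvert$ and invoking a sewing argument at each new grade. For $n\le\level$ the values are prescribed by $\dualrX$, and multiplicativity (condition~1) forces the value on any forest $\zeta=\tau_1\treepro\cdots\treepro\tau_k$ to equal $\prod_{i}\dualfullrX_{st}(\tau_i)$; so at every stage it suffices to define $\dualfullrX_{st}(\tau)$ for \emph{trees} $\tau$ and then extend multiplicatively. Suppose $\dualfullrX_{st}(\zeta)$ has been constructed for all $\zeta$ with $\lvert\zeta\rvert\le n$ (for some $n\ge\level$), meeting Definition~\ref{defn:branched_dualfullrX} up to grade~$n$ together with the refined bound $\lvert\dualfullrX_{st}(\zeta)\rvert\le BA^{\lvert\zeta\rvert}q_\gamma(\zeta)\lvert t-s\rvert^{\gamma\lvert\zeta\rvert}$. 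Fix a tree $\tau$ with $\lvert\tau\rvert=n+1$.

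By Theorem~\ref{thm:rcoproduct_cgbialgebra}, $\rdestar\tau\in\bigoplus_{0<m<n+1}\lingen{\forests_{d,m}}\etensor\lingen{\forests_{d,n+1-m}}$, so both legs of $\rdestar\tau$ have degree $\le n$ and
\[
 \phi_{sut}:=(\dualfullrX_{su}\etensor\dualfullrX_{ut})\rdestar\tau
\]
is already determined by the inductive data. Chen's relation (condition~2) is then equivalent to solving the defect equation $\dualfullrX_{st}(\tau)-\dualfullrX_{su}(\tau)-\dualfullrX_{ut}(\tau)=\phi_{sut}$ for the unknown germ $\dualfullrX_{\cdot}(\tau)$. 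Feeding the inductive Hölder bounds into the two legs and using the product rule $q_\gamma(\zeta_1\zeta_2)=q_\gamma(\zeta_1)q_\gamma(\zeta_2)$ together with the recursion $\sum_{(\tau)}^{\star}q_\gamma(\tau')q_\gamma(\tau'')=(2^{\gamma\lvert\tau\rvert}-2)\,q_\gamma(\tau)$ gives $\lvert\phi_{sut}\rvert\lesssim\lvert t-s\rvert^{\gamma(n+1)}$ uniformly in the intermediate point $u$. Since $n+1>\level\ge1/\gamma$, the exponent $\gamma(n+1)$ strictly exceeds $1$, which is exactly what makes the sewing possible.

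Next I would verify that $\phi$ is coherent: coassociativity of $\destar$ (Theorem~\ref{thm:treepro_destar_Hopf}) combined with the already-established grade-$\le n$ Chen relations shows that $\phi_{sut}$ is a cocycle in the three-point variable, so that the increments over different choices of intermediate point match up. This is the precise hypothesis of Gubinelli's sewing lemma, which then yields a unique continuous map $\dualfullrX_{\cdot}(\tau)$ with $\dualfullrX_{tt}(\tau)=0$ solving the defect equation and obeying $\lvert\dualfullrX_{st}(\tau)\rvert\lesssim\lvert t-s\rvert^{\gamma(n+1)}$; concretely it arises as the limit of the compensated sums $\sum_{[u,v]\in\mathcal{P}}$ over partitions $\mathcal{P}$ of $[s,t]$, the limit existing because $\gamma(n+1)>1$. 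Tracking the geometric factor in the dyadic sewing estimate --- the source of the denominator $2^{\gamma\lvert\tau\rvert}-2$ in the definition of $q_\gamma$ --- sharpens the bound to $\sup_{s<t}\lvert\dualfullrX_{st}(\tau)\rvert/\lvert t-s\rvert^{\gamma\lvert\tau\rvert}\le BA^{\lvert\tau\rvert}q_\gamma(\tau)$, closing the induction on trees. Extending multiplicatively to all forests of grade $n+1$ and checking that conditions~1--3 survive (Chen's relation propagates to products because $\destar$ is an algebra homomorphism and each factor satisfies it) completes the inductive step.

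The two delicate points are the following. First, the coherence of $\phi$: this is a purely algebraic identity resting on coassociativity of $\destar$ and on the inductive Chen relations, and it is the conceptual heart of the argument, since without it the sewing lemma cannot be applied. Second, and harder technically, is propagating the \emph{exact} constant $BA^{\lvert\tau\rvert}q_\gamma(\tau)$: one must match the quantitative output of the sewing lemma to the recursion defining $q_\gamma$, and it is precisely the factor $(2^{\gamma\lvert\tau\rvert}-2)^{-1}$ there that absorbs the sewing constant and prevents the bound from degrading as the degree grows. I expect this constant bookkeeping, rather than any single algebraic step, to be the main obstacle.
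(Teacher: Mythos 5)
The paper does not actually prove this theorem: like Lyons' extension theorem (Theorem \ref{thm:lyons_extension}), it is imported as an external result, namely Theorem 7.3 of \cite{gubinelli10}, and only the \emph{uniqueness} of the extension is established internally (Lemma \ref{lemma:gubinelli_extension_uniqueness}). So there is no in-paper argument to compare yours against; what you have written is, in outline, the proof from the cited source. Your reduction of Chen's relation to the defect equation $\dualfullrX_{st}(\tau)-\dualfullrX_{su}(\tau)-\dualfullrX_{ut}(\tau)=(\dualfullrX_{su}\etensor\dualfullrX_{ut})\rdestar\tau$ is correct because Theorem \ref{thm:rcoproduct_cgbialgebra} confines $\rdestar\tau$ to strictly lower grades, so the right-hand side is inductive data; the cocycle identity for it does follow from coassociativity of $\destar$ together with the lower-grade Chen relations and the multiplicativity of $\dualfullrX_{su}$ and $\dualfullrX_{ut}$; the sewing lemma applies since $\gamma(n+1)\geq\gamma(\level+1)>1$; and the constant bookkeeping you worried about does close: the two legs contribute at most $B^2A^{\lvert\tau\rvert}\sum_{(\tau)}^{\star}q_\gamma(\tau')q_\gamma(\tau'')\lvert t-s\rvert^{\gamma\lvert\tau\rvert}=B^2A^{\lvert\tau\rvert}(2^{\gamma\lvert\tau\rvert}-2)q_\gamma(\tau)\lvert t-s\rvert^{\gamma\lvert\tau\rvert}$, the dyadic sewing bound supplies exactly the factor $(2^{\gamma\lvert\tau\rvert}-2)^{-1}$, and $B\leq1$ absorbs the square --- the same $B\leq1$ being what keeps the multiplicative extension to a forest with $k$ trees, whose naive bound is $B^kA^{\lvert\zeta\rvert}q_\gamma(\zeta)$, within the claimed single-$B$ estimate. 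The only point I would add for a full write-up is the explicit check that defining the grade-$(n+1)$ values multiplicatively on forests yields a character compatible with condition 1 of Definition \ref{defn:branched_dualfullrX}, which is immediate from the unique factorization of forests into trees.
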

\begin{lemma}\label{lemma:gubinelli_extension_uniqueness}
 If for an $\rX\in\branchedpaths^\gamma([0,T],\reals^d)$ there exists an extension $\fullrX\in\branchedpathsfull^\gamma([0,T],\reals^d)$ such that $\quotientmap^\level\fullrX=\rX$, then this extension is unique.
\end{lemma}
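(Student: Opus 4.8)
The plan is to argue by induction on the grade $n$, showing that any two extensions $\fullrX^1,\fullrX^2\in\branchedpathsfull^\gamma([0,T],\reals^d)$ of the same $\rX$ satisfy $\proj_n\fullrX^1_t=\proj_n\fullrX^2_t$ for every $n\in\naturals_0$ and every $t\in\timeT$, whence $\fullrX^1=\fullrX^2$. The base of the induction is immediate: for $n\leq\level$ one has $\proj_n\fullrX^i_t=\proj_n\quotientmap^\level\fullrX^i_t=\proj_n\rX_t$, so the two extensions agree up to grade $\level$. For the inductive step I would fix $n>\level$, assume $\proj_m\fullrX^1_t=\proj_m\fullrX^2_t$ for all $m<n$ and all $t$, and study the grade-$n$ defect of the increments.

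Each increment $\fullrX^i_{st}=\antistar\fullrX^i_s\star\fullrX^i_t$ is again group-like, so Chen's relation $\fullrX^i_{su}\star\fullrX^i_{ut}=\fullrX^i_{st}$ holds and $\proj_0\fullrX^i_{st}=\counit(\fullrX^i_{st})\unit=\unit$. Since $\antistar$ and $\star$ preserve the grading, the inductive hypothesis propagates from the values $\fullrX^i_t$ to the increments, giving $\proj_m\fullrX^1_{st}=\proj_m\fullrX^2_{st}$ for all $m<n$. Using that $\star$ is graded, Chen's relation yields
\begin{equation*}
 \proj_n\fullrX^i_{st}=\proj_n\fullrX^i_{su}+\proj_n\fullrX^i_{ut}+\sum_{0<j<n}\proj_j\fullrX^i_{su}\star\proj_{n-j}\fullrX^i_{ut},
\end{equation*}
where the two isolated terms arise from $\proj_0\fullrX^i_{st}=\unit$ being the $\star$-unit. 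Setting $D_{st}:=\proj_n\fullrX^1_{st}-\proj_n\fullrX^2_{st}$ and subtracting, every summand with $0<j<n$ cancels by the inductive hypothesis (both $j$ and $n-j$ are smaller than $n$), leaving the additivity relation $D_{st}=D_{su}+D_{ut}$ for all $s\leq u\leq t$.

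It then remains to see that $D$ vanishes. The analytic condition of Definition \ref{defn:branched_fullrX} together with Lemma \ref{lemma:branched_norms} gives, for every forest $\zeta$ of grade $n$, the bound $\lvert\langle\zeta,\fullrX^i_{st}\rangle\rvert\leq C_n\lVert\quotientmap^n\fullrX^i_{st}\rVert_{\forestgroup{n}{d}}^{n}\lesssim\lvert t-s\rvert^{\gamma n}$; since the forests form an orthonormal basis for $\dualbracket$ and $\lingen{\forests_{d,n}}$ is finite dimensional, this yields $\lVert D_{st}\rVert\lesssim\lvert t-s\rvert^{\gamma n}$. As $n>\level=\lfloor 1/\gamma\rfloor$ forces $\gamma n>1$, I would partition $[s,t]$ into $N$ equal pieces $s=t_0<\dots<t_N=t$, iterate additivity to obtain $D_{st}=\sum_{i=1}^N D_{t_{i-1}t_i}$, and estimate
\begin{equation*}
 \lVert D_{st}\rVert\leq\sum_{i=1}^N\lVert D_{t_{i-1}t_i}\rVert\lesssim N\Big(\tfrac{\lvert t-s\rvert}{N}\Big)^{\gamma n}=\lvert t-s\rvert^{\gamma n}N^{1-\gamma n},
\end{equation*}
which tends to $0$ as $N\to\infty$ because $1-\gamma n<0$. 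Hence $D_{st}=0$, and in particular (taking $s=0$ and using $\fullrX^i_{0t}=\fullrX^i_t$) $\proj_n\fullrX^1_t=\proj_n\fullrX^2_t$, which closes the induction. The only genuinely delicate point is this final vanishing step: the whole argument hinges on recognizing that the top-grade defect is an \emph{additive} increment whose Hölder exponent $\gamma n$ exceeds $1$, exactly the super-linear decay phenomenon underlying Gubinelli's and Lyons' extension theorems; everything else is bookkeeping with the grading and Chen's relation.
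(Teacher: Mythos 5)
Your proof is correct, and it reaches the same decisive point as the paper's --- the grade-$n$ defect of the increments is an additive two-parameter function with H\"older exponent $\gamma n>1$, hence zero --- but it gets there by a different decomposition. The paper works in exponential coordinates: it writes $\quotientmap^{m+1}\fullrX^i_t=\exp_{\star_{m+1}}(\quotientmap^m A_t)+a_t$ with $A_t=\log_{\star_{m+1}}(\quotientmap^{m+1}\fullrX^i_t)$ and $a_t$ the top-grade part of the logarithm, uses primitivity of $a_t$ to compute $\antistar a_t=-a_t$, and thereby exhibits the defect of the increments explicitly as an exact increment $(a_t-b_t)-(a_s-b_s)$ of a single path; the conclusion is then the one-line remark that a path with H\"older exponent exceeding $1$ is constant. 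You instead bypass the logarithm entirely and extract the additivity relation $D_{st}=D_{su}+D_{ut}$ directly from Chen's relation $\fullrX_{su}\star\fullrX_{ut}=\fullrX_{st}$, the gradedness of $\star$, and the cancellation of all cross terms under the inductive hypothesis, closing with the standard dyadic subdivision estimate. Your route is arguably more elementary (no appeal to $\log_\star$, $\exp_\star$, primitivity, or the behaviour of $\antistar$ on primitives) and generalizes verbatim to any connected graded Hopf-algebra setting; the paper's version buys a cleaner final step, since recognizing the defect as a genuine path increment lets one quote constancy without the explicit partition argument. All the supporting details you use --- propagation of the inductive hypothesis from values to increments via gradedness of $\antistar$ and $\star$, $\proj_0\fullrX^i_{st}=\unit$ from group-likeness, the bound $\lVert D_{st}\rVert\lesssim\lvert t-s\rvert^{\gamma n}$ from Lemma \ref{lemma:branched_norms}, and $\fullrX^i_{0t}=\fullrX^i_t$ from $\fullrX^i_0=\unit$ --- check out.
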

\begin{proof}
 Assume for $\fullrX^1,\,\fullrX^2\in\branchedpathsfull^\gamma([0,T],\reals^d)$, we have $\quotientmap^m\fullrX^1=\rX=\quotientmap^m\fullrX^2$ for some $m\geq\level$.
 
 \noindent Put $A_t:=\log_{\star_{m+1}}(\quotientmap^{m+1}\fullrX_t^1)$, $a_t:=A_t-\quotientmap^m A_t$, $B_t:=\log_{\star_{m+1}}(\quotientmap^{m+1}\fullrX_t^2)$ and $b_t:=B_t-\quotientmap^m B_t$. Then, since $\quotientmap^m A_t\star_{m+1} a_t=0$ due to $a_{st}\in\lingen{\trees_{d,m+1}}$ and $\counit{A_t}=0$, we have $A_t^{\star_{m+1} k}=(\quotientmap^m A_t)^{\star_{m+1} k}$ for $k>1$ and thus
 \begin{equation*}
  \quotientmap^{m+1}\fullrX_t^a=\exp_{\star_{m+1}}(A_t)=\exp_{\star_{m+1}}(\quotientmap^m A_t)+a_t.
 \end{equation*}
 Since both $\quotientmap^m A_s$ and $a_s$ are primitive elements, we have $\antistar\exp_{\star_{m+1}}(\quotientmap^m A_s)=\exp_{\star_{m+1}}(-\quotientmap^m A_s)$ and $\antistar a_s=-a_s$, and thus
 \begin{align*}
  \quotientmap^{m+1}\fullrX_{st}^1&=\quotientmap^{m+1}(\fullrX_t\star\antistar\fullrX_s)=\quotientmap^{m+1}\fullrX_t\star_{m+1}\antistar\quotientmap^{m+1}\fullrX_s\\
  &=(\exp_{\star_{m+1}}(\quotientmap^m A_t)+a_t)\star_{m+1}\antipode(\exp_{\star_{m+1}}(\quotientmap^m A_s)+a_s)\\
  &=(\exp_{\star_{m+1}}(\quotientmap^m A_t)+a_t)\star_{m+1}(\exp_{\star_{m+1}}(-\quotientmap^m A_s)-a_s)\\
  &=\exp_{\star_{m+1}}(\quotientmap^m A_t)\star_{m+1}\exp_{\star_{m+1}}(-\quotientmap^m A_s)+a_t-a_s
 \end{align*}
 Likewise, $\quotientmap^{m+1}\fullrX_{st}^2=\exp_{\star_{m+1}}(\quotientmap^m B_t)\star_{m+1}\exp_{\star_{m+1}}(\quotientmap^m-B_s)+b_t-b_s$. Since
 \begin{align*}
  \quotientmap^m A_t&=\quotientmap^m\log_{\star_{m+1}}(\quotientmap^{m+1}\fullrX_t^1)=\log_{\star_{m}}(\quotientmap^m\fullrX_t^1)=\log_{\star_{m}}(\quotientmap^m\fullrX_t^2)=\quotientmap^m\log_{\star_{m+1}}(\quotientmap^{m+1}\fullrX_t^2)\\
  &=\quotientmap^m B_t,
 \end{align*}
 we have
 \begin{equation*}
  \quotientmap^{m+1}\fullrX_{st}^1-\quotientmap^{m+1}\fullrX_{st}^2=a_t+b_s-a_s-b_t.
 \end{equation*}
 Using Lemma \ref{lemma:branched_norms}, we get from the analytic condition in Definition \ref{defn:branched_fullrX} that
 \begin{equation*}
  \sup_{s<t}\frac{\lvert\langle\zeta,(a_t-b_t)-(a_s-b_s)\rangle\rvert}{\lvert t-s\rvert^{(m+1)\gamma}}<\infty\quad\forallc\zeta\in\forests_{d,m+1},
 \end{equation*}
 thus $a-b$ is constant since $(m+1)\gamma\geq(\level+1)\gamma>1$, and hence $\quotientmap^{m+1}\fullrX_{st}^1=\quotientmap^{m+1}\fullrX_{st}^2$.
 
 \noindent By inductive application of the argumentation above, we get $\fullrX^1=\fullrX^2$.

\end{proof}

\begin{thm}
 Let $\gamma\in(0,1)$ and $\level$ denote the integer part of $\tfrac{1}{\gamma}$. The maps
 \begin{align*}
  &I_{1}^\gamma:\,\branchedpathsfull^\gamma(\timeT,\reals^d)\rightarrow\branchedpaths^\gamma(\timeT,\reals^d),\,\fullrX\mapsto\rX,\,\rX_t:=\quotientmap^\level\fullrX_t,\\
  &I_{2}^\gamma:\,\branchedpaths^\gamma(\timeT,\reals^d)\rightarrow\branchedpathsdual^\gamma(\timeT,\reals^d),\,\rX\mapsto\dualrX,\,\dualrX_{st}(y):=\langle y,\rX_{st}\rangle,\\
  &I_{3}^\gamma:\,\branchedpathsfull^\gamma(\timeT,\reals^d)\rightarrow\branchedpathsdualfull^\gamma(\timeT,\reals^d),\,\fullrX\mapsto\dualfullrX,\,\dualfullrX_{st}(y):=\langle y,\fullrX_{st}\rangle
 \end{align*}
 are well-defined and bijective.
\end{thm}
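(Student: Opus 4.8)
The plan is to follow the same scheme as the proof of Theorem \ref{thm:geo_equivalence}, transcribing every tensor Hopf algebra structure into its Connes-Kreimer counterpart: the shuffle $\shuffle$ and deshuffle $\deshuffle$ become the forest product $\treepro$ and the coproduct $\detreepro$, the concatenation $\itensor$ and deconcatenation $\deconc$ become the Grossman-Larson product $\star$ and the Connes-Kreimer coproduct $\destar$, the group $\geogroup{\level}{d}$ becomes $\forestgroup{\level}{d}$, and Lemma \ref{lemma:branched_norms} replaces Lemma \ref{lemma:geo_norms}. Since Gubinelli's Extension Theorem \ref{thm:gubinelli_extension} is phrased in the dual picture rather than for group-valued paths (as Lyons' theorem was), I would establish $I_2^\gamma$ and $I_3^\gamma$ first and only afterwards treat the vertical extension map $I_1^\gamma$, which depends on them.

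First I would show $I_2^\gamma$ is well-defined and bijective. Given $\rX\in\branchedpaths^\gamma(\timeT,\reals^d)$, the element $\dualrX_{st}:=\langle\,\cdot\,,\rX_{st}\rangle$ lies in $\adual{\lingen{\forests_d^\level}}$; condition 1 of Definition \ref{defn:branched_dualrX} follows from $\rX_{st}\in\forestgroup{\level}{d}$ and the duality of $\treepro$ and $\detreepro$ (with the same degree caveat as at $(\diamondsuit)$ in the proof of Theorem \ref{thm:geo_equivalence}), condition 2 from $\rX_{st}=\rX_{su}\star_\level\rX_{ut}$ and the duality of $\star$ and $\destar$, and the analytic condition from Lemma \ref{lemma:branched_norms}. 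Here I would use that $\antistar\rX_u$ is the $\star_\level$-inverse of the group-like element $\rX_u$, so that $\rX_{su}\star_\level\rX_{ut}=\rX_{st}$. Injectivity is immediate, and for surjectivity I would set $\rX_{st}:=\sum_{\zeta\in\forests_d^\level}\dualrX_{st}(\zeta)\,\zeta$, use that $\langle\,\cdot\,,\,\cdot\,\rangle$ restricted to $\lingen{\forests_d^\level}$ is an inner product to deduce $\rX_{st}\in\forestgroup{\level}{d}$ and the Chen relation $\rX_{st}=\rX_{su}\star_\level\rX_{ut}$, and recover the analytic bound from Lemma \ref{lemma:branched_norms}. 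The treatment of $I_3^\gamma$ is verbatim the same at the untruncated level, working in $\spacefs{\lingen{\forests_d}}$ and using that $(\lingen{\forests_d},\spacefs{\lingen{\forests_d}})$ is a pair of dual vector spaces.

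For $I_1^\gamma$, well-definedness reduces to the computation $\quotientmap^\level\fullrX_{st}=\quotientmap^\level(\antistar\fullrX_s\star\fullrX_t)=\antistar\rX_s\star_\level\rX_t$ together with $\quotientmap^\level\forestgroupfs{d}=\forestgroup{\level}{d}$ from Corollary \ref{cor:primitive_n_grouplike_n}, which transfers the analytic condition. Injectivity of $I_1^\gamma$ is precisely the uniqueness Lemma \ref{lemma:gubinelli_extension_uniqueness}. For surjectivity I would, given $\rX$, form $\dualrX=I_2^\gamma(\rX)$, verify the hypothesis of Gubinelli's theorem, obtain an extension $\dualfullrX\in\branchedpathsdualfull^\gamma(\timeT,\reals^d)$ with $\dualfullrX{\restriction_{\lingen{\forests_d^\level}}}=\dualrX$, and set $\fullrX:=(I_3^\gamma)^{-1}(\dualfullrX)$; since $I_2^\gamma(\quotientmap^\level\fullrX)=\dualfullrX{\restriction_{\lingen{\forests_d^\level}}}=\dualrX=I_2^\gamma(\rX)$ and $I_2^\gamma$ is injective, this $\fullrX$ satisfies $\quotientmap^\level\fullrX=\rX$.

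The main obstacle is checking the analytic hypothesis of Gubinelli's Extension Theorem \ref{thm:gubinelli_extension}, namely producing constants $A>0$, $B\in[0,1]$ with $\sup_{s<t}|\dualrX_{st}(\tau)|/|t-s|^{\gamma|\tau|}\leq BA^{|\tau|}q_\gamma(\tau)$ for all $\tau\in\trees_d^\level$, from the single uniform bound in Definition \ref{defn:branched_dualrX}. This is where the branched case genuinely departs from the geometric one, where Lyons' theorem applied directly to group-valued paths. The simplification I would exploit is that every tree with $|\tau|\leq\level$ satisfies $|\tau|\leq 1/\gamma$, so $q_\gamma(\tau)=1$ by definition, and that $\trees_d^\level$ is finite; hence each supremum is finite and one may choose $A$ and $B$ uniformly to absorb the finitely many values. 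Everything else is a routine transcription of the geometric argument.
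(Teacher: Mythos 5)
Your proposal is correct and follows essentially the same route as the paper: $I_2^\gamma$ and $I_3^\gamma$ by transcription of steps 3–6 of the geometric proof, well-definedness of $I_1^\gamma$ via Corollary \ref{cor:primitive_n_grouplike_n} and the compatibility of $\antistar$, $\star$ with $\quotientmap^\level$, injectivity via Lemma \ref{lemma:gubinelli_extension_uniqueness}, and surjectivity by passing through $I_2^\gamma$, Gubinelli's Extension Theorem and $(I_3^\gamma)^{-1}$. Your explicit remark that $q_\gamma(\tau)=1$ for all $\tau\in\trees_d^\level$ (since $\lvert\tau\rvert\leq\level\leq 1/\gamma$) is a slightly more careful justification of the choice of $A$ and $B$ than the paper's appeal to finiteness of $\trees_d^\level$ alone, but it is the same argument.
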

\begin{proof}
 The proof that the functions $I_{2}^\gamma$ and $I_{3}^\gamma$ are well-defined and bijective is completely analogous to steps 3.\@ to 6.\@ of the proof of Theorem \ref{thm:geo_equivalence}. It remains to look at $I_1^\gamma$.
 \begin{enumerate}
 \item\emph{$I_1^\gamma:\,\branchedpathsfull^\gamma(\timeT,\reals^d)\rightarrow\branchedpaths^\gamma(\timeT,\reals^d)$ is well-defined.} For each $\fullrX\in\branchedpathsfull^\gamma(\timeT,\reals^d)$, we have that $\rX:=\quotientmap^\level\fullrX\in\branchedpaths^\gamma(\timeT,\reals^d)$ since $\quotientmap^\level\forestgroupfs{d}=\forestgroup{\level}{d}$ (see Corollary \ref{cor:primitive_n_grouplike_n}) and
  \begin{equation*}
   \quotientmap^\level(\fullrX_{st})=\quotientmap^\level(\antistar\fullrX_s\star\fullrX_t)= \quotientmap^\level\antistar\fullrX_s\star_\level\quotientmap^\level\fullrX_t= \antistar\quotientmap^\level\fullrX_s\star_\level\quotientmap^\level\fullrX_t=\antistar\rX_s\star_\level\rX_t,
  \end{equation*}
  where the latter together with the analytic condition of Definition \ref{defn:branched_fullrX} implies the analytic condition of Definition \ref{defn:branched_rX}.
 \item\emph{$I_1^\gamma$ is bijective.} Injectivity was already proven in Lemma \ref{lemma:gubinelli_extension_uniqueness}. To show surjectivity, let $\rX\in\branchedpaths^\gamma(\timeT,\reals^d)$ be arbitrary. Then, $\dualrX:=I_2^\gamma(\rX)\in\branchedpathsdual^\gamma(\timeT,\reals^d)$ is given by $\dualrX_{st}(y)=\langle y,\rX_{st}\rangle$ for all $y\in\lingen{\forests_d^\level},\,s,t\in[0,T]$. Since $\trees_d^\level$ is a finite set, for any $B\in(0,1]$ there is some $A>0$ such that
 \begin{equation*}
  \sup_{s<t}\frac{\lvert\dualrX_{st}(\tau)\rvert}{\lvert t-s\rvert^{\gamma\lvert\tau\rvert}}\leq B A^{\lvert\tau\rvert}q_\gamma(\tau)\quad\forallc\tau\in\trees_d^\level.
 \end{equation*}
 Thus, applying Theorem \ref{thm:gubinelli_extension}, we find that there is an extension $\dualfullrX\in\branchedpathsdualfull^\gamma([0,T],\reals^d)$ such that $\dualfullrX_{st}{\restriction_{\lingen{\forests_d^\level}}}=\dualrX_{st}$. By surjectivity of $I_3^\gamma$, there finally is $\fullrX\in\branchedpathsfull^\gamma([0,T],\reals^d)$ such that $\dualfullrX_{st}(y)=\langle y,\fullrX_{st}\rangle$ for all $y\in\forestspace{d},\,s,t\in[0,T]$. Then, again for all $s,t\in[0,T]$,
 \begin{align*}
  I_3^\gamma(\fullrX_{st})&=\quotientmap^\level\fullrX_{st}=\quotientmap^\level\sum_{\zeta\in\forests_d}\langle\zeta,\fullrX_{st}\rangle\zeta=\quotientmap^\level\sum_{\zeta\in\forests_d}\dualfullrX_{st}(\zeta)\zeta=\sum_{\zeta\in\forests_d^\level}\dualfullrX_{st}(\zeta)\zeta=\sum_{\zeta\in\forests_d^\level}\dualrX_{st}(\zeta)\zeta\\
  &=\sum_{\zeta\in\forests_d^\level}\langle\zeta,\rX_{st}\rangle\zeta=\rX_{st}.
 \end{align*}
 Since $\rX\in\branchedpaths^\gamma(\timeT,\reals^d)$ was arbitrary, we have proven that $I_1^\gamma$ is surjective.
 \end{enumerate}
\end{proof}

Putting $\lVert\zeta\rVert:=\sqrt{\langle\zeta,\zeta\rangle}$ and $\lVert\zeta\rVert_m:=\lVert\proj_m\zeta\rVert$, we have the following.
\begin{thm}\label{thm:branchedrough_models}\textnormal{(Based on Section 4.4 \cite{hairer14})}\newline
 Consider maps $\Pi:\,\timeT\rightarrow\linear\big(\forestspace{d},\mathrm{C}(\timeT,\reals^d)\big)$ and $\Gamma:\,\timeT^2\rightarrow\linear(\forestspace{d},\forestspace{d})$. They satisfy the conditions
 \begin{enumerate}
  \item $(\Pi_s(\unit))(t)=1$ and $\Gamma_{st}(x_1\treepro x_2)=\Gamma_{st}x_1\treepro\Gamma_{st}x_2$ for all $s,t\in\timeT,\,x_1,x_2\in\forestspace{d}$,
  \item $\Pi_s=\Pi_u\Gamma_{us}$ and $\Gamma_{su}\Gamma_{ut}=\Gamma_{st}$ and $\destar\Gamma_{st}=(\Gamma_{st}\etensor\id)\destar$ for all $s,u,t\in\timeT$,
  \item $\lvert(\Pi_s(\zeta))(t)\rvert\lesssim\lvert t-s\rvert^{\gamma\lvert\zeta\rvert}$ and $\lVert\Gamma_{st}\zeta\rVert_m\lesssim\lvert t-s\rvert^{\gamma (\lvert\zeta\rvert-m)}$ for all forests $\zeta$ and $m<\lvert\zeta\rvert$, uniformly over all $s,t\in[0,T]$
 \end{enumerate}
 if and only if they are given by
 \begin{equation*}
  (\Pi_s(x))(t):=\dualfullrX_{st}(x),\quad\Gamma_{st}x:=(\dualfullrX_{ts}\etensor\id)\destar x=\sum_{(x)}^{\star}\dualfullrX_{ts}(x_1)x_2
 \end{equation*}
 for some branched rough path $\dualfullrX\in\branchedpathsdualfull^\gamma([0,T],\reals^d)$.
 In this case, we furthermore have $\Gamma_{st}\zeta-\zeta\in\lingen{\forests_d^{n-1}}$ for all forests $\zeta$ with $\lvert\zeta\rvert=n$ and $\Pi_t(x_1\treepro x_2)=\Pi_t x_1\Pi_t x_2$ for all $t\in\timeT$ and $x_1, x_2\in\forestspace{d}$.
\end{thm}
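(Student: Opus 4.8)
The plan is to mirror the proof of Theorem \ref{thm:geom_roughpaths_reg_str} almost verbatim, replacing the word Hopf algebra $(\lingen{\words_d},\shuffle,\deconc,\antishuffle)$ everywhere by the Connes--Kreimer Hopf algebra $(\forestspace{d},\treepro,\destar,\antitree)$ of Theorem \ref{thm:treepro_destar_Hopf}. Every structural input used in the geometric case has an exact analogue here: $\destar$ is an algebra homomorphism and is coassociative, $\counit$ is an algebra homomorphism obeying the counit property \eqref{eq:counit_property}, and the grading by $(\lingen{\forests_{d,n}})_n$ is compatible with both $\treepro$ and $\destar$. So the whole argument transfers once these three facts are in hand.

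For the direction $\implies$, I would set $\dualfullrX_{st}:=\counit\Gamma_{ts}$ and first establish $(\Pi_t\,\cdot\,)(t)=\counit(\,\cdot\,)$: condition 1 gives $(\Pi_t\unit)(t)=1$, while for any forest $\zeta$ with $\lvert\zeta\rvert>0$ the bound in condition 3 together with continuity of $\Pi_t\zeta$ forces $(\Pi_t\zeta)(t)=0$. Taking $u=t$ in $\Pi_s=\Pi_u\Gamma_{us}$ then yields $\dualfullrX_{st}(x)=(\Pi_t\Gamma_{ts}x)(t)=(\Pi_s x)(t)$, and the counit property combined with $\destar\Gamma_{st}=(\Gamma_{st}\etensor\id)\destar$ gives $\Gamma_{st}=(\counit\Gamma_{st}\etensor\id)\destar=(\dualfullrX_{ts}\etensor\id)\destar$, which are precisely the claimed formulas. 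The three defining properties of Definition \ref{defn:branched_dualfullrX} then follow: multiplicativity $\dualfullrX_{st}(\zeta_1\zeta_2)=\dualfullrX_{st}(\zeta_1)\dualfullrX_{st}(\zeta_2)$ from condition 1 with $\counit$ an algebra morphism; the Chen identity $\dualfullrX_{st}=(\dualfullrX_{su}\etensor\dualfullrX_{ut})\destar$ from $\Gamma_{su}\Gamma_{ut}=\Gamma_{st}$; and the Hölder bound from $\lVert\,\cdot\,\rVert_0=\lvert\counit(\,\cdot\,)\rvert$ applied to condition 3.

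For the converse, given $\dualfullrX\in\branchedpathsdualfull^\gamma([0,T],\reals^d)$ I would define $\Pi$ and $\Gamma$ by the stated formulas and check each hypothesis in turn. Continuity of $\Pi_s\zeta$ comes from Definition \ref{defn:branched_dualfullrX} 2.\@ and 3.\@ together with the counit property; $(\Pi_t\unit)(t)=1$ and $\Pi_t(x_1\treepro x_2)=\Pi_t x_1\,\Pi_t x_2$ come from Definition \ref{defn:branched_dualfullrX} 1.; the relation $\Gamma_{st}(x\treepro y)=\Gamma_{st}x\treepro\Gamma_{st}y$ uses that $\destar$ is an algebra homomorphism; $\Pi_u\Gamma_{us}=\Pi_s$ follows from Definition \ref{defn:branched_dualfullrX} 2.; and $\Gamma_{su}\Gamma_{ut}=\Gamma_{st}$ as well as $\destar\Gamma_{st}=(\Gamma_{st}\etensor\id)\destar$ follow from coassociativity of $\destar$. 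For the final assertion I would invoke Theorem \ref{thm:rcoproduct_cgbialgebra}, writing $\destar\zeta=\unit\etensor\zeta+\zeta\etensor\unit+\rdestar\zeta$ with $\rdestar\zeta\in\bigoplus_{0<m<n}\lingen{\forests_{d,m}}\etensor\lingen{\forests_{d,n-m}}$ when $\lvert\zeta\rvert=n$, so that $\Gamma_{st}\zeta-\zeta=\dualfullrX_{ts}(\zeta)\unit+\sum_{(\zeta)}\dualfullrX_{ts}(\zeta')\zeta''\in\lingen{\forests_d^{n-1}}$.

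The one place where the argument genuinely diverges from the geometric case, and the main thing to get right, is the analytic bound on $\Gamma_{st}$. For words, deconcatenation splits a word of length $n$ into exactly one pair of each length profile, so $\proj_m\Gamma_{st}w$ is the single term $\dualfullrX_{ts}(w_1^{n-m})$. The Connes--Kreimer coproduct is more intricate, but it is still graded, i.e.\ $\destar\lingen{\forests_{d,n}}\subseteq\bigoplus_{m\le n}\lingen{\forests_{d,m}}\etensor\lingen{\forests_{d,n-m}}$, so $\proj_m\Gamma_{st}\zeta=\sum_{(\zeta):\,\lvert\zeta_2\rvert=m}\dualfullrX_{ts}(\zeta_1)\zeta_2$ collects exactly those admissible cuts whose trunk $\zeta_2$ has grade $m$, hence whose crown $\zeta_1$ has grade $n-m$. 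Bounding each summand by Definition \ref{defn:branched_dualfullrX} 3.\@ and summing over the finitely many such cuts gives $\lVert\Gamma_{st}\zeta\rVert_m\lesssim\lvert t-s\rvert^{\gamma(n-m)}$, which is the required estimate, now as a finite sum rather than a single term. The companion bound $\lvert(\Pi_s\zeta)(t)\rvert=\lvert\dualfullrX_{st}(\zeta)\rvert\lesssim\lvert t-s\rvert^{\gamma\lvert\zeta\rvert}$ is immediate from Definition \ref{defn:branched_dualfullrX} 3.
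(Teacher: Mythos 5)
Your proposal is correct and matches the paper's proof, which likewise says to repeat the argument of Theorem \ref{thm:geom_roughpaths_reg_str} with $(\lingen{\words_d},\shuffle,\deconc)$ replaced by $(\forestspace{d},\treepro,\destar)$, the only substantive change being the bound on $\lVert\Gamma_{st}\zeta\rVert_m$, which becomes a finite sum over the admissible cuts $(C,T)\in\cuts{\zeta}$ with $\lvert C\rvert=\lvert\zeta\rvert-m$ rather than a single term. You correctly identified this as the one point of genuine divergence and handled it the same way.
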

\begin{proof}
 Analogous to the proof of Theorem \ref{thm:geom_roughpaths_reg_str}. Just replace $(\lingen{\words_d},\shuffle,\deconc)$ with $(\forestspace{d},\treepro,\destar)$ everywhere, except that instead of \eqref{eq:translation_bounds_geo} we have now
 \begin{align*}
   \lVert\Gamma_{st}\zeta\rVert_m&\leq\sum_{(C,T)\in\cuts{\zeta}}\cutfactor{\zeta}{C}{T}\,\lvert\dualfullrX_{ts}(C)\rvert\lVert T\rVert_m=\sum_{\substack{(C,T)\in\cuts{\zeta}:\\\lvert C\rvert=\lvert\zeta\rvert-m}}\cutfactor{\zeta}{C}{T}\,\lvert\dualfullrX_{ts}(C)\rvert\\&\lesssim\lvert t-s\rvert^{\gamma (\lvert\zeta\rvert-m)}.
 \end{align*}

\end{proof}

It turns out that weakly geometric rough paths can be seen as a special kind of branched rough paths satisfying an additional condition, which is nothing but the integration by parts rule. This is made precise by the following result, which is due to \cite{hairerkelly14}. We added a proof for the 'conversely' part which was just stated as an observation there.

Let $\phi:\,\forestspace{d}\rightarrow\lingen{\words_d}$ be the Hopf algebra homomorphism and $\hat\phi:\,\lingen{\words_d}\rightarrow\lingen{\trees_d}$ be the coalgebra monomorphism introduced in Theorem \ref{thm:phi_Hopf_morphism}.

\begin{thm}\textnormal{(Section 4.1, Proposition 4.6.\@ \cite{hairerkelly14})}\label{thm:geom_are_branched}
 For every weakly geometric rough path $\dualrX^\mathrm{g}\in\geopaths^\gamma_2(\timeT,\reals^d)$, there is a branched rough path $\dualrX^\mathrm{b}\in\branchedpaths^\gamma_2(\timeT,\reals^d)$ given by
 \begin{equation*}
  \dualrX^\mathrm{b}_{st}(x):=\dualrX^\mathrm{g}_{st}(\phi(x)).
 \end{equation*}
 $\dualrX^\mathrm{g}$ can be recovered from $\dualrX^\mathrm{b}$ by
 \begin{equation*}
  \dualrX^\mathrm{g}_{st}(x)=\dualrX^\mathrm{b}_{st}(\hat\phi(x)).
 \end{equation*}
 Conversely, for every branched rough path $\dualrX^{\mathrm{b}'}\in\branchedpaths^\gamma_2(\timeT,\reals^d)$ with the additional property that
 \begin{equation}\label{eq:branched_geometric_additional_property}
  \dualrX_{st}^{\mathrm{b}'}(x)=0\quad\forallc x\in\lingen{\forests_d^\level}\cap\ker\phi,\,s,t\in\timeT,
 \end{equation}
 there is a weakly geometric rough path $\dualrX^{\mathrm{g}'}\in\geopaths^\gamma_2(\timeT,\reals^d)$ given by
 \begin{equation*}
  \dualrX^{\mathrm{g}'}_{st}(x):=\dualrX^{\mathrm{b}'}_{st}(\hat\phi(x)).
 \end{equation*}
 $\dualrX^{\mathrm{b}'}$ can be recovered from $\dualrX^{\mathrm{g}'}$ by
 \begin{equation*}
  \dualrX^{\mathrm{b}'}_{st}(x)=\dualrX^{\mathrm{g}'}_{st}(\phi(x)).
 \end{equation*}
\end{thm}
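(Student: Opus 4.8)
The plan is to derive every claim by transporting the defining conditions of Definitions \ref{defn:geometric_dualrX} and \ref{defn:branched_dualrX} along the two maps $\phi$ and $\hat\phi$ supplied by Theorem \ref{thm:phi_Hopf_morphism}. The structural facts I would lean on throughout are that both $\phi$ and $\hat\phi$ are degree-preserving, that $\phi$ is a Hopf algebra homomorphism, that $\hat\phi$ is a coalgebra homomorphism, and that $\phi\hat\phi=\id$. Concretely I will repeatedly invoke $\phi(\zeta_1\treepro\zeta_2)=\phi(\zeta_1)\shuffle\phi(\zeta_2)$, the coalgebra identities $\deconc\phi=(\phi\etensor\phi)\destar$ and $\destar\hat\phi=(\hat\phi\etensor\hat\phi)\deconc$, the counit identity $\counit\phi=\counit$, and its consequence $\counit\hat\phi=\counit$ (which follows from $\counit\phi=\counit$ and $\phi\hat\phi=\id$). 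Since $\phi$ sends $\lingen{\forests_{d,n}}$ into $\lingen{\words_{d,n}}$ and $\hat\phi$ does the reverse, all degree constraints needed to stay inside the truncation level $\level$ are automatic.

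For the first assertion I set $\dualrX^{\mathrm{b}}_{st}:=\dualrX^{\mathrm{g}}_{st}\circ\phi$ and check the three conditions of Definition \ref{defn:branched_dualrX}. Multiplicativity: for forests with $\lvert\zeta_1\rvert+\lvert\zeta_2\rvert\le\level$, write $\phi(\zeta_1\treepro\zeta_2)=\phi(\zeta_1)\shuffle\phi(\zeta_2)$, observe that $\phi(\zeta_i)$ is supported on words of length $\lvert\zeta_i\rvert$, and apply the shuffle-multiplicativity of $\dualrX^{\mathrm{g}}$ (valid for word pairs of total length $\le\level$). The Chen identity comes from the Chen identity for $\dualrX^{\mathrm{g}}$ applied to $\phi\zeta$ together with $\deconc\phi=(\phi\etensor\phi)\destar$, and $\dualrX^{\mathrm{b}}_{tt}=\counit_\level$ from $\counit\phi=\counit$; the analytic bound is immediate because $\phi\zeta$ is a finite combination of words of length $\lvert\zeta\rvert$. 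The recovery formula $\dualrX^{\mathrm{g}}_{st}=\dualrX^{\mathrm{b}}_{st}\circ\hat\phi$ is then instantaneous from $\phi\hat\phi=\id$.

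The converse is where the additional hypothesis \eqref{eq:branched_geometric_additional_property} becomes essential. Setting $\dualrX^{\mathrm{g}'}_{st}:=\dualrX^{\mathrm{b}'}_{st}\circ\hat\phi$, the Chen relation and the analytic bound again transfer verbatim using $\destar\hat\phi=(\hat\phi\etensor\hat\phi)\deconc$ and the grading, while $\dualrX^{\mathrm{g}'}_{tt}=\counit_\level$ uses $\counit\hat\phi=\counit$. The only non-automatic point is shuffle-multiplicativity: I would compare $\hat\phi(w_1\shuffle w_2)$ with $\hat\phi(w_1)\treepro\hat\phi(w_2)$. Both have degree $\lvert w_1\rvert+\lvert w_2\rvert\le\level$, and applying $\phi$ to each gives $w_1\shuffle w_2$ (using $\phi\hat\phi=\id$ and that $\phi$ carries $\treepro$ to $\shuffle$); hence their difference lies in $\lingen{\forests_d^\level}\cap\ker\phi$ and is annihilated by $\dualrX^{\mathrm{b}'}_{st}$ via \eqref{eq:branched_geometric_additional_property}. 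Thus $\dualrX^{\mathrm{g}'}_{st}(w_1\shuffle w_2)=\dualrX^{\mathrm{b}'}_{st}(\hat\phi(w_1)\treepro\hat\phi(w_2))$, and the forest-multiplicativity of $\dualrX^{\mathrm{b}'}$ closes it. For the recovery $\dualrX^{\mathrm{b}'}_{st}=\dualrX^{\mathrm{g}'}_{st}\circ\phi$, the subtlety is that $\hat\phi\phi\neq\id$; but $x-\hat\phi\phi x$ maps to $0$ under $\phi$ and remains in $\lingen{\forests_d^\level}$, so once more \eqref{eq:branched_geometric_additional_property} yields $\dualrX^{\mathrm{b}'}_{st}(x)=\dualrX^{\mathrm{b}'}_{st}(\hat\phi\phi x)=\dualrX^{\mathrm{g}'}_{st}(\phi x)$.

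I expect the main obstacle to be precisely this converse passage. The transport of the Chen and analytic conditions is routine bookkeeping with the transported homomorphism identities, but one must recognize that the two genuine defects — the failure of $\hat\phi$ to be an algebra map and the failure of $\hat\phi\phi$ to equal the identity — are both governed by elements of $\ker\phi$ sitting inside the truncated forest space, which is exactly what hypothesis \eqref{eq:branched_geometric_additional_property} is engineered to suppress. Accordingly, the care in the write-up would go into confirming that these difference elements really do lie in $\lingen{\forests_d^\level}\cap\ker\phi$ (degree preservation of $\phi$ and $\hat\phi$ together with $\phi\hat\phi=\id$), after which the additional property settles each remaining gap.
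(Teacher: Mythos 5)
Your proposal is correct and follows essentially the same route as the paper's proof: transport along $\phi$ for the forward direction, and for the converse observe that both $\hat\phi(w_1)\treepro\hat\phi(w_2)-\hat\phi(w_1\shuffle w_2)$ and $x-\hat\phi\phi(x)$ lie in $\lingen{\forests_d^\level}\cap\ker\phi$ and are killed by hypothesis \eqref{eq:branched_geometric_additional_property}. No gaps.
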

\begin{proof} Again, put $\level:=\lfloor\tfrac{1}{\gamma}\rfloor$.
 \begin{enumerate}
  \item Let $\dualrX^\mathrm{g}\in\geopaths^\gamma_2(\timeT,\reals^d)$ and $\dualrX^\mathrm{b}$ defined as above. The algebraic conditions 1.\@ and 2.\@ of Definition \ref{defn:branched_dualrX} follow then directly from the algebraic conditions 1.\@ and 2.\@ of Definition \ref{defn:geometric_dualrX} due to the fact that $\phi$ is a bialgebra homomorphism which preserves the grading. We also have
  \begin{equation*}
   \sup_{s<t}\frac{\lvert\dualrX_{st}^\mathrm{b}(\zeta)\rvert}{\lvert t-s\rvert^{\gamma\lvert\zeta\rvert}}=\sup_{s<t}\frac{\lvert\dualrX_{st}^\mathrm{g}(\phi(\zeta))\rvert}{\lvert t-s\rvert^{\gamma\lvert\zeta\rvert}}<\infty\quad\forallc\zeta\in\lingen{\forests_d^\level}
  \end{equation*}
  due to condition 3.\@ of Definition \ref{defn:geometric_dualrX} since $\phi(\zeta)\in\lingen{\words_{d,\lvert\zeta\rvert}}$, and thus we indeed have $\dualrX^\mathrm{b}\in\branchedpaths^\gamma_2(\timeT,\reals^d)$. The recovery property is clear from the fact that $\phi\hat\phi=\id$.
  \item Let $\dualrX^{\mathrm{b}'}\in\branchedpaths^\gamma_2(\timeT,\reals^d)$ be such that it satisfies the additional property and let $\dualrX^{\mathrm{g}'}$ be defined as above. We have that $\hat\phi(w_1)\treepro\hat\phi(w_2)-\hat\phi(w_1\shuffle w_2)\in\ker\phi$ for all $w_1,w_2\in\words_d$, since 
  \begin{align*}
   \phi(\hat\phi(w_1)\treepro\hat\phi(w_2)-\hat\phi(w_1\shuffle w_2))&=\phi(\hat\phi(w_1)\treepro\hat\phi(w_2))-\phi\hat\phi(w_1\shuffle w_2)\\
   &=\phi\hat\phi(w_1)\shuffle\phi\hat\phi(w_2)-w_1\shuffle w_2=0.
  \end{align*}
  Hence, using property 1.\@ of Definition \ref{defn:branched_dualrX}, we get for all $w_1,w_2\in\words_d$ with $\lvert w_1\rvert+\lvert w_2\rvert\leq\level$ that
  \begin{align*}
   \dualrX^{\mathrm{g}'}(w_1\shuffle w_2)&=\dualrX^{\mathrm{b}'}(\hat\phi(w_1\shuffle w_2))=\dualrX^{\mathrm{b}'}(\hat\phi(w_1)\treepro\hat\phi(w_2))=\dualrX^{\mathrm{b}'}(\hat\phi(w_1))\dualrX^{\mathrm{b}'}(\hat\phi(w_2))\\
   &=\dualrX^{\mathrm{g}'}(w_1)\dualrX^{\mathrm{g}'}(w_2).
  \end{align*}
  Since $\hat\phi$ is a coalgebra homomorphism, condition 2.\@ of Definition \ref{defn:geometric_dualrX} for $\dualrX^{\mathrm{g}'}$ follows directly from condition 2.\@ of Definition \ref{defn:branched_dualrX}.
  Furthermore,
  \begin{equation*}
   \sup_{s<t}\frac{\lvert\dualrX_{st}^{\mathrm{g}'}(w)\rvert}{\lvert t-s\rvert^{\gamma\lvert w\rvert}}=\sup_{s<t}\frac{\lvert\dualrX_{st}^{\mathrm{b}'}(\hat\phi(w))\rvert}{\lvert t-s\rvert^{\gamma\lvert\hat\phi(w)\rvert}}<\infty\quad\forallc w\in\lingen{\words_d^\level}
  \end{equation*}
  due to condition 3.\@ of Definition \ref{defn:branched_dualrX}, and thus we indeed have $\dualrX^{\mathrm{g}'}\in\geopaths^\gamma_2(\timeT,\reals^d)$.
  
  \noindent We also have that $\hat\phi\phi(x)-x\in\ker\phi$ for all $x\in\forestspace{d}$, since
  \begin{equation*}
   \phi(\hat\phi\phi(x)-x)=\phi\hat\phi\phi(x)-\phi(x)=\phi(x)-\phi(x)=0.
  \end{equation*}
  Thus,
  \begin{equation*}
   \dualrX^{\mathrm{b}'}(x)=\dualrX^{\mathrm{b}'}(\hat\phi\phi(x))=\dualrX^{\mathrm{g}'}(\phi(x))\quad\forallc x\in\lingen{\forests_d^\level}.
  \end{equation*}

 \end{enumerate}

\end{proof}

\chapter{Regularity structures}\label{chapter:regularity_structures}
\section{General concept}
\begin{defn}\label{defn:reg_struc}(Definition 3.1 \cite{hairer15}, Definition 2.1 \cite{hairer14}, Definition 13.1 \cite{frizhairer14})\newline
 Let 
 \begin{enumerate}
  \item $A\subseteq\reals$ be bounded from below and locally finite, \ie $A\cap(-\infty,r)$ finite for all $r\in\reals$,
  \item $(T_\alpha)_{\alpha\in A}$ be a family of non-trivial (\ie $\dim T_\alpha\neq0\,\,\,\,\forallc\alpha\in A$) Banach spaces over $\reals$ with norms $(\lVert\cdot\rVert_\alpha)_{\alpha\in A}$ and let $T:=\bigoplus_{\alpha\in A}T_\beta$ be endowed with the norm $\lVert\cdot\rVert:=\sum_{\alpha\in A}\lVert\pi_\alpha\cdot\rVert_\alpha$, where $\pi_\alpha$ is the canonical projection on $T_\alpha$,
  \item $G\subseteq\linearcont(T,T)$ be a group of invertible continuous linear operators with composition law, \ie $\id\in G$, $\varGamma_1\varGamma_2\in G$ and $\varGamma^{-1}\in G$ for all $\varGamma_1,\varGamma_2,\varGamma\in G$, with the property that
  \begin{equation}\label{eq:structuregroup_grading}
   \varGamma x-x\in T_{<\alpha}:=\bigoplus_{\beta\in A:\,\beta<\alpha}T_\beta\quad\forallc\varGamma\in G,\alpha\in A,x\in T_\alpha
  \end{equation}
\end{enumerate}
 Then, $\regstruc:=(A,T,G)$ is called a \emphind{regularity structure} with \emphind{index set} $A$, \emphind{model space} $T$ and \emphind{structure group} $G$.  \end{defn}
We say that a regularity structure $\bar\regstruc=(\bar A,\bar T,\bar G)$ is a \emphind{sub regularity structure} of a regularity structure $\regstruc=(A,T,G)$ if $\bar A\subseteq A$, $\bar G\subseteq G$ and $\bar T$ is a closed subspace of $T$ such that $\bar T_\alpha$ is a sub Banach space of $T_\alpha$ for all $\alpha\in\bar A$ (loosely based on Section 2.1 \cite{hairer14}). We put $T_{\geq\alpha}:=\bigoplus_{\beta\in A:\,\beta\geq\alpha} T_\beta$ and shortly write $\lVert\cdot\rVert_\alpha$ also for the seminorms $\lVert\pi_\alpha\cdot\rVert_\alpha$ on $T$.
\begin{rmk}
For each $A$ and $(T_\alpha)_\alpha$ fulfilling Properties 1.\@ and 2.\@ there is a maximal structure group given by
  \begin{equation*}
   G^\text{max}:=\{\varGamma\in\linearcont(T,T)|\varGamma x-x\in T_{<\alpha}\,\forallc\alpha\in A,x\in T_\alpha\},
  \end{equation*}
  which contains all possible choices of structure groups. Hence, choosing a structure group for a given model space means choosing certain additional properties that the elements have to fulfill, \eg multiplicity with respect to a given product on $T$. \end{rmk}
\begin{rmk}\label{rmk:reg_struc_with_unit}
 All of the examples of regularity structures $(A,T,G)$ that we present later will satisfy the additional property that $0\in A$ and that there is a unique element $\unit\in T_0$ such that $\lVert\unit\rVert_0=1$ and $\varGamma\unit=\unit$ for all $\varGamma\in G$.
\end{rmk}
The original definition of a regularity structure in \cite{hairer14} contained an even stronger requirement for the subspace $T_0$, namely that $T_0=\lingen{\unit}$, where $\unit$ is such that $\lVert\unit\rVert_0=1$ and $\varGamma\unit=\unit$ for all $\varGamma\in G$. Definition \ref{defn:reg_struc} here is exactly the one given in \cite{hairer15}.

\begin{defn}(Definition 2.17 \cite{hairer14})
 Let $\regstruc=(A,T,G)$ be a regularity structure and $(S,d)$ be a metric space. A map $\Gamma:\,S\times S\rightarrow G$ is called a \emph{$G$-bimap}\footnote{This nonstandard terminology just used for the purpose of this thesis.} over $S$ if
 \begin{enumerate}
  \item $\Gamma_{xx}=\id$ for all $x\in S$,
  \item $\Gamma_{xy}\Gamma_{yz}=\Gamma_{xz}$ for all $x,y,z\in S$,
  \item For every compact subset $\compact\subseteq S$ and every $\beta\in A$, there is a constant $C_{\compact,\beta}$ such that
  \begin{equation*}
   \lVert\Gamma_{xy}a\lVert_\theta\leq C_{\compact,\beta}\lVert a\rVert d(x,y)^{\beta-\theta}\quad\forallc x,y\in\compact,a\in T_\beta,\theta<\beta.
  \end{equation*}

 \end{enumerate}
\begin{defn}
 For a regularity structure $\regstruc=(A,T,G)$, let $V$ be a subspace of $T$ such that $V_\alpha:=V\cap T_\alpha$ is a (possibly $\{0\}$) closed subspace of $T_\alpha$ for all $\alpha\in A$. Then, $V$ is called a \emphind{sector} of $\regstruc$ if $V$ is invariant under $G$, \ie
 \begin{equation*}
  \varGamma V\subseteq V\quad\forallc\varGamma\in G.
 \end{equation*}
 We call the sector \emph{function-like}\index{function-like sector} if $V_\alpha=\{0\}$ for all $\alpha\in A$ with $\alpha<0$ and $V_0=\lingen{\unit}$, where $\unit$ is some element such that $\varGamma\unit=\unit$ for all $\varGamma\in G$.

\end{defn}
\end{defn}
\begin{defn}(Definition 3.1 \cite{hairer14})
 For a given regularity structure $(A,T,G)$, a $G$-bimap $\Gamma$ over $(S,d)$, $\gamma>\min A$, a closed subset $M\subseteq S$ and a sector $V$ of $(A,T,G)$, the space of \emphind{modelled distributions} $\modeld_\Gamma^\gamma(M,V)$ of order $\gamma$ consists of all functions $f:\,M\rightarrow V_{<\gamma}$ such that
 \begin{equation*}
  \lVERT f\rVERT_\compact:=\sup_{x\in\compact}\sup_{\alpha<\gamma}\lVert f(x)\rVert_\alpha+\sup_{\substack{x,y\in\compact:\\0<d(x,y)\leq 1}}\sup_{\alpha<\gamma}\frac{\lVert f(x)-\Gamma_{xy}f(y)\rVert_\alpha}{d(x,y)^{\gamma-\alpha}}<\infty
 \end{equation*}
 for all compact $\compact\subseteq M$. We shortly write $\modeld_\Gamma^\gamma:=\modeld_\Gamma^\gamma(S,T)$.
\end{defn}

The following definition is inspired by the definition of automorphisms in Section 2.4 \cite{hairer14}.
\begin{defn} 
 Let $\regstruc=(A,T,G)$ and $\bar{\regstruc}=(\bar A,\bar T,\bar G)$ be regularity structures. Furthermore, let $\varphi:\,T\rightarrow\bar T$ be a continuous linear map such that $\varphi T_\beta\subseteq\bar{T}_\beta^+$ for all $\beta\in A$ and $\psi:\,\bar G\rightarrow G$ be a group homomorphism. If
 \begin{enumerate}
  \item $\varGamma\varphi=\varphi\psi(\varGamma)\quad\forallc\varGamma\in\bar G$,
  \item for every $G$-bimap $\Gamma$ the function $(x,y)\mapsto\psi(\Gamma_{xy})$ is a $\bar G$-bimap,
 \end{enumerate}
 then $\varphi$ is called a model space morphism and $\psi$ is called a structure group morphism. The pair $(\varphi,\psi)$ is called a regularity structure morphism from $\bar{\regstruc}$ to $\regstruc$. The morphisms $\varphi$, $\psi$ and $(\varphi,\psi)$ are called homogeneous if $\varphi T_\beta\subseteq\bar{T}_\beta$.
\end{defn}
For $x\in\reals^d$ and $\delta>0$ let $\mathfrak{S}_x^\delta\in\linearcont(\schwartz(\reals^d),\schwartz(\reals^d))$ be given by
\begin{equation*}
 (\mathfrak{S}_x^\delta u)(y):=\delta^{-d}u(\delta^{-1}(y-x)).
\end{equation*}
Furthermore, for any $r\in\naturals$, let $\testfcts{r}{d}$ denote the set of all $u\in\schwartz(\reals^d)$ compactly supported in $\mathrm{B}(0,1)$ such that $\lVert u\rVert_{\mathrm{C}^r}\leq 1$. 
\begin{defn}\label{defn:model}(Definition 2.17 \cite{hairer14})
 Let $\regstruc=(A,T,G)$ be a regularity structure, $\Pi:\,\reals^d\rightarrow\linearcont(T,\schwartzd(\reals^d))$ be a function and $\Gamma$ be a $G$-bimap over $\reals^d$. Furthermore, let $r$ be the smallest integer such that $r>\lvert\min A\rvert$. We call $(\Pi,\Gamma)$ a model for $\regstruc$ on $\reals^d$ if
 \begin{enumerate}
  \item $\Pi_x\Gamma_{xy}=\Pi_y$ for all $x,y\in\reals^d$,
  \item For every compact subset $\compact\subseteq\reals^d$ and every $\beta\in A$, there is a constant $C_{\compact,\beta}$ such that
  \begin{equation*}
   \lvert(\Pi_x a)(\mathfrak{S}_x^\delta u)\rvert\leq C_{\compact,\beta}\lVert a\rVert\delta^\beta\quad\forallc x\in\compact,\,\delta\in(0,1],\,a\in T_\beta,\,u\in\testfcts{r}{d}.
  \end{equation*}
 \end{enumerate}
 Let $\spaceofmodels(\reals^d)$ denote the set of all models for $\regstruc$ on $\reals^d$. If $\regstruc$ satisfies the additional property of Remark \ref{rmk:reg_struc_with_unit}, then let $\spaceofmodels_1(\reals^d)\subseteq\spaceofmodels(\reals^d)$ denote those models with the additional property that $\Pi_x\unit$ is the constant one function for all $x\in\reals^d$.
\end{defn}

\begin{defn}(Definition 14.3 \cite{frizhairer14}, based on Definition 4.1 and Definition 4.6 \cite{hairer14})
 Let $V,\bar V$ be two sectors of a regularity structure $\regstruc=(A,T,G)$. A \emph{product}\index{product!in a regularity structure} on $V\times\bar V$ is a continuous bilinear map $\product:\,V\times\bar V\rightarrow T$ such that
 \begin{enumerate}
  \item $V_\alpha\product V_\beta\subseteq T_{\alpha+\beta}$,
  \item $\Gamma(v\product\bar v)=\Gamma v\product\Gamma\bar v\quad\forallc\Gamma\in G,\,v\in V,\,\bar v\in\bar V$.
 \end{enumerate}

\end{defn}
\begin{thm}\label{thm:modeld_composition}\textnormal{(Section 4.2 and Theorem 4.16 \cite{hairer14}, Section 14.2 and Proposition 14.7. \cite{frizhairer14})}
 Let $V$ be a function-like sector of a regularity structure $\regstruc=(A,T,G)$ with $\vartheta$ being the lowest non-zero homogeneity of $V$ and $\product$ a product on $V\times V$ such that $V\product V\subseteq V$. Furthermore, let $\gamma>0$, $h\in\contdiff^k(\reals,\reals)$ for some $k\in\naturals$ with $k\geq\gamma/\vartheta+1$ and $\Gamma$ a $G$-bimap over $\reals^d$.
 Then, $F\mapsto(h\circ F)_\gamma$, where
 \begin{equation*}
  (h\circ F)_\gamma(x):=\sum_{n=0}^{\lfloor\gamma/\vartheta\rfloor}\quotientmap_{<\gamma}\frac{h^{(n)}(F^\unit(x))}{n!}(F(x)-F^\unit(x)\unit)^{\product n},
 \end{equation*}
 $v^{\product 0}:=\unit$ and $F^\unit(x)\unit:=\pi_0 F(x)$, is a continuous map from $\modeld_\Gamma^\gamma$ into itself. More precisely, for any compact $\compact\subseteq\reals^d$ and any $D>0$, there is a $C_{\compact,D}>0$ such that
 \begin{equation*}
  \lVERT (h\circ F)_\gamma-(h\circ G)_\gamma\rVERT_{\gamma,\compact}\leq C_{\compact,D}\lVERT F-G\rVERT_{\gamma,\compact}\quad\forallc F,G\in\modeld_\Gamma^\gamma:\,\lVERT F\rVERT_{\gamma,\compact}+\lVERT G\rVERT_{\gamma,\compact}\leq D.
 \end{equation*}
\end{thm}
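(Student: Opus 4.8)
The plan is to reduce everything to a single coherence estimate and then piggyback the Lipschitz bound on the same computation. Throughout, write $a_x:=F^\unit(x)\in\reals$ and split $F(x)=a_x\unit+\bar F(x)$, where $\bar F(x):=F(x)-a_x\unit\in V_{<\gamma}$ has vanishing homogeneity-$0$ component and hence lowest homogeneity $\geq\vartheta$. \textbf{Step 1 (well-definedness and truncation).} Since $V\product V\subseteq V$ and $\unit\in V$, each summand $\tfrac{h^{(n)}(a_x)}{n!}\quotientmap_{<\gamma}\bar F(x)^{\product n}$ lies in $V_{<\gamma}$, so $(h\circ F)_\gamma(x)\in V_{<\gamma}$. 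Moreover, by the first property of a product, $\bar F(x)^{\product n}$ has lowest homogeneity $\geq n\vartheta$; thus $\quotientmap_{<\gamma}\bar F(x)^{\product n}=0$ once $n\vartheta\geq\gamma$, i.e. once $n>\lfloor\gamma/\vartheta\rfloor$. Consequently the truncated sum equals $\quotientmap_{<\gamma}\widehat h(F)(x)$, where $\widehat h(F)(x):=\sum_{n\geq0}\tfrac{h^{(n)}(a_x)}{n!}\bar F(x)^{\product n}$ is a finite sum in every fixed homogeneity, and nothing is lost by the truncation. I would also record here the boundedness and Hölder continuity of $x\mapsto a_x$: boundedness from $\lvert a_x\rvert=\lVert F(x)\rVert_0\leq\lVERT F\rVERT_\compact$, and Hölder continuity from the defining bound of $\modeld_\Gamma^\gamma$ at homogeneity $0$ together with the defining Hölder bound on a $G$-bimap, which gives $\lvert a_x-a_y\rvert\lesssim d(x,y)^{\gamma\wedge\vartheta}$.

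\textbf{Step 2 (the algebraic re-centering).} The heart of the argument is to rewrite $\Gamma_{xy}(h\circ F)_\gamma(y)$ in a form directly comparable to $(h\circ F)_\gamma(x)$. Using that $\Gamma_{xy}$ is an algebra morphism for $\product$ with $\Gamma_{xy}\unit=\unit$, set $G_{xy}:=\Gamma_{xy}\bar F(y)=\Gamma_{xy}F(y)-a_y\unit$, so that $\Gamma_{xy}\bar F(y)^{\product n}=G_{xy}^{\product n}$. Unlike $\bar F(x)$, the element $G_{xy}$ carries a small nonzero homogeneity-$0$ defect $\delta:=\pi_0 G_{xy}=\pi_0\Gamma_{xy}F(y)-a_y$; write $G_{xy}=\widetilde G_{xy}+\delta\unit$ with $\pi_0\widetilde G_{xy}=0$. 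Expanding binomially (legitimate since $\unit$ is the unit of $\product$) and re-summing in $n$ via the Taylor expansion of each $h^{(j)}$ around $a_y$, one obtains, up to a remainder,
\begin{equation*}
 \Gamma_{xy}(h\circ F)_\gamma(y)=\quotientmap_{<\gamma}\sum_{j\geq0}\frac{h^{(j)}(a_y+\delta)}{j!}\widetilde G_{xy}^{\product j}+\quotientmap_{<\gamma}R_{xy},
\end{equation*}
and crucially $a_y+\delta=\pi_0\Gamma_{xy}F(y)=:\widehat a$, the same quantity that appears when measuring $F(x)-\Gamma_{xy}F(y)$ at homogeneity $0$. This is exactly the manipulation that converts the awkward factor $\pi_0\Gamma_{xy}F(y)$ into the argument of $h$.

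\textbf{Step 3 (assembling the coherence bound).} Now I would compare termwise $\sum_j\tfrac{h^{(j)}(a_x)}{j!}\bar F(x)^{\product j}$ with $\sum_j\tfrac{h^{(j)}(\widehat a)}{j!}\widetilde G_{xy}^{\product j}$ in $\lVert\cdot\rVert_\alpha$ for each $\alpha<\gamma$, splitting the difference into three contributions. First, the replacement $h^{(j)}(a_x)$ by $h^{(j)}(\widehat a)$ costs $\lvert h^{(j)}(a_x)-h^{(j)}(\widehat a)\rvert\lesssim\lvert a_x-\widehat a\rvert=\lVert F(x)-\Gamma_{xy}F(y)\rVert_0\lesssim d(x,y)^{\gamma}\leq d(x,y)^{\gamma-\alpha}$, using $d(x,y)\leq1$ and that $h\in\contdiff^k$ with $k\geq1$ makes $h^{(j)}$ locally Lipschitz. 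Second, the replacement of $\bar F(x)^{\product j}$ by $\widetilde G_{xy}^{\product j}$ is handled by a telescoping sum and the continuity of $\product$, reducing it to $\lVert\bar F(x)-\widetilde G_{xy}\rVert_\theta\lesssim d(x,y)^{\gamma-\theta}$ (the $\modeld_\Gamma^\gamma$ bound, the homogeneity-$0$ defect having been removed into $\widehat a$) times bounded factors. Third, the remainder $\quotientmap_{<\gamma}R_{xy}$ must be shown negligible: each remainder monomial carries a factor $\delta^{p}\,\widetilde G_{xy}^{\product j}$ with $p+j$ large, and since $\widetilde G_{xy}$ has homogeneity $\geq\vartheta$ per factor, survival under $\quotientmap_{<\gamma}$ forces $j\vartheta<\gamma$, i.e. $j\leq\lfloor\gamma/\vartheta\rfloor$; the hypothesis $k\geq\gamma/\vartheta+1$ then guarantees exactly enough derivatives of $h$ for the Taylor remainder at order $k$, bounded via $\sup\lvert h^{(k)}\rvert$, to come with a power $d(x,y)^{\gamma-\alpha}$. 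Collecting the three contributions yields $\lVert(h\circ F)_\gamma(x)-\Gamma_{xy}(h\circ F)_\gamma(y)\rVert_\alpha\lesssim d(x,y)^{\gamma-\alpha}$, and together with Step 1 this shows $(h\circ F)_\gamma\in\modeld_\Gamma^\gamma$.

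\textbf{Step 4 (Lipschitz dependence) and the main difficulty.} For the quantitative estimate I would run the identical scheme on the difference of the two inputs $F,G$: the comparison of $(h\circ F)_\gamma$ and $(h\circ G)_\gamma$ reduces, via the same re-centering and telescoping, to $\lVERT F-G\rVERT_{\gamma,\compact}$ multiplied by constants that depend only on the uniform bound $D$ on $\lVERT F\rVERT_{\gamma,\compact}+\lVERT G\rVERT_{\gamma,\compact}$ (entering through $\sup\lvert h^{(n)}\rvert$ on the bounded range of the values $a_x$ and through the product norms on the compact $\compact$), using that $h\in\contdiff^k$ with $k\geq1$ renders the $h^{(n)}$ Lipschitz on that range. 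The step I expect to be the genuine obstacle is Step 3's remainder analysis: making the interplay between the projection $\quotientmap_{<\gamma}$, the per-factor homogeneity $\vartheta$, and the exact number of available derivatives $k\geq\gamma/\vartheta+1$ completely watertight — that is, verifying that no monomial of the Taylor remainder survives the projection with an insufficient power of $d(x,y)$ — is the calculation carrying the real content, the rest being bookkeeping built on the continuity of $\product$ and the Hölder bound in the definition of a $G$-bimap.
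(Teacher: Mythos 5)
The paper itself gives no proof of this theorem: it is imported verbatim from Theorem 4.16 of \cite{hairer14} and Proposition 14.7 of \cite{frizhairer14} and used as a black box, so the only available comparison is with the proof in those references. Against that benchmark your outline is essentially the right argument and identifies the right crux: the splitting $F=F^\unit\unit+\bar F$ with $\bar F$ of homogeneity at least $\vartheta$, the use of multiplicativity of $\Gamma_{xy}$ and $\Gamma_{xy}\unit=\unit$ to write $\Gamma_{xy}\bar F(y)^{\product n}=(\widetilde G_{xy}+\delta\unit)^{\product n}$, the binomial re-summation that moves the homogeneity-zero defect $\delta$ into the argument of $h^{(j)}$ (turning it into $\pi_0\Gamma_{xy}F(y)$, exactly the quantity controlled at level $0$ by the $\modeld_\Gamma^\gamma$ bound), and the observation that $\lvert\delta\rvert\lesssim d(x,y)^\vartheta$ together with $k\geq\gamma/\vartheta+1$ gives the Taylor remainder the power $d(x,y)^{\gamma-\alpha}$ it needs. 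This is Hairer's proof.

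Two places where the sketch is thinner than the argument it gestures at. First, your remainder $R_{xy}$ as described contains only the Taylor remainder of $h$; it must also absorb the failure of $\Gamma_{xy}$ to commute with $\quotientmap_{<\gamma}$. The truncation in the definition of $(h\circ F)_\gamma(y)$ is applied before $\Gamma_{xy}$ acts, so the terms $\quotientmap_{<\gamma}\Gamma_{xy}(\id-\quotientmap_{<\gamma})\bigl(\bar F(y)^{\product n}\bigr)$ reappear below level $\gamma$ and have to be estimated separately; they are harmless, each being bounded by $d(x,y)^{\beta-\alpha}$ with $\beta\geq\gamma$ via the $G$-bimap bound, but they are a distinct contribution that your three-way split does not list. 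Second, Step 4 is asserted rather than carried out: to obtain the Lipschitz bound one must difference the entire re-centering computation in $F$ versus $G$, including the Taylor remainders, which is where the constraint $\lVERT F\rVERT_{\gamma,\compact}+\lVERT G\rVERT_{\gamma,\compact}\leq D$ and the full strength of $h\in\contdiff^k$ actually enter; ``run the identical scheme on the difference'' is the right slogan but not yet a proof. Neither point invalidates the strategy.
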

\begin{defn}(Section 2.2 and Definition 3.7 \cite{hairer14})
 \begin{enumerate}
  \item For $n\in\naturals_0$ and $\alpha\in(0,1]$, let $\hoelder^{n+\alpha}(\reals^d)$ denote those functions in $\mathrm{C}(\reals^d,\reals)$ which are $n$ times differentiable and whose $n$-th derivative is Hölder continuous with exponent $\alpha$. We endow $\hoelder^{n+\alpha}(\reals^d)$ with the family of seminorms
  \begin{equation*}
   \lVert f\rVert_{\hoelder^{n+\alpha},\compact}:=\sup_{x,y\in\compact}\frac{\lvert f^{(n)}(x)-f^{(n)}(y)\rvert}{\lVert x-y\rVert^\alpha}+\sum_{k=0}^n\,\sup_{x\in\compact}\lvert f^{(k)}(x)\rvert
  \end{equation*}
  indexed by all compact $\compact\subset\reals^d$.
  \item Put $\hoelder^0(\reals^d):=\lebesgue_{\text{loc}}^\infty(\reals^d,\reals)$.
  \item For $n\in\naturals$ and $\alpha\in[0,1)$, let $\hoelder^{\alpha-n}(\reals^d)$ denote those distributions $\varrho\in\tdual{\mathrm{D}}(\reals^d)$ such that for each $\compact\subset\reals^d$, we have
  \begin{equation*}
   \lVert\varrho\rVert_{\hoelder^{\alpha-n},\compact}:=\sup_{u\in\testfcts{n}{d}}\sup_{x\in\compact}\sup_{\delta\in(0,1]}\frac{\varrho(\mathfrak{S}_x^\delta u)}{\delta^{\alpha-n}}<\infty.
  \end{equation*}
  We endow $\hoelder^{\alpha-n}(\reals^d)$ with this family of seminorms.
 \end{enumerate}
\end{defn}

\begin{thm}\index{reconstruction theorem}\textnormal{\textbf{Reconstruction Theorem} (Theorem 3.10 and Corollary  \cite{hairer14})}
 Let $(\Pi,\Gamma)$ be a model over $\reals^d$ for a regularity structure $\regstruc=(A,T,G)$ and $\gamma>\alpha:=\min A$. Then, there is a continuous linear map $\reconstr^\alpha:\,\modeld_\Gamma^\gamma\rightarrow\hoelder^{\alpha}(\reals^d,\reals)$ with the property that for every compact $\compact\subset\reals^d$ there is a constant $C_\compact>0$ such that
 \begin{equation}\label{eq:reconstr_thm}
  \lvert(\reconstr^\alpha F-\Pi_x F(x))(\mathfrak{S}_x^\delta u)\rvert\leq C_\compact\lVERT F\rVERT_{\compact'}\delta^\gamma\quad\forallc x\in\compact,\,\delta\in(0,1],\,F\in\modeld_\Gamma^\gamma,\,u\in\testfcts{r}{d},
 \end{equation}
 where $\compact':=\bigcup_{x\in\compact}\bar{\mathrm{B}}(x,1)$. Such a map is called a \emph{reconstruction map}. We furthermore have that for every compact $\compact\subset\reals^d$ there is a $C'_\compact$ such that
 \begin{equation*}
  \lVert\reconstr^\alpha F\rVert_{\hoelder^\alpha,\compact}\leq C'_\compact\lVERT F\rVERT_{\gamma,\compact'}.
 \end{equation*}
 If $\gamma>0$, then the reconstruction map is unique and for each $F\in\modeld_\Gamma^\gamma$, the distribution $\reconstr F$ is uniquely characterized by equation \eqref{eq:reconstr_thm}.
\end{thm}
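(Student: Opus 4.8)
The plan is to follow Hairer's wavelet construction. First I would fix a compactly supported scaling function $\varphi$ together with an associated orthonormal wavelet system of regularity exceeding $|\min A|$ (so in particular exceeding the integer $r$ of Definition \ref{defn:model}), normalized so that $\{\varphi(\cdot-k)\}_{k\in\mathbb{Z}^d}$ is orthonormal, the dyadically rescaled system spans $L^2(\reals^d)$, and every polynomial of degree at most $r$ lies in the closed span of the scaling functions at each fixed scale. Writing $\varphi_x^n(y):=2^{nd/2}\varphi(2^n(y-x))$ and $\Lambda_n:=2^{-n}\mathbb{Z}^d$, I would introduce the scale-$n$ approximants
\begin{equation*}
 \reconstr_n F:=\sum_{x\in\Lambda_n}(\Pi_x F(x))(\varphi_x^n)\,\varphi_x^n
\end{equation*}
and aim to show that $\reconstr^\alpha F:=\lim_{n\to\infty}\reconstr_n F$ exists in $\hoelder^\alpha(\reals^d,\reals)$ and is the desired reconstruction.

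The core of the argument is a scale-by-scale estimate on the increments $\reconstr_{n+1}F-\reconstr_n F$. The trick is to rewrite the coefficients at neighbouring grid points using the identity $\Pi_x=\Pi_y\Gamma_{yx}$ from Definition \ref{defn:model}, which replaces $(\Pi_x F(x))(\varphi_x^n)$ by $(\Pi_y F(y))(\varphi_x^n)$ up to a term governed by $F(x)-\Gamma_{xy}F(y)$. The two ingredients that then control everything are the analytic model bound $\lvert(\Pi_x a)(\mathfrak{S}_x^\delta u)\rvert\lesssim\lVert a\rVert\delta^\beta$ for $a\in T_\beta$ and the modelled-distribution bound $\lVert F(x)-\Gamma_{xy}F(y)\rVert_\alpha\lesssim d(x,y)^{\gamma-\alpha}$ built into $\lVERT F\rVERT_{\gamma,\compact}$. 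Testing a wavelet increment against a rescaled bump $\mathfrak{S}_x^\delta u$ and summing over the grid, these combine into a geometric series in the scale index whose ratio is a power of $2^{-(\gamma-\alpha)}$; since $\gamma>\alpha$ by hypothesis the series converges, giving both existence of the limit in $\hoelder^\alpha$ and the continuity estimate $\lVert\reconstr^\alpha F\rVert_{\hoelder^\alpha,\compact}\lesssim\lVERT F\rVERT_{\gamma,\compact'}$.

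With the limit in hand I would verify the defining bound \eqref{eq:reconstr_thm}. Fixing $x$ and $\delta\in(0,1]$, I would expand $\reconstr^\alpha F-\Pi_x F(x)$ in the wavelet basis and split the scales at $2^{-n}\approx\delta$: for scales coarser than $\delta$ the local consistency of the coefficients (again via $\Pi_x=\Pi_y\Gamma_{yx}$ and the $\modeld_\Gamma^\gamma$ norm) supplies the bound, while for the finer scales the telescoping estimate from the previous paragraph provides the remaining decay, the two contributions together producing the factor $\delta^\gamma$. Linearity of $\reconstr^\alpha$ is immediate from linearity of the whole construction.

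Finally, for uniqueness when $\gamma>0$: if $\reconstr_1^\alpha F$ and $\reconstr_2^\alpha F$ both satisfy \eqref{eq:reconstr_thm}, their difference $\tau$ obeys $\lvert\tau(\mathfrak{S}_x^\delta u)\rvert\lesssim\delta^\gamma$ uniformly over $x$ in compacts, $\delta\in(0,1]$ and $u\in\testfcts{r}{d}$; since $\gamma>0$ the right-hand side tends to $0$ as $\delta\to0$, and a standard covering argument upgrades this to $\tau=0$ as a distribution. The main obstacle is the multi-scale consistency estimate of the second paragraph: reconciling the algebraic action of $\Gamma_{xy}$ with the analytic scaling bounds, and in particular handling the negative-homogeneity components $T_\beta$ with $\beta<0$, for which the naive scale sums diverge and convergence is recovered only through the polynomial-reproduction property of the wavelets together with the strict inequality $\gamma>\alpha$.
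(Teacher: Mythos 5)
The paper does not actually prove this theorem: it is imported verbatim from Hairer's work (Theorem 3.10 and its corollary in \cite{hairer14}) with only the citation, so there is no in-paper argument to compare yours against. Your sketch is a faithful outline of Hairer's original wavelet proof, and all the essential ingredients are in the right places: the approximants $\reconstr_n F=\sum_{x\in\Lambda_n}(\Pi_xF(x))(\varphi_x^n)\varphi_x^n$, the rewriting of neighbouring coefficients via $\Pi_x=\Pi_y\Gamma_{yx}$ combined with the $\lVert F(x)-\Gamma_{xy}F(y)\rVert_\beta$ bounds from the $\modeld_\Gamma^\gamma$ norm, convergence of the telescoping sum measured in $\hoelder^\alpha$ with ratio governed by $\gamma-\alpha>0$, the scale splitting at $2^{-n}\approx\delta$ for the defining bound, and the $\delta^\gamma\to0$ argument for uniqueness when $\gamma>0$. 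Two points where the sketch is thinnest and where a full write-up would need real work: first, the increment $\reconstr_{n+1}F-\reconstr_nF$ must be split into its component in the span of the scale-$n$ scaling functions and its component in the wavelet complement, and these two pieces are estimated by genuinely different mechanisms (the former by the local consistency of the coefficients, the latter by the vanishing moments of the wavelets against the model bound); your text blends them into one ``geometric series.'' Second, for $\gamma\le0$ the telescoping sum does not converge absolutely when tested against a fixed test function, so the limit must be taken in $\hoelder^\alpha$ through the wavelet characterization of that space --- you gesture at this in your final sentence, but it is the crux of the $\alpha<0$ case rather than a side remark. As a blind reconstruction of the intended (external) proof, this is the correct strategy.
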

\begin{thm}\label{thm:reconstr_fctlike}\textnormal{(Special case of Proposition 3.28 \cite{hairer14})}
 Let $V$ be a function-like sector of a regularity structure $\regstruc=(A,T,G)$ with $\vartheta$ being the lowest non-zero homogeneity of $V$. Let $(\Pi,\Gamma)$ be a model for $\regstruc$ on $\reals^d$ such that $\Pi_x\unit$ is the constant one function for all $x\in\reals^d$. Then, for $\gamma>\vartheta$, $\reconstr^\gamma$ maps $\modeld_\Gamma^\gamma(V)$ into $\hoelder^\vartheta(\reals^d)$ and we have
 \begin{equation*}
  (\reconstr^\gamma F)(x)\unit=\pi_0 F(x)\quad\forallc F\in\modeld_\Gamma^\gamma(V).
 \end{equation*}
 In particular, we have $\Pi_x a\in\hoelder^\vartheta(\reals^d)$ for all $a\in V$.
\end{thm}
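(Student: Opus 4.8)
The plan is to show that the reconstruction of a modelled distribution $F\in\modeld_\Gamma^\gamma(V)$ is simply the ordinary function recording the coefficient of $\unit$ in $F$. Since $V$ is function-like, $V_0=\lingen{\unit}$, so for each $x$ there is a unique scalar $\bar F(x)\in\reals$ with $\pi_0 F(x)=\bar F(x)\unit$. The two things to establish are that $\bar F\in\hoelder^\vartheta(\reals^d)$ and that $\reconstr F$ — a priori only the distribution furnished by the Reconstruction Theorem — coincides with $\bar F$; the claimed identity $(\reconstr F)(x)\unit=\pi_0 F(x)$ is then just $\reconstr F=\bar F$ read off at $x$, and the ``in particular'' clause follows by specialising.

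First I would prove $\bar F\in\hoelder^\vartheta$. Applying the modelled-distribution seminorm with index $0$ gives $\lVert F(x)-\Gamma_{xy}F(y)\rVert_0\lesssim\lvert x-y\rvert^{\gamma}$ for $\lvert x-y\rvert\le1$. Projecting with $\pi_0$, the $G$-invariance $\Gamma_{xy}\unit=\unit$ yields $\pi_0\Gamma_{xy}(\bar F(y)\unit)=\bar F(y)\unit$, while for each homogeneity $\beta$ with $\vartheta\le\beta<\gamma$ the $G$-bimap bound (property 3, with $\theta=0$) gives $\lVert\pi_0\Gamma_{xy}\pi_\beta F(y)\rVert_0\lesssim\lvert x-y\rvert^{\beta}\le\lvert x-y\rvert^{\vartheta}$. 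As $A$ is locally finite this is a finite sum, whence $\lvert\bar F(x)-\bar F(y)\rvert\lesssim\lvert x-y\rvert^{\gamma}+\lvert x-y\rvert^{\vartheta}\lesssim\lvert x-y\rvert^{\vartheta}$ because $\gamma>\vartheta$. For $\vartheta\in(0,1]$ this is exactly membership in $\hoelder^\vartheta$; this is the range relevant to the rough-path applications, and the case $\vartheta>1$ is handled as in \cite{hairer14} by additionally extracting the Taylor jet of $\bar F$ from the higher components of $F$.

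The identification $\reconstr F=\bar F$ is the heart of the matter, and its conversion from a distributional bound into a pointwise identity is the main obstacle. I would test both $\reconstr F$ and $\bar F$ against $\mathfrak{S}_x^\delta u$. The decisive input is the hypothesis that $\Pi_x\unit$ is the constant function $1$: then $(\Pi_x\unit)(\mathfrak{S}_x^\delta u)=\int u$ with no error, whereas every higher component obeys $\lvert(\Pi_x\pi_\beta F(x))(\mathfrak{S}_x^\delta u)\rvert\lesssim\delta^{\beta}$ by Definition \ref{defn:model}. Hence $(\Pi_x F(x))(\mathfrak{S}_x^\delta u)=\bar F(x)\int u+O(\delta^{\vartheta})$, and since $\bar F\in\hoelder^\vartheta$ also $\bar F(\mathfrak{S}_x^\delta u)=\bar F(x)\int u+O(\delta^{\vartheta})$. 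Subtracting and inserting the reconstruction bound $\lvert(\reconstr F-\Pi_x F(x))(\mathfrak{S}_x^\delta u)\rvert\lesssim\delta^{\gamma}$, I obtain for $\eta:=\reconstr F-\bar F$ the uniform estimate $\lvert\eta(\mathfrak{S}_x^\delta u)\rvert\lesssim\delta^{\vartheta}$ over $x$ in compacts, $u\in\testfcts{r}{d}$ and $\delta\in(0,1]$. By the mollifier characterisation of positive Hölder regularity (the $\vartheta>0$ counterpart of the seminorm defining the negative spaces $\hoelder^{\alpha-n}$), this forces $\eta\in\hoelder^\vartheta$, so $\eta$ is continuous; then $\eta(x)\int u=\lim_{\delta\to0}\eta(\mathfrak{S}_x^\delta u)=0$ for a fixed nonnegative test function with $\int u=1$ (admissible up to rescaling its $\testfcts{r}{d}$-norm) gives $\eta\equiv0$. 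Thus $\reconstr F=\bar F\in\hoelder^\vartheta$ and $(\reconstr F)(x)\unit=\pi_0 F(x)$.

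Finally, for the ``in particular'' statement I would fix $a\in V_\beta$ and $x_0\in\reals^d$, choose $\gamma>\max\{\beta,\vartheta\}$, and set $F(y):=\Gamma_{yx_0}a$. This lies in $\modeld_\Gamma^\gamma(V)$ with \emph{vanishing} Hölder increments, since the cocycle relation $\Gamma_{yz}\Gamma_{zx_0}=\Gamma_{yx_0}$ gives $F(y)-\Gamma_{yz}F(z)=0$, while $G$-invariance of $V$ together with the bimap bounds yields both $F(y)\in V_{<\gamma}$ and the supremum bound. The model relation $\Pi_y\Gamma_{yx_0}=\Pi_{x_0}$ makes $\Pi_y F(y)=\Pi_{x_0}a$ independent of $y$, so $\Pi_{x_0}a$ trivially satisfies the reconstruction bound and, by uniqueness of the reconstruction ($\gamma>0$), $\reconstr F=\Pi_{x_0}a$. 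The first part of the theorem then gives $\Pi_{x_0}a=\reconstr F\in\hoelder^\vartheta$, and since $x_0$ was arbitrary this completes the proof.
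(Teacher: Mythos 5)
Your argument is correct, but it does substantially more than the paper, which for the main assertion simply defers to Proposition 3.28 of \cite{hairer14} and only records the derivation of the ``in particular'' clause. That clause you prove exactly as the paper does: you form $F:=\Gamma_{\cdot x_0}a\in\modeld_\Gamma^\gamma(V)$, use the cocycle identity to kill the H\"older increments and the model relation $\Pi_y\Gamma_{yx_0}=\Pi_{x_0}$ to see that $\Pi_{x_0}a$ satisfies the defining bound \eqref{eq:reconstr_thm}, and invoke uniqueness of the reconstruction for $\gamma>0$. What you add is a self-contained proof of the main statement: the H\"older estimate for $\bar F:=\pi_0 F$ via the $\alpha=0$ component of the modelled-distribution seminorm together with $\Gamma_{xy}\unit=\unit$ and the bimap bounds is correct (and you rightly flag that for $\vartheta>1$ one must additionally extract the Taylor jet, so your argument as written covers $\vartheta\in(0,1]$, which is the range relevant here); and the identification $\reconstr^\gamma F=\bar F$ by testing against $\mathfrak{S}_x^\delta u$ is sound, the hypothesis $\Pi_x\unit\equiv 1$ being used exactly where it must be. One small simplification: once you have the uniform bound $\lvert\eta(\mathfrak{S}_x^\delta u)\rvert\lesssim\delta^{\vartheta}$ over all $u\in\testfcts{r}{d}$ with $\vartheta>0$, you need not pass through ``$\eta\in\hoelder^\vartheta$, hence continuous''; taking $u$ to be (a rescaling of) a mollifier $\rho$ gives $\eta\ast\rho_\delta\to 0$ uniformly on compacts while $\eta\ast\rho_\delta\to\eta$ distributionally, so $\eta=0$ at once. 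The payoff of your route is that the theorem no longer rests on an external citation for its main clause; the payoff of the paper's route is brevity and validity for all $\vartheta$.
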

\noindent Note that the 'in particular' part indeed follows from the general statement since $\Gamma_{\cdot x}a\in\modeld_\Gamma^\gamma(V)$ for all $\gamma$ large enough and
\begin{equation*}
 \Pi_x a-\Pi_y\Gamma_{y x}a=0\quad\forallc y\in\reals^d,
\end{equation*}
thus $\Pi_x a=\reconstr^\gamma(\Gamma_{\cdot x}a)\in\hoelder^\vartheta(\reals^d)$ since the reconstruction of a modeled distribution is characterized by equation \eqref{eq:reconstr_thm}.

\section{Construction of a regularity structure based on a Hopf algebra}
\markright{CONSTRUCTION OF A REGULARITY STRUCTURE BASED ON A H.\@ ALGEBRA}
\subsection{General construction}

We closely follow Section 4.3 \cite{hairer14} in this subsection. Let $(H,\product,\coproduct,\antipode)$ be a locally finite dimensional connected graded Hopf algebra over $\reals$ whose grading differentiates into a $d$-dimensional grading
\begin{equation*}
 H=\bigoplus_{n\in\naturals_0^d}H_n
\end{equation*}
for some $d\in\naturals$, \ie
\begin{equation*}
 H_n\product H_m\subseteq H_{n+m},\quad\coproduct H_n\subseteq\bigoplus_{0\leq k\leq n}H_k\etensor H_{n-k},\quad\antipode H_n\subseteq H_n\quad\forallc n,m\in\naturals_0^d.
\end{equation*}
With locally finite dimensional we mean that $H_n$ is finite dimensional for all $n\in\naturals_0^d$. The notion of a $d$-dimensional grading of course reduces to the usual notion of a grading in the case $d=1$ (see Definition \ref{defn:grading}). Also, we obviously get back a familiar one dimensional grading $(\check{H}_i)_{i\in\naturals_0}$ by putting
\begin{equation*}
 \check{H}_i:=\bigoplus_{n\in\naturals_0^d:\,\lvert n\rvert=i}H_n.
\end{equation*}
Now, choose scaling factors $\alpha=(\alpha_i)_{i=1}^d$, where $\alpha_i\in\reals^{+}$, and put $\langle\alpha,n\rangle:=\sum_{i=1}^d \alpha_i n_i$ for all $n\in\naturals_0^d$. Then, we may introduce the index set $A$ and the model space $T$ as
\begin{equation*}
 A:=\bigcup_{n\in\naturals_0^d}\{\langle\alpha,n\rangle\},\quad T_\gamma:=\bigoplus_{n\in\naturals_0^d:\,\langle\alpha,n\rangle=\gamma}H_n,\quad T:=\bigoplus_{\gamma\in A}T_\gamma.
\end{equation*}
Let $G$ denote the set of characters on $T=H$, \ie
\begin{equation*}
 G:=\{g\in\adual{H}|g(x\product y)=g(x)g(y)\,\forallc x,y\in H\}.
\end{equation*}
$G$ is a group together with the convolution product $\convoproduct$ on $\adual{H}=\linear(H,\reals)$.
Then, define linear maps $\structaction{}^{l/r}:\,\adual{H}\rightarrow\linear(H,H)$ by
\begin{equation*}
 \structaction{g}^l:=(g\etensor\id)\coproduct
\end{equation*}
and
\begin{equation*}
 \structaction{g}^r:=(\id\etensor g)\coproduct.
\end{equation*}
In both cases, we have the following.
\begin{thm}\label{thm:regstruc_for_hopfalg}\textnormal{(Based on Section 4.3 \cite{hairer14})}
 $(A,T,\structaction{g}^l)$ and $(A,T,\structaction{g}^r)$ are regularity structures with $\min A=0$. $\bar\Gamma^l:\,G\rightarrow\structaction{g}^l$ is a group antiisomorphism and $\bar\Gamma^r:\,G\rightarrow\structaction{g}^r$ is a group isomorphism. The structure groups have the characterizations
 \begin{equation*}
  \structaction{G}^l=\{\varGamma\in\linear(H,H)|\varGamma\mproduct=\mproduct(\varGamma\etensor\varGamma),\,\coproduct\varGamma=(\varGamma\etensor\id)\coproduct\}
 \end{equation*}
 \resp
 \begin{equation*}
  \structaction{G}^r=\{\varGamma\in\linear(H,H)|\varGamma\mproduct=\mproduct(\varGamma\etensor\varGamma),\,\coproduct\varGamma=(\id\etensor\varGamma)\coproduct\}.
 \end{equation*}
 In particular, $\product$ is a product on the whole $T\times T$.
\end{thm}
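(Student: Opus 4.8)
The plan is to verify the axioms of Definition \ref{defn:reg_struc} and then read off the two group statements and the characterizations from a single pair of coassociativity computations. First I would record the order-theoretic facts: since every $\alpha_i>0$ and $n\in\naturals_0^d$, one has $\langle\alpha,n\rangle\geq 0$ with equality exactly for $n=0$, so $A\subseteq[0,\infty)$, $0\in A$ and $\min A=0$; and for any $r$ there are only finitely many $n$ with $\langle\alpha,n\rangle<r$, so $A$ is locally finite. Each $T_\gamma$ is a finite direct sum (by local finiteness of the grading) of the finite-dimensional spaces $H_n$, hence finite-dimensional and Banach, and $T_0=H_0=\lingen{\unit}$ is non-trivial by connectedness; to guarantee non-triviality of every $T_\gamma$ I would tacitly restrict $A$ to those values with $T_\gamma\neq\{0\}$.

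Next I would establish the group homomorphism properties, which form the algebraic heart. Working in Sweedler notation and using coassociativity \eqref{eq:coassociativity_sweedler}, a direct computation gives $\structaction{g}^r\structaction{h}^r=\structaction{g\convoproduct h}^r$ and $\structaction{g}^l\structaction{h}^l=\structaction{h\convoproduct g}^l$, where $\convoproduct$ is the convolution product on $\adual{H}=\linear(H,\reals)$. Since $(G,\convoproduct)$ is a group with unit $\counit$ and inverse $g^{-1}=g\antipode$ (this follows from $\counit$ being the convolution unit and from the antipode property \eqref{eq:antipodeproperty}, using that characters are exactly the nonzero algebra homomorphisms $H\to\reals$), and since $\structaction{\counit}^{l/r}=\id$ by the counit property \eqref{eq:counit_property}, the maps $\bar\Gamma^r$ and $\bar\Gamma^l$ are a homomorphism and an antihomomorphism onto their images $\structaction{G}^r$ and $\structaction{G}^l$. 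These images are therefore groups of invertible operators with $(\structaction{g}^{l/r})^{-1}=\structaction{g^{-1}}^{l/r}$, and injectivity, hence the (anti)isomorphism claim, comes from the identity $\counit\structaction{g}^{l/r}=g$, once more by the counit property.

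I would then check the filtration condition \eqref{eq:structuregroup_grading}. For $x\in H_n$, Theorem \ref{thm:rcoproduct_cgbialgebra} together with the $d$-dimensional grading of $\coproduct$ gives $\coproduct x=x\etensor\unit+\unit\etensor x+w$ with $w$ supported in bidegrees $H_k\etensor H_{n-k}$ for $k$ with $0\le k\le n$ and $k\neq 0,n$. Applying $(\id\etensor g)$ and using $g(\unit)=1$ yields $\structaction{g}^r x-x\in\bigoplus_{k\le n,\,k\ne n}H_k$, and since $k\le n$, $k\ne n$ forces $\langle\alpha,k\rangle<\langle\alpha,n\rangle$ strictly (as all $\alpha_i>0$), this lies in $T_{<\langle\alpha,n\rangle}$; the computation for $\structaction{g}^l$ is symmetric. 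Continuity of each operator is automatic because it acts blockwise on the finite-dimensional, locally finite graded pieces. This completes the verification that $(A,T,\structaction{G}^l)$ and $(A,T,\structaction{G}^r)$ are regularity structures.

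For the characterizations I would argue both inclusions. The inclusion ``$\subseteq$'' is the computation that each $\structaction{g}^{l/r}$ is an algebra homomorphism — using the compatibility requirement $\coproduct\mproduct=\mproducttwo(\coproduct\etensor\coproduct)$ together with multiplicativity of $g$ — and that it satisfies the one-sided comodule identity $\coproduct\structaction{g}^r=(\id\etensor\structaction{g}^r)\coproduct$ (resp.\ $\coproduct\structaction{g}^l=(\structaction{g}^l\etensor\id)\coproduct$), which is just coassociativity. For ``$\supseteq$'', given $\varGamma$ satisfying both conditions I would set $g:=\counit\varGamma$ and use the coproduct identity with $(\id\etensor\counit)\coproduct=\id$ (resp.\ $(\counit\etensor\id)\coproduct=\id$) to compute $\structaction{g}^{r}=(\id\etensor\counit\varGamma)\coproduct=(\id\etensor\counit)(\id\etensor\varGamma)\coproduct=(\id\etensor\counit)\coproduct\varGamma=\varGamma$; that $g$ is multiplicative follows from $\counit$ being an algebra homomorphism, and $g$ is a genuine character (nonzero, hence $g(\unit)=1$) once the degenerate case $\varGamma=0$ is excluded, which it is since $0$ violates \eqref{eq:structuregroup_grading} and is not invertible. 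Finally, ``$\product$ is a product on $T\times T$'' is immediate: $T_\gamma\product T_{\gamma'}\subseteq T_{\gamma+\gamma'}$ because $H_n\product H_m\subseteq H_{n+m}$ and $\langle\alpha,\cdot\rangle$ is additive, while condition (ii) of a product is exactly the multiplicativity $\varGamma\mproduct=\mproduct(\varGamma\etensor\varGamma)$ now known to hold for every $\varGamma$ in the structure group. The main obstacle I anticipate is the reverse inclusion of the characterization: pinning $\varGamma$ down as $\structaction{\counit\varGamma}^{l/r}$ and confirming $\counit\varGamma$ is a character — not merely a homomorphism, and not the zero functional — is the one genuinely non-mechanical point, the rest being Sweedler bookkeeping driven by coassociativity.
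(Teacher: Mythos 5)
Your proposal is correct and follows essentially the same route as the paper's proof: the convolution (anti)homomorphism identity $\structaction{g}^l\structaction{h}^l=\structaction{h\convoproduct g}^l$ via coassociativity, the grading condition via the vanishing of $g-\counit$ on $\unit$, and the characterization by setting $g:=\counit\varGamma$ and recovering $\varGamma$ through the counit property. You additionally spell out a few points the paper leaves implicit (local finiteness of $A$, injectivity of $\bar\Gamma^{l/r}$ via $\counit\structaction{g}^{l/r}=g$, and nondegeneracy of $\counit\varGamma$), but these are refinements of the same argument rather than a different approach.
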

\begin{proof}(Based on Section 4.3 \cite{hairer14})
 We only show the case of $\structaction{G}^l$, the other case is completely analogous.
 \begin{enumerate}
  \item\emph{$\structaction{}^l$ is a group antiisomorphism.} For all $g,h\in G$, we have
  \begin{align*}
   \structaction{g}^l\structaction{h}^l&=(g\etensor\id)\coproduct(h\etensor\id)\coproduct=(h\etensor g\etensor\id)(\id\etensor\coproduct)\coproduct=(h\etensor g\etensor\id)(\coproduct\etensor\id)\coproduct\\
   &=((h\convoproduct g)\etensor\id)\coproduct=\structaction{h\convoproduct g}^l.
  \end{align*}
  \item\emph{$(A,T,\structaction{g}^l)$ is a regularity structure.} The only thing left to show is \eqref{eq:structuregroup_grading}. Let $g\in G$, $n\in\naturals_0^d$ and $x\in H_n$ be arbitrary. Then, $x\in T_{\langle\alpha,n\rangle}$ and by the multi grading of $\coproduct$, 
  \begin{equation*}
   \structaction{g}^l x-x=(g\etensor\id)\coproduct x-x=((g-\counit)\etensor\id)\coproduct x\in\bigoplus_{m<n}H_m=T_{<\langle\alpha,n\rangle},
  \end{equation*}
  since $(g-\counit)\unit=0$ because $g$ is in $G$.
  \item\emph{The given characterization of $\structaction{G}^l$ holds.} First of all, for any $g\in G$, we have
  \begin{align*}
   \structaction{g}^l\mproduct&=(g\etensor\id)\coproduct\mproduct=(g\etensor\id)(\mproduct\etensor\mproduct)\tau_{1324}(\coproduct\etensor\coproduct)=\mproduct(g\etensor g\etensor\id\etensor\id)\tau_{1324}(\coproduct\etensor\coproduct)\\
   &=\mproduct(g\etensor\id\etensor g\etensor\id)(\coproduct\etensor\coproduct)=\mproduct(\structaction{g}^l\etensor\structaction{g}^l).
  \end{align*}
  and
  \begin{equation*}
   \coproduct\structaction{g}^l=\coproduct(g\etensor\id)\coproduct=(g\etensor\id\etensor\id)(\id\etensor\coproduct)\coproduct=(g\etensor\id\etensor\id)(\coproduct\etensor\id)\coproduct=(\structaction{g}^l\etensor\id)\coproduct.
  \end{equation*}
  Thus, we have shown the '$\subseteq$' part of the characterization. To show the '$\supseteq$' part, let $\varGamma\in\linear(H,H)$ such that $\varGamma\mproduct=\mproduct(\varGamma\etensor\varGamma)$ and $\coproduct\varGamma=(\varGamma\etensor\id)\coproduct$ be arbitrary. Put $h:=\counit\varGamma$. Then,
  \begin{equation*}
   h\mproduct=\counit\varGamma\mproduct=\counit\mproduct(\varGamma\etensor\varGamma)=(\counit\etensor\counit)(\varGamma\etensor\varGamma)=(h\etensor h),
  \end{equation*}
  which implies that $h\in G$. Also,
  \begin{equation*}
   \varGamma=(\counit\etensor\id)\coproduct\varGamma=(\counit\etensor\id)(\varGamma\etensor\id)\coproduct=(h\etensor\id)\coproduct=\structaction{h}^l.
  \end{equation*}
   Hence, the '$\supseteq$' part is also shown.
 \end{enumerate}
\end{proof}

\subsection{Polynomial case}
 This subsection is a more explicit formulation of what is described mostly in words in Remark 4.19 \cite{hairer14}. Considering the polynomial Hopf algebra $\polynoms_d$, we have an obvious $d$-dimensional grading given by $(\lingen{\X^n})_{n\in\naturals_0^d}$. But, as its natural to choose all scaling factors $\alpha_i=1$, we get $A=\naturals_0$ and $T=\bigoplus_{k\in A}T_k$ with $T_k=\polynoms_{d,k}$. We identify $\adual{\polynoms_d}$ with the space of formal series of differential operators $\spacefs{(\diffs_d)}$ via the duality pairing \eqref{eq:dualitypairing_diff_pol}, writing 
 \begin{equation*}
  \bra{D}(p):=\langle D,p\rangle
 \end{equation*}
 for $D\in\spacefs{(\diffs_d)},\,p\in\polynoms_d$.
 \begin{align*}
  \structaction{\bra{\partial^n}}\X^m&=(\bra{\partial^n}\etensor\id)\copropol\X^m=\sum_{0\leq l\leq m}\binom{m}{l}\underbrace{(\partial^n\X^l)(0)}_{=\deltasymb{n,l}n!}\X^{m-l}=\begin{cases}\frac{m!}{(m-n)!}\X^{m-n}&\text{if }n\leq m\\0&\text{if }n\nleq m\end{cases}\\
  &=\partial^n\X^m
 \end{align*}
 Thus, for a formal series $D$ of differential operators, $D\in\spacefs{(\diffs_d)}\subset\linear(\polynoms_d,\polynoms_d)$, we have
 \begin{equation*}
  \structaction{\bra{D}}=D.
 \end{equation*}
 Since the group of characters $G$ can be written as
 \begin{equation*}
  G=\big\{\bra{D}\big|D\in\exp_\circ(\diffs_{d,1})\big\},
 \end{equation*}
 we may write the structure group as
 \begin{equation*}
  \structaction{G}=\exp_\circ(\diffs_{d,1}),
 \end{equation*}
 \ie the structure group consists of the composition exponentials of first order differential operators. Since
 \begin{equation*}
  \exp_\circ(\sum_{i=1}^d a_i\partial_i)=\exp_\circ(a_1\partial_1)\cdots\exp_\circ(a_d\partial_d)
 \end{equation*}
 due to commutativity of $(\diffs_d,\circ)$,
 \begin{equation*}
  \exp_\circ(a_i\partial_i)\X_j=\sum_{k=0}^\infty\frac{a_i^k}{k!}\partial_i^k\X_j=\X_j+\deltasymb{i,j}a_i
 \end{equation*}
 and $\Gamma(x\propol y)=\Gamma x\propol\Gamma y$ for all $\Gamma\in\structaction{G}$, we have
 \begin{equation*}
  \exp_\circ(\sum_{i=1}^d a_i\partial_i)\X^n=\prod_{i=1}^d(\X_i+a_i)^{n_i}=:(\X+a)^n\quad\forallc a\in\reals^d,
 \end{equation*}
 \ie the structure group $\structaction{G}$ is nothing but the group of translations of the $d$-dimensional polynomials.
\subsection{Connes-Kreimer Hopf algebra case}
This section is basically a short formulation of what was suggested in Remark 4.25 \cite{hairer14}. Starting from the Hopf algebra $(\forestspace{d},\treepro,\destar,\antitree)$, the one dimensional grading and the scaling factor $\gamma$, we get a regularity structure $\regstruc^\mathrm{f}=(A^\mathrm{f},T^\mathrm{f},\structaction{\butchergroup}^\mathrm{f})$, where $\butchergroup$ is the group of characters on $\forestspace{d}$, $A^\mathrm{f}=\gamma\naturals_0$ and of course $T^\mathrm{f}=\forestspace{d}$ with $T_{k\gamma}^\mathrm{f}=\lingen{\forests_{d,k}}$. The group $\butchergroup$ can be identified with $\grouplike(\forestspace{d},\detreepro)$ via Theorem \ref{thm:grouplike_character}. The map $\structaction{}^\mathrm{f}$ is then recursively given by
\begin{equation*}
 \structaction{g}^\mathrm{f}\unit=\unit,\quad\structaction{g}^\mathrm{f}\lroot\zeta\rroot_i=g(\lroot\zeta\rroot_i)\unit+\lroot\structaction{g}^\mathrm{f}\zeta\rroot_i,\quad\structaction{g}^\mathrm{f}(\zeta_1\treepro\zeta_2)=\structaction{g}^\mathrm{f}\zeta_1\treepro\structaction{g}^\mathrm{f}\zeta_2.
\end{equation*}
Let $\spaceofmodels_1^\mathrm{f}([0,T])$ denote those models $(\Pi,\Gamma)\in\spaceofmodels_1^\mathrm{f}(\reals)$ such that $\Gamma_{st}=\id$ for $s,t\in(-\infty,0]$ and for $s,t\in[T,\infty)$.
\begin{thm}\label{thm:bijection_branched_model}\textnormal{(Based on Section 4.4 \cite{hairer14})}
 There is a bijective map $I:\,\branchedpathsdualfull^\gamma(\reals,\reals^d)\rightarrow\spaceofmodels_1^\mathrm{f}([0,T])$ which maps a branched rough path $\dualfullrX$ to the unique model $(\Pi,\Gamma)\in\spaceofmodels_1^\mathrm{f}([0,T])$ such that
 \begin{equation}\label{eq:Pi_Gamma_for_dualfullrX}
  (\Pi_s x)(t):=\dualfullrX_{st}(x),\quad\Gamma_{st}x:=(\dualfullrX_{ts}\etensor\id)\destar x=\sum_{(x)}^{\star}\dualfullrX_{ts}(x_1)x_2\quad\forallc s,t\in[0,T],\,x\in\forestspace{d}
 \end{equation}
\end{thm}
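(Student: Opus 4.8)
The plan is to recognise that Theorem~\ref{thm:bijection_branched_model} is essentially a repackaging of the correspondence established in Theorem~\ref{thm:branchedrough_models}, translated from the ``function picture'' (continuous $\Pi_s\zeta$ with pointwise Hölder bounds) into the distributional language of a model in the sense of Definition~\ref{defn:model}. Before anything else I would extend a given $\dualfullrX$ from $[0,T]$ to all of $\reals$ by clamping times, setting $\dualfullrX_{st}:=\dualfullrX_{\kappa(s)\kappa(t)}$ with $\kappa(r):=\max(0,\min(r,T))$; this keeps the three defining properties of Definition~\ref{defn:branched_dualfullrX} intact (the Hölder bound survives because $\lvert\kappa(t)-\kappa(s)\rvert\leq\lvert t-s\rvert$) and forces $\dualfullrX_{st}=\counit$, hence $\Gamma_{st}=\id$, whenever $s,t\leq0$ or $s,t\geq T$, which is exactly the boundary condition defining $\spaceofmodels_1^\mathrm{f}([0,T])$.

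For well-definedness I would, given $\dualfullrX$, set $\Gamma_{st}:=\structaction{\dualfullrX_{ts}}^\mathrm{f}=(\dualfullrX_{ts}\etensor\id)\destar$ and $(\Pi_s x)(t):=\dualfullrX_{st}(x)$ and check membership in $\spaceofmodels_1^\mathrm{f}([0,T])$. Since $\dualfullrX_{ts}$ is a character (property~1 of Definition~\ref{defn:branched_dualfullrX}), it lies in $\butchergroup$, so Theorem~\ref{thm:regstruc_for_hopfalg} identifies $\Gamma_{st}$ as a genuine structure-group element; the bimap axioms $\Gamma_{ss}=\id$ and $\Gamma_{su}\Gamma_{ut}=\Gamma_{st}$ then follow from $\dualfullrX_{ss}=\counit$ and from property~2 together with the group anti-isomorphism $\structaction{g}^\mathrm{f}\structaction{h}^\mathrm{f}=\structaction{h\convoproduct g}^\mathrm{f}$ of Theorem~\ref{thm:regstruc_for_hopfalg}. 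The analytic bimap bound and the model bound both reduce to the scalar estimate $\lvert\dualfullrX_{st}(\zeta)\rvert\lesssim\lvert t-s\rvert^{\gamma\lvert\zeta\rvert}$ supplied by Theorem~\ref{thm:branchedrough_models}: matching homogeneities $\beta=\gamma\lvert\zeta\rvert$ and $\theta=\gamma m$ gives $\lVert\Gamma_{st}\zeta\rVert_{\gamma m}\lesssim\lvert t-s\rvert^{\beta-\theta}$, while viewing $\Pi_s\zeta$ as a continuous function and estimating $(\Pi_s\zeta)(\mathfrak{S}_x^\delta u)=\int(\Pi_s\zeta)(y)\,\delta^{-1}u(\delta^{-1}(y-x))\intd y$ over $\lvert y-x\rvert\leq\delta$ yields the required factor $\delta^{\beta}$ after the substitution $z=\delta^{-1}(y-x)$. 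Finally $\Pi_s\unit\equiv1$ because $\dualfullrX_{st}(\unit)=1$, placing the model in $\spaceofmodels_1^\mathrm{f}$.

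For bijectivity, injectivity is immediate since $\dualfullrX_{st}(x)=(\Pi_s x)(t)$ reads $\dualfullrX$ off the model. For surjectivity I would start from $(\Pi,\Gamma)\in\spaceofmodels_1^\mathrm{f}([0,T])$, restrict to $[0,T]$, and define $\dualfullrX_{st}:=\counit\Gamma_{ts}$, exactly as in the proof of Theorem~\ref{thm:geom_roughpaths_reg_str}. The sector $T^\mathrm{f}$ is function-like with lowest non-zero homogeneity $\gamma$, and $\Pi_x\unit$ is constant, so Theorem~\ref{thm:reconstr_fctlike} guarantees that each $\Pi_s\zeta$ is an honest $\hoelder^\gamma$ function; hence the pointwise evaluation $(\Pi_s\zeta)(t)$ exists and coincides with $\dualfullrX_{st}(\zeta)$. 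Verifying the three conditions of Theorem~\ref{thm:branchedrough_models} for this $(\Pi,\Gamma)$ then produces, by that theorem, a branched rough path $\dualfullrX$ with $I(\dualfullrX)=(\Pi,\Gamma)$: the algebraic conditions follow from $\Gamma_{st}\in\butchergroup$ and the cocycle property of the bimap, while the decisive analytic bound for trees comes from the clean identity $\lvert\dualfullrX_{st}(\zeta)\rvert=\lVert\proj_0\Gamma_{ts}\zeta\rVert_0$ combined with the $G$-bimap bound at $\theta=0<\beta$. Uniqueness of the model is clear, as \eqref{eq:Pi_Gamma_for_dualfullrX} fixes $(\Pi,\Gamma)$ on $[0,T]^2$ and the boundary triviality fixes it elsewhere.

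The main obstacle I anticipate is the two-way dictionary between Definition~\ref{defn:model}'s distributional bounds (pairings with rescaled test functions $\mathfrak{S}_x^\delta u$) and the concrete pointwise Hölder bounds of Theorem~\ref{thm:branchedrough_models}. In the forward direction this is the integral estimate above; in the backward direction it hinges on invoking Theorem~\ref{thm:reconstr_fctlike} to upgrade the \emph{a priori} distribution $\Pi_x a$ to a continuous function so that $(\Pi_s\zeta)(t)$ is even meaningful, and then on the observation that the sought scalar bound is nothing but the zeroth seminorm $\lVert\proj_0\Gamma_{ts}\zeta\rVert_0$, which the bimap estimate controls directly.
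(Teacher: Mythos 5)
Your proposal follows essentially the same route as the paper's proof: both directions are reduced to the function-level correspondence of Theorem \ref{thm:branchedrough_models}, the distributional bound on $\Pi_s\zeta$ is obtained by the same substitution in the pairing with $\mathfrak{S}_x^\delta u$, and surjectivity hinges on Theorem \ref{thm:reconstr_fctlike} to upgrade $\Pi_s\zeta$ to a continuous function together with the identity $\lvert(\Pi_s\zeta)(t)\rvert=\lVert\Gamma_{ts}\zeta\rVert_0$ controlled by the bimap bound. The argument is correct and matches the paper's in all essential steps.
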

\begin{proof}
 Let $\dualfullrX\in\branchedpathsdualfull^\gamma([0,T],\reals^d)$ be arbitrary. By Theorem \ref{thm:branchedrough_models}, there is a unique $\structaction{G}^\mathrm{f}$-bimap $\Gamma$ over $\reals$ such that it fulfills \eqref{eq:Pi_Gamma_for_dualfullrX} and such that $\Gamma_{st}=\id$ for $s,t\in(-\infty,0]$ and for $s,t\in[T,\infty)$ (just put $\Gamma_{st}=\Gamma_{0t}$ if $s<0$ and $\Gamma_{st}=\Gamma_{Tt}$ if $s>T$). Furthermore, there is a unique $\Pi$ on $\reals$ such that \eqref{eq:Pi_Gamma_for_dualfullrX} holds and such that $\Pi_s=\Pi_t\Gamma_{ts}$ for $s,t\in\reals$, in particular $\Pi_s=\Pi_0$ if $s<0$ and $\Pi_s=\Pi_T$ if $s>T$. For this $\Pi$ and for every $k\in\naturals_0$, there is a $C_k>0$ such that
 \begin{equation*}
  \lvert(\Pi_s\zeta)(t)\rvert\leq C_k\lvert t-s\rvert^{\gamma\lvert\zeta\rvert}\quad\forallc\zeta\in\forests_{d,k},\,t,s\in\reals
 \end{equation*}
 Thus, putting $f_s:=\Pi_s\zeta$ for some $\zeta\in\forests_d$, we get
 \begin{align*}
  \lvert(\Pi_s x)(\mathfrak{S}_s^\delta u)\rvert&=\left\lvert\delta^{-1}\int_\reals f_s(t)u(\delta^{-1}(t-s))\intd t\right\rvert=\left\lvert\int_\reals f_s(s+\delta t)u(t)\intd t\right\rvert\leq\int_\reals\lvert f_s(s+\delta t)\rvert\lvert u(t)\rvert\intd t\\
  &\leq C_{\lvert\zeta\rvert}\delta^\alpha T
 \end{align*}
 for all $s\in\reals$ and $\delta\in(0,1]$. By linearity and local finite dimensionality, the analytic condition for $\Pi$ in Definition \ref{defn:model} of a model follows. The map $I$ is therefore well-defined. It is also injective, since $\dualfullrX_{st}=(\Pi_s\cdot)(t)$ for all $s,t\in[0,T]$. It only remains to show surjectivity.
 
 \noindent Let $(\Pi,\Gamma)\in\spaceofmodels_1^\mathrm{f}([0,T])$ be arbitrary. Then, due to Theorem \ref{thm:reconstr_fctlike}, $\Pi_s x$ is a continuous function for all $s\in\reals$ and $x\in\forestspace{d}$. Due to continuity and the analytic condition for $\Pi$ in Definition \ref{defn:model}, we have $(\Pi_t \zeta)(t)=0$ for all $\zeta\in\forests_d\setminus\{\unit\}$ and $t\in\reals$. Since $\Gamma$ is a $\structaction{G}^\mathrm{f}$-bimap, for all $k\in\naturals_0$ there is a $C_k>0$ such that
 \begin{equation*}
  \lvert(\Pi_s\zeta)(t)\rvert=\lvert(\Pi_t\Gamma_{ts}\zeta)(t)=\lVert\Gamma_{ts}\zeta\rVert_0\leq C_{\lvert\zeta\rvert}\lvert t-s\rvert^{\gamma\lvert\zeta\rvert}.
 \end{equation*}
 Finally, due to Theorem \ref{thm:branchedrough_models} there is a $\dualfullrX\in\branchedpathsdualfull^\gamma([0,T],\reals^d)$ such that $\eqref{eq:Pi_Gamma_for_dualfullrX}$ holds for $\Pi$ and $\Gamma$.
 
 \end{proof}

\begin{defn}(Reformulation of Definition 3.2.\@ \cite{hairerkelly14})
 For a branched rough path $\dualfullrX\in\branchedpathsdualfull^\gamma([0,T],\reals^d)$, the model $(\Pi,\Gamma)=I(\dualfullrX)$ and $\level=\lfloor\gamma^{-1}\rfloor$, we call $\modeld_\Gamma^{\level\gamma}$ the space of \emph{controlled rough paths}\index{controlled rough path} of $\dualfullrX$.
\end{defn}
Applying Theorem \ref{thm:modeld_composition} to $\modeld_\Gamma^{\level\gamma}$ yields exactly the one-dimensional case of the composition formula for controlled rough paths given in Equation (3.8) \cite{hairerkelly14}.
\subsection{Shuffle-deconcatenation Hopf algebra case}
 The same can be done for the Hopf algebra $(\lingen{\words_d},\shuffle,\deconc,\antishuffle)$, yielding a regularity structure $\regstruc^\mathrm{w}=(A^\mathrm{w},T^\mathrm{w},\structaction{{G^\mathrm{w}}}^\mathrm{w})$, where $G^\mathrm{w}$ denotes the group of characters on $\lingen{\words_d}$ and where again $A^\mathrm{w}=\gamma\naturals_0$ and $T^\mathrm{w}=\lingen{\words_d}$ with $T_{k\gamma}^\mathrm{w}=\lingen{\words_{d,k}}$. This structure was already discussed in detail in Section 4.4 \cite{hairer14}.
 
 Here, we have the recursion
 \begin{equation*}
  \structaction{g}^\mathrm{w}\unit=\unit,\quad\structaction{g}^\mathrm{w}(w\itensor e_i)=g(w\itensor e_i)\unit+\structaction{g}^\mathrm{w}w\itensor e_i.
 \end{equation*}
 
Due to Theorem \ref{thm:geom_roughpaths_reg_str}, there is a straightforward analogue to Theorem \ref{thm:bijection_branched_model} in this case.
\section{Rough path structures revisited}
\subsection{Motivation}
Another way to construct regularity structures for rough paths is to perform an abstract fixed point iteration in the spirit of Section 15.2 \cite{frizhairer14}.
The problem we look at is, for smooth functions $f_i$ and drivers $x^i$ with $x_t^i=0$ for $t\leq 0$,
\begin{equation*}
 \diffd y_t=\sum_{i=1}^d f_i(y_t)\diffd x_t^i
\end{equation*}
Putting $\xi_t^i:=\dot x_t^i$, this may formally be reformulated as
\begin{equation*}
 \dot y_t=\sum_{i=1}^d f_i(y_t)\xi_t^i
\end{equation*}
or, as a formal integral equation,
\begin{equation*}
 y_t-y_0=\sum_{i=1}^d\int_0^t f_i(y_t)\xi_t^i\intd t.
\end{equation*}
Introducing the integration kernel $K:=\chi_{[0,\infty)}$, we have
\begin{equation*}
 y_t-y_0=\left[K\ast\Big(\sum_{i=1}^d f_i(y)\xi^i\Big)\right](t).
\end{equation*}
This may be translated to an abstract equation
\begin{equation}\label{eq:abstract_rde}
 Y=y_0\unit+\abstractkernel\dot Y\quad\text{and}\quad\dot Y=\sum_{i=1}^d f_i(Y)\Xi_i.
\end{equation}
for modeled distributions $Y$ and $\dot Y$. The linear operator $\abstractkernel$ acting on certain modeled distributions corresponds to the convolution with the kernel $K$.
It will turn out that $\abstractkernel$ can be written as
\begin{equation*}
 (\abstractkernel F)(t)=\abstractint\,F(t)+(K\ast\reconstr F)(t)\unit
\end{equation*}
for some linear operator $\abstractint$ acting on a sector of the model space which we want to introduce. In view of Theorem \ref{thm:modeld_composition}, we will define $f_i(Y)\Xi_i$ as
\begin{equation*}
 \sum_{k=0}^\infty\frac{f_i^{(k)}(Y^\unit)}{k!}(Y-Y^\unit)^k\Xi_i.
\end{equation*}
and later cut off that infinite series at some level. Let therefore $\mathcal{Y}=\bigcup_{n=0}^\infty\mathcal{Y}_n$ be the set of symbols necessary to describe $Y$ for any family of smooth functions $(f_i)_{i=1}^d$ and $\dot{\mathcal{Y}}=\bigcup_{n=0}^\infty\dot{\mathcal{Y}}_n$ the set of symbols necessary to describe $\dot Y$. Starting from $\mathcal{Y}_0=\{\unit\}$, we obtain the sets by iteratively applying \eqref{eq:abstract_rde} and collecting the symbols we have not yet included, \ie
\begin{alignat*}{3}
 \dot{\mathcal{Y}}_0&=\{\Xi_i|i\in I\},&\quad\mathcal{Y}_1&=\{\abstractint(\Xi_i)|i\in I\},\\
 \dot{\mathcal{Y}}_1&=\{\abstractint(\Xi_{i_1})\cdots\abstractint(\Xi_{i_m})\Xi_j|m\in\naturals,\,i_k,j\in I\},&\,\,\,\quad\mathcal{Y}_2&=\{\abstractint(\abstractint(\Xi_{i_1})\cdots\abstractint(\Xi_{i_m})\Xi_j)|m\in\naturals,\,i_k,j\in I\},\\
 &\,\,\,\vdots&\quad&\,\,\,\vdots\\
 \dot{\mathcal{Y}}_n&=\{\zeta_1\cdots\zeta_m\Xi_i|m\in\naturals,\,\zeta_j\in\mathcal{Y}_n,\,i\in I\},&\mathcal{Y}_{n+1}&=\{\abstractint(\zeta)|\zeta\in\dot{\mathcal{Y}}_n\},
\end{alignat*}
where $I:=\naturals\cap[1,d]$.
Obviously, we may identify
\begin{equation*}
 \mathcal{Y}\equiv\trees_d\cup\{\unit\},\quad\dot{\mathcal{Y}}\equiv\bigcup_{i=1}^d\forests_d\Xi_i
\end{equation*}
via
\begin{equation*}
 \abstractint(\tau_1\cdots\tau_m\Xi_i)\equiv\lroot\tau_1\cdots\tau_m\rroot_i
\end{equation*}
\subsection{Branched rough paths structure}
Thus, as a model space, we put
\begin{equation*}
 \branchedmodelspace=\langle\mathcal{Y}\rangle\oplus\langle\dot{\mathcal{Y}}\rangle=\langle\unit\rangle\oplus\langle\trees_d\rangle\oplus\bigoplus_{i=1}^d\langle\forests_d\Xi_i\rangle
\end{equation*}
with homogenities
\begin{equation*}
 \lvert\unit\rvert=0,\quad\lvert\Xi_i\rvert=\gamma-1,\quad\lvert\tau_1\cdots\tau_m\Xi_i\rvert=\lvert\tau_1\rvert+\cdots+\lvert\tau_m\rvert+\gamma-1,\quad\lvert\lroot\zeta\rroot_i\rvert=\lvert\abstractint(\zeta\Xi_i)\rvert=\lvert\zeta\rvert+\gamma
\end{equation*}
and hence index set
\begin{equation*}
 A^\mathrm{b}:=\gamma\naturals_0\cup(\gamma\naturals-1).
\end{equation*}

$\branchedmodelspace$ then can be turned into a comodule of $(\forestspace{d},\destar)$ via
\begin{equation}\label{eq:defn_branched_coproduct}
 \coproduct^\mathrm{b}:\,\branchedmodelspace\rightarrow\forestspace{d}\etensor \branchedmodelspace,\quad\coproduct^\mathrm{b}\zeta\Xi_i=\sum_{(\zeta)}^\star\zeta_1\etensor\zeta_2\Xi_i,\quad\coproduct^\mathrm{b}\lroot\zeta\rroot_i=\destar\lroot\zeta\rroot_i
\end{equation}
which allows us to define the action of some $g\in\adual{\forestspace{d}}$ on $\branchedmodelspace$ as
\begin{equation}\label{eq:defn_branched_structaction}
 \structaction{g}^\mathrm{b}:=(g\etensor\id)\coproduct^\mathrm{b}.
\end{equation}
Denoting the group of characters on $\forestspace{d}$ as $\butchergroup$, we get a regularity structure $\regstruc^\mathrm{b}:=(A^\mathrm{b},\branchedmodelspace,\structaction{\butchergroup}^\mathrm{b})$ for branched rough paths. It is some kind of a generalisation of the rough paths structure given in Definition 13.5.\@ \cite{frizhairer14} for the case of $\gamma\in(1/3,1/2]$, though they work with a finite dimensional model space there, which would also be possible here via truncation.
\begin{thm}\label{thm:branched_regstruc}
 $\regstruc^\mathrm{b}$ is a regularity structure. $\sectorY$ and $\sectorYdot$ are sectors of $\regstruc^\mathrm{b}$.
\end{thm}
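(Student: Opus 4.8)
The plan is to verify the three defining axioms of a regularity structure from Definition~\ref{defn:reg_struc} for $\regstruc^\mathrm{b}=(A^\mathrm{b},\branchedmodelspace,\structaction{\butchergroup}^\mathrm{b})$, and then to check directly that $\sectorY$ and $\sectorYdot$ satisfy the sector condition $\varGamma V\subseteq V$ for all $\varGamma\in\structaction{\butchergroup}^\mathrm{b}$. Since the construction here closely mirrors the general one of Theorem~\ref{thm:regstruc_for_hopfalg} (with a comodule $\branchedmodelspace$ replacing the Hopf algebra $H$ itself), the strategy is to imitate that proof, replacing $\coproduct$ by $\coproduct^\mathrm{b}$ and using the comodule property \eqref{eq:defn_branched_coproduct} in place of coassociativity wherever the latter was invoked.

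First I would confirm that $A^\mathrm{b}=\gamma\naturals_0\cup(\gamma\naturals-1)$ is bounded below and locally finite: both arithmetic progressions accumulate only at $+\infty$, so every set $A^\mathrm{b}\cap(-\infty,r)$ is finite, and $\min A^\mathrm{b}=\gamma-1$. Each homogeneity subspace is finite-dimensional since at each level only finitely many forests of bounded degree occur, so the Banach space requirement (finite-dimensional spaces with any norm, say the one induced by $\dualbracket$) is immediate. Next I would check that $\structaction{\butchergroup}^\mathrm{b}$ is a group: using that $\branchedmodelspace$ is a comodule of $(\forestspace{d},\destar)$, i.e.\ $(\destar\etensor\id)\coproduct^\mathrm{b}=(\id\etensor\coproduct^\mathrm{b})\coproduct^\mathrm{b}$, the same computation as in step~1 of the proof of Theorem~\ref{thm:regstruc_for_hopfalg} gives $\structaction{g}^\mathrm{b}\structaction{h}^\mathrm{b}=\structaction{h\convoproduct g}^\mathrm{b}$, so the map $g\mapsto\structaction{g}^\mathrm{b}$ is a group antihomomorphism from the Butcher group $\butchergroup$, whose image is thus a group containing $\id=\structaction{\counit}^\mathrm{b}$ and closed under inverses.

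The central axiom is the grading condition \eqref{eq:structuregroup_grading}, namely $\structaction{g}^\mathrm{b}x-x\in T_{<\alpha}$ for $x\in T_\alpha$. Here I would argue exactly as in step~2 of Theorem~\ref{thm:regstruc_for_hopfalg}: write $\structaction{g}^\mathrm{b}x-x=((g-\counit)\etensor\id)\coproduct^\mathrm{b}x$, and observe from the explicit recursion \eqref{eq:defn_branched_coproduct} that $\coproduct^\mathrm{b}$ respects the grading, so that the left tensor leg carries strictly positive homogeneity whenever $g-\counit$ does not annihilate it; since $(g-\counit)\unit=0$ for a character $g$, the term of full homogeneity $\alpha$ is killed and only strictly lower-homogeneity terms survive. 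The one genuine thing to verify — and the main obstacle — is that $\coproduct^\mathrm{b}$ really is a comodule map and is grading-compatible; this must be shown by induction on the recursive structure of $\sectorY$ and $\sectorYdot$, treating the two cases $\coproduct^\mathrm{b}\zeta\Xi_i$ and $\coproduct^\mathrm{b}\lroot\zeta\rroot_i=\destar\lroot\zeta\rroot_i$ separately and using coassociativity of $\destar$ (from Theorem~\ref{thm:treepro_destar_Hopf}) together with the homogeneities assigned to $\Xi_i$ and $\lroot\cdot\rroot_i$. Once the comodule and grading properties of $\coproduct^\mathrm{b}$ are in hand, the sector claim for $\sectorY=\lingen{\mathcal{Y}}$ and $\sectorYdot=\lingen{\dot{\mathcal{Y}}}$ follows by inspecting \eqref{eq:defn_branched_coproduct}: for $\lroot\zeta\rroot_i\in\mathcal{Y}$ the right leg of $\destar\lroot\zeta\rroot_i$ always lands in $\lingen{\trees_d}\subseteq\sectorY$, and for $\zeta\Xi_i\in\dot{\mathcal{Y}}$ the right leg $\zeta_2\Xi_i$ lands in $\sectorYdot$, so each of $\sectorY$ and $\sectorYdot$ is invariant under every $\structaction{g}^\mathrm{b}=(g\etensor\id)\coproduct^\mathrm{b}$, proving they are sectors.
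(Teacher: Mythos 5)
Your proposal is correct and follows essentially the same route as the paper: the paper's own proof is a two-line reduction to the argument of Theorem~\ref{thm:regstruc_for_hopfalg} (group antimorphism property of $\structaction{}^{\mathrm{b}}$ plus the grading of $\destar$), with the sector claim read off from the fact that $\coproduct^\mathrm{b}$ maps $\sectorY$ into $\forestspace{d}\etensor\sectorY$ and $\sectorYdot$ into $\forestspace{d}\etensor\sectorYdot$. You merely spell out the same steps in more detail (the comodule identity replacing coassociativity, the $((g-\counit)\etensor\id)\coproduct^\mathrm{b}$ computation for \eqref{eq:structuregroup_grading}), which is a faithful elaboration rather than a different argument.
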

\begin{proof}
 The fact that $\structaction{\butchergroup}^\mathrm{b}$ is a structure group follows again from the grading of $\destar$ and the group antimorphism property of $\structaction{}^\mathrm{b}$ (\cf Theorem \ref{thm:regstruc_for_hopfalg}). Since $\coproduct^\mathrm{b}$ by definition maps $\sectorY$ to $\forestspace{d}\etensor\sectorY$ and $\sectorYdot$ to $\forestspace{d}\etensor\sectorYdot$, they are indeed sectors.
\end{proof}

There is an obvious group isomorphism $\psi$ given by $\psi(\structaction{g}^\mathrm{f})=\structaction{g}^\mathrm{b}$, which is also a structure group morphism. It can be expanded to an injective map
\begin{equation*}
 \varPhi:\,\spaceofmodels^\mathrm{f}(\reals)\rightarrow\spaceofmodels^\mathrm{b}(\reals),\,\varPhi(\Pi,\Gamma):=(\varphi(\Pi),\psi(\Gamma))
\end{equation*}
via
\begin{equation*}
 \varphi(\Pi)_x\zeta:=\Pi_x\zeta\quad\forallc\zeta\in\sectorY,\quad\varphi(\Pi)_x\zeta:=\tfrac{\diffd}{\diffd t}\Pi_x\zeta\quad\forallc\zeta\in\sectorYdot.
\end{equation*}
We have that
\begin{equation*}
 \varPhi(\spaceofmodels_1^\mathrm{f}(\reals))=\big\{(\Pi,\Gamma)\in\spaceofmodels_1^\mathrm{b}(\reals)\big|\tfrac{\diffd}{\diffd t}\Pi_x{\restriction_{\sectorY}}=\Pi_x\partial\big\}=:\spaceofmodels_\textnormal{ad}^\mathrm{b}(\reals),
\end{equation*}
where $\partial\in\linear(\sectorY,\sectorYdot)$ is generated by
\begin{equation*}
 \partial\unit:=0,\quad\partial\lroot\zeta\rroot_i:=\zeta\Xi_i.
\end{equation*}
Note that $\partial\varGamma{\restriction_{\sectorY}}=\Gamma\partial$ for all $\varGamma$ in $\structaction{G}^\mathrm{b}$. The models in $\spaceofmodels_\textnormal{ad}^\mathrm{b}(\reals)$ are called \emph{admissible models}.
\subsection{Integration against the kernel and solving RDEs}
We introduce $\abstractint\in\linear(\sectorYdot,\sectorY)$ as
\begin{equation*}
 \abstractint(\zeta\Xi_i)=\lroot\zeta\rroot_i.
\end{equation*}.
\begin{thm}\label{thm:abstractint}
 $\abstractint:\,\sectorYdot\rightarrow\sectorY$ is an \emph{abstract integration map of order} $1$ in the sense of Definition 5.7 \textnormal{\cite{hairer14}} since
 \begin{enumerate}[(i)]
  \item $\abstractint\sectorYdot_\alpha\subseteq\sectorY_{\alpha+1}$,
  \item $\abstractint\varGamma a-\varGamma\abstractint a\in\lingen{\unit}$ for every $a\in\sectorYdot$ and $\varGamma\in\structaction{\butchergroup}^\mathrm{b}$.
 \end{enumerate}

\end{thm}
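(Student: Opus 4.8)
The plan is to verify the two defining properties of an abstract integration map of order $1$ separately, in both cases reducing to the spanning elements $\zeta\Xi_i$, $\zeta\in\forests_d$, of $\sectorYdot$ and invoking linearity.

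Property (i) is pure homogeneity bookkeeping. A spanning element $\zeta\Xi_i$ has $\lvert\zeta\Xi_i\rvert=\lvert\zeta\rvert+\gamma-1$, while its image $\abstractint(\zeta\Xi_i)=\lroot\zeta\rroot_i$ has $\lvert\lroot\zeta\rroot_i\rvert=\lvert\zeta\rvert+\gamma$. Thus $\abstractint$ raises the homogeneity by exactly $1$, which is precisely the inclusion $\abstractint\sectorYdot_\alpha\subseteq\sectorY_{\alpha+1}$.

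For property (ii), by linearity it suffices to take $\varGamma=\structaction{g}^\mathrm{b}=(g\etensor\id)\coproduct^\mathrm{b}$ for a character $g\in\butchergroup$ and $a=\zeta\Xi_i$, and then to compute and compare the two compositions. Using $\coproduct^\mathrm{b}\zeta\Xi_i=\sum_{(\zeta)}^\star\zeta_1\etensor\zeta_2\Xi_i$ followed by $\abstractint(\zeta_2\Xi_i)=\lroot\zeta_2\rroot_i$, one obtains
\begin{equation*}
 \abstractint\structaction{g}^\mathrm{b}(\zeta\Xi_i)=\sum_{(\zeta)}^\star g(\zeta_1)\lroot\zeta_2\rroot_i.
\end{equation*}
For the reverse order, $\abstractint(\zeta\Xi_i)=\lroot\zeta\rroot_i$ and $\coproduct^\mathrm{b}\lroot\zeta\rroot_i=\destar\lroot\zeta\rroot_i$, so the recursion \eqref{eq:defn_destar}, namely $\destar\lroot\zeta\rroot_i=\lroot\zeta\rroot_i\etensor\unit+(\id\etensor\lroot\cdot\rroot_i)\destar\zeta$, gives
\begin{equation*}
 \structaction{g}^\mathrm{b}\abstractint(\zeta\Xi_i)=(g\etensor\id)\destar\lroot\zeta\rroot_i=g(\lroot\zeta\rroot_i)\unit+\sum_{(\zeta)}^\star g(\zeta_1)\lroot\zeta_2\rroot_i.
\end{equation*}
Subtracting, the two $\destar$-sums cancel and leave $\abstractint\structaction{g}^\mathrm{b}(\zeta\Xi_i)-\structaction{g}^\mathrm{b}\abstractint(\zeta\Xi_i)=-g(\lroot\zeta\rroot_i)\unit\in\lingen{\unit}$, which is exactly what is required.

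The computation is short, and the only point demanding care — the natural place to slip — is keeping the two Sweedler expansions aligned: the sum $\sum_{(\zeta)}^\star\zeta_1\etensor\zeta_2$ appearing in $\coproduct^\mathrm{b}\zeta\Xi_i$ must be recognised as literally $\destar\zeta$, so that it matches the term $(\id\etensor\lroot\cdot\rroot_i)\destar\zeta$ produced by the coproduct recursion for $\lroot\zeta\rroot_i$. Once this identification is made the cancellation is exact, and the surviving boundary term $g(\lroot\zeta\rroot_i)\unit$ of the recursion is precisely the $\lingen{\unit}$-valued obstruction permitted by the definition; the failure of $\abstractint$ to intertwine the structure group is thus concentrated entirely in the multiplicity of $\unit$, as it must be.
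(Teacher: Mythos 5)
Your proposal is correct and follows essentially the same route as the paper's proof: property (i) by comparing the homogeneities $\lvert\zeta\Xi_i\rvert=\lvert\zeta\rvert+\gamma-1$ and $\lvert\lroot\zeta\rroot_i\rvert=\lvert\zeta\rvert+\gamma$, and property (ii) by computing the commutator on spanning elements $\zeta\Xi_i$ and isolating the boundary term $-g(\lroot\zeta\rroot_i)\unit$ from the recursion for $\destar\lroot\zeta\rroot_i$. The only cosmetic difference is that you carry out the cancellation in explicit Sweedler sums, whereas the paper writes the same computation in operator form.
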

\begin{proof}
 We have
 \begin{equation*}
  \lvert\abstractint(\zeta\Xi_i)\rvert=\lvert\lroot\zeta\rroot_i\rvert=\lvert\zeta\rvert+\gamma=\lvert\zeta\Xi_i\rvert+1
 \end{equation*}
 and
 \begin{equation*}
  \abstractint\structaction{g}^\mathrm{b}(\zeta\Xi_i)-\structaction{g}^\mathrm{b}\abstractint(\zeta\Xi_i)=\abstractint\big[(\structaction{g}^\mathrm{b}\zeta)\Xi_i\big]-\structaction{g}^\mathrm{b}\lroot\zeta\rroot_i=\lroot\structaction{g}^\mathrm{b}\zeta\rroot_i-g(\lroot\zeta\rroot_i)\unit-\lroot\structaction{g}^\mathrm{b}\zeta\rroot_i=-g(\lroot\zeta\rroot_i)\unit.
 \end{equation*}

\end{proof}
In the following, let $\treepro:\,\sectorY\times\sectorYdot\rightarrow\sectorYdot$ be generated by
\begin{equation*}
 \tau\treepro(\zeta\Xi_i):=(\tau\treepro\zeta)\Xi_i\quad\forallc\tau\in\trees_d\cup\{\unit\},\,\zeta\in\forests_d,\,i\in\{1,\ldots,d\}.
\end{equation*}

\begin{thm}
 A linear map $\varGamma\in\linear(\branchedmodelspace,\branchedmodelspace)$ is in $\structaction{\butchergroup}^\mathrm{b}$ if and only if all of the following properties are satisfied:
 \begin{enumerate}[(i)]
  \item $\varGamma x-x\in\branchedmodelspace_{<\alpha}$ for all $\alpha\in A^\mathrm{b}$ and $x\in\branchedmodelspace_\alpha$,
  \item $\varGamma\sectorY\subseteq\sectorY$ and $\varGamma\sectorYdot\subseteq\sectorYdot$,
  \item $\abstractint\varGamma b-\varGamma\abstractint b\in\lingen{\unit}$ for every $b\in\sectorYdot$,
  \item $\varGamma a\treepro\varGamma b=\varGamma(a\treepro b)$ for all $a\in\sectorY$ and $b\in\sectorYdot$.
 \end{enumerate}

\end{thm}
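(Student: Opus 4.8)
The plan is to prove the two implications separately, with the reverse direction carrying the real work.

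For the forward implication, assume $\varGamma=\structaction{g}^\mathrm{b}$ for a character $g\in\butchergroup$. Properties (i) and (ii) are already contained in Theorem \ref{thm:branched_regstruc}: (i) is the grading requirement \eqref{eq:structuregroup_grading} verified there, and (ii) is the assertion that $\sectorY$ and $\sectorYdot$ are sectors. Property (iii) is precisely part (ii) of Theorem \ref{thm:abstractint}. Only (iv) needs a short computation. For $\tau\in\sectorY$ (a tree or $\unit$) and $\zeta\Xi_i\in\sectorYdot$ I would expand $\structaction{g}^\mathrm{b}((\tau\treepro\zeta)\Xi_i)=(g\etensor\id)\coproduct^\mathrm{b}((\tau\treepro\zeta)\Xi_i)$ using \eqref{eq:defn_branched_coproduct} together with the fact that $\destar$ is a $\treepro$-homomorphism, so $\destar(\tau\treepro\zeta)=\destar\tau\treeprotwo\destar\zeta$. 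Since $g$ is multiplicative, $g(\tau_1\treepro\zeta_1)=g(\tau_1)g(\zeta_1)$, and both $\structaction{g}^\mathrm{b}((\tau\treepro\zeta)\Xi_i)$ and $\structaction{g}^\mathrm{b}\tau\treepro\structaction{g}^\mathrm{b}(\zeta\Xi_i)$ collapse to the same sum $\sum g(\tau_1)g(\zeta_1)\,(\tau_2\treepro\zeta_2)\Xi_i$, which is (iv).

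For the reverse implication, assume (i)--(iv); the first task is to reconstruct the character. Since every nonempty forest factors uniquely (up to commutativity) into trees, a character of $(\forestspace{d},\treepro)$ is determined by its values on $\trees_d$. I would therefore set $g(\tau):=\counit(\varGamma\tau)$ for each tree $\tau$, which is meaningful because $\varGamma\tau\in\sectorY$ by (ii) and $\counit$ extracts the $\unit$-coefficient on $\sectorY$, and then extend $g$ multiplicatively to obtain $g\in\butchergroup$ (this mirrors the recovery $h:=\counit\varGamma$ in Theorem \ref{thm:regstruc_for_hopfalg}). It then remains to show $\varGamma=\structaction{g}^\mathrm{b}$ by checking agreement on every basis element $\unit$, $\tau\in\trees_d$, and $\zeta\Xi_i$, which I would do by induction on the number of nodes $\lvert\cdot\rvert$, with the convention $\lvert\zeta\Xi_i\rvert:=\lvert\zeta\rvert$. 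For the base cases, (i) and $\branchedmodelspace_{<\gamma-1}=\{0\}$ force $\varGamma\Xi_i=\Xi_i=\structaction{g}^\mathrm{b}\Xi_i$, while (i) together with (ii) forces $\varGamma\unit=\unit=\structaction{g}^\mathrm{b}\unit$ (as $\sectorY\cap\branchedmodelspace_{<0}=\{0\}$).

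In the inductive step at node count $n$ I would first treat trees, then $\sectorYdot$-elements. For a tree $\tau=\abstractint(\zeta\Xi_i)$ with $\lvert\zeta\Xi_i\rvert=n-1$, property (iii) gives $\varGamma\tau=\abstractint\varGamma(\zeta\Xi_i)+c\,\unit$ for some $c\in\reals$; since $\abstractint$ lands in $\lingen{\trees_d}$, the $\unit$-coefficient of $\varGamma\tau$ is $c$, whence $c=g(\tau)$ by the definition of $g$. The analogous identity $\structaction{g}^\mathrm{b}\tau=\abstractint\structaction{g}^\mathrm{b}(\zeta\Xi_i)+g(\tau)\,\unit$ holds by Theorem \ref{thm:abstractint}, and the inductive hypothesis applied to the strictly smaller $\zeta\Xi_i$ gives $\varGamma\tau=\structaction{g}^\mathrm{b}\tau$. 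With all trees of node count $n$ established, I would then handle a $\sectorYdot$-element $\zeta\Xi_i$ of node count $n$: factoring $\zeta=\tau_1\treepro\eta$ so that $\zeta\Xi_i=\tau_1\treepro(\eta\Xi_i)$, property (iv) gives $\varGamma(\zeta\Xi_i)=\varGamma\tau_1\treepro\varGamma(\eta\Xi_i)$, where $\eta\Xi_i$ has strictly fewer nodes and $\tau_1$ is a tree of node count at most $n$, both already treated; the identical factorization holds for $\structaction{g}^\mathrm{b}$ by the forward direction's (iv), so the two maps agree.

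I expect the main obstacle to be bookkeeping rather than any isolated estimate: one must order the induction so that trees of a given node count are settled before the single-tree elements $\tau_1\treepro\Xi_i$ of that same node count (where $\lvert\tau_1\rvert=n$), and one must track the $\unit$-component carefully so that the ambiguity allowed by (iii)—an arbitrary multiple of $\unit$—is pinned down precisely by the chosen value $g(\tau)=\counit(\varGamma\tau)$. The conceptual engine, namely that $\varGamma$ and $\structaction{g}^\mathrm{b}$ obey the \emph{same} recursions under $\abstractint$ and $\treepro$ once they agree on generators, is exactly what (iii), (iv) and the forward computation supply.
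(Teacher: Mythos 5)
Your proposal is correct and follows essentially the same route as the paper: the forward direction cites the same three earlier results and does the same multiplicativity computation for (iv), and the reverse direction reconstructs the character from the $\unit$-defect of $\varGamma$ on trees (your $g(\tau)=\counit(\varGamma\tau)$ coincides with the paper's $g(\lroot\zeta\rroot_i)\unit=\varGamma\abstractint(\zeta\Xi_i)-\abstractint\varGamma(\zeta\Xi_i)$ by property (iii)) and then runs the same two-stage induction, using (iii) to pass from $\zeta\Xi_i$ to $\lroot\zeta\rroot_i$ and (iv) to pass from trees to forest symbols. The only difference is bookkeeping — you induct on node count where the paper inducts along the Picard hierarchy $\mathcal{Y}_n,\dot{\mathcal{Y}}_n$ with an inner induction on the number of trees — and both orderings are well-founded.
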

\begin{proof}~
 \begin{enumerate}
  \item Let $\varGamma\in\structaction{\butchergroup}^\mathrm{b}$ be arbitrary. Let $g\in\butchergroup$ be such that $\varGamma=\structaction{g}^\mathrm{b}$. Properties (i) and (ii) hold by Theorem \ref{thm:branched_regstruc}, property (iii) holds by Theorem \ref{thm:abstractint} and property (iv) holds since
  \begin{align*}
   \varGamma(\tau\treepro\zeta\Xi_i)&=(g\etensor\id)\coproduct^\mathrm{b}(\tau\treepro\zeta\Xi_i)=\big((g\etensor\id)\destar(\tau\treepro\zeta)\big)\Xi_i=\big((g\etensor\id)(\destar\tau\treeprotwo\destar\zeta)\big)\Xi_i\\
   &=\big((g\etensor\id)\destar\tau\big)\treepro\big((g\etensor\id)\destar\zeta\big)\Xi_i=\big((g\etensor\id)\coproduct^\mathrm{b}\tau\big)\treepro\big((g\etensor\id)\coproduct^\mathrm{b}(\zeta\Xi_i)\big)\\
   &=\varGamma\tau\treepro\varGamma(\zeta\Xi_i)
  \end{align*}

  \item Let $\varGamma\in\linear(\branchedmodelspace,\branchedmodelspace)$ be such that it satisfies all of the given properties. Denote by $g$ the element in $\adual{\forestspace{d}}$ recursively generated by
  \begin{equation*}
   g(\unit):=1,\quad g(\lroot\zeta\rroot_i)\unit:=\varGamma\abstractint(\zeta\Xi_i)-\abstractint\varGamma(\zeta\Xi_i),\quad g(\zeta_1\treepro\zeta_2):=g(\zeta_1)g(\zeta_2).
  \end{equation*}
  By the first and last equation, we get that $g\in\butchergroup$. The claim we want to show is $\varGamma=\structaction{g}^\mathrm{b}$. By \eqref{eq:defn_branched_structaction} and \eqref{eq:defn_branched_coproduct} this is equivalent to the identities
  \begin{equation}\label{eq:branched_structaction_on_sectorY}
   \varGamma\tau=(g\etensor\id)\destar\tau
  \end{equation}
  for all $\tau\in\trees_d\cup\{\unit\}$ and
  \begin{equation}\label{eq:branched_structaction_on_sectorYdot}
   \varGamma(\zeta\Xi_i)=\big((g\etensor\id)\destar\zeta\big)\Xi_i
  \end{equation}
  for all $\zeta\in\forests_d$.
  
  \noindent First of all, we indeed have
  \begin{equation*}
   \varGamma\unit=\unit=(g\etensor\id)\destar\unit
  \end{equation*}
  by properties (i) and (ii) as well as
  \begin{equation*}
   \varGamma\Xi_i=\Xi_i=\big((g\etensor\id)\destar\unit\big)\Xi_i
  \end{equation*}
  by property (i) and the fact that the $\Xi_i$ have the lowest homogeneity of the regularity structure. Thus, recalling the iterative scheme from the motivation, \eqref{eq:branched_structaction_on_sectorY} holds on $\mathcal{Y}_0$ and \eqref{eq:branched_structaction_on_sectorYdot} holds on $\dot{\mathcal{Y}}_0$.
  
  \noindent Assuming \eqref{eq:branched_structaction_on_sectorYdot} holds on $\dot{\mathcal{Y}}_n$, we get
  \begin{equation*}
   \varGamma\lroot\zeta\rroot_i=\varGamma\abstractint(\zeta\Xi_i)=g(\lroot\zeta\rroot_i)\unit+\abstractint\varGamma(\zeta\Xi_i)=g(\lroot\zeta\rroot_i)\unit+(g\etensor\lroot\cdot\rroot_i)\destar\zeta=(g\etensor\id)\destar\lroot\zeta\rroot_i
  \end{equation*}
  for all $\zeta\in\dot{\mathcal{Y}}_n$, using in the second equality the definition of $g$ and in the last equality the definition of $\destar$ given in \eqref{eq:defn_destar}. Thus, it follows that \eqref{eq:branched_structaction_on_sectorY} holds on $\mathcal{Y}_{n+1}$.
  
  \noindent Assuming that \eqref{eq:branched_structaction_on_sectorY} holds on $\mathcal{Y}_{n+1}$, we show that \eqref{eq:branched_structaction_on_sectorYdot} holds on $\dot{\mathcal{Y}}_{n+1}$ by an induction over the number of trees of a symbol in $\dot{\mathcal{Y}}_{n+1}$. Indeed, we first have for all $\tau\in\mathcal{Y}_{n+1}$ that
  \begin{equation*}
   \varGamma(\tau\Xi_i)=\varGamma\tau\treepro\varGamma\Xi_i=(\varGamma\tau)\Xi_i=\big((g\etensor\id)\destar\tau\big)\Xi_i,
  \end{equation*}
  where we used property (iv). Then, if \eqref{eq:branched_structaction_on_sectorYdot} holds for some $\zeta\Xi_i\in\dot{\mathcal{Y}}_{n+1}$, we get for all $\tau\in\mathcal{Y}_{n+1}$ that
  \begin{align*}
   \varGamma(\tau\treepro\zeta\Xi_i)&=\varGamma\tau\treepro\varGamma(\zeta\Xi_i)=\big(g\etensor\id)\destar\tau\big)\treepro\big((g\etensor\id)\destar\zeta\big)\Xi_i=\big((g\etensor\id)(\destar\tau\treeprotwo\destar\zeta)\big)\Xi_i\\&=\big((g\etensor\id)\destar(\tau\treepro\zeta)\big)\Xi_i,
  \end{align*}
  where we used property (iv) again in the first identity, the definition of $g$ in the third identity and in the last identity the definition of $\destar$ given in \eqref{eq:defn_destar}. Thus, the induction over the number of trees is complete.
  
  \noindent Finally, by an induction over $n$, we obtain the claim.
 \end{enumerate}
\end{proof}

\begin{thm}
 Let $\alpha>0$, $f\in\contdiff^k(\reals,\reals)$ for some $k\in\naturals$ with $k\geq\alpha/\gamma+1$ and $(\Pi,\Gamma)\in\spaceofmodels^\mathrm{b}(\reals)$.
 Then, $Y\mapsto(f(Y)\Xi_i)_{\alpha+\gamma-1}$, where
 \begin{equation*}
  (f(Y)\Xi_i)_{\alpha+\gamma-1}(t):=\sum_{n=0}^{\lfloor\alpha/\gamma\rfloor}\quotientmap_{<\alpha+\gamma-1}\frac{f^{(n)}(Y^\unit(t))}{n!}(Y(t)-Y^\unit(t)\unit)^{\treepro n}\Xi_i
 \end{equation*}
 and $Y^\unit(t)\unit:=\pi_0 Y(t)$, is a continuous map from $\modeld_\Gamma^{\alpha}(\reals,\sectorY)$ into $\modeld_\Gamma^{\alpha+\gamma-1}(\reals,\sectorYdot)$. More precisely, for any compact $\compact\subseteq\reals^d$ and any $D>0$, there is a $C_{\compact,D}>0$ such that
 \begin{equation}\label{eq:Lipschitz_f_Y_Xi}
  \lVERT (h(F)\Xi_i)_{\alpha+\gamma-1}-(h(F)\Xi_i)_{\alpha+\gamma-1}\rVERT_{\alpha+\gamma-1,\compact}\leq C_{\compact,D}\lVERT F-G\rVERT_{\alpha,\compact}
 \end{equation}
 for all $F,G\in\modeld_\Gamma^{\alpha}(\reals,\sectorY)$ such that $\lVERT F\rVERT_{\alpha,\compact}+\lVERT G\rVERT_{\alpha,\compact}\leq D$.
\end{thm}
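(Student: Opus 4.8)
The plan is to recognise that the map in question factors, through the Connes--Kreimer forest regularity structure $\regstruc^\mathrm{f}=(A^\mathrm{f},\forestspace{d},\structaction{\butchergroup}^\mathrm{f})$ of Theorem~\ref{thm:regstruc_for_hopfalg}, as the composition operator of Theorem~\ref{thm:modeld_composition} followed by right multiplication with $\Xi_i$. Writing $M_{\Xi_i}\colon\forestspace{d}\to\sectorYdot$ for the linear map $\zeta\mapsto\zeta\Xi_i$, the first step is to record the identity
\begin{equation*}
 (f(Y)\Xi_i)_{\alpha+\gamma-1}(t)=M_{\Xi_i}\bigl((f\circ Y)_\alpha(t)\bigr),
\end{equation*}
where $(f\circ Y)_\alpha$ is the composition operator of Theorem~\ref{thm:modeld_composition} for the function-like sector $\forestspace{d}$ of $\regstruc^\mathrm{f}$. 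This rests on two elementary observations: $M_{\Xi_i}$ raises homogeneity by exactly $\gamma-1$ (since $\lvert\zeta\Xi_i\rvert=\lvert\zeta\rvert+\gamma-1$), so that $M_{\Xi_i}\quotientmap_{<\alpha}=\quotientmap_{<\alpha+\gamma-1}M_{\Xi_i}$; and the $\treepro$-powers $(Y(t)-Y^\unit(t)\unit)^{\treepro n}$ are genuine forests in $\forestspace{d}$, lying outside $\branchedmodelspace$, which is precisely why one must pass through $\regstruc^\mathrm{f}$, where $\forestspace{d}\treepro\forestspace{d}\subseteq\forestspace{d}$.

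Next I would verify the hypotheses of Theorem~\ref{thm:modeld_composition} and check that $Y$ feeds into it. The sector $\forestspace{d}$ is function-like with lowest non-zero homogeneity $\vartheta=\gamma$ (the single-node trees), $\treepro$ is a product on $\forestspace{d}\times\forestspace{d}$ by Theorem~\ref{thm:regstruc_for_hopfalg}, and the regularity requirement $k\geq\alpha/\gamma+1$ is exactly the hypothesis $k\geq(\text{order})/\vartheta+1$ with order $\alpha$ and $\vartheta=\gamma$; moreover $\lfloor\alpha/\gamma\rfloor$ is the truncation index in both statements. The input $Y\in\modeld_\Gamma^\alpha(\reals,\sectorY)$ must be reinterpreted as an element of $\modeld_{\Gamma^\mathrm{f}}^\alpha(\reals,\forestspace{d})$, where $\Gamma^\mathrm{f}_{st}:=\psi^{-1}(\Gamma_{st})\in\structaction{\butchergroup}^\mathrm{f}$ under the group isomorphism $\psi(\structaction{g}^\mathrm{f})=\structaction{g}^\mathrm{b}$. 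Here one uses $\structaction{g}^\mathrm{b}{\restriction_{\sectorY}}=\structaction{g}^\mathrm{f}{\restriction_{\langle\unit\rangle\oplus\langle\trees_d\rangle}}$, which holds because $\coproduct^\mathrm{b}\lroot\zeta\rroot_i=\destar\lroot\zeta\rroot_i$, together with the fact that the homogeneity of a tree and the induced norm coincide in the two structures; hence $\lVERT Y\rVERT_{\alpha,\compact}$ is the same computed in either structure.

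The map $M_{\Xi_i}$ then carries $\modeld_{\Gamma^\mathrm{f}}^\alpha(\reals,\forestspace{d})$ isometrically into $\modeld_\Gamma^{\alpha+\gamma-1}(\reals,\sectorYdot)$. Concretely, $M_{\Xi_i}$ is a graded isometry (each basis forest $\zeta$ is sent to the basis symbol $\zeta\Xi_i$) and it intertwines the two actions, $\Gamma_{st}(\zeta\Xi_i)=(\structaction{g_{st}}^\mathrm{f}\zeta)\Xi_i=M_{\Xi_i}(\Gamma^\mathrm{f}_{st}\zeta)$, by \eqref{eq:defn_branched_coproduct}. From these two properties the modelled-distribution bound for $M_{\Xi_i}F$ at homogeneity $\theta+\gamma-1$ is literally the bound for $F$ at homogeneity $\theta$, so that $\lVERT M_{\Xi_i}F\rVERT_{\alpha+\gamma-1,\compact}=\lVERT F\rVERT_{\alpha,\compact}$. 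Combining the three steps gives $(f(Y)\Xi_i)_{\alpha+\gamma-1}=M_{\Xi_i}((f\circ Y)_\alpha)\in\modeld_\Gamma^{\alpha+\gamma-1}(\reals,\sectorYdot)$, and, applying the Lipschitz bound of Theorem~\ref{thm:modeld_composition} through the isometry,
\begin{equation*}
 \lVERT(f(F)\Xi_i)_{\alpha+\gamma-1}-(f(G)\Xi_i)_{\alpha+\gamma-1}\rVERT_{\alpha+\gamma-1,\compact}=\lVERT(f\circ F)_\alpha-(f\circ G)_\alpha\rVERT_{\alpha,\compact}\leq C_{\compact,D}\lVERT F-G\rVERT_{\alpha,\compact},
\end{equation*}
which is the asserted estimate (the printed inequality \eqref{eq:Lipschitz_f_Y_Xi} contains a typographical slip, writing $h$ and repeating $F$, and should read as above).

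I expect the main obstacle to be the analytic ($\beta$--$\theta$) bound in the $\structaction{\butchergroup}^\mathrm{f}$-bimap property of $\Gamma^\mathrm{f}$ on genuine multi-tree forests, which do not lie in $\branchedmodelspace$, so that the bound cannot be read off directly from $\Gamma$. The way through is again the isometry $M_{\Xi_i}$: for a forest $\zeta$ with $\lvert\zeta\rvert=k$ one has $\zeta\Xi_i\in\sectorYdot_{k\gamma+\gamma-1}$, and the bimap estimate for $\Gamma$ on $\sectorYdot$, transported by $M_{\Xi_i}$, yields $\lVert\Gamma^\mathrm{f}_{xy}\zeta\rVert_{\theta'}\lesssim\lVert\zeta\rVert\,d(x,y)^{k\gamma-\theta'}$ for every $\theta'<k\gamma$, which is exactly condition~(iii) of a $G$-bimap for $\Gamma^\mathrm{f}$; the algebraic bimap identities transfer immediately since $\psi$ is a group isomorphism.
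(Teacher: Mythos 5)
Your proposal is correct and follows essentially the same route as the paper: extend the sector to the full forest algebra $\forestspace{d}$ with the structure group acting as $\structaction{g}^\mathrm{f}$, apply Theorem \ref{thm:modeld_composition} there, and observe that right multiplication by $\Xi_i$ shifts homogeneity by $\gamma-1$ and is an isometry of the modelled-distribution seminorms. Your additional verification of the bimap bound for $\Gamma^{\mathrm{f}}$ on multi-tree forests (transported via $M_{\Xi_i}$) fills in a step the paper's proof leaves implicit, and your reading of the typographical slip in \eqref{eq:Lipschitz_f_Y_Xi} is the intended one.
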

\begin{proof}
 First extend $\branchedmodelspace$ to $\forestspace{d}\oplus\bigoplus_{i=1}^d\langle\forests_d\Xi_i\rangle$ with $\structaction{g}^\mathrm{b}{\restriction_{\forestspace{d}}}=\structaction{g}^\mathrm{f}$ and apply Theorem \ref{thm:modeld_composition} to $\modeld_\Gamma^{\alpha}(\reals,\forestspace{d})$ to get $Y\mapsto(f(Y))_{\alpha}$. For all $G\in\modeld_\Gamma^{\alpha}(\reals,\forestspace{d})$ and all compact $\compact\subset\reals$, we have
 \begin{align*}
  \sup_{\substack{s,t\in\compact:\\0<\lvert t-s\rvert\leq 1}}\sup_{\beta<\alpha+\gamma-1}\frac{\lVert G(t)\Xi_i-\Gamma_{ts}(G(s)\Xi_i)\rVert_\beta}{\lvert t-s\rvert^{\alpha+\gamma-1-\beta}}&=\sup_{\substack{s,t\in\compact:\\0<\lvert t-s\rvert\leq 1}}\sup_{\beta<\alpha+\gamma-1}\frac{\lVert(G(t)-\Gamma_{ts}G(s))\Xi_i\rVert_\beta}{\lvert t-s\rvert^{\alpha+\gamma-1-\beta}}\\
  &=\sup_{\substack{s,t\in\compact:\\0<\lvert t-s\rvert\leq 1}}\sup_{\beta<\alpha+\gamma-1}\frac{\lVert G(t)-\Gamma_{ts}G(s)\rVert_{\beta-\gamma+1}}{\lvert t-s\rvert^{\alpha+\gamma-1-\beta}}\\
  &=\sup_{\substack{s,t\in\compact:\\0<\lvert t-s\rvert\leq 1}}\sup_{\theta<\alpha}\frac{\lVert G(t)-\Gamma_{ts}G(s)\rVert_{\theta}}{\lvert t-s\rvert^{\alpha-\theta}}
 \end{align*}
 and
 \begin{equation*}
  \sup_{t\in\compact}\sup_{\beta<\alpha+\gamma-1}\lVert G(t)\Xi_i\rVert_{\beta}=\sup_{t\in\compact}\sup_{\beta<\alpha+\gamma-1}\lVert G(t)\rVert_{\beta-\gamma+1}=\sup_{t\in\compact}\sup_{\theta<\alpha}\lVert G(t)\rVert_{\theta},
 \end{equation*}
 and therefore $\lVERT G\Xi_i\rVERT_{\alpha+\gamma-1,\compact}=\lVERT G\rVERT_{\alpha,\compact}$. Thus, $(f(Y)\Xi_i)_{\alpha+\gamma-1}$ indeed is an element of the space $\modeld_\Gamma^{\alpha+\gamma-1}(\reals,\sectorYdot)$ and we get the Lipschitz bounds \eqref{eq:Lipschitz_f_Y_Xi} from those in Theorem \ref{thm:modeld_composition}.
 
\end{proof}

For the remainder of the discussion, we confine ourselves to a basic outline. The idea is to convolute the kernel $K$ with the reconstructions of modeled distributions in $\modeld_\Gamma^\alpha(\reals,\sectorYdot)$ for some $\alpha>0$. Since those reconstructions live in $\hoelder^{\gamma-1}(\reals)$ and $K$ is a 1-regularizing kernel in the sense of Assumption 5.1 \cite{hairer14}, it is indeed possible to perform such a convolution and to obtain a continuous function in $\hoelder^\gamma(\reals)$ as a result, if we further restrict the set of models in order to get sufficient decay at $-\infty$. We do this by simply demanding that $\Gamma_{st}=\id$ for $s,t\leq 0$, thus this restricted set will still contain the usual setting of rough paths where everything happens on a compact interval of the form $[0,T]$. Now, we can define an abstract kernel operator $\abstractkernel^\alpha:\,\modeld_\Gamma^\alpha([0,\infty),\sectorYdot)\rightarrow\modeld_\Gamma^{\alpha+1}([0,\infty),\sectorY)$ via
 \begin{equation*}
 (\abstractkernel^\alpha F)(t):=\abstractint\,F(t)+(K\ast\reconstr^\alpha F)(t)\,\unit.
\end{equation*}
This operator then obviously fulfills the property that
\begin{equation*}
 \reconstr^{\alpha+1}\abstractkernel^\alpha F=K\ast\reconstr^\alpha F\quad\forallc F\in\modeld_\Gamma^\alpha(\sectorYdot).
\end{equation*}
By showing appropriate Lipschitz bounds for $\abstractkernel^\alpha F$ relative to $F$, we then see that there is $T>0$ such that
\begin{equation*}
 \Psi:\,\modeld_\Gamma^{\alpha+1-\gamma}([0,T],\sectorY)\rightarrow\modeld_\Gamma^{\alpha+1-\gamma}([0,T],\sectorY),\quad\Psi(Z):=y_s\unit+\abstractkernel^{\alpha}\sum_{i=1}^d(f_i(Z)\Xi_i)_{\alpha}
\end{equation*}
is a contraction, and therefore has a unique fixed point.

\subsection{Geometric rough paths structure}
For weakly geometric rough paths, we consider the model space
\begin{equation*}
 \geomodelspace=\lingen{\words_d}\oplus\bigoplus_{i=1}^d\lingen{\words_d\Xi_i},
\end{equation*}
since the kernel of the linear map
\begin{equation*}
 \phi_T:\,\branchedmodelspace\rightarrow \geomodelspace,\quad \phi_T(\unit):=\unit,\quad\phi_T(\tau):=\phi(\tau)\,\,\forallc\tau\in\lingen{\trees_d},\quad\phi_T(\zeta\Xi_i):=\phi(\zeta)\Xi_i\,\,\forallc\zeta\in\forestspace{d},
\end{equation*}
consists of those linear combinations of symbols which will vanish through integration by parts. Here, $\phi:\,\forestspace{d}\rightarrow\lingen{\words_d}$ is again the Hopf algebra endomorphism from Theorem \ref{thm:geom_are_branched}.
As for the case of branched rough paths, $\geomodelspace$ then can be turned into a comodule of now $(\lingen{\words_d},\deconc)$ via
\begin{equation*}
 \coproduct^\mathrm{g}:\,\geomodelspace\rightarrow\lingen{\words_d}\etensor \geomodelspace,\quad\coproduct^\mathrm{g} w\Xi_i=\sum_{(w)}^{\itensor} w_1\etensor w_2\Xi_i,\quad\coproduct^\mathrm{g} w=\deconc w
\end{equation*}
which allows us to define the action of some $g\in\adual{\lingen{\words_d}}$ on $\geomodelspace$ as
\begin{equation*}
 \structaction{g}^\mathrm{g}:=(g\etensor\id)\coproduct^\mathrm{g}.
\end{equation*}
Recalling the group $G^\mathrm{w}$ of characters on $\lingen{\words_d}$, we get a regularity structure $(A^\mathrm{g},\geomodelspace,\structaction{G^\mathrm{w}}^\mathrm{g})$ for weakly geometric rough paths. In the case of $\gamma\in(1/3,1/2]$, it again reduces to the rough paths structure given in Definition 13.5.\@ \cite{frizhairer14} after a suitable truncation. With $\psi(\structaction{g}^\mathrm{g}):=\structaction{g\phi}^\mathrm{b}$, $(\phi_T,\psi)$ can be shown to be a regularity structure morphism and since $\phi_T$ is surjective and $\psi$ injective, there is a sub regularity structure of $\regstruc^\mathrm{b}$ we may identify $\regstruc^\mathrm{g}$ with. Thus, all the concepts developed for $\regstruc^\mathrm{b}$ in the previous subsections can be translated to the case of $\regstruc^\mathrm{g}$.

\FloatBarrier
\bibliography{masterthesis}
\bibliographystyle{alpha}
\small
\printnomenclature[1.75cm]
\markboth{}{}
\scriptsize
\twocolumn
\cleardoublepage
\phantomsection
\printindex
\end{document}